\documentclass[reqno,10pt]{amsart}
\usepackage{amssymb,amsmath,amsthm,amsfonts,color}

\usepackage{mathrsfs,dsfont,comment,mathscinet}

\usepackage{ulem}
\usepackage{enumerate,esint}
\usepackage[no label, final]{showlabels}
\usepackage[left=2.8cm,right=2.8cm,top=2.8cm,bottom=2.8cm]{geometry}
\parskip1mm
\usepackage{mathtools}
\usepackage{graphicx,tikz}
\usepackage[format=hang,labelfont=bf]{caption}

\usepackage{epstopdf}
\DeclareGraphicsRule{.tif}{png}{.png}{`convert #1 `dirname #1`/`basename #1 .tif`.png}

\usepackage[colorlinks=true, pdfstartview=FitV, linkcolor=blue, 
            citecolor=blue, urlcolor=blue]{hyperref}

\tolerance=10000
\allowdisplaybreaks
\numberwithin{equation}{section}
\theoremstyle{plain}
\newtheorem{theorem}{Theorem}[section]
\newtheorem{proposition}[theorem]{Proposition}
\newtheorem{lemma}[theorem]{Lemma}

\newtheorem{corollary}[theorem]{Corollary}
\theoremstyle{definition}

\newtheorem{remark}[theorem]{Remark}

\newtheorem*{theorem*}{Theorem}

%%%%%%%%%%%%%% title of theorems in bf
\makeatletter
\def\th@plain{%
  \thm@notefont{}% same as heading font
  \itshape % body font
}
\def\th@definition{%
  \thm@notefont{}% same as heading font
  \normalfont % body font
}
\makeatother
%%%%%%%%%%%%%%

\newcommand\R{\mathbb R}

\newcommand\N{\mathbb N}
\newcommand\Z{\mathbb{Z}}
\renewcommand\S{\mathbb S}

\renewcommand{\d}{\mathrm{d}}

\newcommand{\eps}{\varepsilon}
\newcommand{\vth}{\vartheta}

\newcommand{\restr}[1]{|_{#1}}

% to avoid superscripts touching hline

%integral average
\def\Xint#1{\mathchoice
{\XXint\displaystyle\textstyle{#1}}%
{\XXint\textstyle\scriptstyle{#1}}%
{\XXint\scriptstyle\scriptscriptstyle{#1}}%
{\XXint\scriptscriptstyle\scriptscriptstyle{#1}}%
\!\int}
\def\XXint#1#2#3{{\setbox0=\hbox{$#1{#2#3}{\int}$ }
\vcenter{\hbox{$#2#3$ }}\kern-.6\wd0}}

\def\dashint{\Xint-}

%%%% widecheck
\DeclareFontFamily{U}{mathx}{\hyphenchar\font45}
\DeclareFontShape{U}{mathx}{m}{n}{
      <5> <6> <7> <8> <9> <10>
      <10.95> <12> <14.4> <17.28> <20.74> <24.88>
      mathx10
      }{}
\DeclareSymbolFont{mathx}{U}{mathx}{m}{n}
\DeclareFontSubstitution{U}{mathx}{m}{n}
\DeclareMathAccent{\widecheck}{0}{mathx}{"71}
\DeclareMathAccent{\wideparen}{0}{mathx}{"75}

%%%%%%%%%%%%%

\newcommand{\loc}{\mathrm{loc}}
\newcommand{\tr}{\mathrm{tr}}
\newcommand{\cof}{\mathrm{cof}}
\newcommand{\sym}{\mathrm{sym}}
\renewcommand{\deg}{\mathrm{deg}}
\renewcommand{\div}{\mathrm{div}}
\newcommand{\inc}{\mathrm{inc}}
\newcommand{\dist}{\mathrm{dist}}
\newcommand\wk{\rightharpoonup}
\newcommand{\rtt}{\R^{3\times 3}}
\newcommand{\rtwtw}{\R^{2 \times 2}}

\newcommand*\closure[1]{\overline{#1}}

\newcommand{\lebt}{\mathscr{L}^2}
\newcommand{\leb}{\mathscr{L}^3}

\definecolor{bblue}{HTML}{3C3C9F}
%%%%%%%%%%%%%%%%%%%%%
\newcommand\BBB{\color{black}}

\newcommand\MMM{\color{black}}

\title [Dimension reduction for magnetoelastic plates]
{Linearized von K\'arm\'an theory for incompressible magnetoelastic plates}
\author[M. Bresciani] {Marco Bresciani} 
\address{Institute of Analysis and Scientific Computing, TU Wien, 
Wiedner Hauptstrasse 8-10, 1040 Vienna, Austria}
\email{marco.bresciani@tuwien.ac.at}
\subjclass[2010]{}
\keywords{}

\begin{document} 
\vskip .2truecm
\begin{abstract}
We study the asymptotic behaviour, in the sense of $\Gamma$-convergence, of a thin incompressible magnetoelastic plate, as its thickness goes to zero. We focus on the linearized von K\'{a}rm\'{a}n regime. The model features a mixed Eulerian-Lagrangian formulation, as magnetizations are defined on the deformed configuration.
\end{abstract}
\maketitle

%%%%%%%%%%%%%%%%%%%%%%%%%%%%%%%%%%%%%%%%%%
% per far tornare emph in italic
\normalem

\section{Introduction}
\label{sec:intro}

A crucial question in materials science is to characterize the unprecedented mechanical behavior of multifunctional materials, originating from the strong interplay between their elastic response and other effects, including polarizability or magnetizability, solid-solid phase change, or heat transfer. These couplings, often nonlinear, make these materials active, as comparably large strains can be induced via electromagnetic or thermal stimuli. This unique feature is at the basis of the vast range of applications of active materials for innovative devices, such as sensors, actuators, and semiconductors.

The modeling of multifunctional materials  is a very active field of research at the triple point between mathematics, physics, and materials science. In the case of \emph{small strains}, a variety of materials including magnetoelastics, active polymers, shape-memory alloys, and piezoelectrics have been extensively addressed and the corresponding mathematical theory is already quite developed \cite{mielke.roubicek}. However, most real-word phenomena involve \emph{large strains}, which cannot be effectively encompassed within the small-strain regime. At the purely mechanical level, passing from small to large strains requires leaving linear theories and resorting to nonlinear theories instead. In the case of active materials, the boost in complexity is even more evident, for their energetic formulations simultaneously involve both energy terms defined on the original stress-free configuration (\emph{Lagrangian}) and energy contributions arising in the deformed state (\emph{Eulerian}). 

The present contribution concerns the study of magnetoelastic materials. Such a material is characterized by a full  coupling between his mechanical properties and magnetization effects. This  feature is due to the presence of small magnetic domains in the material \cite{hubert.schafer} that, given an external magnetic field, tend to orientate themselves according to the latter, producing a magnetically induced deformation of the body. Conversely, any mechanical deformation modifies the orientation of the anisotropy directions of these domains, with the effect of changing the magnetic response of the material. We refer to \cite{brown} for the physical foundations of magnetoelasticity.

The mathematics of magnetoelasticity involves two quantities, the deformation $\boldsymbol{y}$ and the magnetization $\boldsymbol{m}$. If $\Omega \subset \R^3$ represents the reference configuration of the body, the deformation is given by a map $\boldsymbol{y}\colon \Omega \to \R^3$. In the case of small strains, the magnetization $\boldsymbol{m}$ is also defined on the reference configuration $\Omega$, as in micromagnetics \cite{desimone,gioia.james,james.kinderlehrer.2}. This approximation is valid if either the set $\Omega$ represents a ``very large'' sample of material or the deformation $\boldsymbol{y}$ is ``very close'' to the identity. Instead, in the case of large strains, the magnetization is defined on the deformed configuration, namely as a map $ \boldsymbol{m} \colon \boldsymbol{y}(\Omega)\to \R^3$. Thus, the deformation $\boldsymbol{y}$ is a \emph{Lagrangian variable}, while the magnetization $\boldsymbol{m}$ is an \emph{Eulerian variable}. In this last setting, the existence of energy minimizers have been proven first in \cite{rybka.luskin}, where a second-order regularization was introduced, and then in \cite{kruzik.stefanelli.zeman},  where the case of an incompressible material was considered. {\MMM Subsequently, the existence of minimizers under feasible assumptions have been proven in \cite{barchiesi.henao.moracorral}.}

In this paper, we investigate the problem of dimension reduction for thin magnetoelastic plates subjected to large deformations. Our goal is to identify an approximate two-dimensional description of the system, starting from a  three-dimensional one, as the thickness of the plate goes to zero. Such  effective  models are widely used in the applied sciences, since they significantly reduce the computational complexity while preserving the main features of the original three-dimensional structures. 

The problem of dimension reduction in the context of nonlinear elasticity has been a central topic of research in the last decades. In the calculus of variations, the approximation is understood in the sense of $\Gamma$-convergence \cite{braides,dalmaso} and usually relies on quantitative rigidity estimates \cite{friesecke.james.mueller1}. The case of plates has been extensively studied. Scaling the elastic energy by different powers of the thickness of the plate, a hierarchy of regimes and of corresponding limiting theories has been established \cite{friesecke.james.mueller2}. In particular, it has been shown in \cite{friesecke.james.mueller2} that, for a sufficiently small order of magnitude of the applied loads, one recovers in the limit the von K\'arm\'an model for plates, classically derived by means of formal asymptotic expansion and heuristic considerations \cite[Chapter 5]{ciarlet2}. In the context of micromagnetics, the analysis of the thin-film limit of magnetic plates has  been addressed in  \cite{carbou,gioia.james}. {\MMM We also mention the recent work \cite{davoli.difratta.praetorius.ruggeri}.}

In \cite{kruzik.stefanelli.zanini}, the dimension reduction of magnetoelastic plates in the Kirchoff-Love regime and the corresponding evolution are studied, in a purely Lagrangian setting, within the framework of linearized elasticity. 
The problem of dimension reduction for magnetoelastic plates undergoing large-strain deformations have been considered in \cite{liakhova}, under some a priori constraint on the jacobian of deformations, and subsequently numerically investigated in \cite{liakhova.luskin.zhang,luskin.zhang}.  The first rigorous derivation of a two-dimensional model,  starting from  the membrane regime,  has been obtained for non-simple materials in the recent contribution \cite{davoli.kruzik.piovano.stefanelli}.   

In the present work, we consider the same mixed Eulerian-Lagrangian variational formulation in \cite{kruzik.stefanelli.zeman}. Consider a plate $\Omega_h \coloneqq S \times (-h/2,h/2)$, with section $S \subset \R^2$  and thickness $h>0$, subjected to an elastic deformation $\boldsymbol{w}\colon \Omega_h \to \R^3$ and a magnetization $\boldsymbol{m}\colon \boldsymbol{w}(\Omega_h)\to \R^3$. As in \cite{kruzik.stefanelli.zeman}, we impose the constraint of \emph{magnetic saturation} which, up to normalization, reads $|\boldsymbol{m}|=1$  in $\boldsymbol{w}(\Omega_h)$. This is physically reasonable for sufficiently low constant temperature. The corresponding magnetoelastic energy is  described by the following functional:
\begin{equation}
\label{eqn:intro-Fh}
{\MMM \mathcal{E}_h}(\boldsymbol{w},\boldsymbol{m}) := \frac{1}{h^\beta} \int_{\Omega_h}W^\inc(\nabla \boldsymbol{w},\boldsymbol{m}\circ \boldsymbol{w})\,\d \boldsymbol{X}+{\MMM \alpha}\int_{\boldsymbol{w}(\Omega_h)}|\nabla\boldsymbol{m}|^2\,\d \boldsymbol{\xi}+\frac{1}{2}\int_{\R^3}|\nabla \psi_{\boldsymbol{m}}|^2\,\d \boldsymbol{\xi}.
\end{equation}
The first term represents the \emph{elastic energy}, which is rescaled according to the \emph{linearized von K\'arm\'an regime} \cite{friesecke.james.mueller2}. {\MMM More precisely, for deformations $\boldsymbol{w}\in W^{1,p}(\Omega_h;\R^3)$, we assume $\beta>2p$ and, as in \cite{kruzik.stefanelli.zeman,rybka.luskin}, we suppose $p>3$.} This {\MMM last} assumption is merely technical and ensures that every deformation $\boldsymbol{w}$ admits a representative which is continuous up to the boundary and, in particular, that  the deformed set $\boldsymbol{w}(\Omega_h)$ is defined without ambiguity.  {\MMM An alternative  choice could be to work  as in \cite{barchiesi.desimone} and consider $p=3$. In this case, deformations with strictly positive Jacobian still admit a continuous representative \cite[Theorem 5.14]{fonseca.gangbo}, which however, in general, cannot be extended  up  to the boundary.}  We also mention the setting in \cite{barchiesi.henao.moracorral}, where the authors assume $p>2$ and replace the the deformed set $\boldsymbol{w}(\Omega_h)$ by the topological image $\text{im}_T(\boldsymbol{w},\Omega_h)$ (see \eqref{eqn:def-config-reduced-domain} below). 
Unfortunately, no strategy seems to be available for the limiting case $p=2$, which is of particular  modeling interest.  The analysis presented in this paper represents, to the author's knowledge, the first $\Gamma$-convergence study of magnetoelastic plates in the linearized von K\'arm\'an regime: we thus tackle here the more regular case $p>3$. Further integrability assumptions will be the subject of forthcoming investigations.  

Following the modeling  approach  proposed in \cite{conti.dolzmann,li.chermisi}, the elastic energy density $W^\text{inc} \colon \rtt \times \S^2 \to \R$, where $\S^2$ denotes the unit sphere in $\R^3$, is defined by setting $W^\text{inc}(\boldsymbol{F},\boldsymbol{\nu})=W(\boldsymbol{F},\boldsymbol{\nu})$ if $\det \boldsymbol{F}= 1$ and $W^\text{inc}(\boldsymbol{F},\boldsymbol{\nu})=+\infty$ otherwise, where $\boldsymbol{F}\in \rtt$ and $\boldsymbol{\nu}\in \S^2$. This embodies the assumption of \emph{incompressibility} of the material.
The nonlinear magnetoelastic energy density $W \colon \rtt \times \S^2 \to \R$ satisfies frame-indifference and normalization hypotheses (see \eqref{eqn:frame_indifference}-\eqref{eqn:normalization} below) analogous to the classical ones in nonlinear elasticity, combined with growth conditions and regularity assumptions (see \eqref{eqn:mixed_growth}-\eqref{eqn:local-smooth} below) modeled on the ones usually considered in dimension reduction problems \cite{friesecke.james.mueller2}. In particular, we assume $W$ to have global $p$-growth, but quadratic growth close to the set of rotations. This is crucial in order to be able to perform the second-order approximation of $W$ that identifies the von K\'{a}rm\'{a}n model.
We do not assume any particular structure of the coupling, but we only require feasible compatibility conditions (see \eqref{eqn:coupling_C}-\eqref{eqn:coupling_omega} below).   

The second term in \eqref{eqn:intro-Fh}, represents the \emph{exchange energy}{\MMM, where $\alpha>0$ is the exchange constant}. This contribution is of Eulerian type and involves the gradient of magnetizations. Thus, magnetizations are Sobolev functions defined on the deformed domains.  We stress that the set $\boldsymbol{w}(\Omega_h)$  is not necessarily open if $\boldsymbol{w}$ is not an homeomorphism. Hence, we  replace it with a suitable open subset $\Omega_h^{\boldsymbol{w}}$ (see \eqref{eqn:def-config-reduced-domain} and Lemma \ref{lem:def-config-reduced-domain} below) and  we assume $\boldsymbol{m} \in W^{1,2}(\Omega_h^{\boldsymbol{w}};\S^2)$.

The last term in \eqref{eqn:intro-Fh} stands for the \emph{magnetostatic energy}. {\MMM This term usually comprises a  constant $\mu_0>0$, the vacuum permeability, that here, for simplicity, is assumed to be equal to one.} The function $\psi_{\boldsymbol{m}}\colon \R^3 \to \R$ represents the stray field {\MMM potential} which is defined as the weak solution, unique up to additive constants (see Lemma \ref{lem:maxwell} below), of the Maxwell equation:
\begin{equation}
    \label{eqn:intro-maxwell}
    \Delta \psi_{\boldsymbol{m}}=\text{div}(\chi_{\boldsymbol{w}(\Omega_h)}\boldsymbol{m})\:\:\text{in $\R^3$.}
\end{equation}
We mention that the magnetostatic energy usually involves additional terms such as the \emph{anisotropy energy} \cite{alouges.difratta,rybka.luskin} and the 
\emph{Dzyaloshinskii–Moriya interaction energy} \cite{davoli.difratta,davoli.difratta.praetorius.ruggeri} that, on first approximation, we are neglecting. Also, we are not considering the effect of {\MMM applied forces}, as well as the one given by the presence of an external magnetic field through the so-called \emph{Zeeman energy}.

Our main contributions  are  contained in Theorems \ref{thm:comp-lower-bound} and \ref{thm:optimality-lower-bound}, and characterize the limiting behaviour of the system, as the thickness of the plate goes to zero. 
The enunciation of these results requires the introduction of some notation, the specification of our assumptions, as well as rescaling, therefore we postpone it to Section \ref{sec:setting}.  We present a simplified unified statement below. Recall the definition of the functional {\MMM $\mathcal{E}_h$} in \eqref{eqn:intro-Fh}.

\begin{theorem*}[{\MMM Main results}]
\emph{ 
The asymptotic behaviour, as $h \to 0^+$, of the functionals {\MMM $\left(h^{-1} \mathcal{E}_h\right)$} is described, in the sense of $\Gamma$-convergence, by the functional:
\begin{equation}
    \label{eqn:intro-E}
    \begin{split}
	 E({\MMM \boldsymbol{u},}v,\boldsymbol{\lambda})&\coloneqq {\MMM \frac{1}{2}\int_S Q^\mathrm{inc}_2(\sym\nabla'\boldsymbol{u},\boldsymbol{\lambda})\,\d \boldsymbol{x}' +}\frac{1}{24}\int_S Q^\mathrm{inc}_2((\nabla')^2 v,\boldsymbol{\lambda})\,\d \boldsymbol{x}'\\
	 &\,+{\MMM \alpha} \int_S |\nabla' \boldsymbol{\lambda}|^2\,\d \boldsymbol{x}' + \frac{1}{2}\int_S |\lambda^3|^2\,\d \boldsymbol{x}',
    \end{split}
\end{equation}
defined {\MMM for $\boldsymbol{u}\in W^{1,2}(S;\R^2)$, $v \in W^{2,2}(S)$ and $\boldsymbol{\lambda}\in W^{1,2}(S;\S^2)$. Here, given $\boldsymbol{\nu}\in \S^2$, the function $Q^\mathrm{inc}_2(\cdot,\boldsymbol{\nu})$ is a quadratic form on $\rtwtw$ constructed from the second-order approximation of $W(\cdot,\boldsymbol{\nu})$  close to the identity,}  while $\nabla'$ denotes the gradient with respect to the variable $\boldsymbol{x}' \in S$.}
\end{theorem*}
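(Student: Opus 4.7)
I would organize the proof along the classical Friesecke--James--M\"uller scheme, adapted to the mixed Eulerian-Lagrangian setting. First I would rescale the plate to the fixed domain $\Omega = S\times(-1/2,1/2)$ via $\boldsymbol{y}_h(\boldsymbol{x}',x_3) = \boldsymbol{w}_h(\boldsymbol{x}',hx_3)$, so that the scaled gradient becomes $\nabla_h \boldsymbol{y}_h = (\nabla'\boldsymbol{y}_h\,|\,h^{-1}\partial_3 \boldsymbol{y}_h)$, and transfer the magnetization to the reference configuration by $\boldsymbol{\lambda}_h \coloneqq \boldsymbol{m}_h\circ \boldsymbol{w}_h$ (rescaled in $x_3$). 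Using the assumed quadratic growth of $W$ near $SO(3)$ together with the rigidity estimate of \cite{friesecke.james.mueller1}, applied on slabs of the plate, one extracts an approximating rotation field $\boldsymbol{R}_h$ with $\int_\Omega |\nabla_h\boldsymbol{y}_h - \boldsymbol{R}_h|^2\,\d\boldsymbol{x} \lesssim h^{\beta-1}$ (after accounting for the $h^{-1}\mathcal{E}_h$ prefactor). From this, the standard Kirchhoff/von K\'arm\'an hierarchy extracts in-plane and out-of-plane displacements $\boldsymbol{u}_h \to \boldsymbol{u}$ in $W^{1,2}(S;\R^2)$ and $v_h \to v$ in $W^{2,2}(S)$, together with a scaled strain $\boldsymbol{G}_h \coloneqq h^{-\gamma}\operatorname{sym}(\boldsymbol{R}_h^\top \nabla_h\boldsymbol{y}_h - \boldsymbol{I})$ converging weakly in $L^2$ to a limit whose $2\times 2$ minor coincides with $\operatorname{sym}\nabla'\boldsymbol{u} + \tfrac{1}{2}\nabla'v\otimes\nabla'v - x_3(\nabla')^2 v$ (for a suitable $\gamma$). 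For the magnetization, the change-of-variables formula combined with incompressibility ($\det\nabla\boldsymbol{w}_h=1$) converts the Eulerian exchange bound into a Lagrangian $H^1$-bound on $\boldsymbol{\lambda}_h$, giving $\boldsymbol{\lambda}_h \rightharpoonup \boldsymbol{\lambda}$ in $W^{1,2}$ for some $\boldsymbol{\lambda} \in W^{1,2}(S;\S^2)$ after passing $\partial_3\boldsymbol{\lambda}_h \to 0$.

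\textbf{Liminf inequality.} The elastic part is handled by Taylor expanding $W^{\mathrm{inc}}(\cdot,\boldsymbol{\lambda}_h)$ at the identity, writing
\[
\frac{1}{h^{\beta+1}}\int_{\Omega_h}W^{\mathrm{inc}}(\nabla \boldsymbol{w}_h,\boldsymbol{m}_h\circ\boldsymbol{w}_h)\,\d \boldsymbol{X} = \int_\Omega \tfrac{1}{2}D^2W(\boldsymbol{I},\boldsymbol{\lambda}_h)[\boldsymbol{G}_h,\boldsymbol{G}_h]\,\d \boldsymbol{x} + o(1),
\]
then splitting the symmetric strain into its $\boldsymbol{x}'$-average (membrane, yielding $Q_2^{\mathrm{inc}}(\operatorname{sym}\nabla'\boldsymbol{u},\boldsymbol{\lambda})$) and its $x_3$-affine part (bending, yielding $\tfrac{1}{12}Q_2^{\mathrm{inc}}((\nabla')^2 v,\boldsymbol{\lambda})$), invoking weak $L^2$-lower semicontinuity and continuity of $\boldsymbol{\lambda}\mapsto D^2 W(\boldsymbol{I},\boldsymbol{\lambda})$. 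The relaxation over the out-of-plane strain components, restricted to trace-free matrices by incompressibility, is precisely what defines $Q_2^{\mathrm{inc}}$. The exchange energy, via the change of variables and $\partial_3\boldsymbol{\lambda}_h \to 0$, passes to the limit $\alpha\int_S|\nabla'\boldsymbol{\lambda}|^2\,\d\boldsymbol{x}'$ by weak lower semicontinuity. The magnetostatic term requires a thin-film analysis in the spirit of Gioia--James \cite{gioia.james}: after rescaling, the stray-field equation \eqref{eqn:intro-maxwell} concentrates on the section $S$ and one recovers in the limit $\tfrac{1}{2}\int_S|\lambda^3|^2\,\d\boldsymbol{x}'$, exploiting that the normal trace dominates and that $\chi_{\boldsymbol{w}_h(\Omega_h)}$ is essentially $\chi_{\Omega_h}$ up to a deformation negligible at leading order.

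\textbf{Recovery sequence and main obstacle.} For the limsup inequality I would start with smooth $(\boldsymbol{u},v,\boldsymbol{\lambda})$ (by density) and build the von K\'arm\'an ansatz $\boldsymbol{y}_h(\boldsymbol{x}',x_3) = (\boldsymbol{x}' + h^{\gamma}\boldsymbol{u} - x_3 h\nabla' v, h v) + \text{higher-order corrections}$, then define $\boldsymbol{w}_h$ on $\Omega_h$ by unrescaling. The main difficulty is that this ansatz must be corrected in its $x_3$-dependence so that $\det\nabla\boldsymbol{w}_h = 1$ pointwise, not merely to leading order; following \cite{conti.dolzmann,li.chermisi}, this is achieved by adding a third-order in-plane corrector solving a divergence equation which encodes the optimal relaxation defining $Q_2^{\mathrm{inc}}$. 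The magnetization is recovered by extending $\boldsymbol{\lambda}$ trivially in $x_3$ and composing with $\boldsymbol{w}_h^{-1}$, using the incompressibility to preserve norms; the stray-field contribution is then identified by evaluating \eqref{eqn:intro-maxwell} on the near-cylindrical domain $\boldsymbol{w}_h(\Omega_h)$. I expect the principal technical obstacles to be: (i) the construction of the incompressible recovery sequence with the right asymptotic determinant expansion; (ii) the precise control of the Eulerian-Lagrangian bookkeeping, in particular handling $\Omega_h^{\boldsymbol{w}}$ versus $\boldsymbol{w}(\Omega_h)$ via Lemma \ref{lem:def-config-reduced-domain}; and (iii) the thin-film limit of the magnetostatic energy, where one must rule out a nontrivial contribution from the lateral boundary of $\boldsymbol{w}_h(\Omega_h)$ and isolate the vertical-component surface term.
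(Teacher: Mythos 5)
Your proposal captures the broad Friesecke--James--M\"uller skeleton (rescaling, rigidity estimate, Taylor expansion of $W$ near $SO(3)$, relaxation in the out-of-plane strain to produce $Q_2^\mathrm{inc}$, thin-film limit of the stray field \`a la Gioia--James) and correctly identifies the main obstacles in the recovery step. Two points in the compactness/liminf part are genuine gaps, however.

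First, the limiting membrane strain you write, $\operatorname{sym}\nabla'\boldsymbol{u}+\tfrac12\nabla'v\otimes\nabla'v-x_3(\nabla')^2v$, contains the quadratic term $\tfrac12\nabla'v\otimes\nabla'v$, which is the signature of the nonlinear von K\'arm\'an regime. Here the scaling is $\beta>2p>6$, i.e.\ strictly inside the \emph{linearized} von K\'arm\'an regime, and the out-of-plane displacement converges to zero fast enough that this term is of strictly lower order: the correct limiting scaled strain is affine in $x_3$, with $2\times2$ minor $\operatorname{sym}\nabla'\boldsymbol{u}-x_3(\nabla')^2 v$ (this is the content of \eqref{eqn:G-affine-in-x3}). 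Retaining the quadratic term would yield a different, wrong limiting functional.

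Second, and more seriously, the claim that the change-of-variable formula and incompressibility ``convert the Eulerian exchange bound into a Lagrangian $H^1$-bound on $\boldsymbol{\lambda}_h = \boldsymbol{m}_h\circ\boldsymbol{w}_h$'' does not follow. A chain rule for $\nabla(\boldsymbol{m}_h\circ\boldsymbol{w}_h)$ would require either $\boldsymbol{m}_h$ Lipschitz or $\boldsymbol{w}_h$ bi-Lipschitz with uniformly controlled constants; but $\boldsymbol{m}_h\in W^{1,2}(\Omega_h^{\boldsymbol{w}_h};\S^2)$ only, and $\boldsymbol{w}_h\in W^{1,p}$ with $p>3$ is H\"older (not Lipschitz) continuous, so you cannot conclude $\boldsymbol{m}_h\circ\boldsymbol{w}_h\in W^{1,2}$. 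This is exactly the central difficulty the paper addresses and declares to be its main novelty. The actual argument avoids the chain rule entirely: the uniform estimate $\|\widetilde{\boldsymbol{y}}_h-\boldsymbol{z}_h\|_{C^0}\leq Ch^{\beta/p-1}$ together with degree-theoretic stability (\eqref{eqn:subcylinder}) yields an inscribed cylinder $\Omega^\eps_{\vartheta h}\subset\Omega^{\widetilde{\boldsymbol{y}}_h}$; one then sets $\widehat{\boldsymbol{m}}_h:=\widetilde{\boldsymbol{m}}_h\circ\boldsymbol{z}_h$ (composing with the \emph{explicit affine scaling}, not the deformation) to extract $\boldsymbol{\lambda}\in W^{1,2}(S;\S^2)$, and finally uses the Lusin-type Lipschitz truncation of Proposition~\ref{prop:lipschitz-truncation}, together with the area formula and the almost-everywhere injectivity of $\widetilde{\boldsymbol{y}}_h$, to show that $\widetilde{\boldsymbol{m}}_h\circ\widetilde{\boldsymbol{y}}_h$ and $\widehat{\boldsymbol{m}}_h$ have the same $L^2$ limit. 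Your outline would need this machinery (or a substitute) to close the argument; as written, the key compactness step for the magnetization is unjustified.
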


We point out that Theorems \ref{thm:comp-lower-bound} and \ref{thm:optimality-lower-bound} extend the results previously obtained in the context of nonlinear elasticity and micromagnetics. More explicitly, in absence of magnetizations, we recover the theory of dimension reduction for incompressible elastic plates in the linearized von K\'arm\'an regime established in \cite{li.chermisi}, while, in the case of deformations given by the identity map,  our model reduces to  the thin film-limit of micromagnetic bodies presented in \cite{gioia.james}.

Note that the functionals {\MMM $\mathcal{E}_h$} and $E$ in \eqref{eqn:intro-Fh} and \eqref{eqn:intro-E}, respectively, have different domains of definition, therefore we cannot expect a proper $\Gamma$-convergence statement. However, in Corollary \ref{cor:gamma-convergence}, we show that Theorems \ref{thm:comp-lower-bound} and \ref{thm:optimality-lower-bound} can be rigorously reformulated within the framework of $\Gamma$-convergence. This  latter result is adapted from \cite{lewicka.mora.pakzad}. 

{\MMM Applied forces and external magnetic fields can be included in the analysis. While the energetic contribution given by mechanical forces is classically set on the reference configuration according to the assumption of \emph{dead loads}, the one determined by external magnetic fields, the so-called \emph{Zeeman energy}, is necessarily defined in the actual space. As a consequence of our main results, sequences of minimizers of the energy $\mathcal{E}_h$  in \eqref{eqn:intro-Fh}, augmented by the functional representing the applied loads, converge, in a suitable sense, to minimizers of the functional $E$ in \eqref{eqn:intro-E}, completed by the corresponding limiting functional. The precise statement is presented in Corollary \ref{cor:convergence-minimizers}. 
The latter is included here just to attest the significance of our main results and it is stated without proof. The convergence of minimizers will be treated in detail in a forthcoming publication.}

As customary for $\Gamma$-convergence, our main results can be  subdivided into three parts:  compactness of sequences of states with equibounded energies, identification of a common lower bound for the energy of converging sequences of states, and existence of recovery sequences for arbitrary limiting states. The first two parts are contained in Theorem \ref{thm:comp-lower-bound}, while the third one is given by Theorem \ref{thm:optimality-lower-bound}.

In analogy with \cite{friesecke.james.mueller2},  compactness is obtained up to composition with rigid motions. The proof of the compactness of deformations relies on the adaptation of standard techniques of dimension reduction in nonlinear elasticity \cite{friesecke.james.mueller2} to our growth assumption (see \eqref{eqn:mixed_growth} below). In this regime, the deformations $\boldsymbol{w}_h$  converge to the identity. The compactness of magnetizations, instead, is proved using an original approach, which constitutes the main novelty of the present contribution. Indeed, since the magnetizations $\boldsymbol{m}_h$ are defined on the deformed sets $\boldsymbol{w}_h(\Omega_h)$, which are  unknown, the analysis is quite delicate. Note that, in general, the sets $\boldsymbol{w}_h(\Omega_h)$ are not regular domains and hence usual operations like the extension to the whole space or the identification of traces on the boundary are not allowed for the magnetizations $\boldsymbol{m}_h$. Moreover, in our case, the uniform convergence techniques developed in \cite{kruzik.stefanelli.zeman,rybka.luskin} are not available, since the deformed sets are crushing onto the section $S$, which has Lebesgue measure zero. Our approach is based on careful considerations on the geometry of the deformed sets. Combining the uniform convergence estimate of  the deformations  $\boldsymbol{w}_h$ towards the identity map with some elementary properties of the topological degree, we prove that for every $h$ sufficiently small, up to rigid motions, the deformed sets $\boldsymbol{w}_h(\Omega_h)$ contain a cylinder of height of order $h$ whose section is obtained by shrinking $S$. On this cylinder, the compactness of magnetizations can be deduced by standard methods. The limiting object, locally identified by this procedure, turns out to be globally well-defined. The same approach is used to deduce the compactness of the compositions    
$\boldsymbol{m}_h \circ \boldsymbol{w}_h$, properly rescaled. This is another subtle point, since the magnetizations $\boldsymbol{m}_h$ are not necessarily continuous. The issue is overcome by considering the restrictions of magnetizations to the  previously determined cylinder, which is  a regular domain,  and  extending them to the whole space. Then, we obtain the desired convergence by exploiting some Lusin-type property of Sobolev maps (see Proposition \ref{eqn:lip-prop} below). These techniques require an extensive use of the area formula (see Proposition \ref{eqn:area-formula} below). {\MMM In particular, we need admissible deformations to be injective. This fact, combined with the incompressibility constraint, entails that admissible deformations are volume-preserving.}

The proof of the existence of a lower bound for the elastic energies is similar to the corresponding one in the case of elasticity \cite{friesecke.james.mueller2}, once the convergence of the magnetizations is established. Incompressibility is treated by adopting the same strategy in \cite{conti.dolzmann,li.chermisi}.
The lower bound for the exchange energies is obtained by considering a family of cylinders contained in the deformed sets that exhaust them, in the sense of measure. Concerning the magnetostatic energy, we employ our geometric considerations about the deformed domains to prove the convergence of the right-hand sides of the equations \eqref{eqn:intro-maxwell} determined by $\boldsymbol{w}_h$ and $\boldsymbol{m}_h$. Then, we adapt the results in \cite{gioia.james} to prove the compactness of the  corresponding solutions and the convergence of the magnetostatic energies. 

The existence of recovery sequences is proved in two steps. First, we construct the sequence of deformations following the ansatz in \cite{friesecke.james.mueller2} and we deal with the incompressibility constraint by means of the techniques developed in \cite{li.chermisi} (see also \cite{conti.dolzmann}). The resulting deformations are given by perturbations of the identity and, in turn, by a result in \cite{ciarlet}, they are globally injective. Subsequently, we construct the sequence of magnetizations, taking into account that these have to be defined on the corresponding deformed sets. The convergence of the elastic energies follows, once more, similarly to the classical case \cite{friesecke.james.mueller2}. The convergence of the exchange energies is straightforward and the one of the magnetostatic energies is deduced by  analogous arguments to the ones of the lower bound.

The  key novelty of the present contribution is to  move beyond the small-strain assumption and  tackle instead a mixed Eulerian-Lagrangian formulation. In the same direction, we mention again the recent paper \cite{davoli.kruzik.piovano.stefanelli}. Note that, unlike other contributions \cite{davoli.kruzik.piovano.stefanelli,rybka.luskin}, we do not consider any second-order gradient term in the energy functional. This is possible because of the peculiar  properties  of the von K\'{a}rm\'{a}n regimes, in which deformations converge to the identity. A central ingredient for our analysis is the fact that  the convergence rate is of order bigger than one, namely of order $\beta/p-1$, where, we recall, $\beta>2p$. {\MMM If $\beta\leq 2p$, the the inner approximation of  the deformed sets $\boldsymbol{w}(\Omega_h)$  by means of cylinders of height comparable with $h$  is no more directly applicable. Thus, the techniques presented here cannot be extended to other regimes. 

We mention that we recently extended our results to the compressible case. This will be addressed in a forthcoming paper.
}

The paper is structured as follows. In Section \ref{sec:prelim}, we recall preliminary results that will be instrumental for our arguments. In Section \ref{sec:setting}, we establish the precise setting of the problem and state our main results with their corollaries.  Theorems \ref{thm:comp-lower-bound} and \ref{thm:optimality-lower-bound} are proved in Sections \ref{sec:comp-lb} and \ref{sec:opt}, respectively. Finally, in Section \ref{sec:gamma}, we prove the rephrasing of our main results in the language of $\Gamma$-convergence.

\subsection*{Notation and conventions}

We will work mainly in the three dimensions. For every $\boldsymbol{a}\in \R^3$, we set $\boldsymbol{a}'\coloneqq(a^1,a^2)^\top \in \R^2$, so that $\boldsymbol{a}=((\boldsymbol{a}')^\top,a^3)^\top$. The null vector in $\R^3$ is denoted by $\boldsymbol{0}$, thus $\boldsymbol{0}'$ is the null vector in $\R^2$.
The same notation applies to space variables. We denote by $\nabla'$ the gradient with respect to the first two variables, namely $\nabla'\coloneqq(\partial_1,\partial_2)^\top$, with obvious extension in the case of vector-valued functions. {\MMM Accordingly, the divergence and the Laplace operator with respect to the first two variables are denoted by $\div'$ and $\Delta'$, respectively.}
Given $\boldsymbol{a},\boldsymbol{b}\in \R^3$, their tensor product is defined as the matrix $\boldsymbol{a}\otimes \boldsymbol{b}\coloneqq(a^i b^j)_{i,j=1,2,3}\in \rtt$. For every matrix $\boldsymbol{A}\in \rtt$, we define $\boldsymbol{A}''\coloneqq (A^{ij})_{i,j=1,2}\in \rtwtw$. We denote the identity matrix and the null matrix in $\rtt$ by $\boldsymbol{I}$ and $\boldsymbol{O}$, respectively. Thus, $\boldsymbol{I}''$ and $\boldsymbol{O}''$ stand for the corresponding matrices in $\rtwtw$.  The identity map on $\R^3$ is denoted by $\boldsymbol{id}$. 

The group of proper rotations in $\R^N$ is denoted by $SO(N)$. The set of symmetric and skew-symmetric matrices in $\R^{N \times N}$ are respectively given by $\mathrm{Sym}(N)$ and $\mathrm{Skew}(N)$. The Lebesgue measure on $\R^N$ is denoted by $\mathscr{L}^N$, and the characteristic function of sets $A \subset \R^N$ is denoted by $\chi_A$. The optimal Lipschitz constant of a Lipschitz function $\boldsymbol{v}$ is denoted by $\mathrm{Lip}(\boldsymbol{v})$. 
We use standard notation for Lebesgue and Sobolev spaces, i.e. $L^p$ and $W^{m,p}$, and their local counterparts. We will also consider functions in the space, sometimes named after Beppo Levi, given by $V^{1,2}(\R^N)\coloneqq \{\varphi \in L^2_\loc(\R^N):\;\nabla \varphi \in L^2(\R^N;\R^N)\}$. Given an open set $\Omega \subset \R^N$ and an embedded submanifold $\mathcal{M} \subset \R^M$, we define the space of manifold-valued Sobolev maps as $W^{m,p}(\Omega;\mathcal{M})\coloneqq \{\boldsymbol{v} \in W^{m,p}(\Omega;\R^M):\; \boldsymbol{v}(\boldsymbol{x}) \in \mathcal{M}\; \text{for a.e. $\boldsymbol{x}\in \Omega$}\}$. In the following, $\mathcal{M}$ is going to be either the unit sphere $\S^2 \coloneqq \{\boldsymbol{x}\in \R^3: |\boldsymbol{x}|=1\}$ in $\R^3$ or the  special orthogonal group $SO(3)\subset \rtt$.

We will make use of the Landau symbols `$o$' and `$O$'. When referred to vectors or matrices, these are to be understood with respect to the maximum of their components.
We will adopt the common convention of denoting by $C$ a positive constant that can change from line to line and that can be computed in terms of known quantities. Sometimes, we are going to underline its dependence on certain quantities using parentheses. Also, we will identify functions defined on the plane with functions defined on the three-dimensional space that are independent on the third variable. Finally, all statements involving $h$ without specifying the range for this parameter, are to be understood to hold for $h>0$  and sufficiently small. Moreover, in absence of any specification, convergences are intended up to subsequences and for $h \to 0^+$.    

\section{Preliminary results}
\label{sec:prelim}

We briefly recall some facts and  notions that are going to be used later.
Let $\Omega \subset \R^N$ be {\MMM a bounded Lipschitz domain} and consider a map $\boldsymbol{v} \in W^{1,p}(\Omega;\R^N)$ with $p>N$. Any such map admits a representative which is continuous up to the boundary and has the \emph{Lusin property} $(N)$ {\MMM\cite[Corollary 1]{marcus.mizel}}, i.e. maps sets of zero Lebesgue measure to set of zero Lebesgue measure. {\MMM\emph{Henceforth, for such maps, we will always tacitly consider this representative.}} If $\boldsymbol{v}$ satisfies $\det \nabla \boldsymbol{v}>0$ almost everywhere in $\Omega$, then it also has the \emph{Lusin property} $(N^{-1})$ {\MMM \cite[Theorem 5.32]{fonseca.gangbo}}, i.e. the preimage via $\boldsymbol{v}$ of any set of zero Lebesgue measure  has Lebesgue measure zero.

We will use the following version of the {\MMM area and change-of-variable formulas \cite[Theorem 2]{marcus.mizel}.}

{\MMM
\begin{proposition}[Area  and change-of-variable formulas]
\label{prop:area-formula}
Let $\Omega \subset \R^N$ be a bounded Lipschitz domain and let $\boldsymbol{v} \in W^{1,p}(\Omega;\R^N)$ with $p>N$. Then, for every measurable set $A \subset \Omega$ , the multiplicity function
$$\boldsymbol{\xi} \mapsto \iota(\boldsymbol{v},A,\boldsymbol{\xi}):=\#\{\boldsymbol{x} \in A:\;\boldsymbol{v}(\boldsymbol{x})=\boldsymbol{\xi}\}$$
is measurable and the following area formula holds:
\begin{equation}
    \label{eqn:area-formula}
    \int_A | \det \nabla \boldsymbol{v}(\boldsymbol{x})|\,\d\boldsymbol{x}=\int_{\R^N} \iota(\boldsymbol{v},A,\boldsymbol{\xi})\,\d\boldsymbol{\xi}.
\end{equation}
Moreover, for  every  $f\colon \boldsymbol{v}(A)\to \R$ such that either $\boldsymbol{x}\mapsto f(\boldsymbol{v}(\boldsymbol{x}))\,|\det \nabla \boldsymbol{v}(\boldsymbol{x})|$ is integrable over $A$ or $\boldsymbol{\xi}\mapsto f(\boldsymbol{\xi})\,\iota(\boldsymbol{v},A,\boldsymbol{\xi})$ is integrable over $\boldsymbol{v}(A)$, the following change-of-variable formula holds:
\begin{equation}
    \label{eqn:change-of-var}
    \int_A f (\boldsymbol{v}(\boldsymbol{x}))\,|\det \nabla  \boldsymbol{v}(\boldsymbol{x})|\,\d \boldsymbol{x}= \int_{\boldsymbol{v}(A)} f(\boldsymbol{\xi}) \,\iota(\boldsymbol{v},A,\boldsymbol{\xi})\,\d \boldsymbol{\xi}.
\end{equation}
\end{proposition}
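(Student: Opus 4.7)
The plan is to reduce the claim to the classical area formula for Lipschitz maps via a Lusin-type truncation of $\boldsymbol{v}$. Since $p>N$ and $\Omega$ is a bounded Lipschitz domain, as recalled in the paragraph preceding the proposition, the continuous representative of $\boldsymbol{v}$ is approximately differentiable almost everywhere and enjoys the Lusin property $(N)$; moreover, $|\det\nabla\boldsymbol{v}|\in L^{p/N}(\Omega)\subset L^1(\Omega)$. The first step is to invoke a standard Sobolev truncation to produce, for every $k\in\N$, a closed set $E_k\subset A$ with $E_k\subset E_{k+1}$ and $\mathscr{L}^N(A\setminus E_k)\to 0$, together with a globally Lipschitz map $\boldsymbol{v}_k\colon \R^N\to\R^N$ such that $\boldsymbol{v}=\boldsymbol{v}_k$ and $\nabla\boldsymbol{v}=\nabla\boldsymbol{v}_k$ pointwise on $E_k$.

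On each $E_k$ the classical area formula for Lipschitz maps gives
\begin{equation*}
\int_{E_k}|\det\nabla\boldsymbol{v}(\boldsymbol{x})|\,\d\boldsymbol{x}=\int_{\R^N}\iota(\boldsymbol{v}_k,E_k,\boldsymbol{\xi})\,\d\boldsymbol{\xi}=\int_{\R^N}\iota(\boldsymbol{v},E_k,\boldsymbol{\xi})\,\d\boldsymbol{\xi}.
\end{equation*}
The second step is to pass to the limit $k\to\infty$: the left-hand side tends to $\int_A|\det\nabla\boldsymbol{v}|\,\d\boldsymbol{x}$ by dominated convergence, while the right-hand side converges monotonically to $\int_{\R^N}\iota(\boldsymbol{v},E_\infty,\boldsymbol{\xi})\,\d\boldsymbol{\xi}$ with $E_\infty\coloneqq\bigcup_k E_k$. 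The Lusin property $(N)$ then ensures that $\mathscr{L}^N(\boldsymbol{v}(A\setminus E_\infty))=0$, so replacing $E_\infty$ with $A$ on the right-hand side does not alter its value and yields \eqref{eqn:area-formula}. The measurability of $\boldsymbol{\xi}\mapsto\iota(\boldsymbol{v},A,\boldsymbol{\xi})$ comes for free from this construction, since each $\iota(\boldsymbol{v}_k,E_k,\cdot)$ is measurable by the Lipschitz theory, the sequence is non-decreasing in $k$, and its pointwise limit coincides with $\iota(\boldsymbol{v},A,\cdot)$ up to a set of zero measure.

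With \eqref{eqn:area-formula} established, the third step is to derive \eqref{eqn:change-of-var} by the standard layer-cake procedure. For $f=\chi_B$ with $B\subset\boldsymbol{v}(A)$ measurable, applying \eqref{eqn:area-formula} to the measurable set $A\cap\boldsymbol{v}^{-1}(B)$ (well defined since the continuous representative of $\boldsymbol{v}$ is Borel) yields the identity for characteristic functions. Linearity then extends it to nonnegative simple functions, monotone convergence to nonnegative measurable $f$, and the decomposition $f=f^+-f^-$ to signed $f$ under either of the integrability hypotheses in the statement.

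The hard part will be the Lipschitz truncation step: producing, for each $k$, a large subset of $A$ on which $\boldsymbol{v}$ coincides with a globally Lipschitz map, and justifying that the residual set contributes nothing to either side. This is precisely where the hypothesis $p>N$ is decisive, since it simultaneously guarantees the existence of a continuous representative with the Lusin property $(N)$ and the $L^1$ integrability of the Jacobian determinant needed to close the dominated convergence argument.
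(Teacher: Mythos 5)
The paper does not prove this proposition at all: it simply cites \cite[Theorem 2]{marcus.mizel} for both the area and change-of-variable formulas, and moves on. Your proposal, by contrast, reconstructs a proof from scratch via Lipschitz truncation plus the classical (Lipschitz) area formula, which is in fact the standard mechanism underlying the Marcus--Mizel result, so the content is sound and the overall architecture is the right one. The monotonicity argument on the multiplicity side and the dominated/monotone convergence on the Jacobian side are both correctly set up, and the observation that $\mathscr{L}^N(\boldsymbol{v}(A\setminus E_\infty))=0$ by the Lusin property $(N)$ is exactly the point that makes $\iota(\boldsymbol{v},A,\cdot)=\iota(\boldsymbol{v},E_\infty,\cdot)$ almost everywhere.

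Two small technical points deserve sharpening. First, the Lipschitz truncation as recalled in Proposition~\ref{prop:lipschitz-truncation} is stated for maps on $\R^N$, so before invoking it you should extend $\boldsymbol{v}$ from the bounded Lipschitz domain $\Omega$ to $\R^N$ by a Sobolev extension operator; you should also make explicit that on the good set $E_k$ the \emph{approximate} gradient of $\boldsymbol{v}$ agrees a.e. with the classical gradient of the Lipschitz replacement $\boldsymbol{v}_k$ (this is standard but is the step that lets you match Jacobians). Second, in the change-of-variable step you apply \eqref{eqn:area-formula} to $A\cap\boldsymbol{v}^{-1}(B)$ for $B$ merely Lebesgue measurable. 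If $B$ is only Lebesgue measurable, $\boldsymbol{v}^{-1}(B)$ need not be measurable under the hypotheses here, because $\boldsymbol{v}$ only has Lusin $(N)$ and not $(N^{-1})$. The paper acknowledges this precisely in the remark following the proposition: in general $f$ should be \emph{Borel}-measurable for $f\circ\boldsymbol{v}$ to be measurable, and one only gains measurability for arbitrary measurable $f$ when $\det\nabla\boldsymbol{v}>0$ a.e.\ (hence Lusin $(N^{-1})$). So in your layer-cake step you should either restrict to Borel $B$, or add the standing assumption $\det\nabla\boldsymbol{v}>0$ a.e., mirroring the paper's caveat. With these two adjustments your argument is complete and gives an honest, self-contained proof of what the paper states by citation.
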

}

{\MMM
Note that, since $\boldsymbol{v}$ has the Lusin property $(N)$, the image $\boldsymbol{v}(A)$ of every measurable set $A \subset \Omega$ is measurable \cite[Corollary 2]{marcus.mizel}. Also, the map $f$ has to be Borel-measurable in order to have that the composition $f \circ \boldsymbol{v}$ is measurable. If $\det \nabla \boldsymbol{v}>0$ almost evereywhere, so that $\boldsymbol{v}$ has the Lusin property $(N^{-1})$, then it sufficies that $f$ is measurable.}

We say that $\boldsymbol{v}$ is \emph{almost everywhere injective} if there exists a set $B \subset \Omega$ of zero Lebesgue measure such that the map $\boldsymbol{v} \restr{\Omega \setminus B}$ is injective. {\MMM In this case, for every $A \subset \Omega$ measurable, we have $\iota(\boldsymbol{v},A,\cdot)=\chi_{\boldsymbol{v}(A)}$ almost everywhere in $\R^3$.}

{\MMM
\begin{remark}[Ciarlet-Ne\v{c}as condition]
The notion of  almost everywhere injectivity and the area formula are linked by the famous \emph{Ciarlet-Ne\v{c}as condition} \cite{ciarlet.necas} which reads
\begin{equation}
	\label{eqn:CN-intro}
	\int_\Omega |\det \nabla \boldsymbol{v}|\,\d\boldsymbol{x} \leq \mathscr{L}^N(\boldsymbol{v}(\Omega)).
\end{equation}
Indeed, under the assumptions of Proposition \ref{prop:area-formula}, the following holds \cite[page 185]{ciarlet.necas}: if $\boldsymbol{v}$ is almost everywhere injective, then $\boldsymbol{v}$ satisfies the  \eqref{eqn:CN-intro}; viceversa, if $\boldsymbol{v}$ satisfies\eqref{eqn:CN-intro} and $\det \nabla \boldsymbol{v} \neq 0$ almost everywhere in $\Omega$, then $\boldsymbol{v}$ is almost everywhere injective. This equivalence is particularly important in connection with the stability of almost everywhere injectivity with respect to the weak convergence in $W^{1,p}(\Omega;\R^N)$. We refer to \cite{ciarlet.necas} for details. Note that the Ciarlet-Ne\v{c}as condition  \eqref{eqn:CN-intro} is preserved under weak convegence as a consequence of the weak continuity of the Jacobian determinant and the Morrey embedding. 
\end{remark}
}

In our analysis, we will  use the following Lusin-type property of Sobolev maps {\MMM \cite{acerbi.fusco}}, which allows to approximate them with Lipschitz maps.

\begin{proposition}[Lusin-type property of Sobolev maps]
\label{prop:lipschitz-truncation}
Let $\boldsymbol{v} \in W^{1,r}(\R^N;\R^N)$ with $1 \leq r< \infty$. Then, for every $\lambda>0$ there exists a {\MMM measurable} set $F_\lambda \subset \R^N$  such that $\boldsymbol{v}\restr{F_\lambda}\colon F_\lambda \to \R^N$ is Lipschitz continuous with $\mathrm{Lip}(\boldsymbol{v}\restr{F_\lambda}) \leq C(N,r)\lambda$ and we have
\begin{equation*}
\mathscr{L}^N(\R^N \setminus F_\lambda) \leq \frac{C(N,r)}{\lambda^r} \int_{\{|\nabla \boldsymbol{v}|>\lambda/2\}}|\nabla \boldsymbol{v}|^r\,\d \boldsymbol{x},
\end{equation*}
where the constant $C(N,r)>0$ depends only on $N$ and $r$.
\end{proposition}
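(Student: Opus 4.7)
The plan is to prove this via the standard Hardy--Littlewood maximal function argument of Acerbi--Fusco. Let $M f$ denote the Hardy--Littlewood maximal function of a locally integrable $f$. I would define
\begin{equation*}
F_\lambda \coloneqq \{\boldsymbol{x}\in \R^N : M(|\nabla \boldsymbol{v}|)(\boldsymbol{x}) \leq \lambda\} \cap \mathcal{L}_{\boldsymbol{v}},
\end{equation*}
where $\mathcal{L}_{\boldsymbol{v}}$ denotes the set of Lebesgue points of $\boldsymbol{v}$. Since the complement of $\mathcal{L}_{\boldsymbol{v}}$ is Lebesgue-negligible, it suffices to estimate the measure of $\{M(|\nabla \boldsymbol{v}|)>\lambda\}$.

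For the Lipschitz bound, I would invoke the classical pointwise inequality, valid for every pair of Lebesgue points $\boldsymbol{x},\boldsymbol{y}$ of a Sobolev map,
\begin{equation*}
|\boldsymbol{v}(\boldsymbol{x})-\boldsymbol{v}(\boldsymbol{y})| \leq C(N)\,|\boldsymbol{x}-\boldsymbol{y}|\,\bigl(M(|\nabla \boldsymbol{v}|)(\boldsymbol{x})+M(|\nabla \boldsymbol{v}|)(\boldsymbol{y})\bigr),
\end{equation*}
which is obtained by comparing $\boldsymbol{v}(\boldsymbol{x})$ and $\boldsymbol{v}(\boldsymbol{y})$ to their averages on a chain of balls and applying the standard Poincar\'e inequality on each ball. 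For $\boldsymbol{x},\boldsymbol{y}\in F_\lambda$ this yields $|\boldsymbol{v}(\boldsymbol{x})-\boldsymbol{v}(\boldsymbol{y})| \leq 2C(N)\lambda\,|\boldsymbol{x}-\boldsymbol{y}|$, hence $\mathrm{Lip}(\boldsymbol{v}\restr{F_\lambda})\leq C(N)\lambda$.

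For the measure estimate, I would employ the standard truncation trick to obtain a bound involving only the part of $|\nabla \boldsymbol{v}|$ above the threshold $\lambda/2$. Write $|\nabla \boldsymbol{v}| = g_1+g_2$, with $g_1 \coloneqq |\nabla \boldsymbol{v}|\,\chi_{\{|\nabla \boldsymbol{v}|>\lambda/2\}}$ and $g_2 \coloneqq |\nabla \boldsymbol{v}|\,\chi_{\{|\nabla \boldsymbol{v}|\leq \lambda/2\}}$. Since $\|g_2\|_\infty\leq \lambda/2$, one has $M g_2 \leq \lambda/2$ everywhere, so $\{M(|\nabla \boldsymbol{v}|)>\lambda\}\subset \{M g_1 > \lambda/2\}$. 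The weak $(1,1)$ bound for $M$ gives
\begin{equation*}
\mathscr{L}^N\bigl(\{M g_1 > \lambda/2\}\bigr) \leq \frac{C(N)}{\lambda}\int_{\{|\nabla \boldsymbol{v}|>\lambda/2\}}|\nabla \boldsymbol{v}|\,\d \boldsymbol{x},
\end{equation*}
and the elementary inequality $|\nabla \boldsymbol{v}|\leq (2/\lambda)^{r-1}|\nabla \boldsymbol{v}|^r$ on $\{|\nabla \boldsymbol{v}|>\lambda/2\}$ upgrades this to the desired $L^r$ bound with constant depending only on $N$ and $r$.

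The main obstacle is the pointwise Lipschitz estimate in terms of the maximal function: one has to verify it along a chain of dyadically shrinking balls and check that the non-Lebesgue points form a negligible set, so that they can be harmlessly removed from $F_\lambda$. Everything else reduces to the classical maximal-function toolbox; no novel ingredient beyond what appears in the Acerbi--Fusco argument is needed.
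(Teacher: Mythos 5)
Your proposal is correct and is, in essence, the argument of Acerbi--Fusco to which the paper delegates this statement without reproducing a proof: a sublevel set of the Hardy--Littlewood maximal function of $|\nabla\boldsymbol{v}|$ intersected with the Lebesgue points, the pointwise estimate $|\boldsymbol{v}(\boldsymbol{x})-\boldsymbol{v}(\boldsymbol{y})|\leq C(N)|\boldsymbol{x}-\boldsymbol{y}|\bigl(M(|\nabla\boldsymbol{v}|)(\boldsymbol{x})+M(|\nabla\boldsymbol{v}|)(\boldsymbol{y})\bigr)$ obtained by telescoping Poincar\'e on dyadic balls, the splitting $|\nabla\boldsymbol{v}|=g_1+g_2$ at threshold $\lambda/2$ with $Mg_2\leq\lambda/2$, the weak $(1,1)$ bound for $Mg_1$, and the elementary upgrade from the $L^1$ to the $L^r$ estimate on $\{|\nabla\boldsymbol{v}|>\lambda/2\}$. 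All steps are standard and the constants come out depending only on $N$ and $r$ as required, so the proof is sound and matches the intended reference.
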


We will also employ the seminal rigidity estimate by Friesecke, James and  M\"uller \cite[Theorem 3.1]{friesecke.james.mueller1}. The proof for $p=2$ is given in the original paper, for the adaptation of the latter to arbitrary $1<p<\infty$, we refer to \cite[Section 2.4]{conti.schweizer}.

\begin{proposition}[Rigidity estimate]
\label{prop:rigidity-estimate}
Let $\Omega \subset \R^N$ be a bounded Lipschitz domain, \BBB and let $1<p<\infty$. Then, for every $\boldsymbol{v} \in W^{1,p}(\Omega;\R^N)$ there exists a constant rotation $\boldsymbol{Q} \in SO(N)$ such that
\begin{equation}
    \label{eqn:rigidity-intro}
    \int_\Omega |\nabla \boldsymbol{v} - \boldsymbol{Q}|^p\,\d\boldsymbol{x} \leq C(\Omega,N,p) \int_\Omega \dist^p(\nabla \boldsymbol{v};SO(N))\,\d\boldsymbol{x},
\end{equation}
where the constant $C(\Omega,N,p)>0$ depends only on $\Omega$, $N$ and $p$.
\end{proposition}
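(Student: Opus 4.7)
The plan is to follow the original argument of Friesecke, James, and M\"uller \cite{friesecke.james.mueller1} for $p=2$, and its extension to general $1<p<\infty$ due to Conti-Schweizer \cite{conti.schweizer}. The proof proceeds in two stages: first, establish the inequality on a model domain (a cube); second, extend it to arbitrary bounded Lipschitz domains via a covering argument.

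\textbf{Stage 1: Local estimate on a cube.} Fix a reference cube $Q = (0,1)^N$ and set $\eta \coloneqq \int_Q \dist^p(\nabla \boldsymbol{v}; SO(N))\,\d\boldsymbol{x}$. If $\eta \geq \eta_0$ for some fixed threshold $\eta_0>0$, the pointwise inequality $|\nabla \boldsymbol{v}| \leq \sqrt{N} + \dist(\nabla \boldsymbol{v};SO(N))$ makes the estimate trivial with any fixed $\boldsymbol{Q}\in SO(N)$ and a sufficiently large constant. When $\eta < \eta_0$, I would linearize around a nearby rotation: since $SO(N)$ is a smooth submanifold of $\R^{N\times N}$ whose tangent space at $\boldsymbol{R}$ consists of matrices of the form $\boldsymbol{R}\boldsymbol{W}$ with $\boldsymbol{W}$ skew-symmetric, $\dist(\boldsymbol{F}; SO(N))$ controls the symmetric part of $\boldsymbol{R}^\top \boldsymbol{F} - \boldsymbol{I}$ up to a quadratic error. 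The $L^p$ version of Korn's inequality then bounds the full $L^p$-norm of $\nabla \boldsymbol{v} - \boldsymbol{R}$ by that of its symmetric part, modulo a skew-symmetric correction which, through the exponential map, produces an updated rotation. To upgrade this to a \emph{single} constant rotation $\boldsymbol{Q}$, one truncates $\boldsymbol{v}$ using Proposition \ref{prop:lipschitz-truncation}, approximates the truncation by its affine average, and projects the resulting constant matrix onto $SO(N)$.

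\textbf{Stage 2: Covering argument for general Lipschitz domains.} Since $\partial \Omega$ is Lipschitz and $\Omega$ is bounded, I would cover $\closure{\Omega}$ by finitely many open sets $U_1,\dots,U_k$, each bi-Lipschitz equivalent to a cube, chosen so that consecutive overlaps $\Omega \cap U_i \cap U_{i+1}$ have strictly positive Lebesgue measure. Applying Stage 1 on each piece (via change of variables, with constants depending only on the Lipschitz charts) yields rotations $\boldsymbol{Q}_i$ with
\begin{equation*}
\|\nabla \boldsymbol{v} - \boldsymbol{Q}_i\|_{L^p(\Omega\cap U_i)}^p \leq C \,\|\dist(\nabla \boldsymbol{v};SO(N))\|_{L^p(\Omega\cap U_i)}^p.
\end{equation*}
The triangle inequality on each overlap then gives
\begin{equation*}
\mathscr{L}^N(\Omega\cap U_i \cap U_{i+1}) \,|\boldsymbol{Q}_i - \boldsymbol{Q}_{i+1}|^p \leq C \,\|\dist(\nabla \boldsymbol{v};SO(N))\|_{L^p(\Omega)}^p,
\end{equation*}
so chaining through the finite cover shows that all $\boldsymbol{Q}_i$ are mutually comparable in this quantitative sense, and picking $\boldsymbol{Q} \coloneqq \boldsymbol{Q}_1$ yields the global estimate.

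\textbf{Main obstacle.} The crux is Stage 1 in the $L^p$ setting. For $p=2$ one may exploit Hilbert-space tools, in particular the orthogonal Helmholtz decomposition and a clean compactness/contradiction argument, which are unavailable for general $p$. The Conti-Schweizer adaptation replaces these by $L^p$ Calder\'on-Zygmund estimates for the Helmholtz projector and the $L^p$-Korn inequality; moreover, the iterative passage from a locally good $L^p$-approximation to an honest constant rotation must be quantified without the benefit of orthogonality. Tracking the dependence of the constant on the Lipschitz character of $\partial \Omega$ through the covering step is a further technical point, but straightforward once Stage 1 is in hand.
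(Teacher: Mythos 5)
The paper itself gives no proof of this proposition — it refers to \cite{friesecke.james.mueller1} for $p=2$ and to \cite[Section 2.4]{conti.schweizer} for general $1<p<\infty$ — so your sketch should be judged against those references. Your two-stage structure (model estimate on a cube, then a covering argument for Lipschitz domains) and the role of the Lipschitz truncation are correct, and Stage 2 is a fair account of the chaining argument.

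Stage 1, however, misidentifies the key mechanism, and as written it would not close. You propose to linearize $\dist(\nabla\boldsymbol{v};SO(N))$ around a ``nearby rotation'' $\boldsymbol{R}$ and then invoke the $L^p$-Korn inequality on $\sym(\boldsymbol{R}^\top\nabla\boldsymbol{v}-\boldsymbol{I})$. But the comparison $\dist(\boldsymbol{F};SO(N)) \geq |\sym(\boldsymbol{R}^\top\boldsymbol{F}-\boldsymbol{I})| - C\,|\boldsymbol{R}^\top\boldsymbol{F}-\boldsymbol{I}|^2$ is useful only once one already knows $|\boldsymbol{F}-\boldsymbol{R}|$ is small for a \emph{single fixed} $\boldsymbol{R}$ — which is exactly the conclusion, not a hypothesis. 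Pointwise closeness to $SO(N)$ (what truncation buys) does not single out such an $\boldsymbol{R}$: the nearest rotation to $\nabla\boldsymbol{v}(\boldsymbol{x})$ may vary with $\boldsymbol{x}$, so the scheme is circular. The actual engine in Friesecke--James--M\"uller is rotation-free: it is the null-Lagrangian identity $\div(\cof\nabla\boldsymbol{v})=0$ together with the algebraic fact that $\cof\boldsymbol{R}=\boldsymbol{R}$ for $\boldsymbol{R}\in SO(N)$, so that $\Delta\boldsymbol{v}=\div(\nabla\boldsymbol{v}-\cof\nabla\boldsymbol{v})$ has a right-hand side controlled \emph{pointwise} by $\dist(\nabla\boldsymbol{v};SO(N))$. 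Harmonic replacement and interior elliptic regularity (together with a weak Poincar\'e inequality) then produce a constant matrix $\bar{\boldsymbol{F}}$ approximating $\nabla\boldsymbol{v}$, and one projects $\bar{\boldsymbol{F}}$ onto $SO(N)$ only at the very end; no Korn step is needed. In the $L^p$ adaptation of Conti--Schweizer, the $L^2$ orthogonality in the harmonic replacement is substituted by $L^p$ Calder\'on--Zygmund estimates — something you mention only in passing under ``Main obstacle,'' whereas it is actually the core replacement for the Hilbert-space tools, not an afterthought to a Korn-based linearization.
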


\begin{remark}[Rigidity constant]
\label{rem:rigidity-constant}
The constant in the rigidity estimate can be chosen uniformly for  domains which are homeomorphic through translations and homotheties. {\MMM Namely, in \eqref{eqn:rigidity-intro}, we can assume that $C(\sigma\, \Omega + \boldsymbol{c},N,p)=C(\Omega,N,p)$ for every $\sigma>0$ and $\boldsymbol{c}\in \R^N$. To see this, given $\widetilde{\boldsymbol{v}} \in W^{1,p}(\sigma \,\Omega + \boldsymbol{c};\R^N)$,  it is sufficient to apply the rigidity estimate to  $\boldsymbol{v}\coloneqq \sigma^{-1}\widetilde{\boldsymbol{v}}(\sigma \cdot + \boldsymbol{c}) \in W^{1,p}(\Omega;\R^N)$ and to use the change-of-variable formula.  More generally, the rigidity constant can be chosen uniformly for  domains which are homeomorphic via Bilipschitz maps with uniformly controlled Lipschitz constants.}
\end{remark}

{\MMM
\begin{remark}[Constant rotation]
\label{rem:constant-rotation}
A careful inspection of the proof of Proposition \ref{prop:rigidity-estimate} shows that the constant rotation $\boldsymbol{Q}\in SO(N)$ doest not depend on the exponent $p$. Namely, for every $1 < q \leq p$ there holds
\begin{equation*}
	\int_\Omega |\nabla \boldsymbol{v} - \boldsymbol{Q}|^q\,\d\boldsymbol{x} \leq C(\Omega,N,q) \int_\Omega \dist^q(\nabla \boldsymbol{v};SO(N))\,\d\boldsymbol{x}.
\end{equation*}
\end{remark}
}

We  recall  some elementary facts about the topological degree of a continuous map \cite{fonseca.gangbo,deimling,outerelo.ruiz}. Let $\Omega \subset \R^N$ be open and bounded, and let $\boldsymbol{v} \in C^0(\closure{\Omega};\R^N)$. The topological degree of $\boldsymbol{v}$ on $\Omega$, also known as the Brouwer degree, is a map $\deg(\boldsymbol{v},\Omega,\cdot) \colon \R^N \setminus \boldsymbol{v}(\partial \Omega) \to \Z$ which can be defined axiomatically by means of the following properties \cite[page 39]{outerelo.ruiz}:
\begin{align*}
    \textbf{(normalization)}& \hspace{5mm} \deg(\boldsymbol{id}\restr{\Omega}, \Omega, \boldsymbol{\xi})=1 \hskip 3pt \text{for every $\boldsymbol{\xi} \in \Omega$;}\\
    \textbf{(additivity)}& \hspace{5 mm} \text{if $A_1,A_2\subset \Omega$ are open with $A_1 \cap A_2 = \emptyset$, then}\\
    & \hspace{5mm } \text{$\deg(\boldsymbol{v},A_1 \cup A_2,\boldsymbol{\xi})=\deg(\boldsymbol{v},A_1,\boldsymbol{\xi})+ \deg(\boldsymbol{v},A_2,\boldsymbol{\xi})$} \\
    & \hspace{5mm } \text{for every $\boldsymbol{\xi} \in \R^N \setminus \boldsymbol{v}(\partial A_1 \cup \partial A_2)$;}\\
    \textbf{(homotopy invariance)}& \hspace{5mm} \text{if $\boldsymbol{H} \in C^0([0,1]\times \closure{\Omega};\R^N)$ and $\boldsymbol{\gamma}\colon [0,1]\to \R^N$ satisfies $\boldsymbol{\gamma}(t) \notin \boldsymbol{H}(\{t\} \times \partial \Omega)$}\\
   & \hspace{5mm} \text{for every $0 \leq t \leq 1$, then $\deg(\boldsymbol{H}(0,\cdot),\Omega,\boldsymbol{\gamma(0)})=\deg(\boldsymbol{H}(t,\cdot),\Omega,\boldsymbol{\gamma}(t))$ for}\\
    & \hspace{5mm}  \text{every $0 \leq t \leq 1$};\\
    \textbf{(solvability)}& \hspace{5mm} \text{if $\deg(\boldsymbol{v},\Omega,\boldsymbol{\xi})\neq 0$ for some $\boldsymbol{\xi} \in \R^N \setminus \boldsymbol{v}(\partial \Omega)$, then there exists $\boldsymbol{x} \in \Omega$ such}\\
    & \hspace{5mm} \text{that $\boldsymbol{\xi}=\boldsymbol{v}(\boldsymbol{x})$}.
\end{align*}
Starting from these, several other properties can be deduced. We state the ones that are going to be used in the following:
\begin{align*}
    \textbf{(continuity)} \hspace{5mm}& \text{$\deg(\boldsymbol{v},\Omega,\cdot)$ is continuous on $\R^N \setminus \boldsymbol{v}(\partial \Omega)$};\\
    \textbf{(local constantness)} \hspace{5mm}& \text{$\deg(\boldsymbol{v},\Omega,\cdot)$ is constant on each connected component of $\R^N \setminus \boldsymbol{v}(\partial \Omega)$};\\
    \textbf{(stability)} \hspace{5mm}& \text{if $\widetilde{\boldsymbol{v}}\in C^0(\closure{\Omega};\R^N)$ satisfies $||\widetilde{\boldsymbol{v}}-\boldsymbol{v}||_{C^0(\closure{\Omega};\R^N)}<\dist(\boldsymbol{\xi};\boldsymbol{v}(\partial\Omega))$ for some}\\
    \hspace{5mm}& \text{$\boldsymbol{\xi} \in \R^N \setminus \boldsymbol{v}(\partial \Omega)$, then $\boldsymbol{\xi} \in \R^N \setminus \widetilde{\boldsymbol{v}}(\partial \Omega)$ and $\deg(\widetilde{\boldsymbol{v}},\Omega,\boldsymbol{\xi})=\deg(\boldsymbol{v},\Omega,\boldsymbol{\xi})$}.
\end{align*}
In the case of regular maps, the topological degree can be computed explicitly. Namely, if $\boldsymbol{v} \in C^1(\closure{\Omega};\R^N)$ and $\boldsymbol{\xi} \in \R^N \setminus \boldsymbol{v}(\partial \Omega)$ is a regular value, i.e. $\boldsymbol{\xi} \notin \boldsymbol{v}(\{{\MMM \det \nabla}\boldsymbol{v}=0\})$, then 
\begin{equation*}
    \deg(\boldsymbol{v},\Omega,\boldsymbol{\xi})=\sum_{\boldsymbol{x} \in \Omega:\;\boldsymbol{v}(\boldsymbol{x})=\boldsymbol{\xi}} \mathrm{sgn}({\MMM \det \nabla} \boldsymbol{v}(\boldsymbol{x})).
\end{equation*}
In particular, if $\boldsymbol{v}$ is injective, then for every $\boldsymbol{\xi} \in \boldsymbol{v}(\Omega)$ we obtain $\deg(\boldsymbol{v},\Omega,\boldsymbol{\xi}) \in \{-1,1\}$. Actually, this last property holds even if $\boldsymbol{v}$ is just continuous \cite[Theorem 3.35]{fonseca.gangbo}.

If $\boldsymbol{v}\in C^1(\closure{\Omega};\R^N)$ and $\boldsymbol{\xi} \in \R^N \setminus \boldsymbol{v}(\partial \Omega)$ is a regular value, then the degree can be also computed by means of the following integral formula
\begin{equation}
    \label{eqn:degree-integral-formula}
    \deg(\boldsymbol{v},\Omega,\boldsymbol{\xi})=\int_\Omega \psi \circ \boldsymbol{v} \,{\MMM \det \nabla}\boldsymbol{v}\,\d \boldsymbol{x},
\end{equation}
where $\psi \in C^\infty_c(\R^N)$ is any non-negative function  with integral equal to one and whose support is contained in the connected component $V$ of $\R^N \setminus \boldsymbol{v}(\partial \Omega)$ with $\boldsymbol{\xi}\in V$ \cite[Proposition 2.1]{deimling}. As already noted in \cite[page 317]{ball}, if $\Omega \subset \R^N$ is a bounded Lipschitz domain, then formula \eqref{eqn:degree-integral-formula} holds for $\boldsymbol{v} \in W^{1,p}(\Omega;\R^N)$ with $p>N$ and for every $\boldsymbol{\xi} \in \R^N \setminus \boldsymbol{v}(\partial \Omega)$ as well. 

We conclude this section by introducing two sets associated to the deformations. We refer to \cite{kroemer} for more details and for a comprehensive treatment of the topological properties of deformations in the context of nonlinear elasticity. Let $\Omega \subset \R^N$ be open and bounded, and let $\boldsymbol{v} \in C^0(\closure{\Omega};\R^N)$. The corresponding \emph{deformed configuration} and \emph{topological image} are respectively defined as
\begin{equation}
\label{eqn:def-config-reduced-domain}
    \Omega^{\boldsymbol{v}}\coloneqq \boldsymbol{v}(\Omega)\setminus \boldsymbol{v}(\partial \Omega), \hskip 14 pt \mathrm{im}_T(\boldsymbol{v},\Omega)\coloneqq \{\boldsymbol{\xi}\in \R^N\setminus\boldsymbol{v}(\partial \Omega):\;\deg(\boldsymbol{v},\Omega,\boldsymbol{\xi})\neq 0   \}.
\end{equation}
The first set was considered in \cite{davoli.kruzik.piovano.stefanelli,kruzik.stefanelli.zeman,rybka.luskin}. The second one was introduced in \cite{sverak} and then studied by several authors {\MMM \cite{barchiesi.henao.moracorral,bouchala.hencl.molchanova,henao.moracorral,mueller.spector,mueller.spector.tang,tang}}. For the class of deformations that we are going to consider, these two sets turn out to coincide. 

\begin{lemma}[Deformed configuration and topological image]
\label{lem:def-config-reduced-domain}
Let $\Omega \subset \R^N$ be a bounded Lipschitz domain and let $\boldsymbol{v} \in W^{1,p}(\Omega;\R^N)$ with $p>N$ {\MMM satisfy $\det \nabla \boldsymbol{v}>0$ almost everywhere in $\Omega$}. Then, the corresponding deformed configuration and topological image coincide, namely $\Omega^{\boldsymbol{v}}=\mathrm{im}_T(\boldsymbol{v},\Omega)$. In particular, both sets are open. Moreover, $\mathscr{L}^N(\Omega^{\boldsymbol{v}})=\mathscr{L}^N(\boldsymbol{v}(\Omega))$.
\end{lemma}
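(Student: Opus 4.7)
The plan is to prove the two set-theoretic inclusions separately and then read off the openness assertion and the measure identity as corollaries. The easy direction $\mathrm{im}_T(\boldsymbol{v},\Omega) \subset \Omega^{\boldsymbol{v}}$ is immediate from the solvability property of the Brouwer degree: if $\boldsymbol{\xi} \in \R^N\setminus\boldsymbol{v}(\partial\Omega)$ satisfies $\deg(\boldsymbol{v},\Omega,\boldsymbol{\xi})\neq 0$, then $\boldsymbol{\xi}\in\boldsymbol{v}(\Omega)$, and since $\boldsymbol{\xi}\notin\boldsymbol{v}(\partial\Omega)$ it already lies in $\Omega^{\boldsymbol{v}}$.

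For the reverse inclusion, fix $\boldsymbol{\xi}\in\Omega^{\boldsymbol{v}}$ and pick $\boldsymbol{x}_0\in\Omega$ with $\boldsymbol{v}(\boldsymbol{x}_0)=\boldsymbol{\xi}$. Since $\boldsymbol{v}(\partial\Omega)$ is compact and misses $\boldsymbol{\xi}$, the number $r\coloneqq\dist(\boldsymbol{\xi},\boldsymbol{v}(\partial\Omega))$ is strictly positive and the open ball $B(\boldsymbol{\xi},r)$ lies in the connected component of $\R^N\setminus\boldsymbol{v}(\partial\Omega)$ containing $\boldsymbol{\xi}$. Choose a non-negative mollifier $\psi\in C_c^\infty(B(\boldsymbol{\xi},r))$ of unit integral with $\psi>0$ on $B(\boldsymbol{\xi},r/2)$. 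The integral representation \eqref{eqn:degree-integral-formula}, valid for $W^{1,p}$ maps with $p>N$, gives
\begin{equation*}
\deg(\boldsymbol{v},\Omega,\boldsymbol{\xi}) = \int_\Omega \psi(\boldsymbol{v}(\boldsymbol{x}))\,\det\nabla\boldsymbol{v}(\boldsymbol{x})\,\d\boldsymbol{x}.
\end{equation*}
By continuity of $\boldsymbol{v}$, there is an open neighbourhood $U\subset\Omega$ of $\boldsymbol{x}_0$ with $\boldsymbol{v}(U)\subset B(\boldsymbol{\xi},r/2)$; on $U$ one has $\psi\circ\boldsymbol{v}>0$ while $\det\nabla\boldsymbol{v}>0$ almost everywhere, so the integrand is strictly positive on a set of positive measure. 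Hence $\deg(\boldsymbol{v},\Omega,\boldsymbol{\xi})>0$ and $\boldsymbol{\xi}\in\mathrm{im}_T(\boldsymbol{v},\Omega)$. The decisive ingredient, and the only real subtlety of the argument, is the strict positivity of $\det\nabla\boldsymbol{v}$, without which cancellations in the degree integral could in principle occur.

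For the remaining conclusions, note first that $\mathrm{im}_T(\boldsymbol{v},\Omega)$ is open since $\deg(\boldsymbol{v},\Omega,\cdot)$ is locally constant on the open set $\R^N\setminus\boldsymbol{v}(\partial\Omega)$, so its preimage of $\Z\setminus\{0\}$ is open; the equality just proven then transfers openness to $\Omega^{\boldsymbol{v}}$. Finally, because $\Omega$ is a bounded Lipschitz domain, $\mathscr{L}^N(\partial\Omega)=0$, and the Lusin property $(N)$ of $\boldsymbol{v}$ (available since $p>N$) yields $\mathscr{L}^N(\boldsymbol{v}(\partial\Omega))=0$. Writing $\boldsymbol{v}(\Omega)=\Omega^{\boldsymbol{v}}\cup\bigl(\boldsymbol{v}(\Omega)\cap\boldsymbol{v}(\partial\Omega)\bigr)$ and taking measures gives $\mathscr{L}^N(\boldsymbol{v}(\Omega))=\mathscr{L}^N(\Omega^{\boldsymbol{v}})$, completing the proof.
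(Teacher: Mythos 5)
Your proof is correct and follows essentially the same route as the paper's: the easy inclusion via solvability, the reverse inclusion via the integral formula for the degree together with the strict positivity of the Jacobian, openness from local constantness of the degree, and the measure identity via the Lusin property $(N)$ applied to the Lipschitz boundary. The only cosmetic difference is that the paper establishes openness of $\mathrm{im}_T(\boldsymbol{v},\Omega)$ at the outset via the continuity property, while you defer it to the end; both are equivalent.
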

\begin{proof}
By the continuity property of the degree, $\mathrm{im}_T(\boldsymbol{v},\Omega)$ is open in $\R^N \setminus \boldsymbol{v}(\partial \Omega)$ and, in turn, in $\R^N$. By the solvability property of the degree,  $\mathrm{im}_T(\boldsymbol{v},\Omega) \subset \Omega^{\boldsymbol{v}}$. For the opposite inclusion, consider $\boldsymbol{\xi}_0 \in \Omega^{\boldsymbol{v}}$ and denote by $V$ the connected component of $\R^N \setminus \boldsymbol{v}(\partial \Omega)$ with $\boldsymbol{\xi}_0 \in V$. Since $\R^N \setminus \boldsymbol{v}(\partial \Omega)$ is open, so is $V$. In particular, $B(\boldsymbol{\xi}_0,\eps)\subset \subset V$ for some $\eps>0$. Consider any $\boldsymbol{x}_0 \in \Omega$ such that $\boldsymbol{v}(\boldsymbol{x}_0)=\boldsymbol{\xi}_0$. By the continuity of $\boldsymbol{v}$, there exists $\delta>0$ such that $B(\boldsymbol{x}_0,\delta)\subset \subset \Omega$ and $\boldsymbol{v}(B(\boldsymbol{x}_0,\delta))\subset  B(\boldsymbol{\xi}_0,\eps)$. {\MMM Let $\psi \in C^\infty_c(\R^N)$ be such that $\psi \geq 0$, $\mathrm{supp}\,\psi = \closure{B}(\boldsymbol{\xi}_0,\eps) \subset V$ and $\int_{\R^N} \psi\,\d\boldsymbol{\xi}=1$}. Applying \eqref{eqn:degree-integral-formula}, we compute
\begin{equation*}
    \deg(\boldsymbol{v},\Omega,\boldsymbol{\xi}_0)=\int_\Omega \psi \circ \boldsymbol{v}\,{\MMM \det \nabla} \boldsymbol{v}\,\d\boldsymbol{x} \geq \int_{B(\boldsymbol{x}_0,\delta)} \psi \circ \boldsymbol{v}\,{\MMM \det \nabla} \boldsymbol{v}\,\d\boldsymbol{x}>0,
\end{equation*}
where we used that $\psi \circ \boldsymbol{v}>0$ in $B(\boldsymbol{x}_0,\delta)$ and ${\MMM \det \nabla} \boldsymbol{v}>0$ almost everywhere in $\Omega$. Thus $\boldsymbol{\xi}_0 \in \mathrm{im}_T(\boldsymbol{v},\Omega)$, and, by the arbitrariness of $\boldsymbol{\xi}_0$, we deduce $\Omega^{\boldsymbol{v}} \subset \mathrm{im}_T(\boldsymbol{v},\Omega)$. Therefore $\Omega^{\boldsymbol{v}} = \mathrm{im}_T(\boldsymbol{v},\Omega)$ and, in particular, $\Omega^{\boldsymbol{v}}$ is open. Since $\Omega$ is a Lipschitz domain, $\mathscr{L}^N(\partial \Omega)=0$. Then,  $\mathscr{L}^N(\boldsymbol{v}(\partial \Omega))=0$ by the Lusin property $(N)$, and we conclude $\mathscr{L}^N(\Omega^{\boldsymbol{v}})=\mathscr{L}^N(\boldsymbol{v}(\Omega))$.
\end{proof}

%%%%%%%%%%%%%%%%%%%%%%%%%%%%%%%%%%%

\section{Setting of the problem and main results}
\label{sec:setting}

\subsection{Setting of the problem}
\label{subsec:setting}
Let $\Omega_h:=S \times hI \subset \R^3$ be the reference configuration of a thin magnetoelastic plate. Here, $S \subset \R^2$ is a bounded, connected, Lipschitz domain representing the section of the plate, $h>0$ specifies its thickness and $I:=(-1/2,1/2)$. 

We consider deformations $\boldsymbol{w} \in W^{1,p}(\Omega_h;\R^3)$ with $p>3$ that {\MMM satisfy $\det \nabla \boldsymbol{w}>0$ almost everywhere and are almost everywhere injective in $\Omega_h$}. Therefore,  we can assume that $\boldsymbol{w}$ is continuous up to the boundary and that it satisfies both Lusin properties $(N)$ and $(N^{-1})$ (see Section \ref{sec:prelim}). 
Magnetizations are given by maps $\boldsymbol{m} \in W^{1,2}(\Omega_h^{\boldsymbol{w}};\S^2)$, where $\Omega_h^{\boldsymbol{w}}$ is defined as in \eqref{eqn:def-config-reduced-domain}. Note that, by Lemma \ref{lem:def-config-reduced-domain},  the set $\Omega_h^{\boldsymbol{w}}$ is open and differs from $\boldsymbol{w}(\Omega_h)$ at most by a set of zero Lebesgue measure. 

Following \cite{kruzik.stefanelli.zeman}, the magnetoelastic energy associated to an elastic deformation $\boldsymbol{w}:\Omega_h\to \mathbb{R}^3$ and a magnetization $\boldsymbol{m}:\Omega_h^{\boldsymbol{w}}\to \mathbb{S}^2$ will be encoded by the following energy functional:
\begin{equation}
\label{eqn:energy_Fh}
{\MMM \mathcal{E}_h}(\boldsymbol{w},\boldsymbol{m}) := \frac{1}{h^\beta} \int_{\Omega_h}W^\inc(\nabla \boldsymbol{w},\boldsymbol{m}\circ \boldsymbol{w})\,\d \boldsymbol{X}+{\MMM \alpha}\int_{\Omega_h^{\boldsymbol{w}}}|\nabla\boldsymbol{m}|^2\,\d \boldsymbol{\xi}+\frac{1}{2}\int_{\R^3}|\nabla \psi_{\boldsymbol{m}}|^2\,\d \boldsymbol{\xi}.
\end{equation}
The first term in \eqref{eqn:energy_Fh} represents the \emph{elastic energy}. Note that, since $\boldsymbol{w}$ is continuous and $\boldsymbol{m}$ is measurable, their composition is measurable. Moreover, by the Lusin property $(N^{-1})$, this composition is well-defined, meaning that its equivalence class does not depend on the choice of the representative of $\boldsymbol{m}$. Here, we focus on the linearized von K\'arm\'an regime, where we adopt the terminology of \cite{friesecke.james.mueller2}. {\MMM More precisely, the elastic energy is rescaled by $h^\beta$ for some fixed $\beta>2p$}.

As in \cite{conti.dolzmann} and \cite{li.chermisi}, we enforce an  incompressibility constraint through the elastic energy density $W^\inc \colon \rtt \times \S^2 \to [0,+\infty]$ (the superscript `inc' stands for `incompressible') by setting
\begin{equation*}
    W^\inc(\boldsymbol{F},\boldsymbol{\nu})=
    \begin{cases}
        W(\boldsymbol{F},\boldsymbol{\nu}) & \text{if } \det \boldsymbol{F}=1,\\
        + \infty &  \text{if }\det \boldsymbol{F}\neq 1,
    \end{cases}
\end{equation*}
for all $\boldsymbol{F} \in \rtt$ and $\boldsymbol{\nu} \in \S^2$, where the map $W \colon \rtt \times \S^2 \to [0,+\infty]$ is a continuous nonlinear magnetoelastic energy density. In particular, we require the following : 
\begin{align}
	%\phantom{\textbf{(frame indifference)}} 
	\textbf{(frame indifference)}& \hskip 12pt \text{$W(\boldsymbol{R}\boldsymbol{F},\boldsymbol{R}\,\boldsymbol{\nu})=0$ for every $\boldsymbol{R}\in SO(3)$, $\boldsymbol{F}\in \rtt$, $\boldsymbol{\nu}\in\S^2$;} \hspace*{10mm}\label{eqn:frame_indifference}\\
	\textbf{(normalization)}& \hskip 12pt \text{$W(\boldsymbol{I},\boldsymbol{\nu})=0$ for every $\boldsymbol{\nu}\in\S^2$;} \label{eqn:normalization}\\
	&\begin{aligned}
		\mathllap{\textbf{(growth)}} &\hskip 12pt \text{there exists a constant $C>0$ such that}  \\
		&\hspace*{15mm} W(\boldsymbol{F},\boldsymbol{\nu})\geq C\,\dist^2(\boldsymbol{F};SO(3))\vee \dist^p(\boldsymbol{F};SO(3))\\
		&\hskip 12pt \text{for every $\boldsymbol{F}\in \rtt$, $\boldsymbol{\nu}\in \S^2$;}
	\end{aligned} \label{eqn:mixed_growth}\\
	&\begin{aligned}
		\mathllap{\textbf{(regularity)}} &\hskip 12pt \text{there exists $\delta>0$ such that for all $\boldsymbol{\nu}\in \S^2$ the function $W(\cdot,\boldsymbol{\nu})$}\\
		&\hskip 12pt \text{ is of class $C^2$ on the set $\{\boldsymbol{F}\in \rtt:\: \dist(\boldsymbol{F};SO(3))<\delta\}.$} \label{eqn:local-smooth}
	\end{aligned}
\end{align}

In view of \eqref{eqn:local-smooth} and by the normalization condition \eqref{eqn:normalization}, we have the following second-order Taylor expansion centered at the identity
\begin{equation}
    \label{eqn:taylor-expansion}
    W(\boldsymbol{I}+\boldsymbol{G},\boldsymbol{\nu})=\frac{1}{2}Q_3(\boldsymbol{G},\boldsymbol{\nu})+\omega(\boldsymbol{G},\boldsymbol{\nu}),
\end{equation}
for every $\boldsymbol{G}\in \rtt$ with $|\boldsymbol{G}|<\delta$ and for every  $\boldsymbol{\nu}\in \S^2$.
In the equation above
\begin{equation}
\label{eqn:definition-Q3}
Q_3(\boldsymbol{G},\boldsymbol{\nu}) \coloneqq\mathbb{C}^{\boldsymbol{\nu}}\boldsymbol{G}:\boldsymbol{G},
\end{equation}
where the fourth-order tensor $\mathbb{C}^{\boldsymbol{\nu}}\in \R^{3 \times 3 \times 3 \times 3}$ is defined by
\begin{equation}
    \mathbb{C}^{\boldsymbol{\nu}} \coloneqq \partial^2_{\boldsymbol{F}} W(\boldsymbol{I},\boldsymbol{\nu}),
\end{equation}
and, for every $\boldsymbol{\nu} \in \mathbb{S}^2$, there holds $\omega(\boldsymbol{G},\boldsymbol{\nu})=o(|\boldsymbol{G}|^2)$, as $|\boldsymbol{G}|\to 0^+$.
Note that, by \eqref{eqn:normalization},  for every $\boldsymbol{\nu} \in \S^2$, the tensor $\mathbb{C}^{\boldsymbol{\nu}}$ is positive definite. Hence, so is the quadratic form $Q_3(\cdot,\boldsymbol{\nu})$ which, in turn, is convex. 

We assume $W$ to be such that the following two facts hold:
\begin{equation}
    \label{eqn:coupling_C}
    \text{the map $\boldsymbol{\nu}\mapsto \mathbb{C}^{\boldsymbol{\nu}}$ {\MMM from $\S^2$ to $\R^{3 \times 3 \times 3 \times 3}$} is continuous,}
\end{equation}
\begin{equation}
    \label{eqn:coupling_omega}
    \text{$\overline{\omega}(t)\coloneqq\sup \left \{\frac{\omega(\boldsymbol{G},\boldsymbol{\nu})}{|\boldsymbol{G}|^2}:\:\boldsymbol{G}\in \rtt, |\boldsymbol{G}|\leq t,\:\boldsymbol{\nu}\in \S^2 \right \}=o(t^2)$, as $t\to 0^+$.}
\end{equation} 

Frame-indifference and normalization hypotheses are standard in nonlinear elasticity. The assumption \eqref{eqn:frame_indifference} corresponds to the fact that the energetic description of the system does not depend on the position of the observer and it was already considered in \cite{james.kinderlehrer}. The normalization hypothesis \eqref{eqn:normalization} states that, for any fixed magnetization, the reference configuration is a natural state and, by frame indifference, any rigid motion does not increase the elastic energy. Growth conditions from below an regularity assumptions involving the distance from the set of rotations have been firstly considered in \cite{friesecke.james.mueller1,friesecke.james.mueller2} in the context of dimension reduction problems. Our mixed-growth assumption \eqref{eqn:mixed_growth} states that the energy density $W$ has global $p$-growth, but quadratic growth close to $SO(3)$. This fact supports the second-order approximation of $W$  in \eqref{eqn:taylor-expansion}. We require the function $W$ to be regular in the first argument in a uniform way with respect to the second one. Indeed, the  differentiability set of $W(\cdot,\boldsymbol{\nu})$, determined by the constant $\delta>0$ in \eqref{eqn:local-smooth}, is the same for every $\boldsymbol{\nu}\in \S^2$. Furthermore, the hypotheses \eqref{eqn:coupling_C} and \eqref{eqn:coupling_omega} on the coupling are assumed.

The second term in \eqref{eqn:energy_Fh} represents the \emph{exchange energy} {\MMM with the exchange constant $\alpha>0$.}
The third term in \eqref{eqn:energy_Fh} is given by the \emph{magnetostatic energy}. This last term involves the function $\psi_{\boldsymbol{m}} \colon \R^3 \to \R$, {\MMM the \emph{stray field potential}}, which is a weak solution of the Maxwell equation:
\begin{equation}
    \label{eqn:maxwell_w}
    \Delta \psi_{\boldsymbol{m}}=\div(\chi_{\Omega_h^{\boldsymbol{w}}}\boldsymbol{m}) \hskip 4pt \mbox{in} \hskip 3pt \R^3,
\end{equation}
where $\chi_{\Omega_h^{\boldsymbol{w}}}\boldsymbol{m}$ denotes the extension by zero of $\boldsymbol{m}$ to the whole space.
Such a weak solution exists and it is unique up to an additive constant (see Lemma \ref{lem:maxwell} below), so that the magnetostatic energy is well defined. 

As already mentioned in Section \ref{sec:intro}, in the present work we neglect other contributions in the magnetostatic energy, like the \emph{anisotropy energy} \cite{alouges.difratta,rybka.luskin} or the {\MMM \emph{Dzyaloshinskii–Moriya interaction energy} \cite{davoli.difratta,davoli.difratta.praetorius.ruggeri}}. 

\subsection{Change of variables and rescaling} 
We perform a change of variables in order to deal with energy functionals defined on a fixed domain. We introduce the two functions $\boldsymbol{z}_h$ and $\boldsymbol{z}_0$ defined on the whole space by setting $\boldsymbol{z}_h(\boldsymbol{x}) \coloneqq((\boldsymbol{x}')^\top,h x_3)^\top$ and $\boldsymbol{z}_0(\boldsymbol{x})\coloneqq((\boldsymbol{x}')^\top,0)^\top$  for every $\boldsymbol{x}\in \mathbb{R}^3$. Set $\Omega:= S \times I$. To any deformation $\boldsymbol{w}:\Omega_h\to \mathbb{R}^3$, we associate the map  $\boldsymbol{y}\coloneqq\boldsymbol{w}\circ \boldsymbol{z}_h|_{\Omega}$. Applying the change-of-variable formula, we obtain 
\begin{equation*}
    \frac{1}{h} {\MMM \mathcal{E}_h}(\boldsymbol{w},\boldsymbol{m})=\frac{1}{h^\beta}\int_\Omega W^\inc(\nabla_h\boldsymbol{y},\boldsymbol{m}\circ\boldsymbol{y})\,\d \boldsymbol{x}+ \frac{\alpha}{h}\int_{\Omega^{\boldsymbol{y}}} |\nabla \boldsymbol{m}|^2\,\d \boldsymbol{\xi}+\frac{1}{2h}\int_{\R^3}|\nabla\psi_{\boldsymbol{m}}|^2\,\d \boldsymbol{\xi},
\end{equation*}
where we define the scaled gradient as  $\nabla_h\coloneqq((\nabla')^\top,h^{-1}\partial_3)^\top$. In particular, recalling \eqref{eqn:def-config-reduced-domain}, we have $\Omega^{\boldsymbol{y}}=\Omega^{\boldsymbol{w}}_h$, so that the Maxwell equation \eqref{eqn:maxwell_w} can be trivially rewritten as
\begin{equation}
    \label{eqn:maxwell_y}
    \Delta \psi_{\boldsymbol{m}}=\div(\chi_{\Omega^{\boldsymbol{y}}}\boldsymbol{m}) \hskip 4pt \mbox{in} \hskip 3pt \R^3.
\end{equation}
Thus, we define the magnetoelastic energy functional as
\begin{equation}
    \label{eqn:energy_Eh}
    E_h(\boldsymbol{y},\boldsymbol{m}) \coloneqq \frac{1}{h^\beta}\int_\Omega W^\inc(\nabla_h\boldsymbol{y},\boldsymbol{m}\circ\boldsymbol{y})\,\d \boldsymbol{x} + \frac{{\MMM \alpha}}{h}\int_{\Omega^{\boldsymbol{y}}} |\nabla \boldsymbol{m}|^2\,\d \boldsymbol{\xi} + \frac{1}{2h}\int_{\R^3}|\nabla \psi_{\boldsymbol{m}}|^2\,\d \boldsymbol{\xi},
\end{equation}
where the function ${\psi}_{\boldsymbol{m}}$ is a weak solution of \eqref{eqn:maxwell_y}. We denote the three terms at the right-hand side of \eqref{eqn:energy_Eh} by $E^{\text{el}}_h(\boldsymbol{y},\boldsymbol{m})$, $E^{\text{exc}}_h(\boldsymbol{y},\boldsymbol{m})$ and $E^{\text{mag}}_h(\boldsymbol{y},\boldsymbol{m})$, respectively.

The class of admissible deformation is given by 
\begin{equation*}
    \label{eqn:classY}
    \mathcal{Y}:=\left \{ \boldsymbol{y} \in W^{1,p}(\Omega;\R^3): \; \mbox{${\MMM \det \nabla} \boldsymbol{y}>0$  a.e. in $\Omega$},\;{\MMM \text{$\boldsymbol{y}$ a.e. injective in $\Omega$}} \right \},
\end{equation*}
where $p>3$.
To such deformations we associate  corresponding magnetizations $\boldsymbol{m} \in W^{1,2}(\Omega^{\boldsymbol{y}};\S^2)$. Therefore the class of the admissible states is defined as
\begin{equation*}
    \mathcal{Q}:=\left \{(\boldsymbol{y},\boldsymbol{m}):\; \boldsymbol{y}\in \mathcal{Y},\;\boldsymbol{m}\in W^{1,2}(\Omega^{\boldsymbol{y}};\S^2) \right\}.
\end{equation*}

\begin{remark}[Invariance by rigid motions]
	\label{rem:invariance}
We will show in Propositions \ref{prop:comp-def}, \ref{prop:comp-mag} and \ref{prop:liminf-mag} that the energy in \eqref{eqn:energy_Eh} is invariant with respect to rigid motions. More precisely, given an admissible state $(\boldsymbol{y},\boldsymbol{m})\in \mathcal{Q}$ and a rigid motion $\boldsymbol{T}\colon \R^3 \to \R^3$ of the form $\boldsymbol{T}(\boldsymbol{\xi})\coloneqq \boldsymbol{R}\,\boldsymbol{\xi}+\boldsymbol{d}$ for every $\boldsymbol{\xi}\in \R^3$, where $\boldsymbol{R}\in SO(3)$ and $\boldsymbol{d}\in \R^3$, if we set $\widetilde{\boldsymbol{y}}\coloneqq\boldsymbol{T}\circ \boldsymbol{y}$ and $\widetilde{\boldsymbol{m}}\coloneqq \boldsymbol{R}\,\boldsymbol{m}\circ \boldsymbol{T}^{-1}$, then $(\widetilde{\boldsymbol{y}},\widetilde{\boldsymbol{m}})\in\mathcal{Q}$ and $E_h(\widetilde{\boldsymbol{y}},\widetilde{\boldsymbol{m}})=E_h(\boldsymbol{y},\boldsymbol{m})$. This property follows by \eqref{eqn:frame_indifference} and by the structure of the Maxwell equation.
\end{remark}

\subsection{Main results}
\label{subsec:main-results}
In order to state our two main results, we introduce the limiting energy functional. {\MMM Similarly to \cite{conti.dolzmann},} for every $\boldsymbol{H} \in \rtwtw$ and $\boldsymbol{\nu} \in \S^2$, we set
\begin{equation}
    \label{eqn:Q2-inc}
    \begin{split}
    Q_2^\inc(\boldsymbol{H},\boldsymbol{\nu})\coloneqq \min \Bigg \{  Q_3 &\left( \left(\renewcommand\arraystretch{1.2} 
    \begin{array}{@{}c|c@{}}   \boldsymbol{H}  & \boldsymbol{0}'\\ \hline  (\boldsymbol{0}')^{\top} & 0 \end{array} \right)+\boldsymbol{c}\otimes \boldsymbol{e}_3+\boldsymbol{e}_3 \otimes \boldsymbol{c}, \boldsymbol{\nu} \right): \:\:\boldsymbol{c} \in \R^3,\;\\
    \tr &\left(\left( \renewcommand\arraystretch{1.2}\begin{array}{@{}c|c@{}}   \boldsymbol{H}  & \boldsymbol{0}'\\ \hline  (\boldsymbol{0}')^\top & 0 \end{array} \right)+\boldsymbol{c} \otimes \boldsymbol{e}_3+\boldsymbol{e}_3 \otimes \boldsymbol{c}\right)=0 \Bigg \}.
    \end{split}
\end{equation}
The limiting functional is defined on {\MMM $W^{1,2}(S;\R^2)\times  W^{2,2}(S)\times W^{1,2}(S;\S^2)$} by
\begin{equation}
    \label{eqn:energy_E}
    	\begin{split}
    		E({\MMM\boldsymbol{u},}v,\boldsymbol{\lambda}) &\coloneqq
    		\frac{1}{2}\int_S Q_2^\inc(\sym \nabla'\boldsymbol{u},\boldsymbol{\lambda})\,\d\boldsymbol{x}'+\frac{1}{24}\int_S Q_2^\inc((\nabla')^2 v,\boldsymbol{\lambda})\,\d\boldsymbol{x}'\\
    		&\,+{\MMM \alpha}\int_S |\nabla'\boldsymbol{\lambda}|^2\,\d\boldsymbol{x}'+\frac{1}{2}\int_S |\lambda^3|^2\,\d \boldsymbol{x}'.	
    	\end{split}
\end{equation}
We denote the sum of the first two terms at the right-hand side of \eqref{eqn:energy_E} by $E^{\text{el}}(\boldsymbol{u},v,\boldsymbol{\lambda})$, and the remaining two terms by $E^{\text{exc}}(\boldsymbol{\lambda})$ and $E^{\text{mag}}(\boldsymbol{\lambda})$, respectively.

Our main results are given by Theorems \ref{thm:comp-lower-bound} and \ref{thm:optimality-lower-bound} and describe the asymptotic behaviour of the energies $E_h$ in \eqref{eqn:energy_Eh}, as $h \to 0^+$. {\MMM In order to state them, we  introduce the {\em horizonal} and {\em vertical averaged displacements} \cite{friesecke.james.mueller2}. For $\boldsymbol{y}\in \mathcal{Y}$, these are given by $\mathcal{U}_h(\boldsymbol{y})\colon S \to \R^2$ and $\mathcal{V}_h(\boldsymbol{y})\colon S \to \R$, respectively defined by setting
\begin{equation}
	\label{eqn:averaged-displacements}
	\mathcal{U}_h(\boldsymbol{y})(\boldsymbol{x}')\coloneqq \frac{1}{h^{\beta/2}}\int_I (\boldsymbol{y}'(\boldsymbol{x}',x_3)-\boldsymbol{x}')\,\d x_3, \quad \mathcal{V}_h(\boldsymbol{y})(\boldsymbol{x}')\coloneqq \frac{1}{h^{\beta/2-1}}\int_I y^3(\boldsymbol{x}',x_3)\,\d x_3,
\end{equation}
for every $\boldsymbol{x}' \in S$.}
The first theorem states that the functional $E$ in \eqref{eqn:energy_E} provides a lower bound for the asymptotic behavior of $E_h$ in
\eqref{eqn:energy_Eh}, as $h \to 0^+$.

\begin{theorem}[Compactness and lower bound]
\label{thm:comp-lower-bound}
Assume $p>3$ and $\beta>2p$. Suppose that $W$ satisfies \eqref{eqn:frame_indifference}-\eqref{eqn:local-smooth} and \eqref{eqn:coupling_C}-\eqref{eqn:coupling_omega}. Let $((\boldsymbol{y}_h,\boldsymbol{m}_h))_h \subset \mathcal{Q}$ be such that $E_h(\boldsymbol{y}_h,\boldsymbol{m}_h)\leq C$ for every $h>0$. Then, there exist a sequence of rotations $(\boldsymbol{Q}_h)\subset SO(3)$ and a sequence of translation vectors  $(\boldsymbol{c}_h)\subset \R^3$ such that, setting $\widetilde{\boldsymbol{y}}_h \coloneqq\boldsymbol{T}_h \circ \boldsymbol{y}_h$ and $\widetilde{\boldsymbol{m}}_h\coloneqq \boldsymbol{Q}_h^\top \boldsymbol{m}_h \circ \boldsymbol{T}_h^{-1}$, where $\boldsymbol{T}_h \colon \R^3 \to \R^3$ is the rigid motion defined by $\boldsymbol{T}_h(\boldsymbol{\xi})\coloneqq\boldsymbol{Q}_h^\top \boldsymbol{\xi}-\boldsymbol{c}_h$ for every $\boldsymbol{\xi} \in \R^3$, we have, up to subsequences, as $h \to 0^+$:
\begin{align}
        \label{eqn:lb-comp-u}
        &{\MMM\text{$\widetilde{\boldsymbol{u}}_h \coloneqq \mathcal{U}_h(\widetilde{\boldsymbol{y}}_h) \wk \boldsymbol{u}$ for some $\boldsymbol{u}\in W^{1,2}(S;\R^2$);}}\\
        \label{eqn:lb-comp-v}
        &\text{$\widetilde{v}_h\coloneqq \mathcal{V}_h(\widetilde{\boldsymbol{y}}_h) \to v$ in $W^{1,2}(S)$ for some $v \in W^{2,2}(S)$;}\\
        \label{eqn:lb-comp-my}
        &\text{$\widetilde{\boldsymbol{m}}_h \circ \widetilde{\boldsymbol{y}}_h \to \boldsymbol{\lambda}$ in $L^r(\Omega;\R^3)$ for every $1\leq r<\infty$ for some $\boldsymbol{\lambda} \in W^{1,2}(S;\S^2)$.}
    \end{align}
    Moreover, the following inequality holds:
    \begin{equation}
    \label{eqn:lb-liminf-ineq}
    E({\MMM \boldsymbol{u},}v,\boldsymbol{\lambda}) \leq \liminf\limits_{h \to 0^+}E_h(\boldsymbol{y}_h,\boldsymbol{m}_h) . 
    \end{equation}
\end{theorem}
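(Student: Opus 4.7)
From $E_h(\boldsymbol{y}_h,\boldsymbol{m}_h) \leq C$ and \eqref{eqn:mixed_growth} one has $\|\dist(\nabla_h \boldsymbol{y}_h;SO(3))\|_{L^p(\Omega)}^p + \|\dist(\nabla_h \boldsymbol{y}_h;SO(3))\|_{L^2(\Omega)}^2 \leq C h^\beta$. Following the blueprint of \cite{friesecke.james.mueller2}, I would partition $S$ into squares of side $h$ and apply Proposition \ref{prop:rigidity-estimate} on each cube $\omega \times I$, with uniform constant by Remark \ref{rem:rigidity-constant} and the \emph{same} rotation serving both exponents by Remark \ref{rem:constant-rotation}. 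This produces a piecewise-constant rotation field $\boldsymbol{R}_h$ with
\begin{equation*}
\|\nabla_h \boldsymbol{y}_h - \boldsymbol{R}_h\|_{L^p(\Omega)}^p + \|\nabla_h \boldsymbol{y}_h - \boldsymbol{R}_h\|_{L^2(\Omega)}^2 \leq C h^\beta,
\end{equation*}
and a constant $\boldsymbol{Q}_h \in SO(3)$ with $\|\boldsymbol{R}_h - \boldsymbol{Q}_h\|_{L^2(\Omega)} \leq C h^{\beta/2-1}$. Choosing $\boldsymbol{c}_h$ so that the averages of $\widetilde{\boldsymbol{y}}_h = \boldsymbol{Q}_h^\top \boldsymbol{y}_h - \boldsymbol{c}_h$ are normalized as required, the Poincaré inequality applied to $\mathcal{U}_h(\widetilde{\boldsymbol{y}}_h)$ and $\mathcal{V}_h(\widetilde{\boldsymbol{y}}_h)$, combined with the linearization $\boldsymbol{R}_h = \boldsymbol{I} + h^{\beta/2-1}\boldsymbol{A}_h + O(h^{\beta-2})$ with $\boldsymbol{A}_h \in \mathrm{Skew}(3)$ bounded in $L^2$, produces \eqref{eqn:lb-comp-u}--\eqref{eqn:lb-comp-v}.

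\textbf{Magnetizations --- the main obstacle.} Since $\beta/p > 2$, Morrey's embedding applied to $\nabla_h \widetilde{\boldsymbol{y}}_h - \boldsymbol{I}$ gives $\|\widetilde{\boldsymbol{y}}_h - \boldsymbol{id}\|_{C^0(\overline{\Omega})} = o(h)$. By the stability property of the Brouwer degree and the integral formula \eqref{eqn:degree-integral-formula}, for every $\eta>0$ the shrunk cylinder $C_{\eta,h} \coloneqq S_\eta \times h(-1/2+\eta,1/2-\eta)$, with $S_\eta \subset\subset S$ a slight inward retraction, is contained in the deformed configuration $\Omega^{\widetilde{\boldsymbol{y}}_h}_h$ for $h$ small: the uniform $o(h)$ closeness of $\widetilde{\boldsymbol{y}}_h$ to $\boldsymbol{id}$ forces $\deg(\widetilde{\boldsymbol{y}}_h,\Omega,\boldsymbol{\xi}) = 1$ at every $\boldsymbol{\xi} \in C_{\eta,h}$, so Lemma \ref{lem:def-config-reduced-domain} applies. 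Rescaling $C_{\eta,h}$ vertically to $S_\eta \times (-1/2+\eta, 1/2-\eta)$ turns the bound $E_h^\mathrm{exc} \leq C$ into a uniform $W^{1,2}$ estimate on the rescaled magnetizations (with $\partial_3$ component of order $h$); extracting a weak limit and sending $\eta \to 0^+$ diagonally produces $\boldsymbol{\lambda} \in W^{1,2}(S;\S^2)$, automatically independent of $x_3$. For \eqref{eqn:lb-comp-my} I would restrict $\widetilde{\boldsymbol{m}}_h$ to the regular cylinder $C_{\eta,h}$, Sobolev-extend it to $\R^3$, apply Proposition \ref{prop:lipschitz-truncation} at a carefully chosen level $\lambda_h \to \infty$ to obtain a Lipschitz approximant off an asymptotically negligible set, and then use the change-of-variable formula \eqref{eqn:change-of-var} (valid since $\widetilde{\boldsymbol{y}}_h \in \mathcal{Y}$ and has unit Jacobian by incompressibility) to transfer strong convergence from $C_{\eta,h}$ back to $\widetilde{\boldsymbol{y}}_h^{-1}(C_{\eta,h})\subset \Omega$; the saturation $|\widetilde{\boldsymbol{m}}_h| = 1$ upgrades $L^2$-convergence to $L^r$ for every $1 \leq r < \infty$. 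The decisive difficulty of the whole theorem lies here: the magnetizations are merely $W^{1,2}$, hence in general discontinuous, and the deformed domains move with $h$; the combination of inner-cylinder approximation, Lipschitz truncation and the area formula is precisely what tames it.

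\textbf{Lower bound.} I set $\boldsymbol{G}_h \coloneqq h^{-\beta/2}(\boldsymbol{R}_h^\top \nabla_h \widetilde{\boldsymbol{y}}_h - \boldsymbol{I})$, bounded in $L^2$ with weak limit $\boldsymbol{G}$ whose $2\times 2$-block equals $\sym\nabla'\boldsymbol{u} - x_3 (\nabla')^2 v$ and whose trace vanishes by linearization of the incompressibility constraint $\det \nabla_h \widetilde{\boldsymbol{y}}_h = 1$. On $\{|h^{\beta/2}\boldsymbol{G}_h|<\delta\}$, frame indifference \eqref{eqn:frame_indifference} and the expansion \eqref{eqn:taylor-expansion}, together with \eqref{eqn:coupling_omega}, yield $h^{-\beta}W^\mathrm{inc}(\nabla_h \widetilde{\boldsymbol{y}}_h, \widetilde{\boldsymbol{m}}_h \circ \widetilde{\boldsymbol{y}}_h) = \tfrac{1}{2} Q_3(\boldsymbol{G}_h, \widetilde{\boldsymbol{m}}_h \circ \widetilde{\boldsymbol{y}}_h) + o(1)$, while the $L^p$-bound on $\boldsymbol{G}_h$ makes the complementary set negligible; continuity \eqref{eqn:coupling_C} together with the strong convergence \eqref{eqn:lb-comp-my} permits passage to the limit in the magnetization argument, and convex lower semicontinuity under the constraint $\tr \boldsymbol{G} = 0$ forces the minimization defining \eqref{eqn:Q2-inc}, with averaging in $x_3$ and parity in $x_3$ separating the membrane and bending contributions of $E^\mathrm{el}$ (as in \cite{conti.dolzmann,li.chermisi}). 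For the exchange term I exhaust $\Omega^{\widetilde{\boldsymbol{y}}_h}_h$ from inside by the cylinders $C_{\eta,h}$, rescale in $x_3$, discard the non-negative $|\partial_3(\cdot)/h|^2$ piece, apply weak $L^2$-lower semicontinuity of $\nabla'$, and let $\eta \to 0^+$. For the magnetostatic term I would adapt \cite{gioia.james}: the geometric description of $\Omega^{\widetilde{\boldsymbol{y}}_h}_h$ from the previous step and the convergence \eqref{eqn:lb-comp-my} ensure that $h^{-1/2}\chi_{\Omega^{\widetilde{\boldsymbol{y}}_h}_h}\widetilde{\boldsymbol{m}}_h$ distributionally concentrates on $S$ with vertical component $\lambda^3$, and the classical thin-film stray-field compactness of \cite{gioia.james} applied to \eqref{eqn:maxwell_y} after rescaling yields the lower bound $\tfrac{1}{2}\int_S |\lambda^3|^2\,\d\boldsymbol{x}'$, completing \eqref{eqn:lb-liminf-ineq}.
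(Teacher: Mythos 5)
Your proposal reproduces the paper's architecture faithfully: cube-by-cube rigidity and an approximating rotation field (Lemma~\ref{lem:approx-rot}, Proposition~\ref{prop:comp-def}); the degree-theoretic inner-cylinder approximation $\Omega^\eps_{\vth h}\subset\Omega^{\widetilde{\boldsymbol{y}}_h}$ together with Lipschitz truncation and the area formula to control $\widetilde{\boldsymbol{m}}_h\circ\widetilde{\boldsymbol{y}}_h$ on unknown deformed domains (Proposition~\ref{prop:comp-mag}); exhaustion by cylinders for the exchange lower bound; a second-order Taylor expansion plus Egorov for the elastic lower bound; and a Gioia--James-type argument for the magnetostatic term. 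The one genuine departure is the treatment of incompressibility in the elastic lower bound. The paper follows Conti--Dolzmann: penalize with $W^k\coloneqq W+\tfrac{k}{2}(\det\boldsymbol F-1)^2\leq W^{\mathrm{inc}}$, expand $Q_3^k=Q_3+k(\tr\cdot)^2$, use the comparison estimate $Q_2^k\geq Q_2^{\mathrm{inc}}-\tfrac{C}{\sqrt k}|\cdot|^2$, and send $k\to\infty$. You instead pass the trace constraint directly to the weak limit: from $\det(\boldsymbol I+h^{\beta/2}\boldsymbol G_h)=1$ and the truncation $|\boldsymbol G_h|\leq h^{-\beta/4}$ on $X_h$, the quadratic and cubic remainders in the determinant expansion are $O(h^{\beta/2})$ and $O(h^{3\beta/4})$ in $L^1$, so $\chi_h\tr\boldsymbol G_h\to 0$ in $L^1$, and since $\chi_h\boldsymbol G_h\wk\boldsymbol G$ in $L^2$ this gives $\tr\boldsymbol G=0$. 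This is a valid shortcut, but to then get $Q_3(\boldsymbol G,\boldsymbol\lambda)\geq Q_2^{\mathrm{inc}}(\boldsymbol G'',\boldsymbol\lambda)$ you still need an observation the paper never states: $Q_3(\cdot,\boldsymbol\nu)$ vanishes on skew matrices (because frame indifference forces $W(\boldsymbol R,\boldsymbol\nu)=W(\boldsymbol I,\boldsymbol R^\top\boldsymbol\nu)=0$ for all $\boldsymbol R\in SO(3)$), hence $Q_3(\boldsymbol G,\boldsymbol\nu)=Q_3(\sym\boldsymbol G,\boldsymbol\nu)$, and $\sym\boldsymbol G$ is a competitor in the constrained minimization defining $Q_2^{\mathrm{inc}}$. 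The two routes are equivalent in outcome; yours trades the Conti--Dolzmann estimate for these two facts.

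A few small slips. (a)~The uniform estimate is $\|\widetilde{\boldsymbol y}_h-\boldsymbol z_h\|_{C^0(\closure\Omega)}\leq Ch^{\beta/p-1}=o(h)$: the comparison is with the scaled map $\boldsymbol z_h$, not the identity on $\R^3$ (with $\boldsymbol{id}$ the left side is $O(1)$). (b)~Incompressibility gives $\det\nabla_h\boldsymbol y_h=1$, hence $\det\nabla\boldsymbol y_h=h$, not unit Jacobian; the area formula in Step~3 of the magnetization compactness carries the factor $h$, which is precisely why $\leb(A^\eps_{\vth,h})=\vth\lebt(S^\eps)$ and $\leb(\Omega^\eps_{\vth h}\setminus F_{\lambda,h})=h\,\leb(A^\eps_{\vth,h}\setminus V_{\lambda,h})$. (c)~The piecewise-constant rotation field must be mollified and projected onto $SO(3)$ before one has the $W^{1,p}$ bound on $\nabla'\boldsymbol R_h$ that feeds Korn/Poincar\'e. (d)~In the magnetostatic step the delicate point, not visible in your sketch, is identifying the weak limit $l$ of $\partial_3\widehat\psi_h/h$ with $\chi_\Omega\lambda^3$; the paper does this by passing to the limit in the scaled Euler--Lagrange equation of the stray-field minimization (an argument going back to Carbou).
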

The second theorem shows that the lower bound identified in Theorem \ref{thm:comp-lower-bound} is optimal.

\begin{theorem}[Optimality of the lower bound]
\label{thm:optimality-lower-bound}
Assume $p>3$ and $\beta>2p$. Suppose that $W$ satisfies \eqref{eqn:frame_indifference}-\eqref{eqn:local-smooth} and \eqref{eqn:coupling_C}-\eqref{eqn:coupling_omega}. Then, {\MMM for every $\boldsymbol{u}\in W^{1,2}(S;\R^2)$,}  $v\in W^{2,2}(S)$ and  $\boldsymbol{\lambda}\in W^{1,2}(S;\S^2)$, there exists $((\boldsymbol{y}_h,\boldsymbol{m}_h))_h \subset \mathcal{Q}$  such that, as $h \to 0^+$, we have:
    \begin{align}
        \label{eqn:opt-lb-u}
        &{\MMM\text{$\boldsymbol{u}_h\coloneqq \mathcal{U}_h(\boldsymbol{y}_h)\to \boldsymbol{u}$ in $W^{1,2}(S;\R^3)$;}}\\
        \label{eqn:opt-lb-v}
        &\text{$v_h\coloneqq \mathcal{V}_h(\boldsymbol{y}_h) \to v$ in $W^{1,2}(S)$;}\\
        \label{eqn:opt-lb-m-comp-y}
        &\text{${\boldsymbol{m}}_h \circ {\boldsymbol{y}}_h \to \boldsymbol{\lambda}$ in $L^r(\Omega;\R^3)$ for every $1 \leq r<\infty$.}
    \end{align}
    Moreover, the following equality holds:
    \begin{equation}
        \label{eqn:opt-lb-limit}
         E({\MMM \boldsymbol{u},}v,\boldsymbol{\lambda}) = \lim_{h \to 0^+} E_h({\boldsymbol{y}}_h,{\boldsymbol{m}}_h). 
    \end{equation}     
\end{theorem}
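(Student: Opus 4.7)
\emph{Strategy and reduction to smooth data.} The plan is to follow the classical recipe for constructing recovery sequences in plate theories: construct the deformation and the magnetization separately, and verify the convergence of each energy contribution. By the continuity of $E$ under strong convergence in $W^{1,2}(S;\R^2)\times W^{2,2}(S)\times W^{1,2}(S;\S^2)$ (visible from \eqref{eqn:energy_E} together with the continuity of $Q_2^{\inc}(\cdot,\cdot)$), and by density of smooth functions in each of these spaces, a standard diagonal argument reduces the construction to the case in which $\boldsymbol{u}$, $v$, and $\boldsymbol{\lambda}$ are smooth up to $\closure{S}$. The relevant density of $C^\infty(\closure{S};\S^2)$ in $W^{1,2}(S;\S^2)$ is furnished by Bethuel's theorem, which applies since $\dim \S^2 = 2$ does not exceed the integrability exponent $2$.

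\emph{Construction of the deformation.} I would then adopt the linearized von K\'arm\'an ansatz of \cite{friesecke.james.mueller2}, namely a perturbation of $\boldsymbol{z}_h$ by $h^{\beta/2-1}v(\boldsymbol{x}')\boldsymbol{e}_3$ in the vertical component, by $h^{\beta/2}(\boldsymbol{u}(\boldsymbol{x}')-x_3\nabla'v(\boldsymbol{x}'))$ in the in-plane components, and by an additional warping term encoding a vector field $\boldsymbol{\zeta}(\boldsymbol{x}',\boldsymbol{\lambda}(\boldsymbol{x}'))\in \R^3$ chosen so that, for each $(\boldsymbol{x}',\boldsymbol{\nu})$, the matrix inside the minimum in \eqref{eqn:Q2-inc} realizes $Q_2^{\inc}(\boldsymbol{H},\boldsymbol{\nu})$ at $\boldsymbol{H}=\sym\nabla'\boldsymbol{u}(\boldsymbol{x}')-x_3(\nabla')^2 v(\boldsymbol{x}')$ with $\boldsymbol{c}=\boldsymbol{\zeta}(\boldsymbol{x}',\boldsymbol{\nu})$. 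Such an ansatz only yields $\det\nabla_h\boldsymbol{y}_h=1$ to leading order; to enforce the pointwise constraint, I would compose with a volume-preserving near-identity diffeomorphism produced by the Dacorogna-Moser method, following the strategy of \cite{li.chermisi,conti.dolzmann}. Since the resulting $\boldsymbol{y}_h$ is an $O(h^{\beta/2-1})$ perturbation of $\boldsymbol{z}_h$ in $C^1(\closure{\Omega};\R^3)$ and $\beta/2-1>p-1>2$, the invertibility criterion of \cite{ciarlet} for small perturbations of the identity yields global injectivity, hence $\boldsymbol{y}_h\in \mathcal{Y}$.

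\emph{Magnetizations and convergences.} I would extend $\boldsymbol{\lambda}$ to a smooth $\S^2$-valued map $\widetilde{\boldsymbol{\lambda}}$ on a neighborhood of $\closure{S}$ in $\R^2$, and set $\boldsymbol{m}_h(\boldsymbol{\xi})\coloneqq\widetilde{\boldsymbol{\lambda}}(\boldsymbol{\xi}')$ for $\boldsymbol{\xi}\in \Omega^{\boldsymbol{y}_h}$. Because $\boldsymbol{y}_h\to \boldsymbol{z}_0$ uniformly, the projection of $\Omega^{\boldsymbol{y}_h}$ onto its first two coordinates is eventually contained in the extension domain of $\widetilde{\boldsymbol{\lambda}}$, so $\boldsymbol{m}_h\in W^{1,2}(\Omega^{\boldsymbol{y}_h};\S^2)$ is well defined. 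The convergences \eqref{eqn:opt-lb-u}--\eqref{eqn:opt-lb-m-comp-y} then follow directly from the explicit ansatz: $\mathcal{U}_h(\boldsymbol{y}_h)=\boldsymbol{u}+o(1)$ in $W^{1,2}$ and $\mathcal{V}_h(\boldsymbol{y}_h)=v+o(1)$ in $W^{1,2}$, while $\boldsymbol{m}_h\circ\boldsymbol{y}_h(\boldsymbol{x})=\widetilde{\boldsymbol{\lambda}}(\boldsymbol{y}_h'(\boldsymbol{x}))\to \boldsymbol{\lambda}(\boldsymbol{x}')$ uniformly on $\Omega$.

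\emph{Energy convergences and main obstacle.} For the elastic energy, the construction ensures $\nabla_h\boldsymbol{y}_h=\boldsymbol{I}+h^{\beta/2-1}\boldsymbol{G}_h$ with $\boldsymbol{G}_h$ uniformly bounded and convergent; combining the Taylor expansion \eqref{eqn:taylor-expansion} with assumption \eqref{eqn:coupling_omega} and the continuity \eqref{eqn:coupling_C} of $\mathbb{C}^{\boldsymbol{\nu}}$ in $\boldsymbol{\nu}$ gives
\begin{equation*}
\frac{1}{h^\beta}\int_\Omega W^{\inc}(\nabla_h\boldsymbol{y}_h,\boldsymbol{m}_h\circ\boldsymbol{y}_h)\,\d\boldsymbol{x} \longrightarrow \frac{1}{2}\int_\Omega Q_3(\boldsymbol{G},\boldsymbol{\lambda})\,\d\boldsymbol{x}.
\end{equation*}
Integration in $x_3\in I$ annihilates the cross-term between membrane and bending strains (thanks to $\int_I x_3\,\d x_3=0$), and the pointwise optimality of $\boldsymbol{\zeta}$ reduces $Q_3$ to $Q_2^{\inc}$ separately on the membrane and bending components, producing exactly $E^{\text{el}}(\boldsymbol{u},v,\boldsymbol{\lambda})$. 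The exchange energy is then handled by a direct change-of-variable computation based on $\nabla\boldsymbol{m}_h=(\nabla'\widetilde{\boldsymbol{\lambda}},0)$ and the fact that $\Omega^{\boldsymbol{y}_h}$ is, up to a negligible set, a slab of height $h$ over $S$. The magnetostatic energy is dealt with exactly as in the lower-bound argument, identifying the thin-film limit of the Maxwell equation \eqref{eqn:maxwell_y} via the technique of \cite{gioia.james} to produce $\frac{1}{2}\int_S|\lambda^3|^2\,\d\boldsymbol{x}'$. The principal technical difficulty is the simultaneous realization of the exact incompressibility constraint, the correct perturbation order $O(h^{\beta/2-1})$, global injectivity, and the right limit energy: this is precisely the step at which the Dacorogna-Moser correction of \cite{li.chermisi,conti.dolzmann} is indispensable.
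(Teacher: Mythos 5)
Your overall strategy is essentially the one the paper follows: reduce by density and a diagonal argument to smooth data, build the deformation from the von K\'arm\'an ansatz augmented by a warping field, enforce $\det\nabla_h\boldsymbol{y}_h=1$ exactly by a reparametrization of the thin direction, obtain injectivity through a Ciarlet-type perturbation estimate, and treat the three energy contributions separately. One small point of terminology: the determinant correction in the paper (and in \cite{conti.dolzmann,li.chermisi}) is not a full Dacorogna--Moser flow but the one-dimensional Cauchy problem \eqref{eqn:ode} for $x_3\mapsto\eta_h(\boldsymbol{x}',x_3)$, and the injectivity threshold is $\beta>4$, coming from the factor $h^{-1}$ that path lengths acquire in the thin domain $\Omega_h$.

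There is, however, a genuine gap in the way you set up the warping term. You propose to bake the optimal $\boldsymbol{c}$ of \eqref{eqn:Q2-inc} directly into the ansatz as a field $\boldsymbol{\zeta}(\boldsymbol{x}',\boldsymbol{\lambda}(\boldsymbol{x}'))$ computed from the (smoothed) data. But the minimizer $\boldsymbol{c}=\boldsymbol{c}(\boldsymbol{H},\boldsymbol{\nu})$ inherits only the regularity in $\boldsymbol{\nu}$ of $\boldsymbol{\nu}\mapsto\mathbb{C}^{\boldsymbol{\nu}}$, and hypothesis \eqref{eqn:coupling_C} gives mere continuity. Thus $\boldsymbol{\zeta}(\boldsymbol{x}',\boldsymbol{\lambda}(\boldsymbol{x}'))$ is only $C^0$ even when $\boldsymbol{u},v,\boldsymbol{\lambda}$ are $C^\infty$, which is insufficient for the Cauchy problem (one needs the right-hand side to be Lipschitz in $\eta$, hence $\det\nabla_h\overline{\boldsymbol{y}}_h$ to be $C^1$) and for the $O(h^{\beta/2+1})$ expansions of Step 3 of Proposition \ref{prop:recovery-sequence}. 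The paper sidesteps this by treating the warping fields $\boldsymbol{a},\boldsymbol{b}$ as \emph{independent} smooth data subject only to the trace constraints \eqref{eqn:recovery-traceless-a}--\eqref{eqn:recovery-traceless-b}, proving the energy limits for every such pair (Proposition \ref{prop:recovery-sequence}), and only in the final diagonal argument approximating the optimal --- merely $L^2$ --- fields $\boldsymbol{a},\boldsymbol{b}$ by smooth sequences respecting the trace condition. Your construction needs an analogous two-level approximation to close this hole.
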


\subsection{$\boldsymbol{\Gamma}$-convergence}
\label{subsec:gamma-convergence}
The results of Theorems \ref{thm:comp-lower-bound} and \ref{thm:optimality-lower-bound} can be reformulated within the framework of $\Gamma$-convergence \cite{braides,dalmaso}. {\MMM Recall the notation introduced in \eqref{eqn:averaged-displacements} . Given an admissible state $(\boldsymbol{y},\boldsymbol{m}) \in \mathcal{Q}$ and $h>0$, we define  $\mathcal{M}_h(\boldsymbol{y},\boldsymbol{m})\colon \R^3 \to \R^3$ by setting} 
\begin{equation}
\label{eqn:gamma-notation-M}
\mathcal{M}_h(\boldsymbol{y},\boldsymbol{m})\coloneqq (\chi_{\Omega^{\boldsymbol{y}}}\boldsymbol{m})\circ \boldsymbol{z}_h.
\end{equation}

We introduce the functionals
\begin{equation*}
    {\MMM \mathcal{I}_h, \mathcal{I} \colon W^{1,p}(\Omega;\R^3) \times W^{1,2}(S;\R^2) \times W^{1,2}(S) \times L^2(\R^3;\R^3) \to \R \cup \{+\infty\}}
\end{equation*}
given by {\MMM
\begin{equation}
\label{eqn:gamma-energy-Eh}
\mathcal{I}_h(\boldsymbol{y},\boldsymbol{u},v,\boldsymbol{\mu})\coloneqq
\begin{cases}
E_h(\boldsymbol{y},\boldsymbol{m}) &\begin{aligned}
	&\text{if $\boldsymbol{u}=\mathcal{U}_h(\boldsymbol{y})$, $v=\mathcal{V}_h(\boldsymbol{y})$ \text{and} $\boldsymbol{\mu}=\mathcal{M}_h(\boldsymbol{y},\boldsymbol{m})$ }\\
	&\text{for some $\boldsymbol{m}\in W^{1,2}(\Omega^{\boldsymbol{y}};\S^2)$,}
\end{aligned}\\
+ \infty & \text{otherwise,}
\end{cases}
\end{equation}
} and {\MMM
\begin{equation}
\label{eqn:gamma-energy-E}
    \mathcal{I}(\boldsymbol{y},\boldsymbol{u},v,\boldsymbol{\mu})\coloneqq \begin{cases}
    E(\boldsymbol{u},v,\boldsymbol{\lambda}) & \begin{aligned}
    	&\text{if $\boldsymbol{y}=\boldsymbol{z}_0$, $v \in W^{2,2}(S)$ \text{and} $\boldsymbol{\mu}=\chi_{\Omega}\boldsymbol{\lambda}$}\\
    	&\text{for some $\boldsymbol{\lambda}\in W^{1,2}(S;\S^2)$,}
    \end{aligned}\\
    + \infty & \text{otherwise}.
    \end{cases}
\end{equation}
}
The following result is inspired by \cite[Corollary 2.4]{lewicka.mora.pakzad}.

\begin{corollary}[$\boldsymbol{\Gamma}$-convergence]
\label{cor:gamma-convergence}
{\MMM The functionals $(\mathcal{I}_h)$ $\Gamma$-converge to $\mathcal{I}$, as $h \to 0^+$, with respect  to the strong product topology. Namely, we have the following:
\begin{itemize}
    \item[\em (i)] \textbf{(Liminf inequality)} for every sequence $((\boldsymbol{y}_h,\boldsymbol{u}_h,v_h,\boldsymbol{\mu}_h))_h \subset W^{1,p}(\Omega;\R^3) \times W^{1,2}(S;\R^2)\times W^{1,2}(S)\times L^2(\R^3;\R^3)$ such that $(\boldsymbol{y}_h,\boldsymbol{u}_h,v_h,\boldsymbol{\mu}_h)\to(\boldsymbol{y},\boldsymbol{u},v,\boldsymbol{\mu})$ in $W^{1,p}(\Omega;\R^3) \times W^{1,2}(S;\R^2)\times W^{1,2}(S)\times L^2(\R^3;\R^3)$ for some $(\boldsymbol{y},\boldsymbol{u},v,\boldsymbol{\mu}) \in W^{1,p}(\Omega;\R^3) \times W^{1,2}(S;\R^2)\times W^{1,2}(S)\times L^2(\R^3;\R^3)$, we have:
    \begin{equation}
        \label{eqn:liminf-inequality}
        \mathcal{I}(\boldsymbol{y},\boldsymbol{u},v,\boldsymbol{\mu}) \leq \liminf_{h \to 0^+} \mathcal{I}_h(\boldsymbol{y}_h,\boldsymbol{u}_h,v_h,\boldsymbol{\mu}_h);
    \end{equation}
    \item[\em (ii)] \textbf{(Recovery sequence)} for every $(\boldsymbol{y},\boldsymbol{u},v,\boldsymbol{\mu}) \in W^{1,p}(\Omega;\R^3) \times W^{1,2}(S;\R^2)\times W^{1,2}(S)\times L^2(\R^3;\R^3)$ there exists a sequence  $((\boldsymbol{y}_h,\boldsymbol{u}_h,v_h,\boldsymbol{\mu}_h))_h \subset W^{1,p}(\Omega;\R^3) \times W^{1,2}(S;\R^2)\times W^{1,2}(S)\times L^2(\R^3;\R^3)$ such that $(\boldsymbol{y}_h,\boldsymbol{u}_h,v_h,\boldsymbol{\mu}_h)\to(\boldsymbol{y},\boldsymbol{u},v,\boldsymbol{\mu})$ in $W^{1,p}(\Omega;\R^3) \times W^{1,2}(S;\R^2)\times W^{1,2}(S)\times L^2(\R^3;\R^3)$ and we have:
    \begin{equation}
    \label{eqn:recovery-sequence}
        \mathcal{I}(\boldsymbol{y},\boldsymbol{u},v,\boldsymbol{\mu}) = \lim_{h \to 0^+} \mathcal{I}_h(\boldsymbol{y}_h,\boldsymbol{u}_h,v_h,\boldsymbol{\mu}_h).
    \end{equation}
\end{itemize}
}
The same result holds also with the weak product topology in place of the strong one.
\end{corollary}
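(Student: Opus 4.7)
The plan is to deduce the two $\Gamma$-convergence conditions from Theorems \ref{thm:comp-lower-bound} and \ref{thm:optimality-lower-bound} by unpacking them in the product space. Since the strong topology is finer than the weak one, the liminf inequality is most demanding under weak convergence while the recovery sequence is most demanding under strong convergence; establishing each condition in its harder form therefore covers both statements at once.

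\textbf{Liminf.} Let $(\boldsymbol{y}_h,\boldsymbol{u}_h,v_h,\boldsymbol{\mu}_h)\to(\boldsymbol{y},\boldsymbol{u},v,\boldsymbol{\mu})$ with $\liminf_{h\to 0^+}\mathcal{I}_h(\boldsymbol{y}_h,\boldsymbol{u}_h,v_h,\boldsymbol{\mu}_h)<+\infty$; up to a subsequence the liminf is a limit and $\mathcal{I}_h(\boldsymbol{y}_h,\boldsymbol{u}_h,v_h,\boldsymbol{\mu}_h)=E_h(\boldsymbol{y}_h,\boldsymbol{m}_h)\leq C$ for some $\boldsymbol{m}_h\in W^{1,2}(\Omega^{\boldsymbol{y}_h};\S^2)$ with $\boldsymbol{u}_h=\mathcal{U}_h(\boldsymbol{y}_h)$, $v_h=\mathcal{V}_h(\boldsymbol{y}_h)$ and $\boldsymbol{\mu}_h=\mathcal{M}_h(\boldsymbol{y}_h,\boldsymbol{m}_h)$. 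Theorem \ref{thm:comp-lower-bound} then supplies rigid motions $\boldsymbol{T}_h$ and limits $(\widetilde{\boldsymbol{u}},\widetilde{v},\widetilde{\boldsymbol{\lambda}})$ of the rotated sequence with $E(\widetilde{\boldsymbol{u}},\widetilde{v},\widetilde{\boldsymbol{\lambda}})\leq \liminf_{h\to 0^+}\mathcal{I}_h$. The remaining task is to prove $\boldsymbol{y}=\boldsymbol{z}_0$ and to identify the rotated limits with the hypothesized ones. From \eqref{eqn:averaged-displacements} and the convergence of $(\boldsymbol{u}_h,v_h)$ one infers $\int_I (\boldsymbol{y}_h'-\boldsymbol{x}')\,\d x_3=h^{\beta/2}\boldsymbol{u}_h\to \boldsymbol{0}'$ and $\int_I y_h^3\,\d x_3=h^{\beta/2-1}v_h\to 0$; passing to the limit using $\boldsymbol{y}_h\to \boldsymbol{y}$ in $W^{1,p}(\Omega;\R^3)$ yields $\int_I \boldsymbol{y}'\,\d x_3=\boldsymbol{x}'$ and $\int_I y^3\,\d x_3=0$. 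On the other hand, the rigidity estimate of Proposition \ref{prop:rigidity-estimate} applied to $\boldsymbol{y}_h$ together with the elastic-energy bound produces $\boldsymbol{R}_\infty\in SO(3)$ such that $\nabla \boldsymbol{y}_h\to \boldsymbol{R}_\infty(\boldsymbol{e}_1\otimes \boldsymbol{e}_1+\boldsymbol{e}_2\otimes \boldsymbol{e}_2)$ in $L^p(\Omega;\rtt)$, and hence $\boldsymbol{y}=\boldsymbol{R}_\infty \boldsymbol{z}_0+\boldsymbol{c}$ for some $\boldsymbol{c}\in \R^3$. The integral conditions above then force $\boldsymbol{R}_\infty=\boldsymbol{I}$ and $\boldsymbol{c}=\boldsymbol{0}$, whence $\boldsymbol{y}=\boldsymbol{z}_0$. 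Once this is known, the rigid motions $\boldsymbol{T}_h$ can be arranged to converge to the identity, so $\widetilde{\boldsymbol{u}}=\boldsymbol{u}$, $\widetilde{v}=v$ and $\boldsymbol{\mu}=\chi_\Omega \widetilde{\boldsymbol{\lambda}}$; the last identification additionally uses that the symmetric difference of $\Omega^{\boldsymbol{y}_h}$ and $\boldsymbol{z}_h(\Omega)$ has vanishing Lebesgue measure, which is a consequence of incompressibility and the area formula (Proposition \ref{prop:area-formula}). This delivers $\mathcal{I}(\boldsymbol{y},\boldsymbol{u},v,\boldsymbol{\mu})=E(\boldsymbol{u},v,\boldsymbol{\lambda})\leq \liminf_{h\to 0^+}\mathcal{I}_h$.

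\textbf{Recovery.} If $\mathcal{I}(\boldsymbol{y},\boldsymbol{u},v,\boldsymbol{\mu})=+\infty$, the constant sequence is a recovery sequence, since the admissibility constraints defining $\mathcal{I}_h$ fail at each $h$. Otherwise $\boldsymbol{y}=\boldsymbol{z}_0$, $v\in W^{2,2}(S)$ and $\boldsymbol{\mu}=\chi_\Omega \boldsymbol{\lambda}$ for some $\boldsymbol{\lambda}\in W^{1,2}(S;\S^2)$; apply Theorem \ref{thm:optimality-lower-bound} to obtain $(\boldsymbol{y}_h,\boldsymbol{m}_h)\in \mathcal{Q}$ realizing the energy limit and set $\boldsymbol{u}_h:=\mathcal{U}_h(\boldsymbol{y}_h)$, $v_h:=\mathcal{V}_h(\boldsymbol{y}_h)$, $\boldsymbol{\mu}_h:=\mathcal{M}_h(\boldsymbol{y}_h,\boldsymbol{m}_h)$, so that $\mathcal{I}_h(\boldsymbol{y}_h,\boldsymbol{u}_h,v_h,\boldsymbol{\mu}_h)=E_h(\boldsymbol{y}_h,\boldsymbol{m}_h)\to \mathcal{I}(\boldsymbol{y},\boldsymbol{u},v,\boldsymbol{\mu})$. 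The strong convergence $\boldsymbol{y}_h\to \boldsymbol{z}_0$ in $W^{1,p}(\Omega;\R^3)$ is built into the von K\'arm\'an ansatz underlying the construction, where $\boldsymbol{y}_h$ appears as an order-$h^{\beta/2-1}$ perturbation of $\boldsymbol{z}_h$ and $\boldsymbol{z}_h\to \boldsymbol{z}_0$ trivially. The strong convergence $\boldsymbol{\mu}_h\to \chi_\Omega \boldsymbol{\lambda}$ in $L^2(\R^3;\R^3)$ follows by combining \eqref{eqn:opt-lb-m-comp-y} with the uniform closeness of $\boldsymbol{y}_h$ to $\boldsymbol{z}_h$ and with the same vanishing of the symmetric difference between $\Omega^{\boldsymbol{y}_h}$ and $\boldsymbol{z}_h(\Omega)$.

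The main obstacle is the identification $\boldsymbol{y}=\boldsymbol{z}_0$ in the liminf step, which couples the $W^{1,p}$-convergence of the hypothesis with the rigidity information from the bounded elastic energy to rule out nontrivial limiting rigid motions; equally delicate is the verification of the strong $L^2$-convergence of the extensions $\boldsymbol{\mu}_h$ on $\R^3$, which requires quantitative geometric control of the deformed configurations $\Omega^{\boldsymbol{y}_h}$ through the incompressibility constraint and the area formula.
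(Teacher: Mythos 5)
The structure of your proof is broadly compatible with the paper's, but in the liminf step your identification argument contains a genuine gap. You write ``Once this is known, the rigid motions $\boldsymbol{T}_h$ can be arranged to converge to the identity, so $\widetilde{\boldsymbol{u}}=\boldsymbol{u}$, $\widetilde{v}=v$.'' This does not follow. The averaged displacements $\mathcal{U}_h(\widetilde{\boldsymbol{y}}_h)$ and $\mathcal{U}_h(\boldsymbol{y}_h)$ are defined with a factor $h^{-\beta/2}$ in \eqref{eqn:averaged-displacements}, so their difference involves terms of size roughly $h^{-\beta/2}|\boldsymbol{Q}_h-\boldsymbol{I}|$. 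Knowing only qualitatively that $\boldsymbol{Q}_h\to\boldsymbol{I}$ says nothing about the rescaled quantity, and in fact even the sharp rate obtained in the paper, $|\boldsymbol{Q}_h^\top-\boldsymbol{I}|\le Ch^{\beta/2-1}$ (which you never establish; your rigidity argument applied to $\nabla\boldsymbol{y}_h$ only determines the limiting rotation, not a convergence rate), is \emph{not} of order $o(h^{\beta/2})$, so one cannot conclude $\widetilde{\boldsymbol{u}}=\boldsymbol{u}$ or $\widetilde{v}=v$. The paper sidesteps precisely this difficulty: after deriving the quantitative estimates $|\boldsymbol{Q}_h^\top-\boldsymbol{I}|\le Ch^{\beta/2-1}$ and $|\boldsymbol{c}_h|\le Ch^{\beta/2-1}$ by comparing $\boldsymbol{V}_h=(h\boldsymbol{u}_h^\top,v_h)^\top$ with $\widetilde{\boldsymbol{V}}_h$, it re-runs the entire compactness argument with a pure translation $\widecheck{\boldsymbol{T}}_h(\boldsymbol{\xi})=\boldsymbol{\xi}-\widecheck{\boldsymbol{c}}_h$ in place of the rigid motion. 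The resulting $\widecheck{\boldsymbol{u}}_h$ and $\widecheck{v}_h$ differ from $\boldsymbol{u}_h$ and $v_h$ only by additive constants, so their gradients agree and hence $\nabla'\widecheck{\boldsymbol{u}}=\nabla'\boldsymbol{u}$, $\nabla'\widecheck{v}=\nabla'v$ in the limit; the conclusion is then that the limiting \emph{energies} coincide, $E(\widecheck{\boldsymbol{u}},\widecheck{v},\widecheck{\boldsymbol{\lambda}})=E(\boldsymbol{u},v,\boldsymbol{\lambda})$, even though $\widecheck{\boldsymbol{u}}$ need not equal $\boldsymbol{u}$. None of the steps in this $\widecheck{}$-argument can be launched without the rate estimate, which is essential to reproduce \eqref{eqn:variant-approx-rot} from \eqref{eqn:approx-rot2}.

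A second, smaller, omission is the identification $\widecheck{\boldsymbol{\lambda}}=\boldsymbol{\lambda}$. You invoke ``the same vanishing of the symmetric difference between $\Omega^{\boldsymbol{y}_h}$ and $\boldsymbol{z}_h(\Omega)$'' and the strong convergence of $\boldsymbol{\mu}_h$, but because the translated extensions $\widecheck{\boldsymbol{\mu}}_h=\mathcal{M}_h(\widecheck{\boldsymbol{y}}_h,\widecheck{\boldsymbol{m}}_h)$ live on shifted domains composed with $\boldsymbol{z}_h$, the actual identification requires the change-of-variables and test-function computation carried out in \eqref{eqn:gamma-final}, which in turn relies again on $|\widecheck{\boldsymbol{c}}_h|\le Ch^{\beta/2-1}$. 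Your recovery sequence argument (part (ii)) is essentially the same as the paper's and is fine as stated.
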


However, it must be said that Corollary \ref{cor:gamma-convergence} provides less information than Theorems \ref{thm:comp-lower-bound} and \ref{thm:optimality-lower-bound}. Indeed, we cannot deduce the convergence of minimizers from the $\Gamma$-convergence result, since the sequence of functionals {\MMM $(\mathcal{I}_h)$} does not satisfy any suited coercivity assumption \cite[Theorem 1.21]{braides}.

{\MMM
\subsection{Convergence of almost minimizers}
\label{subsec:convergence-minimizers}
From Theorems \ref{thm:comp-lower-bound} and \ref{thm:optimality-lower-bound}, we deduce the convergence of almost minimizers of the energy $E_h$ in \eqref{eqn:energy_Eh} to minimizers of the energy $E$ in \eqref{eqn:energy_E} by straightforward arguments. However, the same conclusion becomes more delicate to be obtained when applied loads are included. In our case, these involve both Lagrangian and Eulerian energy contributions.

For $h>0$, we consider $\boldsymbol{f}_h \in L^2(S;\R^2)$, $g_h \in L^2(S)$ and $\boldsymbol{h}_h \in L^2(\R^3;\R^3)$ representing a tangential force, a normal force and an external magnetic field, respectively. We assume the following:
\begin{align}
	\label{eqn:load-f}
	&\text{$\frac{1}{h^{\beta/2}}\boldsymbol{f}_h \wk \boldsymbol{f}$ in $L^2(S;\R^2)$ for some $\boldsymbol{f}\in L^2(S;\R^2)$,}\\
	\label{eqn:load-g}
	&\text{$\frac{1}{h^{\beta/2+1}}g_h \wk g$ in $L^2(S)$ for some $g\in L^2(S)$,}\\
	\label{eqn:load-h}
	&\text{$\boldsymbol{h}_h\circ \boldsymbol{z}_h \wk \boldsymbol{h}$ in $L^2(\R^3;\R^3)$ for some $\boldsymbol{h}\in L^2(\R^2;\R^3)$.}
\end{align}
Note that, in \eqref{eqn:load-h}, we a priori assume that the limiting field $\boldsymbol{h}$ does not depend on the variable $x_3$.
The effect of the applied loads at the bulk level is described by the functional $L_h \colon \mathcal{Q}\to \R$ defined by
\begin{equation}
	\label{eqn:applied-loads-h}
	L_h(\boldsymbol{y},\boldsymbol{m})\coloneqq \frac{1}{h^\beta} \int_{\Omega} \boldsymbol{f}_h \cdot (\boldsymbol{y}'-\boldsymbol{x}')\,\d \boldsymbol{x}+\frac{1}{h^\beta}\int_\Omega g_h\,y^3\,\d\boldsymbol{x}+\frac{1}{h}\int_{\Omega^{\boldsymbol{y}}} \boldsymbol{h}_h \cdot \boldsymbol{m}\,\d\boldsymbol{\xi},
\end{equation}
so that the corresponding total energy $F_h \colon \mathcal{Q}\to \R$ reads
\begin{equation*}
	F_h(\boldsymbol{y},\boldsymbol{m})\coloneqq E_h(\boldsymbol{y},\boldsymbol{m})-L_h(\boldsymbol{y},\boldsymbol{m}).
\end{equation*}
Note that, while the first two terms in \eqref{eqn:applied-loads-h} are given by integrals on the reference configuration, the last term in \eqref{eqn:applied-loads-h} is written as an integral on the deformed set.

In the limit, the action of the applied loads is determined by the functional $L \colon W^{1,2}(S;\R^2)\times W^{2,2}(S)\times W^{1,2}(S;\S^2)\to \R$ given by
\begin{equation*}
	L(\boldsymbol{u},v,\boldsymbol{\lambda})\coloneqq \int_S \boldsymbol{f}\cdot \boldsymbol{u}\,\d\boldsymbol{x}'+\int_S g\,v\,\d\boldsymbol{x}'+\int_S \boldsymbol{h}\cdot \boldsymbol{\lambda}\d \boldsymbol{x}'
\end{equation*}
and the resulting energy $F \colon W^{1,2}(S;\R^2)\times W^{2,2}(S)\times W^{1,2}(S;\S^2) \to \R$ takes the form
\begin{equation*}
	F(\boldsymbol{u},v,\boldsymbol{\lambda})\coloneqq E(\boldsymbol{u},v,\boldsymbol{\lambda})-L(\boldsymbol{u},v,\boldsymbol{\lambda}).
\end{equation*}
Additionaly, we impose Dirichlet boundary conditions by rescriting ourselves to the class of admissible deformations
\begin{equation}
	\label{eqn:dirichlet-y}
	\mathcal{Y}_h \coloneqq \left \{\boldsymbol{y}\in \mathcal{Y}:\:\text{$\boldsymbol{y}=\boldsymbol{z}_h$ on $\partial S \times I$} \right \}
\end{equation}
and to the corresponding admissible states
\begin{equation}
	\label{eqn:dirichlet-q}
	\mathcal{Q}_h \coloneqq \left \{(\boldsymbol{y},\boldsymbol{m})\in \mathcal{Q}:\:\boldsymbol{y}\in \mathcal{Y}_h,\:\boldsymbol{m}\in W^{1,2}(\Omega^{\boldsymbol{y}};\S^2) \right \}.
\end{equation}
Consequently, the class of limiting admissible states reduces to the set
\begin{equation*}
	\begin{split}
		\mathcal{A}	&\coloneqq  \{(\boldsymbol{u},v,\boldsymbol{\lambda})\in W^{1,2}(S;\R^2)\times W^{2,2}(S)\times W^{1,2}(S;\S^2):\\
		&\hspace*{8,5mm}\text{$\boldsymbol{u}=\boldsymbol{0}'$ on $\partial S$, $v=0$ on $\partial S$, $\nabla ' v=\boldsymbol{0}'$ on $\partial S$}   \}.
	\end{split}
\end{equation*} 
The next result claims that, under the boundary conditions in \eqref{eqn:dirichlet-y}--\eqref{eqn:dirichlet-q}, almost minimizers of the energy $F_h$ actually converge to minimizers of the energy $F$. As a byproduct, we deduce the existence of minimizers for the functional $F$ in $\mathcal{A}$. 

\begin{corollary}[Convergence of almost minimizers]
	\label{cor:convergence-minimizers}
	Assume $p>3$ and $\beta>2p$. Suppose that $W$ satisfies \eqref{eqn:frame_indifference}--\eqref{eqn:local-smooth} and \eqref{eqn:coupling_C}--\eqref{eqn:coupling_omega}, and that the applied loads satisfy \eqref{eqn:load-f}-\eqref{eqn:load-h}. Let  $((\boldsymbol{y}_h,\boldsymbol{m}_h))\subset \mathcal{Q}$ be such that $(\boldsymbol{y}_h,\boldsymbol{m}_h) \in \mathcal{Q}_h$ for every $h>0$. Suppose that
	\begin{equation*}
		\lim_{h \to 0^+} \left \{F_h(\boldsymbol{q}_h)-\inf_{\mathcal{Q}_h}F_h  \right\}=0.
	\end{equation*}
	Then, we have, up to subsequences, as $h \to 0^+$:
	\begin{align}
		\label{eqn:cm-u}
		&\text{$\boldsymbol{u}_h\coloneqq \mathcal{U}_h(\boldsymbol{y}_h)\to \boldsymbol{u}$ in $W^{1,2}(S;\R^3)$;}\\
		\label{eqn:cm-v}
		&\text{$v_h\coloneqq \mathcal{V}_h(\boldsymbol{y}_h) \to v$ in $W^{1,2}(S)$;}\\
		\label{eqn:cm-my}
		&\text{${\boldsymbol{m}}_h \circ {\boldsymbol{y}}_h \to \boldsymbol{\lambda}$ in $L^r(\Omega;\R^3)$ for every $1\leq r <\infty$.}
	\end{align}
	Moreover, $(\boldsymbol{u},v,\boldsymbol{\lambda})\in \mathcal{A}$ is a minimizer of $F$ in $\mathcal{A}$. 
\end{corollary}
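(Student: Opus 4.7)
I would follow the classical $\Gamma$-convergence recipe, enriched with the handling of the load terms: (i) an a priori energy bound on $E_h(\boldsymbol{y}_h,\boldsymbol{m}_h)$ obtained by comparison with a recovery sequence and absorption of $L_h$; (ii) extraction of a converging subsequence via Theorem \ref{thm:comp-lower-bound}; (iii) passage to the limit in the load functional $L_h$; (iv) minimizer identification. For (i), pick any fixed $(\widehat{\boldsymbol{u}},\widehat{v},\widehat{\boldsymbol{\lambda}})\in\mathcal{A}$ and construct, via Theorem \ref{thm:optimality-lower-bound} together with a local modification of the ansatz near $\partial S\times I$, a recovery sequence $(\widehat{\boldsymbol{y}}_h,\widehat{\boldsymbol{m}}_h)\in\mathcal{Q}_h$; together with the almost-minimality and the convergence of $L_h$ along this sequence, this yields $F_h(\boldsymbol{y}_h,\boldsymbol{m}_h)\le C$. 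Integrating in $x_3$, the mechanical part of $L_h(\boldsymbol{y}_h,\boldsymbol{m}_h)$ reads $\int_S (h^{-\beta/2}\boldsymbol{f}_h)\cdot\boldsymbol{u}_h+\int_S (h^{-\beta/2-1}g_h)\,v_h$, which by Poincar\'e (valid because of the boundary conditions in \eqref{eqn:dirichlet-y}) and the rigidity-based control of the averaged displacements is bounded by $C+C\sqrt{E_h(\boldsymbol{y}_h,\boldsymbol{m}_h)}$. For the Zeeman term, assumption \eqref{eqn:load-h} gives $\|\boldsymbol{h}_h\|_{L^2(\R^3)}^2=h\,\|\boldsymbol{h}_h\circ\boldsymbol{z}_h\|_{L^2(\R^3)}^2$, which together with incompressibility ($|\Omega^{\boldsymbol{y}_h}|=h|\Omega|$) and Cauchy--Schwarz forces $|h^{-1}\!\int_{\Omega^{\boldsymbol{y}_h}}\boldsymbol{h}_h\cdot\boldsymbol{m}_h|\le C$. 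Young's inequality then yields $E_h(\boldsymbol{y}_h,\boldsymbol{m}_h)\le C$.

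Now Theorem \ref{thm:comp-lower-bound} furnishes rotations $\boldsymbol{Q}_h$ and translations $\boldsymbol{c}_h$ along which compactness holds. The Dirichlet condition $\boldsymbol{y}_h=\boldsymbol{z}_h$ on $\partial S\times I$ pins the rigid motions, since any non-trivial $\boldsymbol{Q}_h$ would create a contribution of order one to the elastic energy at the boundary; hence $\boldsymbol{Q}_h\to\boldsymbol{I}$ and $\boldsymbol{c}_h\to\boldsymbol{0}$. Along a subsequence, then, $\boldsymbol{u}_h\wk\boldsymbol{u}$ in $W^{1,2}(S;\R^2)$, $v_h\to v$ in $W^{1,2}(S)$ with $v\in W^{2,2}(S)$, and $\boldsymbol{m}_h\circ\boldsymbol{y}_h\to\boldsymbol{\lambda}$ in $L^r(\Omega;\R^3)$ for every $r<\infty$. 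The boundary values $\boldsymbol{u}=\boldsymbol{0}'$, $v=0$ and $\nabla' v=\boldsymbol{0}'$ on $\partial S$ follow from trace continuity, and therefore $(\boldsymbol{u},v,\boldsymbol{\lambda})\in\mathcal{A}$.

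The core of the argument, which I expect to be the \textbf{main obstacle}, is the convergence $L_h(\boldsymbol{y}_h,\boldsymbol{m}_h)\to L(\boldsymbol{u},v,\boldsymbol{\lambda})$. The Lagrangian terms pass to the limit by weak-strong pairing between \eqref{eqn:load-f}--\eqref{eqn:load-g} and the strong convergence of $\boldsymbol{u}_h$ and $v_h$. For the Zeeman contribution, a.e.\ injectivity and incompressibility ($\det\nabla\boldsymbol{y}_h=h$) recast it, via the change-of-variable formula, as $\int_\Omega(\boldsymbol{h}_h\circ\boldsymbol{y}_h)\cdot(\boldsymbol{m}_h\circ\boldsymbol{y}_h)\,\d\boldsymbol{x}$. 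The delicate step is proving $\boldsymbol{h}_h\circ\boldsymbol{y}_h\wk\boldsymbol{h}$ in $L^2(\Omega;\R^3)$: the rigidity-based estimates from Theorem \ref{thm:comp-lower-bound} yield the uniform convergence $\boldsymbol{y}_h\to\boldsymbol{z}_0$ at rate $h^{\beta/p-1}$, but since $\boldsymbol{h}_h$ is merely $L^2$, one cannot directly compare $\boldsymbol{h}_h\circ\boldsymbol{y}_h$ with $\boldsymbol{h}_h\circ\boldsymbol{z}_h$. A density argument, approximating $\boldsymbol{h}_h$ by fields equi-continuous in $\boldsymbol{\xi}$, permits the comparison and allows one to invoke \eqref{eqn:load-h}. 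Combined with the $L^r$-strong convergence of $\boldsymbol{m}_h\circ\boldsymbol{y}_h$, this identifies the limit.

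Finally, to conclude, pick any $(\widetilde{\boldsymbol{u}},\widetilde{v},\widetilde{\boldsymbol{\lambda}})\in\mathcal{A}$ and construct a recovery sequence $(\widetilde{\boldsymbol{y}}_h,\widetilde{\boldsymbol{m}}_h)\in\mathcal{Q}_h$ by Theorem \ref{thm:optimality-lower-bound}. Combining the liminf inequality \eqref{eqn:lb-liminf-ineq}, the convergence of $L_h$ from the previous step, Theorem \ref{thm:optimality-lower-bound}, and the almost-minimality, one chains
\begin{equation*}
F(\boldsymbol{u},v,\boldsymbol{\lambda})\le\liminf_{h\to 0^+} F_h(\boldsymbol{y}_h,\boldsymbol{m}_h)\le\lim_{h\to 0^+} F_h(\widetilde{\boldsymbol{y}}_h,\widetilde{\boldsymbol{m}}_h)=F(\widetilde{\boldsymbol{u}},\widetilde{v},\widetilde{\boldsymbol{\lambda}}).
\end{equation*}
The arbitrariness of $(\widetilde{\boldsymbol{u}},\widetilde{v},\widetilde{\boldsymbol{\lambda}})$ then characterises $(\boldsymbol{u},v,\boldsymbol{\lambda})$ as a minimizer of $F$ on $\mathcal{A}$.
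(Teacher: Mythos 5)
The paper does not actually prove Corollary \ref{cor:convergence-minimizers}; it explicitly defers the proof to a forthcoming publication and only provides a commentary on the intended strategy, so there is no paper proof to compare against verbatim. Against that commentary, your overall architecture (a priori bound by absorption, compactness, passage to the limit in $L_h$, conclusion by $\Gamma$-convergence) is the right one, and the estimate for the Zeeman term in the a priori bound step is correct. But there is a genuine gap in how you pass to the limit in the Zeeman term.

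You change variables via $\boldsymbol{y}_h$ to rewrite the Zeeman contribution as $\int_\Omega(\boldsymbol{h}_h\circ\boldsymbol{y}_h)\cdot(\boldsymbol{m}_h\circ\boldsymbol{y}_h)$ and then claim $\boldsymbol{h}_h\circ\boldsymbol{y}_h\wk\boldsymbol{h}$ in $L^2(\Omega;\R^3)$ by a density argument. This does not work as stated: the family $(\boldsymbol{h}_h)$ varies with $h$ and is only controlled in $L^2$ through \eqref{eqn:load-h}, so there is no modulus of continuity (uniform in $h$) that would let you compare $\boldsymbol{h}_h\circ\boldsymbol{y}_h$ with $\boldsymbol{h}_h\circ\boldsymbol{z}_h$, and mollifying each $\boldsymbol{h}_h$ by a fixed $\rho_\delta$ gives no uniform-in-$h$ error estimate. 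The route indicated in the paper's commentary avoids the composition with $\boldsymbol{y}_h$ entirely: write
\begin{equation*}
\frac{1}{h}\int_{\Omega^{\boldsymbol{y}_h}}\boldsymbol{h}_h\cdot\boldsymbol{m}_h\,\d\boldsymbol{\xi}
=\int_{\R^3}(\boldsymbol{h}_h\circ\boldsymbol{z}_h)\cdot\mathcal{M}_h(\boldsymbol{y}_h,\boldsymbol{m}_h)\,\d\boldsymbol{x},
\end{equation*}
and pass to the limit by weak--strong pairing, using \eqref{eqn:load-h} for the weak factor and the strong $L^2$-convergence $\mathcal{M}_h(\boldsymbol{y}_h,\boldsymbol{m}_h)\to\chi_\Omega\boldsymbol{\lambda}$ (which in this setting holds without composing with rigid motions, thanks to the boundary condition). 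This is what the paper means by ``exploiting the convergence of $\mathcal{M}_h$''; you should reorganize the argument around this identity.

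Two further points are glossed over. First, the a priori bound is circular as stated: your ``rigidity-based control of the averaged displacements'' presupposes the very bound $E^{\mathrm{el}}_h\leq C$ you are trying to prove. One must set up the argument as a bootstrap (as in \cite{lecumberry.mueller}), first applying the rigidity estimate and the boundary condition to control $|\boldsymbol{Q}_h-\boldsymbol{I}|$ and $|\boldsymbol{c}_h|$ in terms of $(E^{\mathrm{el}}_h)^{1/2}$, then deriving $\|\boldsymbol{u}_h\|_{W^{1,2}}+\|v_h\|_{W^{1,2}}\leq C(1+(E^{\mathrm{el}}_h)^{1/2})$, and only then closing by Young's inequality. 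Second, your recovery sequence relies on a ``local modification of the ansatz near $\partial S\times I$''; this is delicate because the modified deformation must still satisfy $\det\nabla_h\boldsymbol{y}_h=1$ pointwise. The paper's suggestion is cleaner: work first with $\boldsymbol{u}$, $v$ compactly supported in $S$ (for which the recovery sequence of Theorem \ref{thm:optimality-lower-bound} already lies in $\mathcal{Q}_h$), and then argue by density in $\mathcal{A}$. Finally, ``$\boldsymbol{Q}_h\to\boldsymbol{I}$, $\boldsymbol{c}_h\to\boldsymbol{0}$'' is not enough; you need the quantitative rates $|\boldsymbol{Q}_h-\boldsymbol{I}|=O(h^{\beta/2-1})$, $|\boldsymbol{c}_h|=O(h^{\beta/2-1})$ (cf.\ \eqref{eqn:Qh-I-estimate}--\eqref{eqn:ch-estimate}) so that composing with the rigid motion does not alter the limiting displacements.
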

The proof of Corollary \ref{cor:convergence-minimizers} is omitted and will be included in a forthcoming paper. Here, we limit ourselves to provide some comments.

The main difficulty in proving Corollary \ref{cor:convergence-minimizers} is to deduce the bound $E^{\text{el}}_h(\boldsymbol{y}_h,\boldsymbol{m}_h)\leq C$ starting from $F_h(\boldsymbol{y}_h,\boldsymbol{m}_h)\leq C$. The situation is analogous to the one in \cite{lecumberry.mueller}, but our case is simpler as we are dealing with the linearized regime. In contrast with \eqref{eqn:lb-comp-u}-\eqref{eqn:lb-comp-my}, in \eqref{eqn:cm-u}-\eqref{eqn:cm-my} compactness is obtained without composing with rigid motions. This is possibile thanks to the Dirichlet boundary conditions in \eqref{eqn:dirichlet-y}-\eqref{eqn:dirichlet-q} by means of the techniques developed in \cite{lecumberry.mueller}. This improved compactness allows to pass to the limit in the last term in \eqref{eqn:applied-loads-h}, which is defined in the actual space, exploiting the convergence of $\mathcal{M}_h(\boldsymbol{y}_h,\boldsymbol{m}_h)$ defined in \eqref{eqn:gamma-notation-M}. The construction of the recovery sequence requires some care, since deformations have to satisfy both the incompressibility constraint and the boundary condition in \eqref{eqn:dirichlet-y}. In our case, where the boundary condition has to be imposed on the whole $\partial S \times I$, it is sufficient to consider limiting displacements $\boldsymbol{u}$ and $v$ having compact support. Then, the deformations of the recovery sequence constructed in the proof of Theorem \ref{thm:optimality-lower-bound} automatically satisfy the desired boundary condition. If, as in \cite{lecumberry.mueller}, the boundary condition is imposed on the set $\Gamma \times I$ for some $\Gamma \subset \partial S$, then the existence of a recovery sequence is a delicate issue. For more general boundary data, even the existence of incompressible deformations  belonging to the class in \eqref{eqn:dirichlet-y} is not guaranteed.

In analogy with \cite[Theorem 2]{friesecke.james.mueller2}, for normal forces only, i.e. for $\boldsymbol{f}_h=\boldsymbol{0}'$, having null average on $S$ and in absence of external magnetic fields, i.e. for $\boldsymbol{h}_h=\boldsymbol{0}$, it is possible to prove the convergence of minimizers of $F_h$  without imposing any boundary condition, but considering  a limiting functional that additionaly depends on a constant rotation. In this case, one can show that \eqref{eqn:cm-u} holds with $\boldsymbol{u}=\boldsymbol{0}'$. This could justify the choice, analogous to the one adopted in \cite{friesecke.james.mueller2}, to define the limiting energy $E$ in \eqref{eqn:energy_E} as a functional depending on $v$ and $\boldsymbol{\lambda}$ only.
}

\section{Compactness and lower bound}
\label{sec:comp-lb}

In this section we show that sequences of admissible states with equi-bounded energies enjoy suitable compactness properties and that the functional \eqref{eqn:energy_Eh} provides a lower bound for the asymptotic behavior of the magnetoelastic energies as $h\to 0^+$.

\subsection{Compactness}
\label{subsec:comp}

The proof of the compactness of deformations with  equi-bounded energies relies on the method of approximation by rotations in thin domains developed in \cite{friesecke.james.mueller2}, suitably adapted to our growth assumption \eqref{eqn:mixed_growth}. This  technique allows to approximate the scaled gradient of deformations with maps taking values in $SO(3)$. These are constructed explicitly, first locally, by means of mollification, and then globally, using  partitions of unity and projecting onto the set of proper rotations. Here, the rigidity estimate given by Proposition \ref{prop:rigidity-estimate} is fundamental in order to ensure that the maps obtained from the local approximation are in an appropriate neighborhood of the set of rotations for $h$ small enough. The invariance property of the rigidity constant (see Remark \ref{rem:rigidity-constant}) is also essential in the argument.

\begin{lemma}[Approximation by rotations]
\label{lem:approx-rot}
Let $\boldsymbol{y}\in W^{1,p}(\Omega;\R^3)$ and set $\boldsymbol{F}_h \coloneqq\nabla_h\boldsymbol{y}$. Define
\begin{equation*}
    \kappa_h\coloneqq\int_\Omega \dist^2(\boldsymbol{F}_h;SO(3))\vee \dist^p(\boldsymbol{F}_h;SO(3))\,\d \boldsymbol{x}.
\end{equation*}
Suppose that $\kappa_h/h^p\to 0$, as $h \to 0^+$. Then, there  exist a map $\boldsymbol{R}_h \in W^{1,p}(S;SO(3))\cap C^\infty (S;SO(3))$ and a constant rotation $\boldsymbol{Q}_h \in SO(3)$ such that, for $q \in \{2,p\}$ and $h \ll 1$, we have the following:
\begin{align}
    \label{eqn:approx-rot1} 
    \int_\Omega |\boldsymbol{F}_h-\boldsymbol{R}_h|^q\,\d\boldsymbol{x}\leq C \kappa_h, \hspace{2 cm} \int_S |\nabla '\boldsymbol{R}_h|^q\,\d\boldsymbol{x}'\leq C \frac{\kappa_h}{h^q},\\
    \label{eqn:approx-rot2} \int_S |\boldsymbol{R}_h-\boldsymbol{Q}_h|^q\,\d\boldsymbol{x}'\leq C \frac{\kappa_h}{h^q}, \hspace{1,6 cm} \int_\Omega |\boldsymbol{F}_h-\boldsymbol{Q}_h|^q\,\d\boldsymbol{x} \leq C \frac{\kappa_h}{h^q}.
\end{align}
\end{lemma}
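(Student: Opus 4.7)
The plan is to adapt the two-scale approximation-by-rotations of \cite{friesecke.james.mueller2} to the mixed growth in \eqref{eqn:mixed_growth}. First I would cover $S$ by a family $\{Q_{h,k}\}_k$ of squares of side $h$ with bounded overlap. Applying Proposition \ref{prop:rigidity-estimate} to the unscaled deformation $\boldsymbol{y}\circ\boldsymbol{z}_h^{-1}$ on each cube $Q_{h,k}\times(-h/2,h/2)\subset\Omega_h$, and rewriting the result in terms of $\boldsymbol{F}_h$ via the chain rule, I would produce rotations $\boldsymbol{R}_{h,k}\in SO(3)$. By Remark \ref{rem:rigidity-constant} the rigidity constant is uniform over these homothetic cubes, and by Remark \ref{rem:constant-rotation} a single $\boldsymbol{R}_{h,k}$ can be chosen simultaneously for $q=2$ and $q=p$, so that
\begin{equation*}
\int_{Q_{h,k}\times I}|\boldsymbol{F}_h-\boldsymbol{R}_{h,k}|^q\,\d\boldsymbol{x}\leq C\int_{Q_{h,k}\times I}\dist^q(\boldsymbol{F}_h;SO(3))\,\d\boldsymbol{x},\quad q\in\{2,p\}.
\end{equation*}

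Next I would glue these piecewise rotations into a smooth field. Pick a smooth partition of unity $\{\varphi_{h,k}\}$ subordinate to a slight enlargement of the cover with $|\nabla'\varphi_{h,k}|\leq C/h$, and set $\widetilde{\boldsymbol{R}}_h:=\sum_k \varphi_{h,k}\,\boldsymbol{R}_{h,k}$. The crucial step, which I expect to be the main obstacle of the proof, is to ensure that $\widetilde{\boldsymbol{R}}_h$ falls in a tubular neighborhood of $SO(3)$ on which the nearest-point projection $\pi$ onto $SO(3)$ is smooth. For neighboring indices $k,k'$, applying local rigidity on the union of the two adjacent cubes together with the triangle inequality gives $|\boldsymbol{R}_{h,k}-\boldsymbol{R}_{h,k'}|^p\leq Ch^{-2}\kappa_h$, whence $\dist(\widetilde{\boldsymbol{R}}_h;SO(3))\leq C(\kappa_h/h^2)^{1/p}\to 0$ uniformly in $\boldsymbol{x}'$, thanks to $\kappa_h/h^p\to 0$. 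I would then define $\boldsymbol{R}_h:=\pi\circ\widetilde{\boldsymbol{R}}_h\in C^\infty(S;SO(3))$.

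The first bound in \eqref{eqn:approx-rot1} would then follow from the triangle inequality, the Lipschitz continuity of $\pi$ on the tubular neighborhood, and summation of the local rigidity estimates over the cover with its finite overlap. For the gradient bound, I would exploit $\sum_k\nabla'\varphi_{h,k}=0$ to write, on the support of $\varphi_{h,k_0}$, $\nabla'\widetilde{\boldsymbol{R}}_h=\sum_k \nabla'\varphi_{h,k}\,(\boldsymbol{R}_{h,k}-\boldsymbol{R}_{h,k_0})$, estimate each term via $|\nabla'\varphi_{h,k}|\leq C/h$ together with the adjacent-rotation bound above, and apply the chain rule with $\pi$; summation over $k_0$ yields the second estimate in \eqref{eqn:approx-rot1}. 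Finally, I would set $\boldsymbol{Q}_h:=\pi\big(\dashint_S \boldsymbol{R}_h\,\d\boldsymbol{x}'\big)$: Poincar\'e's inequality applied to $\boldsymbol{R}_h-\dashint_S \boldsymbol{R}_h\,\d\boldsymbol{x}'$ combined with the Lipschitz continuity of $\pi$ gives the first bound in \eqref{eqn:approx-rot2}, while the second follows by triangle inequality together with the first bound in \eqref{eqn:approx-rot1}.
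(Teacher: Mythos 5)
Your proposal is correct and follows essentially the same route as the paper: the paper cites \cite[Theorem 6]{friesecke.james.mueller2} for the construction of the smooth auxiliary field and its sup-distance bound to $SO(3)$ (their formula $(77)$), then projects and averages, whereas you rederive precisely that construction — cubes of side $h$, local rigidity with the shared-rotation observation of Remark \ref{rem:constant-rotation}, partition of unity, nearest-point projection, Poincar\'e — before projecting the mean onto $SO(3)$. The only cosmetic deviation is that your sup-distance estimate carries exponent $1/p$ where the paper's cited bound has exponent $1/2$; both tend to zero since $\kappa_h/h^2\to 0$, so either suffices to land in the tubular neighborhood.
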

\begin{proof}
Consider a tubular neighborhood $\mathcal{O}$ of $SO(3)$ in $\rtt$ such that the nearest-point projection $\boldsymbol{\Pi} \colon \mathcal{O}\to SO(3)$ is defined and smooth. 

Arguing as in \cite[Theorem 6]{friesecke.james.mueller2}, we construct a map $\widehat{\boldsymbol{R}}_h\in W^{1,p}(S:\rtt)\cap C^\infty(S;\rtt)$ such that, for $q \in \{2,p\}$and $h\ll1$, the following estimates hold:
\begin{equation}
    \label{eqn:R-tilda-2}
    \int_\Omega |\boldsymbol{F}_h-\widehat{\boldsymbol{R}}_h|^q\,\d\boldsymbol{x}\leq  C \,\kappa_h, \hspace{1,6cm} \int_S |\nabla' \widehat{\boldsymbol{R}}_h|^q\,\d\boldsymbol{x}'\leq C\, \frac{\kappa_h}{h^q}.
\end{equation}
{\MMM
To prove \eqref{eqn:R-tilda-2}, we apply the argument of \cite[Theorem 6]{friesecke.james.mueller2} twice, once with $q=2$ and again with $q=p$. At this point, we stress that the constant rotation obtained by applying the rigidity estimate given by Proposition \ref{prop:rigidity-estimate} locally in cubes of side of order $h$ (see \cite[formula $(68)$]{friesecke.james.mueller2}) does not depend on the value of the exponent $q$ (see Remark \ref{rem:constant-rotation}). Regardless, by construction, the resulting map $\widehat{\boldsymbol{R}}_h$ does not depend on the value of $q$.}

Note that, by \cite[formula $(77)$]{friesecke.james.mueller2}, we also have
\begin{equation}
    \label{eqn:R-tilda-distance}
    \sup_{\boldsymbol{x}' \in S} \dist^2(\widehat{\boldsymbol{R}}_h (\boldsymbol{x}') ;SO(3)) \leq C \frac{\kappa_h}{h^2}.
\end{equation}
From this, since the right hand side tends to zero, as $h \to 0^+$, we deduce that, for $h\ll1$, $\widehat{\boldsymbol{R}}_h(\boldsymbol{x}') \in \mathcal{O}$ for every $\boldsymbol{x}'\in S$. Hence, we can define the smooth map $\boldsymbol{R}_h \coloneqq \boldsymbol{\Pi}\circ \widehat{\boldsymbol{R}}_h$ which, by definition, takes values in $SO(3)$. 

By construction $|\widehat{\boldsymbol{R}}_h-\boldsymbol{R}_h|=\dist(\widehat{\boldsymbol{R}}_h;SO(3))$ in $S$. Using this fact and the firs estimate in \eqref{eqn:R-tilda-2}, we obtain
\begin{equation*}
    \begin{split}
        \int_\Omega |\boldsymbol{F}_h-\boldsymbol{R}_h|^q\,\d\boldsymbol{x}&\leq C \int_\Omega |\boldsymbol{F}_h-\widehat{\boldsymbol{R}}_h|^q\,\d\boldsymbol{x} + C \int_\Omega \dist^q(\widehat{\boldsymbol{R}}_h;SO(3))\,\d\boldsymbol{x}\\
        & \leq C \int_\Omega |\boldsymbol{F}_h-\widehat{\boldsymbol{R}}_h|^q\,\d\boldsymbol{x} + C \int_\Omega \dist^q(\boldsymbol{F}_h;SO(3))\,\d\boldsymbol{x} \leq C \kappa_h.
    \end{split}
\end{equation*}
Since $|\nabla \boldsymbol{\Pi}|\leq 1$ in $\rtt$, by the second estimate in  \eqref{eqn:R-tilda-2}, we have 
\begin{equation*}
    \int_S |\nabla' \boldsymbol{R}_h|^q \,\d\boldsymbol{x}' \leq \int_S |\nabla' \widehat{\boldsymbol{R}}_h|^q \,\d\boldsymbol{x}'\leq C \frac{\kappa_h}{h^q}.
\end{equation*}
Thus \eqref{eqn:approx-rot1} is proven.
Denote by $\boldsymbol{U}_h \in \rtt$ the integral average of $\boldsymbol{R}_h$ over $S$. From the second estimate in \eqref{eqn:approx-rot1}, using the Poincaré inequality, we obtain
\begin{equation}
\label{eqn:Rh-Uh}
    \int_S |\boldsymbol{R}_h-\boldsymbol{U}_h|^q\,\d\boldsymbol{x}'\leq C \int_S |\nabla'\boldsymbol{R}_h|^q\,\d\boldsymbol{x}'\leq C \frac{\kappa_h}{h^q}.
\end{equation}
This implies that there exists at least one point $\boldsymbol{x}_0'\in S$ such that $|\boldsymbol{R}_h(\boldsymbol{x}'_0)-\boldsymbol{U}_h|\leq C h^{-q}\kappa_h$.
Since $\boldsymbol{R}_h(\boldsymbol{x}'_0) \in SO(3)$, we deduce  
\begin{equation*}
    \dist(\boldsymbol{U}_h;SO(3))\leq |\boldsymbol{R}_h(\boldsymbol{x}'_0)-\boldsymbol{U}_h| \leq C h^{-q}\kappa_h,  
\end{equation*}
so that $\boldsymbol{U}_h \in \mathcal{O}$ for $h \ll1$, because the right-hand side tends to zero, as $h \to 0^+$. 
Define $\boldsymbol{Q}_h \coloneqq \boldsymbol{\Pi}(\boldsymbol{U}_h)$. Recalling that $\boldsymbol{\Pi}$ is Lipschitz continuous with $\mathrm{Lip}(\boldsymbol{\Pi})\leq 1$, we have $|\boldsymbol{R}_h-\boldsymbol{Q}_h|\leq|\boldsymbol{R}_h-\boldsymbol{U}_h|$ in $S$. Then, by \eqref{eqn:Rh-Uh}, we obtain
\begin{equation*}
    \int_S |\boldsymbol{R}_h-\boldsymbol{Q}_h|^q\,\d\boldsymbol{x}'\leq  \int_S |\boldsymbol{R}_h-\boldsymbol{U}_h|^q\,\d\boldsymbol{x}'  \leq C \frac{\kappa_h}{h^q},
\end{equation*}
which is the first estimate in \eqref{eqn:approx-rot2}. The latter combined with the first estimate in \eqref{eqn:approx-rot1} gives the second estimate in \eqref{eqn:approx-rot2}.
\end{proof}

In the next result we show how to use Lemma \ref{lem:approx-rot} to obtain the convergence, up to rigid motions, for sequences of deformations with equi-bounded energy. {\MMM Recall the definition of $E_h$ in \eqref{eqn:energy_Eh} and the notation introduced in \eqref{eqn:averaged-displacements}.}

\begin{proposition}[Compactness of deformations]
\label{prop:comp-def}
Let $((\boldsymbol{y}_h,\boldsymbol{m}_h))_h\subset \mathcal{Q}$ be such that $E_h(\boldsymbol{y}_h,\boldsymbol{m}_h)\leq C$ for every $h>0$. Then, there exist a sequence of rotations $(\boldsymbol{Q}_h)\subset SO(3)$ and a sequence of translations vectors $(\boldsymbol{c}_h)\subset \R^3$ such that, setting $\widetilde{\boldsymbol{y}}_h \coloneqq \boldsymbol{T}_h \circ \boldsymbol{y}_h$, where $\boldsymbol{T}_h \colon \R^3 \to \R^3$ is the rigid motion given by $\boldsymbol{T}_h(\boldsymbol{\xi})\coloneqq \boldsymbol{Q}_h^\top \boldsymbol{\xi}-\boldsymbol{c}_h$ for every $\boldsymbol{\xi}\in \R^3$, we have, up to subsequences, as $h \to 0^+$:
\begin{align}
    \label{eqn:comp-y}&\text{$\widetilde{\boldsymbol{y}}_h \to \boldsymbol{z}_0$  in $W^{1,p}(\Omega;\R^3)$;}\\
    \label{eqn:comp-u}&{\MMM
    	\text{$\widetilde{\boldsymbol{u}}_h\coloneqq \mathcal{U}_h(\widetilde{\boldsymbol{y}}_h) \wk \boldsymbol{u}$  weakly  in $W^{1,2}(S;\R^2)$ for some $\boldsymbol{u} \in W^{1,2}(S;\R^2)$;}
    }\\
    \label{eqn:comp-v}&{\MMM
    \text{$\widetilde{v}_h\coloneqq \mathcal{V}_h(\widetilde{\boldsymbol{y}}_h) \to v$ in $W^{1,2}(S)$ for some $v \in W^{2,2}(S)$.}
	}
\end{align}
\end{proposition}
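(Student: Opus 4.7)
The strategy is to combine the rigidity-based approximation by rotations from Lemma~\ref{lem:approx-rot} with a careful decomposition into averaged displacements. The energy bound and the growth condition \eqref{eqn:mixed_growth} give
\begin{equation*}
\kappa_h\coloneqq \int_\Omega \dist^2(\nabla_h\boldsymbol{y}_h;SO(3))\vee \dist^p(\nabla_h\boldsymbol{y}_h;SO(3))\,\d\boldsymbol{x} \leq C h^\beta,
\end{equation*}
hence $\kappa_h/h^p\leq Ch^{\beta-p}\to 0$ since $\beta>2p$. Lemma~\ref{lem:approx-rot} then produces smooth rotations $\boldsymbol{R}_h\in W^{1,p}(S;SO(3))$ and constant rotations $\boldsymbol{Q}_h\in SO(3)$ satisfying \eqref{eqn:approx-rot1}--\eqref{eqn:approx-rot2}. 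Set $\widetilde{\boldsymbol{y}}_h\coloneqq\boldsymbol{Q}_h^\top\boldsymbol{y}_h-\boldsymbol{c}_h$, with $\boldsymbol{c}_h$ to be fixed. The second estimate in \eqref{eqn:approx-rot2} gives $\|\nabla_h\widetilde{\boldsymbol{y}}_h-\boldsymbol{I}\|_{L^p(\Omega)}^p\leq Ch^{\beta-p}\to 0$; splitting the gradient into horizontal and vertical components, and observing that $\partial_3(\widetilde{\boldsymbol{y}}_h-\boldsymbol{z}_h)$ gains an extra factor $h$ relative to the $\nabla_h$-derivative, this implies $\nabla(\widetilde{\boldsymbol{y}}_h-\boldsymbol{z}_h)\to \boldsymbol{O}$ in $L^p(\Omega;\rtt)$. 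Choosing $\boldsymbol{c}_h$ so that $\int_\Omega(\widetilde{\boldsymbol{y}}_h-\boldsymbol{z}_h)\,\d\boldsymbol{x}=\boldsymbol{0}$, Poincar\'e's inequality together with $\boldsymbol{z}_h\to \boldsymbol{z}_0$ in $W^{1,p}$ yields \eqref{eqn:comp-y}.

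Set $\widetilde{\boldsymbol{R}}_h\coloneqq \boldsymbol{Q}_h^\top\boldsymbol{R}_h$ and $\boldsymbol{A}_h\coloneqq h^{-\beta/2+1}(\widetilde{\boldsymbol{R}}_h-\boldsymbol{I})$. The $L^2$-bounds in Lemma~\ref{lem:approx-rot} deliver $\|\boldsymbol{A}_h\|_{L^2(S)}+\|\nabla'\boldsymbol{A}_h\|_{L^2(S)}\leq C$, so up to subsequences $\boldsymbol{A}_h\wk \boldsymbol{A}$ weakly in $W^{1,2}(S;\rtt)$ and strongly in every $L^q(S)$, $q<\infty$. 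Combining the $L^p$-bounds from Lemma~\ref{lem:approx-rot} with Morrey's embedding $W^{1,p}(S)\hookrightarrow L^\infty(S)$ (valid for $p>2$) gives $\|\widetilde{\boldsymbol{R}}_h-\boldsymbol{I}\|_{L^\infty(S)}\leq Ch^{\beta/p-1}\to 0$. Since $\widetilde{\boldsymbol{R}}_h\in SO(3)$, we have $\sym(\widetilde{\boldsymbol{R}}_h-\boldsymbol{I})=-\tfrac{1}{2}(\widetilde{\boldsymbol{R}}_h-\boldsymbol{I})^\top(\widetilde{\boldsymbol{R}}_h-\boldsymbol{I})$, and combining the $L^\infty$- with the $L^2$-bounds yields $\|\sym\boldsymbol{A}_h\|_{L^2(S)}\leq Ch^{\beta/p-1}\to 0$, so the limit $\boldsymbol{A}$ is skew-symmetric. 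For the vertical displacement, the decomposition
\begin{equation*}
\nabla'\widetilde{v}_h = h^{-\beta/2+1}\bigl(\widetilde{R}_h^{31},\widetilde{R}_h^{32}\bigr)+h^{-\beta/2+1}\int_I \bigl(\nabla'\widetilde{y}_h^3-(\widetilde{R}_h^{31},\widetilde{R}_h^{32})\bigr)\,\d x_3
\end{equation*}
has first summand equal to a row of $\boldsymbol{A}_h$, hence strongly convergent in $L^2(S)$, and second summand bounded by $Ch$ in $L^2(S)$ via $\|\nabla_h\widetilde{\boldsymbol{y}}_h-\widetilde{\boldsymbol{R}}_h\|_{L^2(\Omega)}\leq Ch^{\beta/2}$. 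Choosing $\boldsymbol{c}_h$ also so that $\int_S \widetilde{v}_h\,\d\boldsymbol{x}'=0$, Poincar\'e yields \eqref{eqn:comp-v}, with $\nabla' v$ equal to a row of $\boldsymbol{A}\in W^{1,2}(S)$, whence $v\in W^{2,2}(S)$.

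The main obstacle is \eqref{eqn:comp-u}, because the analogous splitting
\begin{equation*}
\nabla'\widetilde{\boldsymbol{u}}_h = h^{-\beta/2}\int_I\bigl(\nabla'\widetilde{\boldsymbol{y}}_h'-\widetilde{\boldsymbol{R}}_h''\bigr)\,\d x_3 + h^{-1}\boldsymbol{A}_h''
\end{equation*}
features the factor $h^{-1}$ in front of the only weakly $L^2$-bounded quantity $\boldsymbol{A}_h''$, which threatens the skew part. Two ingredients resolve this. First, the assumption $\beta>2p$ together with the $L^\infty$-bound on $\widetilde{\boldsymbol{R}}_h-\boldsymbol{I}$ yields $\|h^{-1}\sym\boldsymbol{A}_h''\|_{L^2(S)}\leq Ch^{\beta/p-2}\to 0$, so that $\|\sym\nabla'\widetilde{\boldsymbol{u}}_h\|_{L^2(S)}\leq C$. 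Second, I refine the choice of rigid motion by replacing $\boldsymbol{Q}_h$ with $\boldsymbol{Q}_h\boldsymbol{R}^{\alpha_h}$ for a suitable planar angle $\alpha_h\in\R$, and further adjust $\boldsymbol{c}_h$, so as to normalize $\widetilde{\boldsymbol{u}}_h$ to have vanishing mean and vanishing average skew gradient on $S$. The two-dimensional Korn inequality then delivers $\|\widetilde{\boldsymbol{u}}_h\|_{W^{1,2}(S)}\leq C\|\sym\nabla'\widetilde{\boldsymbol{u}}_h\|_{L^2(S)}\leq C$, and weak $W^{1,2}$-compactness yields \eqref{eqn:comp-u}. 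The invariance of $E_h$ under rigid motions recorded in Remark~\ref{rem:invariance} ensures that all these refinements leave the energy bound on $(\widetilde{\boldsymbol{y}}_h,\widetilde{\boldsymbol{m}}_h)$ unaffected.
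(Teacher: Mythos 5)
Your argument follows the paper's proof essentially step by step: the energy bound feeds Lemma~\ref{lem:approx-rot}, the translation $\boldsymbol{c}_h$ zeroes the mean of $\widetilde{\boldsymbol{y}}_h-\boldsymbol{z}_h$, and the identity $\sym(\widetilde{\boldsymbol{R}}_h-\boldsymbol{I})=-\tfrac12(\widetilde{\boldsymbol{R}}_h-\boldsymbol{I})^\top(\widetilde{\boldsymbol{R}}_h-\boldsymbol{I})$ (valid because $\widetilde{\boldsymbol{R}}_h\in SO(3)$) is used to tame the a priori unbounded term $h^{-1}\boldsymbol{A}_h''$ in the decomposition of $\nabla'\widetilde{\boldsymbol{u}}_h$. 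The proof is correct. Two points merit comment. To control $h^{-1}\sym\boldsymbol{A}_h$ you combine the Morrey $L^\infty$ bound $\|\widetilde{\boldsymbol{R}}_h-\boldsymbol{I}\|_{L^\infty(S)}\leq Ch^{\beta/p-1}$ with the $L^2$ bound and invoke $\beta>2p$; the paper instead renormalizes $\boldsymbol{C}_h:=h^{-\beta+2}\sym(\widetilde{\boldsymbol{R}}_h-\boldsymbol{I})=-\tfrac12\boldsymbol{A}_h^\top\boldsymbol{A}_h$, shows $\|\boldsymbol{C}_h\|_{L^2}$ is bounded (indeed convergent) via the compact embedding $W^{1,2}(S)\hookrightarrow L^4(S)$, and only needs $\beta>4$; both routes are fine, yours gives an explicit rate while the paper's is marginally more economical in the scaling hypothesis. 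More substantively, you correctly flag that a bound on $\|\sym\nabla'\widetilde{\boldsymbol{u}}_h\|_{L^2(S)}$ alone does not yield a $W^{1,2}$ bound on $\widetilde{\boldsymbol{u}}_h$ via Korn: without further normalization, Korn's inequality only produces $\|\nabla'\widetilde{\boldsymbol{u}}_h-\boldsymbol{W}_h\|_{L^2(S)}\leq C$ for some constant skew $\boldsymbol{W}_h$ which could diverge. Your fix — composing with an additional in-plane rotation $\boldsymbol{R}^{\alpha_h}$ with $\alpha_h=O(h^{\beta/2-1})\to 0$ so that the averaged skew gradient of $\widetilde{\boldsymbol{u}}_h$ vanishes, while re-checking that this perturbation preserves the other estimates and \eqref{eqn:comp-y} — is exactly what the freedom to choose $\boldsymbol{Q}_h$ in the proposition's statement is for, and it is left implicit in the paper's phrase ``applying the Korn and the Poincaré inequalities.'' Making this normalization explicit is a useful clarification, not a departure from the paper's strategy.
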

\begin{proof}
We argue along the lines of \cite[Lemma 1]{friesecke.james.mueller2}. For every $h>0$, define $\boldsymbol{F}_h \coloneqq \nabla_h \boldsymbol{y}_h$. Due to the growth assumption \eqref{eqn:mixed_growth} and the scaling $\beta>2p$, we are in a position  to apply Lemma \ref{lem:approx-rot}. Thus, for every $h \ll 1$, we obtain a map $\boldsymbol{R}_h \in W^{1,p}(S;SO(3))$ and a constant rotation $\boldsymbol{Q}_h \in SO(3)$ such that \eqref{eqn:approx-rot1} and \eqref{eqn:approx-rot2} hold. Consider the rigid motion $\boldsymbol{T}_h$ given by $\boldsymbol{T}_h(\boldsymbol{\xi}) \coloneqq \boldsymbol{Q}_h^\top \boldsymbol{\xi}-\boldsymbol{c}_h$ for every $\boldsymbol{\xi} \in \R^3$, where $\boldsymbol{c}_h \in \R^3$. Define  $\widetilde{\boldsymbol{y}}_h \coloneqq \boldsymbol{T}_h\circ \boldsymbol{y}_h$, so that $\widetilde{\boldsymbol{F}}_h \coloneqq \nabla_h \widetilde{\boldsymbol{y}}_h=\boldsymbol{Q}_h^\top \boldsymbol{F}_h$, and set $\widetilde{\boldsymbol{R}}_h \coloneqq\boldsymbol{Q}_h^\top \boldsymbol{R}_h$.  For $q \in \{2,p\}$, by \eqref{eqn:approx-rot1} and \eqref{eqn:approx-rot2}, we have the following estimates:
\begin{align}
    \label{eqn:approx-rot-tilda1} \int_\Omega |\widetilde{\boldsymbol{F}}_h-\widetilde{\boldsymbol{R}}_h|^q\,\d\boldsymbol{x}\leq C h^\beta,& \hspace{1,6 cm} \int_S |\nabla '\widetilde{\boldsymbol{R}}_h|^q\,\d\boldsymbol{x}'\leq C h^{\beta-q},\\
    \label{eqn:approx-rot-tilda2} \int_S |\widetilde{\boldsymbol{R}}_h-\boldsymbol{I}|^q\,\d\boldsymbol{x}'\leq C h^{\beta -q},& \hspace{15 mm} \int_\Omega |\widetilde{\boldsymbol{F}}_h-\boldsymbol{I}|^q\,\d\boldsymbol{x} \leq C h^{\beta-q}.
\end{align}
We choose $\boldsymbol{c}_h$ so that $\widetilde{\boldsymbol{y}}_h - \boldsymbol{z}_h$ has zero average over $\Omega$, namely $\boldsymbol{c}_h \coloneqq \dashint_{\Omega}(\boldsymbol{Q}^\top_h \boldsymbol{y}_h-\boldsymbol{z}_h)\,\d\boldsymbol{x}$. By the second estimate in \eqref{eqn:approx-rot-tilda2} with $q=p$, we have
\begin{equation*}
    \int_\Omega |\nabla \widetilde{\boldsymbol{y}}_h-\nabla\boldsymbol{z}_h|^p\,\d\boldsymbol{x}\leq \int_\Omega |\nabla_h \widetilde{\boldsymbol{y}}_h-\nabla_h\boldsymbol{z}_h|^p\,\d\boldsymbol{x}=\int_\Omega |\widetilde{\boldsymbol{F}}_h-\boldsymbol{I}|^p\,\d\boldsymbol{x}\leq C h^{\beta - p},
\end{equation*}
and applying the Poincaré inequality we deduce that $\widetilde{\boldsymbol{y}}_h-\boldsymbol{z}_h\to 0$ in $W^{1,p}(\Omega;\R^3)$. Thus \eqref{eqn:comp-y} is proven.

We claim that there exists a map $\boldsymbol{A} \in W^{1,2}(S;\rtt)$ such that, up to subsequences, we have, as $h \to 0^+$:
\begin{align}
    \label{eqn:A_h-conv}
    \boldsymbol{A}_h \coloneqq \frac{1}{h^{\beta/2-1}}(\widetilde{\boldsymbol{R}}_h-\boldsymbol{I}) \wk \boldsymbol{A} \hskip 5pt \mbox{in $W^{1,2}(S;\rtt)$}; \hspace{7mm}\\
    \label{eqn:B_h-conv}
    \boldsymbol{B}_h\coloneqq\frac{1}{h^{\beta/2-1}}(\widetilde{\boldsymbol{F}}_h-\boldsymbol{I})\to \boldsymbol{A} \hskip 5pt \mbox{in $L^2(\Omega;\rtt)$}; \hspace{10mm}\\
    \label{eqn:C_h-conv}
    \boldsymbol{C}_h\coloneqq\frac{1}{h^{\beta-2}} \sym(\widetilde{\boldsymbol{R}}_h-\boldsymbol{I})\to \frac{1}{2}\boldsymbol{A}^2 \hskip 5pt \mbox{in $L^2(S;\rtt)$} \hspace{3mm}.
\end{align}
By the second estimate in \eqref{eqn:approx-rot-tilda1} and the first one in \eqref{eqn:approx-rot-tilda2}, both for $q=2$, we have $||\boldsymbol{A}_h||_{W^{1,2}(S;\R^3)}\leq C$ for every $h>0$ and hence \eqref{eqn:A_h-conv} holds. To see \eqref{eqn:B_h-conv}, note that $\boldsymbol{B}_h=h^{-\beta/2+1}(\widetilde{\boldsymbol{F}}_h-\widetilde{\boldsymbol{R}}_h)+\boldsymbol{A}_h$ and use the first estimate in \eqref{eqn:approx-rot-tilda1} with $q=2$ and \eqref{eqn:A_h-conv}. For the proof of \eqref{eqn:C_h-conv}, we first check that $-h^{\beta/2-1}\,\boldsymbol{A}_h^\top \boldsymbol{A}_h=\boldsymbol{A}_h^\top+\boldsymbol{A}_h$. After extracting a further subsequence, so that  $\boldsymbol{A}_h\to\boldsymbol{A}$ almost everywhere in $S$, we pass to the limit in the previous identity and we deduce that $\boldsymbol{A}\in \mathrm{Skew}(3)$ almost everywhere in $S$. Then, applying the Dominated convergence Theorem, we obtain
\begin{equation*}
    \boldsymbol{C}_h=\frac{1}{2h^{\beta/2-1}}(\boldsymbol{A}_h^\top+\boldsymbol{A}_h)=-\frac{1}{2}\boldsymbol{A}_h^\top \boldsymbol{A}_h \to -\frac{1}{2}\boldsymbol{A}^\top  \boldsymbol{A}=\frac{1}{2}\boldsymbol{A}^2 \hskip 5pt \mbox{in $L^2(S;\rtt)$}.
\end{equation*}
We can now prove the compactness of horizontal and vertical displacements. Note that, by the choice of $\boldsymbol{c}_h$, both $\widetilde{\boldsymbol{u}}_h$ and $\widetilde{v}_h$ have zero average on $S$. From the first estimate in \eqref{eqn:approx-rot-tilda1} with $q=2$ and \eqref{eqn:C_h-conv}, we obtain
\begin{equation*}
    \begin{split}
        ||\sym \nabla'\widetilde{{\boldsymbol{u}}}_h||_{L^2(S;\rtwtw)} & \leq \frac{1}{h^{\beta/2}} ||\sym \widetilde{\boldsymbol{F}}_h''-\boldsymbol{I}''||_{L^2(S;\rtwtw)} \\
        & \leq \frac{1}{h^{\beta/2}} ||\sym \,\widetilde{ \boldsymbol{F}}_h''-\sym\, \widetilde{\boldsymbol{R}}_h''||_{L^2(S;\rtwtw)} + \frac{1}{h^{\beta/2}} ||\sym\,\widetilde{\boldsymbol{R}}''_h-\boldsymbol{I}''||_{L^2(S;\rtwtw)}\\
        &\leq \frac{1}{h^{\beta/2}} ||\widetilde{\boldsymbol{F}}_h-\widetilde{\boldsymbol{R}}_h||_{L^2(\Omega;\rtt)} + h^{\beta/2-2} ||\boldsymbol{C}_h||_{L^2(S;\rtt)} \leq C.
    \end{split}
\end{equation*}
Therefore, applying the Korn and the Poincaré inequalities, we prove \eqref{eqn:comp-u}. Finally, we have 
\begin{equation*}
    \begin{split}
     ||\nabla'\widetilde{v}_h - (A^{31},A^{32})^\top||_{L^2(S;\R^2)}&\leq ||h^{-\beta/2+1}\nabla'\widetilde{y}_h^{\,3}-(A^{31},A^{32})^\top||_{L^2(S;\R^2)}\\   
    &=||(B_h^{31},B_h^{32})^\top-(A^{31},A^{32})^\top||_{L^2(S;\R^2)},
    \end{split}
\end{equation*}
and from \eqref{eqn:B_h-conv} we conclude that $\nabla' \widetilde{v}_h \to (A^{31},A^{32})^\top$ in $L^2(S;\R^2)$. Hence, applying the Poincaré inequality and recalling that $\boldsymbol{A}\in W^{1,2}(S;\rtt)$, we deduce \eqref{eqn:comp-v}.
\end{proof}

We fix some notation that will be used in the following. Given any $a>0$, we define the two sets $S^a \coloneqq\{\boldsymbol{x}'\in S:\;\dist(\boldsymbol{x}';\partial S)>a\}$ and $S^{-a} \coloneqq \{\boldsymbol{x'}\in \R^2:\,\dist(\boldsymbol{x}',S) < a\}$. Then, for every  $b>0$, we set $\Omega^a_b\coloneqq S^a \times bI$ and $\Omega^{-a}_b\coloneqq S^{-a} \times bI$.

Our next result guarantees the  compactness of magnetizations for sequences of admissible states with equi-bounded energies. As in Proposition \ref{prop:comp-def}, compactness is obtained up to rigid motions. {\MMM Recall the definition of $E_h$ in \eqref{eqn:energy_Eh} and the notation introduced in \eqref{eqn:gamma-notation-M}.}

\begin{proposition}[Compactness of magnetizations]
\label{prop:comp-mag}
Let $((\boldsymbol{y}_h,\boldsymbol{m}_h))_h \subset \mathcal{Q}$ be such that $E^{\text{\em el}}_h(\boldsymbol{y}_h,\boldsymbol{m}_h)+E^{\text{\em exc}}_h(\boldsymbol{y}_h,\boldsymbol{m}_h)\leq C$ for every $h>0$. Set $\widetilde{\boldsymbol{y}}_h \coloneqq \boldsymbol{T}_h \circ \boldsymbol{y}_h$ and $\widetilde{\boldsymbol{m}}_h \coloneqq \boldsymbol{Q}_h^\top \boldsymbol{m}_h \circ \boldsymbol{T}_h^{-1}$, where $\boldsymbol{Q}_h\in SO(3)$ and $\boldsymbol{T}_h\colon \R^3 \to \R^3$ are given by Proposition \ref{prop:comp-def}. Then, there exists a map $\boldsymbol{\lambda} \in W^{1,2}(S;\S^2)$ such that, up to subsequences, as $h \to 0^+$:
\begin{align}
    \label{eqn:conv-m-comp-y}
    &\text{$\widetilde{\boldsymbol{m}}_h \circ \widetilde{\boldsymbol{y}}_h \to \boldsymbol{\lambda}$ in $L^r(\Omega;\R^3)$ for every $1 \leq r < \infty$,}\\
    \label{eqn:conv-mu}
    &{\MMM
    	\text{$\widetilde{\boldsymbol{\mu}}_h\coloneqq \mathcal{M}_h(\widetilde{\boldsymbol{y}}_h,\widetilde{\boldsymbol{m}}_h)\to \chi_\Omega \boldsymbol{\lambda}$ in $L^r(\R^3;\R^3)$ for every $1 \leq r < \infty$.}
    }
\end{align}
\end{proposition}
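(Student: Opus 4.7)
My overall plan is to locate a cylinder of essentially full thickness inside $\Omega^{\widetilde{\boldsymbol{y}}_h}$, extract the limit magnetization there by rescaling, and then transfer the convergence back to the reference configuration via a Lipschitz-truncation argument. A quantitative inspection of the proof of Proposition~\ref{prop:comp-def} yields $\|\widetilde{\boldsymbol{y}}_h-\boldsymbol{z}_h\|_{W^{1,p}(\Omega;\R^3)}\leq Ch^{\beta/p-1}$; the Morrey embedding ($p>3$) upgrades this to $\eta_h\coloneqq \|\widetilde{\boldsymbol{y}}_h-\boldsymbol{z}_h\|_{C^0(\overline{\Omega};\R^3)}\leq Ch^{\beta/p-1}$, and the decisive scaling $\beta>2p$ forces $\eta_h/h\to 0$. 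Homotopy invariance of the degree applied to $\boldsymbol{H}(t,\cdot)\coloneqq t\widetilde{\boldsymbol{y}}_h+(1-t)\boldsymbol{z}_h$ gives $\deg(\widetilde{\boldsymbol{y}}_h,\Omega,\boldsymbol{\xi})=\deg(\boldsymbol{z}_h,\Omega,\boldsymbol{\xi})=1$ for every $\boldsymbol{\xi}\in\Omega_h$ with $\dist(\boldsymbol{\xi},\partial\Omega_h)>\eta_h$, so, by Lemma~\ref{lem:def-config-reduced-domain}, $\boldsymbol{\xi}\in\Omega^{\widetilde{\boldsymbol{y}}_h}$. This yields the key inclusion $D_h\coloneqq S^{\eta_h}\times b_h I\subset\Omega^{\widetilde{\boldsymbol{y}}_h}$, with $b_h\coloneqq h-2\eta_h$ and $b_h/h\to 1$.

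Setting $\boldsymbol{n}_h(\boldsymbol{x})\coloneqq \widetilde{\boldsymbol{m}}_h(\boldsymbol{x}',b_h x_3)$ on $S^{\eta_h}\times I$, the exchange bound $\int_{\Omega^{\widetilde{\boldsymbol{y}}_h}}|\nabla\widetilde{\boldsymbol{m}}_h|^2\leq Ch$ translates into $\int|\nabla'\boldsymbol{n}_h|^2\leq C$ and $\int|\partial_3\boldsymbol{n}_h|^2\to 0$, with the sphere constraint $|\boldsymbol{n}_h|=1$. A diagonal Rellich--Kondrachov extraction along a compact exhaustion of $S$, combined with the lower semicontinuity of $\|\nabla'\,\cdot\,\|_{L^2}$, produces $\boldsymbol{\lambda}\in W^{1,2}(S;\S^2)$, independent of $x_3$, with $\boldsymbol{n}_h\to\boldsymbol{\lambda}$ in $L^r_\loc(S\times I;\R^3)$ for every $1\leq r<\infty$. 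The convergence \eqref{eqn:conv-mu} is then immediate: on $\{\boldsymbol{x}\in\R^3:\boldsymbol{z}_h(\boldsymbol{x})\in D_h\}=S^{\eta_h}\times (b_h/h)I$, which exhausts $\Omega$ in measure, $\widetilde{\boldsymbol{\mu}}_h(\boldsymbol{x})=\boldsymbol{n}_h(\boldsymbol{x}',hx_3/b_h)$ with $h/b_h\to 1$, while on the complementary region both integrands are uniformly bounded and the measure is vanishing.

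The main obstacle, and the key novelty, is \eqref{eqn:conv-m-comp-y}, because $\widetilde{\boldsymbol{m}}_h$ is only Sobolev and $\widetilde{\boldsymbol{y}}_h$ is not a diffeomorphism. My plan here is to couple the Lusin-type property of Proposition~\ref{prop:lipschitz-truncation} with the area formula of Proposition~\ref{prop:area-formula}. After extending $\boldsymbol{n}_h|_{K\times I}$, for a Lipschitz subdomain $K\Subset S$, to $\widehat{\boldsymbol{n}}_h\in W^{1,2}(\R^3;\R^3)$ with uniformly bounded norm, I would apply Proposition~\ref{prop:lipschitz-truncation} at scale $\lambda>0$ to obtain Lipschitz truncations $\widehat{\boldsymbol{n}}_h^\lambda$ with $\mathrm{Lip}(\widehat{\boldsymbol{n}}_h^\lambda)\leq C\lambda$ agreeing with $\widehat{\boldsymbol{n}}_h$ off a set of measure $\leq C/\lambda^2$. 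The rescaled deformation $\boldsymbol{Y}_h(\boldsymbol{x})\coloneqq(\widetilde{\boldsymbol{y}}_h(\boldsymbol{x})', \widetilde{y}^{\,3}_h(\boldsymbol{x})/b_h)$ inherits almost-everywhere injectivity from $\widetilde{\boldsymbol{y}}_h$ and satisfies $\det\nabla\boldsymbol{Y}_h=h/b_h$, bounded between positive constants; moreover $(\widetilde{\boldsymbol{m}}_h\circ\widetilde{\boldsymbol{y}}_h)(\boldsymbol{x})=\widehat{\boldsymbol{n}}_h(\boldsymbol{Y}_h(\boldsymbol{x}))$ on the full-measure set where $\widetilde{\boldsymbol{y}}_h(\boldsymbol{x})\in D_h$. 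Inserting $\widehat{\boldsymbol{n}}_h^\lambda(\boldsymbol{Y}_h(\boldsymbol{x}))$ and $\widehat{\boldsymbol{n}}_h^\lambda(\boldsymbol{x}',\widetilde{y}^{\,3}_h(\boldsymbol{x})/b_h)$ produces a three-piece decomposition of $\widehat{\boldsymbol{n}}_h(\boldsymbol{Y}_h(\boldsymbol{x}))-\boldsymbol{\lambda}(\boldsymbol{x}')$: the Lusin error is controlled in $L^2(\Omega)$ by $C/\lambda^2$ through the area formula and $\det\nabla\boldsymbol{Y}_h\sim 1$; the Lipschitz-displacement error is dominated by $C\lambda\,\|\widetilde{\boldsymbol{y}}_h-\boldsymbol{z}_0\|_{C^0}$, which vanishes for fixed $\lambda$ as $h\to 0^+$; and the residual term reduces to the convergence established in the previous paragraph. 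Letting $h\to 0^+$ first and then $\lambda\to\infty$ yields the $L^2$ convergence, which upgrades to every $L^r$ thanks to the sphere constraint $|\widetilde{\boldsymbol{m}}_h\circ\widetilde{\boldsymbol{y}}_h|=1$.
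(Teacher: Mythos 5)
Your proposal is correct and follows the same overall strategy as the paper: degree theory to place a near-full cylinder inside $\Omega^{\widetilde{\boldsymbol{y}}_h}$, a rescaled Sobolev bound plus Rellich to extract $\boldsymbol{\lambda}$, and Lipschitz truncation combined with the area formula to transfer the convergence to $\widetilde{\boldsymbol{m}}_h\circ\widetilde{\boldsymbol{y}}_h$. There are, however, two genuine technical simplifications worth noting. First, you work directly with $h$-dependent cylinders $D_h=S^{\eta_h}\times b_h I$ with $b_h=h-2\eta_h$, rather than fixing $\eps>0$ and $0<\vth<1$ as in the paper and sending $\eps\to 0^+$, $\vth\to 1^-$ only at the end; this streamlines the exposition at the price of a careful diagonal extraction. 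Second, and more substantively, you perform the Lipschitz truncation on the \emph{rescaled} map $\widehat{\boldsymbol{n}}_h$ defined on the reference geometry, paired with the rescaled deformation $\boldsymbol{Y}_h=(\widetilde{\boldsymbol{y}}_h',\widetilde{y}_h^3/b_h)$ whose Jacobian $h/b_h$ stays of order one; this makes the Lusin error $\leq C/\lambda^2$ independent of $h$, while the Lipschitz displacement error is $\leq C\lambda\,\eta_h/h\to 0$. Consequently you may pass to the limit $h\to 0^+$ first and then $\lambda\to\infty$. The paper instead truncates the physical-space extension $\widetilde{\boldsymbol{M}}_h$, whose gradient scales like $h^{-1/2}$ in $L^2$, so the two errors are $C/(\lambda^2 h)$ and $C\lambda\,\eta_h$, and it must couple the scales by taking $\lambda=h^{-s}$ with $1<s<\beta/p-1$; your choice removes this coupling and is cleaner. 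One small inaccuracy: the set $\{\boldsymbol{x}:\widetilde{\boldsymbol{y}}_h(\boldsymbol{x})\in D_h\}$ does \emph{not} have full measure in $\Omega$ for each $h$ (its measure is $(b_h/h)\lebt(S^{\eta_h})<\lebt(S)$); but its complement vanishes in measure as $h\to 0^+$, and since all quantities involved are uniformly bounded, this contributes only another negligible error term, exactly as in the second integral of \eqref{eqn:comp-m-comp-y-2} in the paper.
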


\begin{proof}
For convenience of the reader, we subdivide the proof into four steps.

\textbf{Step 1.}
{\MMM By definition of $\mathcal{Y}$, the map $\boldsymbol{y}_h \in \mathcal{Y}$ is almost everywhere injective and, since $E^{\text{el}}_h(\boldsymbol{y}_h,\boldsymbol{m}_h)\leq C$, we deduce that $\det \nabla \boldsymbol{y}_h=h$ almost everywhere in $\Omega$. Consider $\widetilde{\boldsymbol{y}}_h\coloneqq \boldsymbol{T}_h \circ \boldsymbol{y}_h$, where $\boldsymbol{T}_h$ is the rigid motion given by Proposition \ref{prop:comp-def}. By construction, $\widetilde{\boldsymbol{y}}_h$ is also almost everywhere injective with $\det \nabla \widetilde{\boldsymbol{y}}_h=h$ almost everywhere in $\Omega$.}  

Using the second estimate in \eqref{eqn:approx-rot-tilda2} with $q=p$ and applying the Poincaré inequality and the Morrey embedding, we obtain the following key estimate:
\begin{equation}
    \label{eqn:key-estimate}
    ||\widetilde{\boldsymbol{y}}_h-\boldsymbol{z}_h||_{C^0(\closure{\Omega};\R^3)}\leq C \, h^{\beta/p-1}=:\delta_h.
\end{equation}
Note that $\delta_h /h \to 0$, as $h \to 0^+$, since $\beta>2p$. Fix $\eps>0$  and $0<\vth<1$. For every $\boldsymbol{\xi} \in \Omega^\eps_{\vth h}$, we have
\begin{equation*}
    \dist(\boldsymbol{\xi};\partial \Omega_h)=\dist(\boldsymbol{\xi}';\partial S) \wedge (h/2-|\xi_3|)>\eps \wedge ((1-\vth)/2)\,h. 
\end{equation*}
For $h \ll 1$, depending only on $\eps$ and $\vth$, the right-hand side is strictly bigger than $\delta_h$, so that
\begin{equation*}
    ||\widetilde{\boldsymbol{y}}_h-\boldsymbol{z}_h||_{C^0(\closure{\Omega};\R^3)}\leq \delta_h<\dist(\boldsymbol{\xi};\partial \Omega_h)=\dist(\boldsymbol{\xi};\boldsymbol{z}_h(\partial \Omega)).
\end{equation*}
By the stability property of the degree, we have {\MMM $\boldsymbol{\xi} \notin \widetilde{\boldsymbol{y}}_h(\partial \Omega)$} and  $\deg(\widetilde{\boldsymbol{y}}_h,\Omega,\boldsymbol{\xi})=\deg(\boldsymbol{z}_h,\Omega,\boldsymbol{\xi})=1$ so that, by the solvability property, we deduce $\boldsymbol{\xi} \in \widetilde{\boldsymbol{y}}_h(\Omega)$. Therefore, we have the following (see Figure \ref{fig:subcylinder}):
\begin{equation}
    \label{eqn:subcylinder}
    \forall\, \eps>0,\:\forall\, 0<\vth<1,\:\:\exists\, \bar{h}(\eps,\vth)>0:\:\:\forall\, 0<h\leq \bar{h}(\eps,\vth), \:\:\Omega^\eps_{\vth h} \subset \Omega^{\widetilde{\boldsymbol{y}}_h}.
\end{equation}

\begin{figure}
\centering
\begin{tikzpicture}
\node[anchor=south west,inner sep=0] at (0,0) {\includegraphics[width=\textwidth]{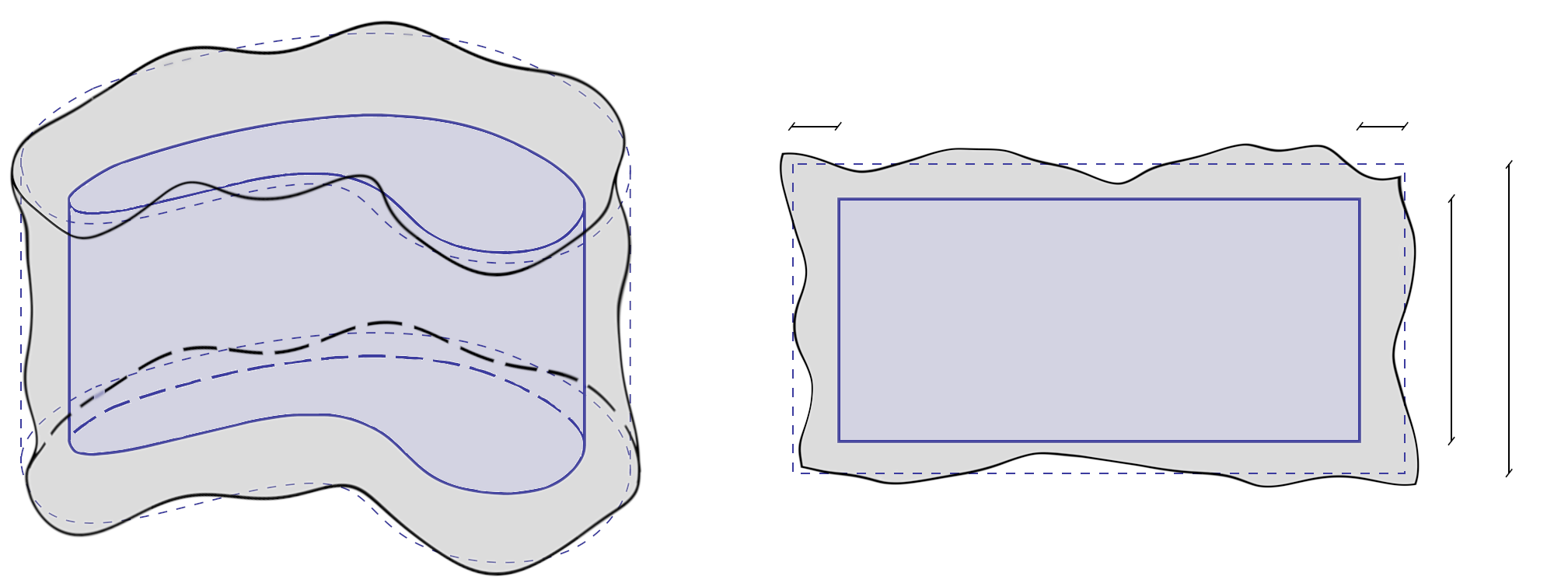}};
\node at (5.4,3.9) {\large \color{bblue}  ${\Omega^{\varepsilon}_{\vartheta h}}$};
\node at (5.75,4.75) {\large $\Omega^{\widetilde{\boldsymbol{y}}_h}$};
\node at (6.8,4.1) {\large \color{bblue} $\Omega_h$};
\node at (8.35,4.85) {$\varepsilon$};
\node at (14.15,4.85) {$\varepsilon$};
\node at (15.1,2.85) {$\vartheta h$};
\node at (15.6,2.85) {$h$};
\end{tikzpicture}
\caption{On the left, the cylinder $\Omega^\varepsilon_{\vartheta h}$ is contained in the deformed configuration $\Omega^{\widetilde{\boldsymbol{y}}_h}$. On the right, the corresponding section.}
\label{fig:subcylinder}
\end{figure}

%\begin{figure}[tb]
%    \centering
%    \includegraphics[width=\textwidth]{prova bis.png}
%\end{figure}

\textbf{Step 2.} Define $\widetilde{\boldsymbol{m}}_h \coloneqq \boldsymbol{Q}_h^\top \boldsymbol{m}_h \circ \boldsymbol{T}_h^{-1}$, where $\boldsymbol{Q}_h\in SO(3)$ is given by Proposition \ref{prop:comp-def}. By the chain-rule, $\nabla \widetilde{\boldsymbol{m}}_h=\boldsymbol{Q}_h^\top \nabla \boldsymbol{m}_h \circ \boldsymbol{T}_h^{-1} \boldsymbol{Q}_h$, and applying the change-of-variable formula we deduce
\begin{equation}
\label{eqn:exc-invariance}
\begin{split}
E^\mathrm{exc}_h(\widetilde{\boldsymbol{y}}_h,\widetilde{\boldsymbol{m}}_h)&=\frac{\MMM \alpha}{h}\int_{\Omega^{\widetilde{\boldsymbol{y}}_h}} |\nabla \widetilde{\boldsymbol{m}}_h|^2\,\d\boldsymbol{\xi}=\frac{\MMM \alpha}{h}\int_{\boldsymbol{T}_h(\Omega^{\boldsymbol{y}_h})} |\nabla \boldsymbol{m}_h \circ \boldsymbol{T}_h^{-1}|^2 \,\d\boldsymbol{\xi}\\
&=    \frac{\MMM \alpha}{h}\int_{\Omega^{\boldsymbol{y}_h}} |\nabla \boldsymbol{m}_h|^2\,\d\boldsymbol{\xi}=E^{\mathrm{exc}}_h(\boldsymbol{y}_h,\boldsymbol{m}_h).
\end{split}
\end{equation}
Fix $\eps>0$ and $0<\vth<1$. By \eqref{eqn:subcylinder}, for $h\leq \bar{h}(\eps,\vth)$ the composition  $\widehat{\boldsymbol{m}}_h \coloneqq \widetilde{\boldsymbol{m}}_h \circ \boldsymbol{z}_h \restr{\Omega^\eps_\vth}$ is meaningful and defines a map in $W^{1,2}(\Omega^\eps_\vth;\S^2)$. Using \eqref{eqn:subcylinder} and the change-of-variable formula, we have
\begin{equation}
\label{eqn:cylinder-energy}
\begin{split}
C &\geq E_h^\mathrm{exc}(\widetilde{\boldsymbol{y}}_h,\widetilde{\boldsymbol{m}}_h)=\frac{\MMM \alpha}{h}\int_{\Omega^{\widetilde{\boldsymbol{y}}}_h}|\nabla \widetilde{\boldsymbol{m}}_h|^2\,\d \boldsymbol{\xi}\geq \frac{\MMM \alpha}{h}\int_{\Omega^\eps_{\vth h}} |\nabla \widetilde{\boldsymbol{m}}_h|^2\,\d \boldsymbol{\xi}\\
&\geq \frac{\MMM \alpha}{h}\int_{\Omega^\eps_{\vth h}}|\nabla_h \widehat{\boldsymbol{m}}_h \circ \boldsymbol{z}_h^{-1}|^2\,\d \boldsymbol{\xi}={\MMM \alpha}\int_{\Omega^\eps_\vth}|\nabla_h \widehat{\boldsymbol{m}}_h|^2\,\d\boldsymbol{x}.
\end{split}
\end{equation}
From this estimate we deduce two facts. First $||\nabla \widehat{\boldsymbol{m}}_h||_{L^2(\Omega^\eps_\vth;\R^3)} \leq ||\nabla_h \widehat{\boldsymbol{m}}_h||_{L^2(\Omega^\eps_\vth;\R^3)} \leq C$ so that, up to subsequences, we have $\widehat{\boldsymbol{m}}_h \wk \boldsymbol{\lambda}$ in $W^{1,2}(\Omega^\eps_\vth;\R^3)$ for some $\boldsymbol{\lambda} \in W^{1,2}(\Omega^\eps_\vth;\S^2)$. Second, $||\partial_3 \widehat{\boldsymbol{m}}_h/h||_{L^2(\Omega^\eps_\vth;\R^3)}\leq C$ and hence, up to subsequences, we have $\partial_3\widehat{\boldsymbol{m}}_h \to 0$ in $L^2(\Omega^\eps_\vth;\R^3)$ and $\partial_3\widehat{\boldsymbol{m}}_h/h \wk {\MMM\boldsymbol{k}}$ in $L^2(\Omega^\eps_\vth;\R^3)$  for some $\boldsymbol{k}\in L^2(\Omega^\eps_\vth;\R^3)$. In particular, $\boldsymbol{\lambda}$ does not depend on the variable $x_3$. In both cases, since the bounds in \eqref{eqn:cylinder-energy} are uniform in $ \eps$ and $\vth$, the limits depend neither on $\eps$ nor on $\vth$. Thus, $\boldsymbol{\lambda}\in W^{1,2}_\loc(S;\S^2)$ and ${\MMM \boldsymbol{k}}\in L^2_\loc(\Omega;\R^3)$. Moreover, by lower semicontinuity, from \eqref{eqn:cylinder-energy} we obtain
\begin{equation}
    \label{eqn:comp-m-comp-y-0}
    C\geq \liminf_{h \to 0^+} \int_{\Omega^\eps_\vth}|\nabla_h \widehat{\boldsymbol{m}}_h|^2\,\d\boldsymbol{x} \geq \int_{\Omega^\eps_\vth} |\nabla'\boldsymbol{\lambda}|^2\,\d\boldsymbol{x}' + \int_{\Omega^\eps_\vth} |{\MMM\boldsymbol{k}}|^2\,\d\boldsymbol{x} \geq \vth  \int_{S^\eps}|\nabla'\boldsymbol{\lambda}|^2\,\d\boldsymbol{x}',
\end{equation}
and letting $\eps \to 0^+$ and $\vth \to 1^-$ we conclude that $\nabla' \boldsymbol{\lambda} \in L^2(S;\R^2)$, so that $\boldsymbol{\lambda} \in W^{1,2}(S;\S^2)$.

\textbf{Step 3.} In this step, we prove \eqref{eqn:conv-m-comp-y}. First, we show that $\widetilde{\boldsymbol{m}}_h \circ \widetilde{\boldsymbol{y}}_h \wk \boldsymbol{\lambda}$ in $L^2_\loc(\Omega;\R^3)$. Consider a test function $\boldsymbol{\varphi} \in L^2(\Omega';\R^3)$ where $\Omega'\subset \subset \Omega$ is measurable. Choose $\eps\ll1$ and $\vth$ sufficiently close to one  in order to have $\Omega'\subset \Omega^\eps_\vth$, and consider $h \leq \bar{h}(\eps,\vth)$. Denote by $\overline{\boldsymbol{\varphi}}$ the extension of $\boldsymbol{\varphi}$ by zero on $\Omega^\eps_\vth \setminus \Omega'$. We compute
\begin{equation}
\label{eqn:comp-m-comp-y-1}
\int_{\Omega'} (\widetilde{\boldsymbol{m}}_h \circ \widetilde{\boldsymbol{y}}_h-\boldsymbol{\lambda})\,\boldsymbol{\varphi}\,\d\boldsymbol{x} = \int_{\Omega^\eps_\vth} (\widetilde{\boldsymbol{m}}_h \circ \widetilde{\boldsymbol{y}}_h-\widehat{\boldsymbol{m}}_h)\,\overline{\boldsymbol{\varphi}}\,\d\boldsymbol{x}+\int_{\Omega^\eps_\vth} (\widehat{\boldsymbol{m}}_h-\boldsymbol{\lambda})\,\overline{\boldsymbol{\varphi}}\,\d\boldsymbol{x}.
\end{equation}
Since $\widehat{\boldsymbol{m}}_h \wk \boldsymbol{\lambda}$ in $L^2(\Omega^\eps_\vth;\R^3)$, the second integral on the right-hand side tends to zero. For the first integral, we set $A^\eps_{\vth,h} \coloneqq \widetilde{\boldsymbol{y}}_h^{-1}(\Omega^\eps_{\vth h})$ and we split it as
\begin{equation}
\label{eqn:comp-m-comp-y-2}
\int_{\Omega^\eps_\vth} (\widetilde{\boldsymbol{m}}_h \circ \widetilde{\boldsymbol{y}}_h-\widehat{\boldsymbol{m}}_h)\,\overline{\boldsymbol{\varphi}}\,\d\boldsymbol{x}=\int_{\Omega^\eps_\vth \cap A^\eps_{\vth,h}} (\widetilde{\boldsymbol{m}}_h \circ \widetilde{\boldsymbol{y}}_h-\widehat{\boldsymbol{m}}_h)\,\overline{\boldsymbol{\varphi}}\,\d\boldsymbol{x} + \int_{\Omega^\eps_\vth \setminus A^\eps_{\vth,h}} (\widetilde{\boldsymbol{m}}_h \circ \widetilde{\boldsymbol{y}}_h-\widehat{\boldsymbol{m}}_h)\,\overline{\boldsymbol{\varphi}}\,\d\boldsymbol{x}.
\end{equation}

Recall that $\widetilde{\boldsymbol{y}}_h$ is almost everywhere injective with $\det \nabla \widetilde{\boldsymbol{y}}_h=h$ almost averywhere in $\Omega$. By the area formula, we have
\begin{equation*}
    \leb(\Omega^\eps_{\vth h})=\leb(\widetilde{\boldsymbol{y}}_h(A^\eps_{\vth,h}))=\int_{A^\eps_{\vth,h}} \det \nabla \widetilde{\boldsymbol{y}}_h\,\d\boldsymbol{x}=h\, \leb(A^\eps_{\vth,h}),
\end{equation*}
from which we deduce
\begin{equation}
    \leb(A^\eps_{\vth,h})=h^{-1}\leb(\Omega^\eps_{\vth h})=\vth \lebt(S^\eps).
\end{equation}
Since both  sequences $\widetilde{\boldsymbol{m}}_h$ and $\widehat{\boldsymbol{m}}_h$ take values in $\mathbb{S}^2$, the second integral on the right-hand side of \eqref{eqn:comp-m-comp-y-2} can be estimated in the following way
\begin{equation}
\label{eqn:comp-m-comp-y-3}
\begin{split}
\int_{\Omega^\eps_\vth \setminus A^\eps_{\vth,h}} (\widetilde{\boldsymbol{m}}_h \circ \widetilde{\boldsymbol{y}}_h-\widehat{\boldsymbol{m}}_h)\,\overline{\boldsymbol{\varphi}}\,\d\boldsymbol{x} &\leq 2\, ||\boldsymbol{\varphi}||_{L^2(\Omega';\R^3)} \left (\leb(\Omega^\eps_\vth \setminus A^\eps_{\vth,h}) \right)^{1/2}\\
& \leq 2\, ||\boldsymbol{\varphi}||_{L^2(\Omega';\R^3)} \left (\leb(\Omega \setminus A^\eps_{\vth,h}) \right)^{1/2}\\
& \leq 2  \,||\boldsymbol{\varphi}||_{L^2(\Omega';\R^3)} \left (\lebt(S)-\vth \lebt(S_\eps) \right )^{1/2}. 
\end{split}
\end{equation}

To estimate the first integral on the right-hand side of \eqref{eqn:comp-m-comp-y-2}, we proceed as follows. Note that if $\boldsymbol{x}\in \Omega^\eps_\vth \cap A^\eps_{\vth,h}$, then $\widetilde{\boldsymbol{y}}_h(\boldsymbol{x}) \in \Omega^\eps_{\vth h}$ and $\boldsymbol{z}_h(\boldsymbol{x}) \in \Omega^\eps_{\vth h}$. Consider the map $\widehat{\boldsymbol{m}}_h\in W^{1,2}(\Omega^\eps_{\vth h};\S^2)$. Since $\Omega^\eps_{\vth}$ is a Lipschitz domain, at least for $\eps \ll 1$ and $\vth$ sufficiently close to one, this map  admits an extension $\widehat{\boldsymbol{M}}_h \in W^{1,2}(\R^3;\R^3)$, which may depend on $\eps$ and $\vth$, and satisfies
\begin{equation*}
    ||\widehat{\boldsymbol{M}}_h||_{W^{1,2}(\R^3;\R^3)} \leq C(\eps,\vth) ||\widehat{\boldsymbol{m}}_h||_{W^{1,2}(\Omega^\eps_\vth;\R^3)}.
\end{equation*}
In particular,  we have
\begin{equation}
    \label{eqn:grad-M-hat}
    \int_{\R^3} |\nabla \widehat{\boldsymbol{M}}_h|^2\,\d\boldsymbol{x}\leq C(\eps,\vth) \left ( \int_{\Omega^\eps_\vth} |\widehat{\boldsymbol{m}}_h|^2\,\d\boldsymbol{x} +\int_{\Omega^\eps_\vth} |\nabla \widehat{\boldsymbol{m}}_h|^2 \,\d\boldsymbol{x} \right)\leq C(\eps,\vth),
\end{equation}
where we used that $\widehat{\boldsymbol{m}}_h$ takes values in $\S^2$ and that $||\nabla \widehat{\boldsymbol{m}}_h||_{L^2(\Omega^\eps_\vth;\rtt)}\leq C$. Define $\widetilde{\boldsymbol{M}}_h \coloneqq \widehat{\boldsymbol{M}}_h \circ \boldsymbol{z}_h^{-1}$. By construction, $\widetilde{\boldsymbol{M}}_h \restr{\Omega^\eps_{\vth h}}=\widetilde{\boldsymbol{m}}_h \restr{\Omega^\eps_{\vth h}}$. Moreover, from  \eqref{eqn:grad-M-hat},  using the change-of-variables formula we obtain
\begin{equation}
    \label{eqn:grad-M-tilde}
    \begin{split}
        \int_{\R^3}|\nabla \widetilde{\boldsymbol{M}}_h|^2\,\d\boldsymbol{\xi}&=\int_{\R^3}|\nabla_h \widehat{\boldsymbol{M}}_h \circ \boldsymbol{z}_h^{-1}|^2\,\d\boldsymbol{\xi}     \leq \frac{1}{h^2}\int_{\R^3}|\nabla \widehat{\boldsymbol{M}}_h \circ \boldsymbol{z}_h^{-1}|^2\,\d\boldsymbol{\xi}\\
        &=\frac{1}{h}\int_{\R^3}|\nabla \widehat{\boldsymbol{M}}_h|^2\,\d\boldsymbol{x}\leq \frac{C(\eps,\vth)}{h}.
    \end{split}
\end{equation}
We now apply Proposition \ref{prop:lipschitz-truncation} to the map $\widetilde{\boldsymbol{M}}_h$. For every $\lambda>0$, we find a set $F_{\lambda,h} \subset \R^3$ and a constant $C>0$ independent on $h$, $\eps$ and $\vth$, such that for every $\boldsymbol{\xi},\widetilde{\boldsymbol{\xi}} \in F_{\lambda,h}$ we have
\begin{equation}
    \label{eqn:lip-prop}
    |\widetilde{\boldsymbol{M}}_h(\boldsymbol{\xi})-\widetilde{\boldsymbol{M}}_h(\widetilde{\boldsymbol{\xi}})|\leq C \, \lambda\, |\boldsymbol{\xi}-\widetilde{\boldsymbol{\xi}}|.
\end{equation}
Using \eqref{eqn:grad-M-tilde}, we estimate the measure of the complement of $F_{\lambda,h}$ as follows
\begin{equation}
    \label{eqn:complement-F_lambda}
    \leb(\R^3\setminus F_{\lambda,h})\leq \frac{C}{\lambda^2}\int_{\{|\nabla \widetilde{\boldsymbol{M}}_h|>\lambda /2\}} |\nabla \widetilde{\boldsymbol{M}}_h|^2 \,\d\boldsymbol{\xi}\leq \frac{C(\eps,\vth)}{\lambda^2h}.
\end{equation}
Set $U_{\lambda,h} \coloneqq \boldsymbol{z}_h^{-1}(F_{\lambda,h}\cap \Omega^\eps_{\vth h})$ and $V_{\lambda,h} \coloneqq \widetilde{\boldsymbol{y}}_h^{-1}(F_{\lambda,h}\cap \Omega^\eps_{\vth h})$, so that $U_{\lambda,h}\subset \Omega^\eps_\vth$ and $V_{\lambda,h}\subset A^\eps_{\vth,h}$. We split the first integral on the right-hand side of \eqref{eqn:comp-m-comp-y-2} as
\begin{equation}
    \label{eqn:lip-truncation-1}
    \begin{split}
    \int_{\Omega^\eps_\vth \cap A^\eps_{\vth, h}} (\widetilde{\boldsymbol{m}}_h \circ \widetilde{\boldsymbol{y}}_h - \widehat{\boldsymbol{m}}_h)\,\overline{\boldsymbol{\varphi}}\,\d \boldsymbol{x} &= \int_{(\Omega^\eps_\vth \cap A^\eps_{\vth, h})\cap (U_{\lambda,h}\cap V_{\lambda,h})}  (\widetilde{\boldsymbol{m}}_h \circ \widetilde{\boldsymbol{y}}_h - \widehat{\boldsymbol{m}}_h)\,\overline{\boldsymbol{\varphi}}\,\d \boldsymbol{x}\\
    & + \int_{(\Omega^\eps_\vth \cap A^\eps_{\vth, h})\setminus (U_{\lambda,h}\cap V_{\lambda,h})}  (\widetilde{\boldsymbol{m}}_h \circ \widetilde{\boldsymbol{y}}_h - \widehat{\boldsymbol{m}}_h)\,\overline{\boldsymbol{\varphi}}\,\d \boldsymbol{x}.
    \end{split}
\end{equation}

The first integral on the right-hand side of \eqref{eqn:lip-truncation-1} is estimated by  exploiting the Lipschitz  continuity \eqref{eqn:lip-prop}. Indeed, if $\boldsymbol{x}\in (\Omega^\eps_\vth\cap A^\eps_{\vth,h}) \cap (U_{\lambda,h}\cap V_{\lambda,h})$, then $\widetilde{\boldsymbol{y}}_h(\boldsymbol{x})\in F_{\lambda,h}$ and $\boldsymbol{z}_h(\boldsymbol{x})\in F_{\lambda,h}$. Recalling \eqref{eqn:key-estimate}, we obtain
\begin{equation}
    \label{eqn:lip-truncation-3}
    \begin{split}
    \int_{(\Omega^\eps_\vth \cap A^\eps_{\vth, h})\cap (U_{\lambda,h}\cap V_{\lambda,h})}  &(\widetilde{\boldsymbol{m}}_h \circ \widetilde{\boldsymbol{y}}_h-\widehat{\boldsymbol{m}}_h)\,\overline{\boldsymbol{\varphi}}\,\d\boldsymbol{x}\\
    &=\int_{(\Omega^\eps_\vth \cap A^\eps_{\vth, h})\cap (U_{\lambda,h}\cap V_{\lambda,h})}  (\widetilde{\boldsymbol{M}}_h \circ \widetilde{\boldsymbol{y}}_h-\widetilde{\boldsymbol{M}}_h\circ \boldsymbol{z}_h)\,\overline{\boldsymbol{\varphi}}\,\d\boldsymbol{x}\\
    &\leq 
    \int_{(\Omega^\eps_\vth \cap A^\eps_{\vth, h})\cap (U_{\lambda,h}\cap V_{\lambda,h})} C \lambda\, |\widetilde{\boldsymbol{y}}_h-\boldsymbol{z}_h|\,|\overline{\boldsymbol{\varphi}}|\,\d\boldsymbol{x}\\
    & \leq C \lambda ||\widetilde{\boldsymbol{y}}_h-\boldsymbol{z}_h||_{L^\infty(\Omega';\R^3)} \,  ||\boldsymbol{\varphi}||_{L^2(\Omega';\R^3)}\\
    & \leq C \lambda  \,  ||\boldsymbol{\varphi}||_{L^2(\Omega';\R^3)}\, h^{\beta/p-1}.
    \end{split}
\end{equation}

For the second integral on the right-hand side of \eqref{eqn:lip-truncation-1}, first note that 
\begin{equation}
\label{eqn:rem-set}
    (\Omega^\eps_\vth \cap A^\eps_{\vth, h})\setminus (U_{\lambda,h}\cap V_{\lambda,h}) \subset (\Omega^\eps_\vth \setminus U_{\lambda,h}) \cup (A^\eps_{\vth, h} \setminus V_{\lambda,h}).
\end{equation} 
Using the area formula and \eqref{eqn:complement-F_lambda}, we have
\begin{equation}
    \label{eqn:diff-meas-1}
    \begin{split}
    \leb(\Omega^\eps_\vth \setminus U_{\lambda,h})&=\leb(\boldsymbol{z}_h^{-1}(\Omega^\eps_{\vth h}\setminus F_{\lambda,h}))=\int_{\Omega^\eps_{\vth h}\setminus F_{\lambda,h}} \det \nabla \boldsymbol{z}_h^{-1}\,\d\boldsymbol{\xi}\\
    & =h^{-1}\leb(\Omega^\eps_{\vth h}\setminus F_{\lambda,h}) \leq h^{-1}\leb(\R^3 \setminus F_{\lambda,h})\leq \frac{C(\eps,\vth)}{\lambda^2 h^2}.
    \end{split}
\end{equation}
Similarly, by the area formula 
\begin{equation*}
    \leb(\Omega^\eps_{\vth h} \setminus F_{\lambda,h})=\leb(\widetilde{\boldsymbol{y}}_h(A^\eps_{\vth,h}\setminus V_{\lambda,h}))=\int_{A^\eps_{\vth,h}\setminus V_{\lambda,h}} \det \nabla \widetilde{\boldsymbol{y}}_h\,\d\boldsymbol{x}=h \, \leb(A^\eps_{\vth,h}\setminus V_{\lambda,h}), 
\end{equation*}
so that
\begin{equation}
\label{eqn:diff-meas-2}
\leb(A^\eps_{\vth,h}\setminus V_{\lambda,h})=h^{-1} \leb(\Omega^\eps_{\vth h} \setminus F_{\lambda,h}) \leq h^{-1} \leb(\R^3 \setminus F_{\lambda,h}) \leq \frac{C(\eps,\vth)}{\lambda^2h^2}.  
\end{equation}
Using \eqref{eqn:rem-set}-\eqref{eqn:diff-meas-2}, since both $\widetilde{\boldsymbol{m}}_h$ and $\widehat{\boldsymbol{m}}_h$ take values in $\S^2$, we  compute
\begin{equation}
\label{eqn:lip-truncation-2}
\begin{split}
\int_{(\Omega^\eps_\vth \cap A^\eps_{\vth, h})\setminus (U_{\lambda,h}\cap V_{\lambda,h})}  &(\widetilde{\boldsymbol{m}}_h \circ \widetilde{\boldsymbol{y}}_h-\widehat{\boldsymbol{m}}_h)\,\overline{\boldsymbol{\varphi}}\,\d\boldsymbol{x}\\
&\leq 2 ||\boldsymbol{\varphi}||_{L^2(\Omega';\R^3)} \left ( \leb(\Omega^\eps_\vth \setminus U_{\lambda,h}) + \leb(A^\eps_{\vth,h}\setminus V_{\lambda,h})\right)^{1/2}\\
& \leq  ||\boldsymbol{\varphi}||_{L^2(\Omega';\R^3)} \frac{C(\eps,\vth)}{\lambda h}.
\end{split}
\end{equation}
Choosing $\lambda=h^{-s}$ for some $s>0$, and combining \eqref{eqn:lip-truncation-3} with \eqref{eqn:lip-truncation-2}, we obtain
\begin{equation}
    \label{eqn:lip-truncation-4}
     \int_{\Omega^\eps_\vth \cap A^\eps_{\vth, h}} (\widetilde{\boldsymbol{m}}_h \circ \widetilde{\boldsymbol{y}}_h-\widehat{\boldsymbol{m}}_h)\,\overline{\boldsymbol{\varphi}}\,\d\boldsymbol{x} \leq C(\eps,\vth) \, ||\boldsymbol{\varphi}||_{L^2(\Omega';\R^3)}\, \left ( h^{s-1}+h^{\beta/p-s-1} \right)
\end{equation}
so that, for $1<s<\beta/p-1$, the right-hand side goes to zero, as $h \to 0^+$. 

Therefore, we conclude the argument as follows. For every $\rho>0$, we choose $\eps \ll 1$  and $\vth$ sufficiently close to one, according to $\Omega'$, $\boldsymbol{\varphi}$ and $\rho$, so that $\Omega' \subset \Omega^\eps_\vth$ and the right-hand side of \eqref{eqn:comp-m-comp-y-3} is smaller that $\rho/3$. Then, for every $h\ll 1$, according to $\eps$ and $\vth$ chosen before, the first integrals  on  the right-hand side of \eqref{eqn:comp-m-comp-y-1} and  \eqref{eqn:lip-truncation-4} are both smaller than $\rho/3$, so that
\begin{equation*}
   \limsup_{h\to 0^+} \left |\int_{\Omega'} (\widetilde{\boldsymbol{m}}_h \circ \widetilde{\boldsymbol{y}}_h-\boldsymbol{\lambda})\,\boldsymbol{\varphi}\,\d\boldsymbol{x} \right | < \rho.
\end{equation*}
From the arbitrariness of $\rho$, $\Omega'$ and $\boldsymbol{\varphi}$, we conclude that $\widetilde{\boldsymbol{m}}_h \circ \widetilde{\boldsymbol{y}}_h \wk \boldsymbol{\lambda}$ in $L^2_\loc(\Omega;\R^3)$. 

Now, for every measurable set  $\Omega' \subset \subset \Omega$, there holds $\widetilde{\boldsymbol{m}}_h \circ \widetilde{\boldsymbol{y}}_h \wk \boldsymbol{\lambda}$ in $L^2(\Omega';\R^3)$ and, since $|\widetilde{\boldsymbol{m}}_h \circ \widetilde{\boldsymbol{y}}_h|=|\boldsymbol{\lambda}|=1$ almost everywhere in $\Omega'$, we  have $||\widetilde{\boldsymbol{m}}_h \circ \widetilde{\boldsymbol{y}}_h||_{L^2(\Omega';\R^3)}=||\boldsymbol{\lambda}||_{L^2(\Omega';\R^3)}$. As a result, $\widetilde{\boldsymbol{m}}_h \circ \widetilde{\boldsymbol{y}}_h \to \boldsymbol{\lambda}$ in $L^2(\Omega';\R^3)$ so that, up to subsequences, $\widetilde{\boldsymbol{m}}_h \circ \widetilde{\boldsymbol{y}}_h \to \boldsymbol{\lambda}$ almost everywhere in $\Omega'$. Considering a sequence of compact subsets invading $\Omega$, we find a  subsequence such that $\widetilde{\boldsymbol{m}}_h \circ \widetilde{\boldsymbol{y}}_h \to \boldsymbol{\lambda}$ almost everywhere in $\Omega$, and finally, by the Dominated Convergence Theorem, we  obtain \eqref{eqn:conv-m-comp-y}. 

{\MMM
\textbf{Step 4.} We are left to prove \eqref{eqn:conv-mu}.
Note that, by \eqref{eqn:key-estimate}, we  have
\begin{equation*}
	\Omega^{\widetilde{\boldsymbol{y}}_h}\subset \widetilde{\boldsymbol{y}}_h(\Omega) \subset \boldsymbol{z}_h(\Omega) + \closure{B}(\boldsymbol{0},\delta_h) = \Omega_h + \closure{B}(\boldsymbol{0},\delta_h),
\end{equation*}
where $\delta_h=C h^{\beta/p-1}$, from which we  deduce the following (see Figure \ref{fig:supcylinder}):
\begin{equation}
	\label{eqn:supercylinder}
	\forall \eps>0,\:\:\forall \ell>1,\:\:\exists\, \underline{h}(\eps,\ell)>0:\:\:\forall\,0<h\leq \underline{h}(\eps,\ell),\:\:\Omega^{\widetilde{\boldsymbol{y}}_h} \subset \Omega^{-\eps}_{\ell h}.
\end{equation}

\begin{figure}
	\centering
	\begin{tikzpicture}
		\node[anchor=south west,inner sep=0] at (0,0) {\includegraphics[width=\textwidth]{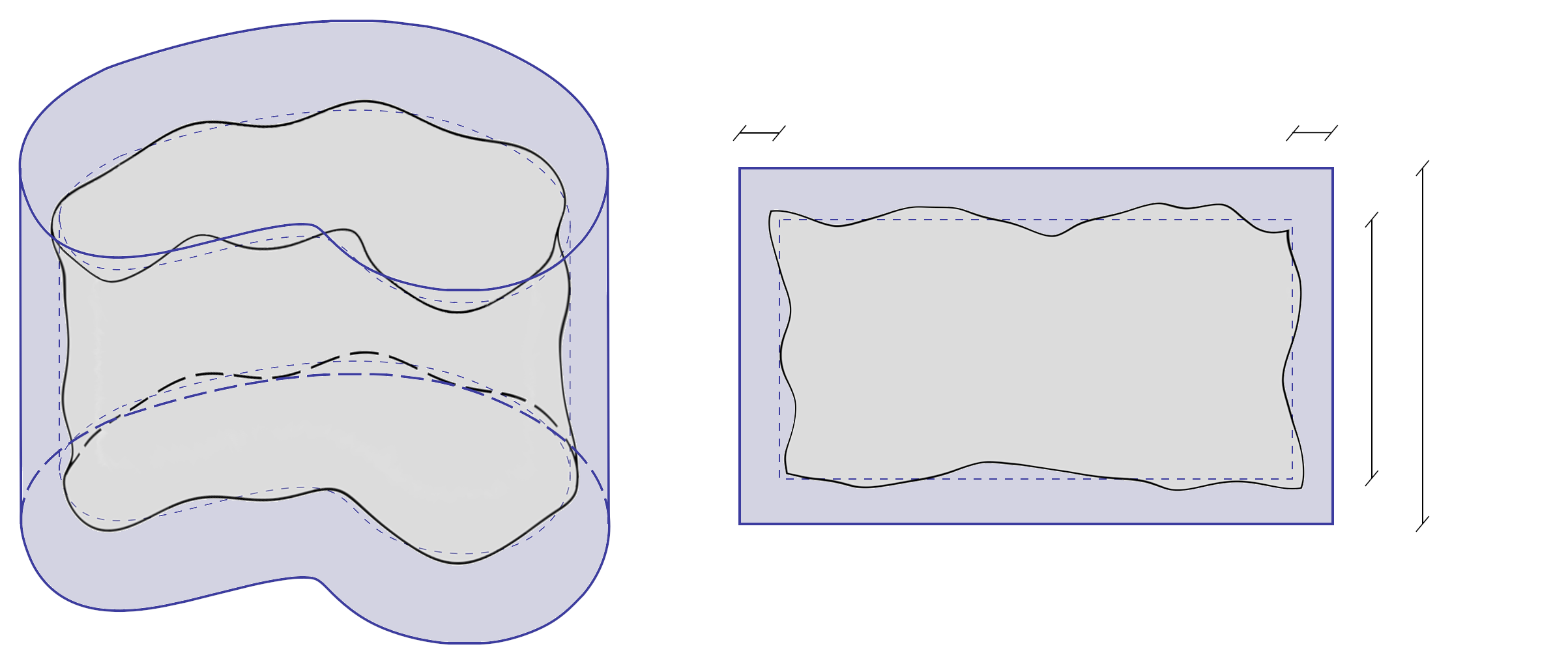}};
		\node at (6.25,6) {\large \color{bblue}  ${\Omega^{-\varepsilon}_{\ell h}}$};
		\node at (5.3,4.8) {\large $\Omega^{\widetilde{\boldsymbol{y}}_h}$};
		\node at (4.65,5.65) {\large \color{bblue} $\Omega_h$};
		\node at (7.8,5.5) {$\varepsilon$};
		\node at (13.45,5.5) {$\varepsilon$};
		\node at (14.2,3.15) {$h$};
		\node at (15.2,3.15) {$\ell h$};
	\end{tikzpicture}
	\caption{On the left, the cylinder $\Omega^{-\varepsilon}_{\ell h}$ contains the deformed configuration $\Omega^{\widetilde{\boldsymbol{y}}_h}$. On the right, the corresponding section.}
	\label{fig:supcylinder}
\end{figure}

Using \eqref{eqn:subcylinder} and \eqref{eqn:supercylinder}, we  infer that $\chi_{\Omega^{\widetilde{\boldsymbol{y}}_h}} \circ \boldsymbol{z}_h=\chi_{\boldsymbol{z}_h^{-1}(\Omega^{\widetilde{\boldsymbol{y}}_h})} \to \chi_\Omega$ almost everywhere in $\R^3$. Indeed,  any point in $\Omega$ is contained in $\Omega^\eps_{\vth}$ for some $\eps>0$ and $0<\vth<1$ and, by \eqref{eqn:subcylinder}, for $h \ll 1$ we have $\Omega^\eps_\vth \subset  \boldsymbol{z}_h^{-1}(\Omega^{\widetilde{\boldsymbol{y}}_h})$.  Similarly, any point $\R^3 \setminus \closure{\Omega}$ is contained in  $\R^3 \setminus \Omega^{-\eps}_{\ell}$ for some $\eps>0$ and $\ell>1$, and, by \eqref{eqn:supercylinder}, for $h \ll 1$ we have $\boldsymbol{z}_h^{-1}(\Omega^{\widetilde{\boldsymbol{y}}_h}) \subset \Omega^{-\eps}_{\ell}$, so that $\R^3 \setminus \Omega^{-\eps}_{\ell} \subset \R^3 \setminus \boldsymbol{z}_h^{-1}(\Omega^{\widetilde{\boldsymbol{y}}_h})$. By Step 2, we can assume that, for almost every $\boldsymbol{x}\in \Omega$, we have $\widetilde{\boldsymbol{m}}_h (\boldsymbol{z}_h(\boldsymbol{x})) \to \boldsymbol{\lambda}(\boldsymbol{x}')$, where the composition $\widetilde{\boldsymbol{m}}_h \circ \boldsymbol{z}_h$ is defined only for $h \ll 1$ depending on $\boldsymbol{x}\in \Omega$. Thus $\widetilde{\boldsymbol{\mu}}_h=\chi_{\boldsymbol{z}_h^{-1}(\Omega^{\widetilde{\boldsymbol{y}}_h})}(\widetilde{\boldsymbol{m}}_h \circ \boldsymbol{z}_h) \to \chi_\Omega \boldsymbol{\lambda}$ almost everywhere in $\R^3$. Since these two maps are bounded and supported in a compact set by \eqref{eqn:supercylinder}, applying the Dominated Convergence Theorem we obtain \eqref{eqn:conv-mu}.
}
\end{proof}

\subsection{Lower bound}
\label{subsec:lb}

In this subsection, we show that the energy functional $E$ defined in \eqref{eqn:energy_E} provides a lower bound for the asymptotic behavior of the energies $E_h$ defined in  \eqref{eqn:energy_Eh}. In the proof, we will analyze the three terms of the energy separately. 

We begin by showing that the lower bound for the exchange energies follows immediately from the argument used to prove the compactness of magnetizations. Recall the definitions of $E_h$ and $E$ in \eqref{eqn:energy_Eh} and \eqref{eqn:energy_E}, respectively.

\begin{proposition}[Lower bound for the exchange energy]
\label{prop:liminf-exc}
Let $((\boldsymbol{y}_h,\boldsymbol{m}_h))_h \subset \mathcal{Q}$ be such that $E^{\text{\em el}}_h(\boldsymbol{y}_h,\boldsymbol{m}_h)+E^{\text{\em exc}}_h(\boldsymbol{y}_h,\boldsymbol{m}_h)\leq C$ for every $h>0$. Given the map $\boldsymbol{\lambda} \in W^{1,2}(S;\S^2)$ identified in Proposition \ref{prop:comp-mag}, we have, up to subsequences, as $h \to 0^+$:
\begin{equation}
    \label{eqn:lower-bound-exc}
    E^\mathrm{exc}(\boldsymbol{\lambda})\leq \liminf_{h \to 0^+} E^\mathrm{exc}_h({\boldsymbol{y}}_h, {\boldsymbol{m}}_h).
\end{equation}
\end{proposition}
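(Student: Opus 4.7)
The plan is to reduce the statement to a weak lower semicontinuity argument on the family of interior cylinders already constructed in the compactness proof of Proposition~\ref{prop:comp-mag}, and then to exhaust the plate by letting those cylinders fill $\Omega$.

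First, I would invoke the invariance of the exchange energy under rigid motions established by the change-of-variable computation \eqref{eqn:exc-invariance}, so that $E_h^{\mathrm{exc}}(\boldsymbol{y}_h,\boldsymbol{m}_h)=E_h^{\mathrm{exc}}(\widetilde{\boldsymbol{y}}_h,\widetilde{\boldsymbol{m}}_h)$ and the analysis can be carried out on the rotated-and-translated sequence. Then I would fix $\eps>0$ and $0<\vartheta<1$ and recall the interior cylinder inclusion \eqref{eqn:subcylinder}, which for all $h\leq\bar h(\eps,\vartheta)$ guarantees $\Omega^{\eps}_{\vartheta h}\subset\Omega^{\widetilde{\boldsymbol{y}}_h}$. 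Setting $\widehat{\boldsymbol{m}}_h\coloneqq\widetilde{\boldsymbol{m}}_h\circ\boldsymbol{z}_h|_{\Omega^\eps_\vartheta}\in W^{1,2}(\Omega^\eps_\vartheta;\mathbb{S}^2)$ and repeating the change of variables in \eqref{eqn:cylinder-energy}, I obtain
\begin{equation*}
E_h^{\mathrm{exc}}(\widetilde{\boldsymbol{y}}_h,\widetilde{\boldsymbol{m}}_h)\geq \frac{\alpha}{h}\int_{\Omega^\eps_{\vartheta h}}|\nabla\widetilde{\boldsymbol{m}}_h|^2\,\d\boldsymbol{\xi}=\alpha\int_{\Omega^\eps_\vartheta}|\nabla_h\widehat{\boldsymbol{m}}_h|^2\,\d\boldsymbol{x}\geq \alpha\int_{\Omega^\eps_\vartheta}|\nabla'\widehat{\boldsymbol{m}}_h|^2\,\d\boldsymbol{x}.
\end{equation*}

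Next, from Step~2 of Proposition~\ref{prop:comp-mag} I recall that $\widehat{\boldsymbol{m}}_h\wk\boldsymbol{\lambda}$ in $W^{1,2}(\Omega^\eps_\vartheta;\mathbb{R}^3)$, where $\boldsymbol{\lambda}$ is independent of $x_3$. Consequently $\nabla'\widehat{\boldsymbol{m}}_h\wk\nabla'\boldsymbol{\lambda}$ in $L^2(\Omega^\eps_\vartheta;\mathbb{R}^{3\times 2})$, and the standard weak lower semicontinuity of the $L^2$-norm yields
\begin{equation*}
\liminf_{h\to 0^+}\alpha\int_{\Omega^\eps_\vartheta}|\nabla'\widehat{\boldsymbol{m}}_h|^2\,\d\boldsymbol{x}\geq \alpha\int_{\Omega^\eps_\vartheta}|\nabla'\boldsymbol{\lambda}|^2\,\d\boldsymbol{x}=\alpha\vartheta\int_{S^\eps}|\nabla'\boldsymbol{\lambda}|^2\,\d\boldsymbol{x}',
\end{equation*}
exactly as in the display \eqref{eqn:comp-m-comp-y-0}. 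Chaining the two inequalities gives $\liminf_{h\to 0^+}E_h^{\mathrm{exc}}(\boldsymbol{y}_h,\boldsymbol{m}_h)\geq \alpha\vartheta\int_{S^\eps}|\nabla'\boldsymbol{\lambda}|^2\,\d\boldsymbol{x}'$.

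Finally, since the left-hand side does not depend on $\eps$ or $\vartheta$, I would let $\vartheta\to 1^-$ and $\eps\to 0^+$ and apply monotone convergence to the family $S^\eps\nearrow S$ (using that $|\nabla'\boldsymbol{\lambda}|^2\in L^1(S)$, which was part of the conclusion of Proposition~\ref{prop:comp-mag}). This produces \eqref{eqn:lower-bound-exc}. There is no serious obstacle here: most of the real work, namely the $x_3$-independence of the limit and the local weak $W^{1,2}$-convergence on interior cylinders, was already carried out inside Proposition~\ref{prop:comp-mag}; the only subtlety is keeping track of the factor $\vartheta$ coming from the thickness of the cylinders and passing to the sup over $(\eps,\vartheta)$, which the monotone convergence step handles cleanly.
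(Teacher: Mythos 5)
Your proof is correct and follows essentially the same route as the paper's: invariance under rigid motions via \eqref{eqn:exc-invariance}, restriction to the interior cylinders $\Omega^\eps_{\vth h}$ from \eqref{eqn:subcylinder}, change of variables to $\Omega^\eps_\vth$, weak lower semicontinuity against the convergence $\widehat{\boldsymbol{m}}_h\wk\boldsymbol{\lambda}$ established in Step 2 of Proposition \ref{prop:comp-mag}, and the final limits $\eps\to 0^+$, $\vth\to 1^-$. The only cosmetic difference is that you discard the rescaled vertical derivative $h^{-1}\partial_3\widehat{\boldsymbol{m}}_h$ before taking the liminf, whereas the paper retains both contributions and drops the limiting vertical term $\boldsymbol{k}$ afterward; the resulting bound is the same.
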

\begin{proof}
We follow the notation of Propositions \ref{prop:comp-def} and \ref{prop:comp-mag}.  Recall that $\widetilde{\boldsymbol{y}}_h=\boldsymbol{T}_h \circ \boldsymbol{y}_h$ and $\widetilde{\boldsymbol{m}}_h=\boldsymbol{Q}_h^\top \boldsymbol{m}_h \circ \boldsymbol{T}_h^{-1}$. Here, $\boldsymbol{T}_h$ is the rigid motion given by $\boldsymbol{T}_h(\boldsymbol{\xi})=\boldsymbol{Q}_h^\top \boldsymbol{\xi}-\boldsymbol{c}_h$ for every $\boldsymbol{\xi}\in \R^3$, where $\boldsymbol{Q}_h \in SO(3)$ and $\boldsymbol{c}_h \in \R^3$. By \eqref{eqn:exc-invariance}, $E^\text{exc}_h(\boldsymbol{y}_h,\boldsymbol{m}_h)=E^\text{exc}_h(\widetilde{\boldsymbol{y}}_h,\widetilde{\boldsymbol{m}}_h)$. 
We  argue as in \eqref{eqn:comp-m-comp-y-0} and \eqref{eqn:comp-m-comp-y-1}. More explicitly, we fix $\eps>0$ and $0<\vth<1$, so that, as in Step 2 of the proof of Proposition \ref{prop:comp-mag}, we have $\widehat{\boldsymbol{m}}_h \wk \boldsymbol{\lambda}$ in $W^{1,2}(\Omega^\eps_\vth;\R^3)$ for some $\boldsymbol{\lambda}\in W^{1,2}(S;\S^2)$ and $\partial_3 \widehat{\boldsymbol{m}}_h/h \wk {\MMM \boldsymbol{k}}$ in $L^2(\Omega^\eps_\vth;\R^3)$ for some ${\MMM\boldsymbol{k}}\in L^2_\loc(\Omega;\R^3)$. Then, taking into account \eqref{eqn:subcylinder}, by lower semicontinuity, we have
\begin{equation*}
    \begin{split}
     \liminf_{h \to 0^+} E_h^\mathrm{exc}(\widetilde{\boldsymbol{y}}_h,\widetilde{\boldsymbol{m}}_h) & \geq  \liminf_{h \to 0^+} \frac{\MMM \alpha}{h} \int_{\Omega^\eps_{\vth h}} |\nabla \widetilde{\boldsymbol{m}}_h |^2 \,\d \boldsymbol{\xi} \geq \liminf_{h \to 0^+} {\MMM \alpha} \int_{\Omega^\eps_\vth} |\nabla_h \widehat{\boldsymbol{m}}|^2\,\d\boldsymbol{x}\\
     & \geq {\MMM \alpha} \int_{\Omega^\eps_\vth} |\nabla'\boldsymbol{\lambda}|^2\,\d\boldsymbol{x} + {\MMM \alpha} \int_{\Omega^\eps_\vth} |{\MMM\boldsymbol{k}}|^2\,\d\boldsymbol{x} \geq {\MMM \alpha} \int_{\Omega^\eps_\vth} |\nabla'\boldsymbol{\lambda}|^2\,\d\boldsymbol{x} = {\MMM \alpha}\,\vth \int_{S^\eps} |\nabla'\boldsymbol{\lambda}|^2\,\d\boldsymbol{x}'.
    \end{split}
\end{equation*}
Thus, letting $\eps \to 0^+$ and $\vth \to 1^-$, we obtain \eqref{eqn:lower-bound-exc}.
\end{proof}

The proof of the lower bound for the elastic energy requires more work. In order to deal with the incompressibility constraint, we make use of a technique developed in \cite{conti.dolzmann}  (see also \cite{li.chermisi}). The idea consists in approximating the incompressible energy from below by adding to the compressible energy a penalization term that forces the determinant to be close to one. Namely, for every  $k \in \N$, we define an approximated energy density as
\begin{equation*}
    W^k(\boldsymbol{F},\boldsymbol{\nu}) \coloneqq W(\boldsymbol{F},\boldsymbol{\nu})+\frac{k}{2}(\det \boldsymbol{F}-1)^2
\end{equation*}
for every $\boldsymbol{F}\in \rtt$ and $\boldsymbol{\nu}\in \S^2$. Note that $W^\inc \geq W^k \geq W$. Moreover, $W^k$ satisfies frame-indifference and normalization properties analogous to \eqref{eqn:frame_indifference} and \eqref{eqn:normalization}. Therefore, as in \eqref{eqn:taylor-expansion}, we have the second-order Taylor expansion 
\begin{equation}
    \label{eqn:taylor-expansion-k}
    W^k(\boldsymbol{I}+\boldsymbol{G},\boldsymbol{\nu})=\frac{1}{2}Q_3^k(\boldsymbol{G},\boldsymbol{\nu})+\omega^k(\boldsymbol{G},\boldsymbol{\nu})
\end{equation}
for every $\boldsymbol{G}\in \rtt$ with $|\boldsymbol{G}|<\delta$, where $\delta>0$ was introduced in \eqref{eqn:local-smooth}, and for every $\boldsymbol{\nu}\in \S^2$. In particular
\begin{equation*}
    Q^k_3(\boldsymbol{G},\boldsymbol{\nu})=Q_3(\boldsymbol{G},\boldsymbol{\nu})+k (\tr \boldsymbol{G})^2
\end{equation*}
and $\omega^k(\boldsymbol{G},\boldsymbol{\nu})=\omega(\boldsymbol{G},\boldsymbol{\nu})+k \, \gamma(|\boldsymbol{G}|^2)$ with $\gamma(t)=o(t^2)$ as $t\to 0^+$, where $Q_3$ and $\omega$ were introduced in \eqref{eqn:taylor-expansion} and \eqref{eqn:definition-Q3}, respectively.
We recall \eqref{eqn:Q2-inc} and, for every $\boldsymbol{H} \in \rtwtw$ and $\boldsymbol{\nu} \in \S^2$, we define
\begin{equation*}
    Q^k_2(\boldsymbol{G},\boldsymbol{\nu}) \coloneqq \min \left \{Q_3^k \left (\left( \renewcommand\arraystretch{1.2}\begin{array}{@{}c|c@{}}   \boldsymbol{H}  & \boldsymbol{0}'\\ \hline  (\boldsymbol{0}')^\top & 0 \end{array} \right)+\boldsymbol{c}\otimes \boldsymbol{e}_3+\boldsymbol{e}_3 \otimes \boldsymbol{c},\boldsymbol{\nu} \right ) :\,\boldsymbol{c}\in \R^3 \right \}.
\end{equation*}
For every $k \in \N$, the following comparison estimate holds
\begin{equation}
    \label{eqn:comparison-estimate}
    Q^k_2(\boldsymbol{H},\boldsymbol{\nu}) \geq Q^\inc_2(\boldsymbol{H},\boldsymbol{\nu})-\frac{C}{\sqrt{k}} |\boldsymbol{H}|^2
\end{equation}
for every $\boldsymbol{H}\in \rtwtw$ and $\boldsymbol{\nu} \in \S^2$. This is proved analogously to \cite[Lemma 2.1]{conti.dolzmann}.

We are now in a position to establish a lower bound for the elastic energy. Recall the definitions of $E_h$ and $E$ in \eqref{eqn:energy_Eh} and \eqref{eqn:energy_E}, respectively.

\begin{proposition}[Lower bound for the elastic energy]
\label{prop:liminf-el}
Let $((\boldsymbol{y}_h,\boldsymbol{m}_h))_h \subset \mathcal{Q}$ be such that $E^{\text{\em el}}_h(\boldsymbol{y}_h,\boldsymbol{m}_h)+E^{\text{\rm {exc}}}_h(\boldsymbol{y}_h,\boldsymbol{m}_h)\leq C$ for every $h>0$. Then, given the maps $\boldsymbol{u}\in W^{1,2}(S;\R^2)$, $v \in W^{2,2}(S)$ and $\boldsymbol{\lambda}\in W^{1,2}(S;\S^2)$ identified in Propositions \ref{prop:comp-def} and \ref{prop:comp-mag}, we have, up to subsequences, as $h \to 0^+$:
\begin{equation}
    E^\mathrm{el}(\boldsymbol{u},v,\boldsymbol{\lambda}) \leq \liminf_{h \to 0^+}  E^\mathrm{el}_h({\boldsymbol{y}}_h, {\boldsymbol{m}}_h).
\end{equation}
\end{proposition}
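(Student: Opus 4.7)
The plan combines three elements: the penalization of \cite{conti.dolzmann,li.chermisi} to relax the incompressibility constraint; the approximation-by-rotations and truncation framework of \cite{friesecke.james.mueller2}; and a careful passage to the liminf in the magnetization-dependent quadratic form, using Proposition \ref{prop:comp-mag}. For each $k \in \N$, the inequality $W^{\inc} \geq W^k$ and frame indifference \eqref{eqn:frame_indifference} give
$$E^{\mathrm{el}}_h(\boldsymbol{y}_h,\boldsymbol{m}_h) \geq \frac{1}{h^\beta}\int_\Omega W^k\bigl(\widetilde{\boldsymbol{R}}_h^\top \nabla_h\widetilde{\boldsymbol{y}}_h,\,\boldsymbol{\nu}_h\bigr)\,\d\boldsymbol{x},$$
where $\widetilde{\boldsymbol{R}}_h$ is the approximating rotation from Lemma \ref{lem:approx-rot} applied to $\widetilde{\boldsymbol{y}}_h$, and $\boldsymbol{\nu}_h \coloneqq \widetilde{\boldsymbol{R}}_h^\top(\widetilde{\boldsymbol{m}}_h\circ\widetilde{\boldsymbol{y}}_h)$. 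Introduce the rescaled nonlinear strain
$\boldsymbol{G}_h \coloneqq h^{-\beta/2}\bigl(\widetilde{\boldsymbol{R}}_h^\top \nabla_h\widetilde{\boldsymbol{y}}_h - \boldsymbol{I}\bigr)$,
which is bounded in $L^2(\Omega;\rtt)$ by \eqref{eqn:approx-rot-tilda1}, and truncate on the good set $\{\chi_h=1\}\coloneqq\{|\boldsymbol{G}_h|\leq h^{-s}\}$ for a fixed $s\in (0,\beta/2)$, whose complement has measure $O(h^{2s})$ by Chebyshev.

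On $\{\chi_h=1\}$ the rescaled perturbation satisfies $|h^{\beta/2}\boldsymbol{G}_h|\leq h^{\beta/2-s}\to 0$, so the Taylor expansion \eqref{eqn:taylor-expansion-k} combined with \eqref{eqn:coupling_omega} produces
$$\frac{1}{h^\beta}\int_\Omega W^k\bigl(\widetilde{\boldsymbol{R}}_h^\top \nabla_h\widetilde{\boldsymbol{y}}_h,\,\boldsymbol{\nu}_h\bigr)\,\d\boldsymbol{x} \geq \frac{1}{2}\int_\Omega \chi_h\,Q^k_3(\boldsymbol{G}_h,\boldsymbol{\nu}_h)\,\d\boldsymbol{x} - o(1).$$
Up to subsequences, $\boldsymbol{G}_h \wk \boldsymbol{G}$ in $L^2(\Omega;\rtt)$, and the identification scheme of \cite{friesecke.james.mueller2} applied to the convergences \eqref{eqn:comp-u}-\eqref{eqn:comp-v} yields $\sym(\boldsymbol{G})''(\boldsymbol{x}',x_3) = \sym\nabla'\boldsymbol{u}(\boldsymbol{x}') - x_3(\nabla')^2 v(\boldsymbol{x}')$ for almost every $(\boldsymbol{x}',x_3)\in \Omega$. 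Meanwhile, the second estimate in \eqref{eqn:approx-rot-tilda2} with $q=p$ and the Morrey embedding force $\widetilde{\boldsymbol{R}}_h \to \boldsymbol{I}$ uniformly on $\closure{S}$, so Proposition \ref{prop:comp-mag} gives $\boldsymbol{\nu}_h \to \boldsymbol{\lambda}$ strongly in every $L^r(\Omega;\R^3)$ and, up to a further subsequence, pointwise almost everywhere. Using \eqref{eqn:coupling_C}, Ioffe's theorem on the lower semicontinuity of integrals with Carath\'eodory integrands convex in the weak variable produces
$$\liminf_{h\to 0^+}\int_\Omega \chi_h\,Q^k_3(\boldsymbol{G}_h,\boldsymbol{\nu}_h)\,\d\boldsymbol{x} \geq \int_\Omega Q^k_3(\boldsymbol{G},\boldsymbol{\lambda})\,\d\boldsymbol{x}.$$
Pointwise minimization over the third column of $\sym\boldsymbol{G}$---the free vector $\boldsymbol{c}$ in the definition of $Q^k_2$---followed by Fubini in $x_3$, using $\int_I x_3\,\d x_3 = 0$ and $\int_I x_3^2\,\d x_3 = 1/12$, gives
$$\int_\Omega Q^k_3(\boldsymbol{G},\boldsymbol{\lambda})\,\d\boldsymbol{x} \geq \int_S Q^k_2(\sym\nabla'\boldsymbol{u},\boldsymbol{\lambda})\,\d\boldsymbol{x}' + \frac{1}{12}\int_S Q^k_2\bigl((\nabla')^2 v,\boldsymbol{\lambda}\bigr)\,\d\boldsymbol{x}'.$$
Finally, letting $k\to\infty$ and using \eqref{eqn:comparison-estimate} together with $\sym\nabla'\boldsymbol{u},(\nabla')^2 v \in L^2(S)$ upgrades $Q^k_2$ to $Q^{\inc}_2$, yielding the $\frac{1}{2}$ and $\frac{1}{24}$ prefactors of $E^{\mathrm{el}}(\boldsymbol{u},v,\boldsymbol{\lambda})$.

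The main technical obstacle is the joint liminf argument: the coefficients of the quadratic form $Q^k_3(\cdot,\boldsymbol{\nu}_h)$ depend on the magnetization $\boldsymbol{\nu}_h$, which converges only strongly (and only almost everywhere after extraction), while $\boldsymbol{G}_h$ converges only weakly. The strong convergence of the Eulerian magnetizations provided by Proposition \ref{prop:comp-mag}---the chief novelty of the paper---and the continuity hypothesis \eqref{eqn:coupling_C} of the coupling are both essential for this step to go through.
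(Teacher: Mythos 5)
Your proposal is correct and follows the same structure as the paper's proof: frame indifference and the penalization $W^k$ from \cite{conti.dolzmann,li.chermisi}, the rescaled strain $\boldsymbol{G}_h$ and truncation set $\{\chi_h=1\}$, Taylor expansion with the uniform remainder bound \eqref{eqn:coupling_omega}, identification of $\boldsymbol{G}''$ via the FJM argument, pointwise minimization to $Q^k_2$, the Fubini computation in $x_3$, and the limit $k\to\infty$ through the comparison estimate \eqref{eqn:comparison-estimate}. The only difference is the handling of the joint liminf with weakly converging $\chi_h\boldsymbol{G}_h$ and a.e.\ converging $\boldsymbol{\nu}_h$: you invoke Ioffe's semicontinuity theorem, while the paper splits off the error term $|\mathbb{C}^{\boldsymbol{\nu}_h}-\mathbb{C}^{\boldsymbol{\lambda}}|\,|\chi_h\boldsymbol{G}_h|^2$ using Egorov's theorem and convexity of $Q_3^k(\cdot,\boldsymbol{\lambda})$; these are interchangeable technical devices for the same step.
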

\begin{proof}
We follow the notation of Propositions \ref{prop:comp-def} and \ref{prop:comp-mag}. Recall that $\widetilde{\boldsymbol{y}}_h=\boldsymbol{T}_h \circ \boldsymbol{y}_h$ and $\widetilde{\boldsymbol{m}}_h=\boldsymbol{Q}_h^\top \boldsymbol{m}_h \circ \boldsymbol{T}_h^{-1}$, where $\boldsymbol{T}_h$ is the rigid motion given by $\boldsymbol{T}_h(\boldsymbol{\xi})=\boldsymbol{Q}_h^\top \boldsymbol{\xi}-\boldsymbol{c}_h$ for every $\boldsymbol{\xi}\in \R^3$, with $\boldsymbol{Q}_h \in SO(3)$ and $\boldsymbol{c}_h \in \R^3$. Set $\boldsymbol{F}_h \coloneqq \nabla_h \boldsymbol{y}_h$ and $\widetilde{\boldsymbol{F}}_h\coloneqq\nabla_h \widetilde{\boldsymbol{y}}_h$, so that $\widetilde{\boldsymbol{F}}_h=\boldsymbol{Q}_h^\top \boldsymbol{F}_h$, and set $\widetilde{\boldsymbol{R}}_h\coloneqq \boldsymbol{Q}_h^\top \boldsymbol{R}_h$, where $\boldsymbol{R}_h$ is the map given by Lemma \ref{lem:approx-rot}. Define
\begin{equation*}
\boldsymbol{G}_h \coloneqq \frac{1}{h^{\beta/2}}(\widetilde{\boldsymbol{R}}_h^\top \widetilde{\boldsymbol{F}}_h-\boldsymbol{I}).    
\end{equation*}
By the first estimate in \eqref{eqn:approx-rot-tilda1} with $q=2$, we have $||\boldsymbol{G}_h||_{L^2(\Omega;\rtt)}\leq C$. Hence, up to subsequences,  $\boldsymbol{G}_h \wk \boldsymbol{G}$ in $L^2(\Omega;\rtt)$ for some $\boldsymbol{G}\in L^2(\Omega;\rtt)$. Proceeding as in \cite[Lemma 2]{friesecke.james.mueller2}, we prove that  
\begin{equation}
    \label{eqn:G-affine-in-x3}
\boldsymbol{G}''(\boldsymbol{x})=\boldsymbol{K}(\boldsymbol{x}')+\boldsymbol{L}(\boldsymbol{x}')x_3
\end{equation}
for almost every $\boldsymbol{x} \in \Omega$, where we set $\boldsymbol{K} \coloneqq\sym \nabla'\boldsymbol{u} \in \rtwtw$ and $\boldsymbol{L} \coloneqq - (\nabla')^2 v \in \rtwtw$. {\MMM Here, $\boldsymbol{u}\in W^{1,2}(S;\R^2)$ and $v\in W^{2,2}(S)$ are the limiting averaged displacements identified in \eqref{eqn:comp-u} and \eqref{eqn:comp-v}, respectively.} Define $X_h\coloneqq \{|\boldsymbol{G}_h|\leq h^{-\beta/4}\}$ and set $\chi_h \coloneqq \chi_{X_h}$. By the first estimate in \eqref{eqn:approx-rot-tilda1} with $q=2$, we have $h^{\beta/4}\boldsymbol{G}_h\to \boldsymbol{O}$ in $L^2(\Omega;\rtt)$ so that, up to subsequences, $h^{\beta/4}\boldsymbol{G}_h\to \boldsymbol{O}$ almost everywhere in $\Omega$. Thus, $\chi_h \to 1$ almost everywhere in $\Omega$ and, by the Dominated Convergence Theorem, $\chi_h \to 1$ in $L^1(\Omega)$. From this we deduce that $\chi_h \boldsymbol{G}_h \wk \boldsymbol{G}$ in $L^2(\Omega;\rtt)$.

For simplicity, set $\boldsymbol{\nu}_h\coloneqq \boldsymbol{m}_h \circ \boldsymbol{y}_h$ and $\widetilde{\boldsymbol{\nu}}_h \coloneqq \widetilde{\boldsymbol{m}}_h \circ \widetilde{\boldsymbol{y}}_h$, so that $\widetilde{\boldsymbol{\nu}}_h=\boldsymbol{Q}_h^\top \boldsymbol{\nu}_h$. Note that, since $E^{\text{el}}_h({\boldsymbol{y}}_h,{\boldsymbol{m}}_h)\leq C$, we have $\det \boldsymbol{F}_h=1$ and, in turn, $W^\inc({\boldsymbol{F}}_h,\boldsymbol{\nu}_h)=W({\boldsymbol{F}}_h,\boldsymbol{\nu}_h)=W^k({\boldsymbol{F}}_h,\boldsymbol{\nu}_h)$ for every $k \in \N$. By frame indifference and by \eqref{eqn:taylor-expansion-k}, we obtain
\begin{equation}
    \begin{split}
   \chi_h W^\inc({\boldsymbol{F}}_h,\boldsymbol{\nu}_h)&=\chi_h W^k({\boldsymbol{F}}_h,\boldsymbol{\nu}_h )=\chi_h W^k(\boldsymbol{Q}_h^\top{\boldsymbol{F}}_h,\boldsymbol{Q}_h^\top \boldsymbol{\nu}_h )=\chi_h W^k(\widetilde{\boldsymbol{F}}_h,\widetilde{\boldsymbol{\nu}}_h)\\
   &=\chi_h W^k(\widetilde{\boldsymbol{R}}_h^\top \widetilde{\boldsymbol{F}}_h, \widetilde{\boldsymbol{R}}_h^\top \widetilde{\boldsymbol{\nu}}_h)=\chi_h W^k(\boldsymbol{I}+h^{\beta/2} \boldsymbol{G}_h,\widetilde{\boldsymbol{R}}_h^\top \widetilde{\boldsymbol{\nu}}_h)\\
    &=\frac{1}{2}\chi_h Q^k_3(h^{\beta/2}\boldsymbol{G}_h,\widetilde{\boldsymbol{R}}_h^\top \widetilde{\boldsymbol{\nu}}_h)+\chi_h\, \omega^k(h^{\beta/2}\boldsymbol{G}_h,\widetilde{\boldsymbol{R}}_h^\top \widetilde{\boldsymbol{\nu}}_h)\\
    &=\frac{h^\beta}{2} Q^k_3(\chi_h \boldsymbol{G}_h,\widetilde{\boldsymbol{R}}_h^\top \widetilde{\boldsymbol{\nu}}_h)+ \omega^k(h^{\beta/2}\chi_h \boldsymbol{G}_h,\widetilde{\boldsymbol{R}}_h^\top \widetilde{\boldsymbol{\nu}}_h),
    \end{split}
\end{equation}
where we used that ${\MMM h^{\beta/2}}|\boldsymbol{G}_h|<\delta$ on $X_h$ for $h \ll 1$ depending only on $\delta$.
We compute
\begin{equation}
    \label{eqn:lower-bound-el-1}
    \begin{split}
    E_h^\mathrm{el}({\boldsymbol{y}}_h,{\boldsymbol{m}}_h)&=\frac{1}{h^\beta} \int_\Omega  W^k(\boldsymbol{F}_h,\boldsymbol{\nu}_h) \,\d \boldsymbol{x}    \geq \frac{1}{h^\beta} \int_\Omega  \chi_h \,W^k(\boldsymbol{F}_h,\boldsymbol{\nu}_h) \,\d \boldsymbol{x}\\
    & = \frac{1}{2}\int_\Omega Q^k_3(\chi_h \boldsymbol{G}_h,\widetilde{\boldsymbol{R}}_h^\top \widetilde{\boldsymbol{\nu}}_h)\,\d\boldsymbol{x} + \frac{1}{h^\beta} \int_\Omega \omega^k(h^{\beta/2}\,\chi_h \boldsymbol{G}_h, \widetilde{\boldsymbol{R}}_h^\top \widetilde{\boldsymbol{\nu}}_h)\,\d \boldsymbol{x}.
    \end{split}
\end{equation}
We focus on the first integral on the right-hand side of \eqref{eqn:lower-bound-el-1}. From \eqref{eqn:approx-rot-tilda1} and \eqref{eqn:approx-rot-tilda2}, for $q=p$, using the Morrey embedding, we deduce that $\widetilde{\boldsymbol{R}}_h \to \boldsymbol{I}$ uniformly in $\Omega$. By \eqref{eqn:conv-m-comp-y}, upon extracting a further subsequence, $\widetilde{\boldsymbol{\nu}}_h \to \boldsymbol{\lambda}$ almost everywhere in $\Omega$ and hence, by \eqref{eqn:coupling_C}, we obtain that $\mathbb{C}^{\widetilde{\boldsymbol{R}}_h^\top \widetilde{\boldsymbol{\nu}}_h} \to \mathbb{C}^{\boldsymbol{\lambda}}$ almost everywhere in $\Omega$. Fix $\rho>0$. By the Egorov Theorem, there exists a measurable set $K_\rho \subset \Omega$ with $\leb(K_\rho)<\rho$ such that $\mathbb{C}^{\widetilde{\boldsymbol{R}}_h^\top \widetilde{\boldsymbol{\nu}}_h} \to \mathbb{C}^{\boldsymbol{\lambda}}$ uniformly in $\Omega \setminus K_\rho$.  We write
\begin{equation*}
    \begin{split}
    \int_\Omega Q^k_3(\chi_h \boldsymbol{G}_h,\widetilde{\boldsymbol{R}}_h^\top \widetilde{\boldsymbol{\nu}}_h)\,\d\boldsymbol{x} &\geq \int_{\Omega \setminus K_\rho} Q^k_3(\chi_h \boldsymbol{G}_h,\widetilde{\boldsymbol{R}}_h^\top\widetilde{\boldsymbol{\nu}}_h)\,\d\boldsymbol{x}\\
     &=\int_{\Omega \setminus K_\rho} Q^k_3(\chi_h \boldsymbol{G}_h,\boldsymbol{\lambda})\,\d\boldsymbol{x}+ \int_{\Omega \setminus K_\rho} \left( Q^k_3(\chi_h \boldsymbol{G}_h,\widetilde{\boldsymbol{R}}_h^\top \widetilde{\boldsymbol{\nu}}_h)-Q^k_3(\chi_h \boldsymbol{G}_h, \boldsymbol{\lambda})\right )\,\d \boldsymbol{x}.
    \end{split}
\end{equation*}
Since $Q^k_3(\cdot,\boldsymbol{\lambda})$ is convex, by lower semicontinuity we deduce
\begin{equation}
    \liminf_{h \to 0^+} \int_{\Omega \setminus K_\rho} Q^k_3(\chi_h\boldsymbol{G}_h,\boldsymbol{\lambda})\,\d\boldsymbol{x} \geq \int_{\Omega \setminus K_\rho} Q^k_3(\boldsymbol{G},\boldsymbol{\lambda})\,\d\boldsymbol{x}.
\end{equation}
On the other hand,
\begin{equation}
    \lim_{h \to 0^+} \int_{\Omega \setminus K_\rho} \left( Q^k_3(\chi_h \boldsymbol{G}_h,\widetilde{\boldsymbol{R}}_h^\top \widetilde{\boldsymbol{\nu}}_h)-Q^k_3(\chi_h \boldsymbol{G}_h, \boldsymbol{\lambda})\right )\,\d \boldsymbol{x}=0.
\end{equation}
Indeed, recalling \eqref{eqn:definition-Q3}, 
\begin{equation*}
    \begin{split}
       |Q^k_3(\chi_h \boldsymbol{G}_h,\widetilde{\boldsymbol{R}}_h^\top \widetilde{\boldsymbol{\nu}}_h)-Q^k_3(\chi_h \boldsymbol{G}_h,\boldsymbol{\lambda})|&=|Q_3(\chi_h \boldsymbol{G}_h,\widetilde{\boldsymbol{R}}_h^\top\widetilde{\boldsymbol{\nu}}_h)-Q_3(\chi_h \boldsymbol{G}_h,\boldsymbol{\lambda})|\\
       &=|\mathbb{C}^{\widetilde{\boldsymbol{R}}_h^\top \widetilde{\boldsymbol{\nu}}_h} \,(\chi_h \boldsymbol{G}_h):(\chi_h \boldsymbol{G}_h) - \mathbb{C}^{\boldsymbol{\lambda}} (\chi_h \boldsymbol{G}_h):(\chi_h \boldsymbol{G}_h)|\\
       &\leq |\mathbb{C}^{\widetilde{\boldsymbol{R}}_h^\top \widetilde{\boldsymbol{\nu}}_h} - \mathbb{C}^{\boldsymbol{\lambda}}|\,|\chi_h \boldsymbol{G}_h|^2
    \end{split}
\end{equation*}
so that
\begin{equation*}
    \begin{split}
     \left |  \int_{\Omega \setminus K_\rho} \left( Q^k_3(\chi_h \boldsymbol{G}_h,\widetilde{\boldsymbol{R}}_h^\top \widetilde{\boldsymbol{\nu}}_h)-Q^k_3(\chi_h \boldsymbol{G}_h, \boldsymbol{\lambda})\right )\,\d \boldsymbol{x} \right | &\leq  \int_{\Omega \setminus K_\rho} |\mathbb{C}^{\widetilde{\boldsymbol{R}}_h^\top \widetilde{\boldsymbol{\nu}}_h} - \mathbb{C}^{\boldsymbol{\lambda}}|\,|\chi_h \boldsymbol{G}_h|^2 \,\d\boldsymbol{x}\\
     &\leq ||\mathbb{C}^{\widetilde{\boldsymbol{R}}_h^\top \widetilde{\boldsymbol{\nu}}_h} - \mathbb{C}^{\boldsymbol{\lambda}}||_{L^\infty(\Omega \setminus K_\rho;\rtt)} \,||\chi_h \boldsymbol{G}_h||^2_{L^2(\Omega;\rtt)},
    \end{split}
\end{equation*}
where the right-hand side goes to zero, as $h \to 0^+$. Thus we obtain
\begin{equation*}
    \liminf_{h \to 0^+} \int_\Omega Q_3^k(\chi_h \boldsymbol{G}_h,\widetilde{\boldsymbol{R}}_h^\top \widetilde{\boldsymbol{\nu}}_h)\,\d\boldsymbol{x} \geq \int_{\Omega \setminus K_\rho} Q_3^k(\boldsymbol{G},\boldsymbol{\lambda})\,\d\boldsymbol{x}
\end{equation*}
from which, letting $\rho \to 0^+$ and applying the Monotone Convergence
Theorem, we deduce
\begin{equation}
    \label{eqn:Q3-liminf}
    \liminf_{h \to 0^+} \int_\Omega Q_3^k(\chi_h \boldsymbol{G}_h,\widetilde{\boldsymbol{R}}_h^\top \widetilde{\boldsymbol{\nu}}_h)\,\d\boldsymbol{x} \geq \int_\Omega Q_3^k(\boldsymbol{G},\boldsymbol{\lambda})\,\d\boldsymbol{x}.
\end{equation}
For the second integral on the right-hand side of \eqref{eqn:lower-bound-el-1}, note that $|\omega(\boldsymbol{F},\boldsymbol{\nu})|\leq \overline{\omega}(|\boldsymbol{F}|)\,|\boldsymbol{F}|^2$ for every $\boldsymbol{F}\in \rtt$ and $\boldsymbol{\nu} \in \S^2$, with $\overline{\omega}$ defined as in \eqref{eqn:coupling_omega}. Thus, we obtain
\begin{equation*}
    \begin{split}
    \frac{1}{h^\beta} \int_\Omega \omega^k(h^{\beta/2}\,\chi_h \boldsymbol{G}_h, \widetilde{\boldsymbol{R}}_h^\top \widetilde{\boldsymbol{\nu}}_h)\,\d \boldsymbol{x} &=   \frac{1}{h^\beta} \int_\Omega \left (  \omega(h^{\beta/2}\,\chi_h \boldsymbol{G}_h, \widetilde{\boldsymbol{R}}_h^\top\widetilde{\boldsymbol{\nu}}_h) +
    k\, \gamma(h^\beta\,|\chi_h \boldsymbol{G}_h|^2) \right) \,\d \boldsymbol{x}\\
    &\leq  \int_\Omega \left (\overline{\omega}(h^{\beta/2}|\chi_h \boldsymbol{G}_h|)+k\,\frac{\gamma(h^\beta\,|\chi_h \boldsymbol{G}_h|^2)}{h^\beta\,|\chi_h \boldsymbol{G}_h|^2} \right) |\chi_h \boldsymbol{G}_h|^2\\
    & \leq   \left(\overline{\omega}(h^{\beta/4}) + k\,\overline{\gamma}(h^{\beta/2}) \right)\,||\chi_h \boldsymbol{G}_h||^2_{L^2(\Omega;\rtt)}.
    \end{split}
\end{equation*}
In the formula above, $\overline{\gamma}(t)\coloneqq\sup\{|\gamma(\tau)|/\tau^2:\,0<\tau\leq t\}$. By construction, $\overline{\gamma}(t)\to 0$, as $t \to 0^+$, and, by \eqref{eqn:coupling_omega}, we infer that 
\begin{equation}
\label{eqn:omega-liminf}
\lim_{h \to 0^+} \frac{1}{h^\beta} \int_\Omega \omega^k(h^{\beta/2}\,\chi_h \boldsymbol{G}_h, \widetilde{\boldsymbol{R}}_h^\top \widetilde{\boldsymbol{\nu}}_h)\,\d \boldsymbol{x}=0.
\end{equation}

Combining \eqref{eqn:Q3-liminf} and \eqref{eqn:omega-liminf}, we obtain 
\begin{equation*}
    \liminf_{h \to 0^+} E_h^\mathrm{el}({\boldsymbol{y}}_h,{\boldsymbol{m}}_h) \geq \frac{1}{2}\int_\Omega Q^k_3(\boldsymbol{G},\boldsymbol{\lambda})\,\d\boldsymbol{x} \geq \frac{1}{2}\int_\Omega Q^k_2(\boldsymbol{G}'',\boldsymbol{\lambda})\,\d\boldsymbol{x}.
\end{equation*}
Therefore, by \eqref{eqn:comparison-estimate}, for every $k \in \N$, we have
\begin{equation*}
    \liminf_{h \to 0^+} E_h^\mathrm{el}({\boldsymbol{y}}_h,{\boldsymbol{m}}_h) \geq \frac{1}{2}\int_\Omega Q^\inc_2(\boldsymbol{G}'',\boldsymbol{\lambda})\,\d\boldsymbol{x}-\frac{C}{\sqrt{k}}\int_\Omega |\boldsymbol{G}''|^2\,\d\boldsymbol{x},
\end{equation*}
from which, letting $k \to \infty$, we deduce
\begin{equation*}
    \liminf_{h \to 0^+} E_h^\mathrm{el}({\boldsymbol{y}}_h,{\boldsymbol{m}}_h) \geq \frac{1}{2}\int_\Omega Q^\inc_2(\boldsymbol{G}'',\boldsymbol{\lambda})\,\d\boldsymbol{x}.
\end{equation*}
Finally, recalling \eqref{eqn:G-affine-in-x3}, we conclude
\begin{equation*}
    \int_\Omega Q^\inc_2(\boldsymbol{G}'',\boldsymbol{\lambda})\,\d \boldsymbol{x}=\int_S Q^\inc_2(\boldsymbol{K},\boldsymbol{\lambda})\,\d\boldsymbol{x}'+\frac{1}{12} \int_S Q^\inc_2(\boldsymbol{L},\boldsymbol{\lambda})\,\d\boldsymbol{x}',
\end{equation*}
which gives \eqref{eqn:lower-bound-exc}.
\end{proof}

We now focus on the magnetostatic energy. Recall that, for any $(\boldsymbol{y},\boldsymbol{m}) \in \mathcal{Q}$, the corresponding stray field {\MMM potential} $\psi_{\boldsymbol{m}}$ is a weak solution of \eqref{eqn:maxwell_y}. More explicitly, this means that $\psi_{\boldsymbol{m}} \in V^{1,2}(\R^3)$ and there holds
\begin{equation}
\label{eqn:Maxwell-weak}
    \forall\, \varphi \in V^{1,2}(\R^3),\quad \int_{\R^3} \nabla \psi_{\boldsymbol{m}} \cdot \nabla \varphi\,\d\boldsymbol{\xi}=\int_{\R^3} \chi_{\Omega^{\boldsymbol{y}}}\boldsymbol{m} \cdot \nabla \varphi\,\d\boldsymbol{\xi}.
\end{equation}
In the following result, we claim the existence of such a weak solution and we collect some of its properties that are going to be used later.

\begin{lemma}[Weak solutions of the Maxwell equation]
\label{lem:maxwell}
Let $ (\boldsymbol{y},\boldsymbol{m}) \in \mathcal{Q}$. The Maxwell equation \eqref{eqn:maxwell_y} admits a weak solution $\psi_{\boldsymbol{m}} \in V^{1,2}(\R^3)$ which is unique up to additive constants and satisfies the following stability estimate:
\begin{equation}
    \label{eqn:Maxwell-stability}
    ||\nabla \psi_{\boldsymbol{m}}||_{L^2(\R^3;\R^3)}\leq ||\chi_{\Omega^{\boldsymbol{y}}}\boldsymbol{m}||_{L^2(\R^3;\R^3)}.
\end{equation}
Moreover, such a weak solution admits the following variational characterization:
\begin{equation}
    \label{eqn:Maxwell-variational}
    \psi_{\boldsymbol{m}} \in \mathrm{argmin} \left\{ \int_{\R^3} |\nabla \varphi - \chi_{\Omega^{\boldsymbol{y}}}\boldsymbol{m}|^2 \,\d\boldsymbol{\xi}:\,\varphi\in V^{1,2}(\R^3) \right \}.
\end{equation}
\end{lemma}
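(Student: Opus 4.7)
The plan is to obtain existence and uniqueness through the variational characterization \eqref{eqn:Maxwell-variational}, treating the minimization problem as primary and deriving the weak formulation \eqref{eqn:Maxwell-weak} as its Euler--Lagrange equation. First, observe that $\chi_{\Omega^{\boldsymbol{y}}}\boldsymbol{m}\in L^2(\R^3;\R^3)$: the saturation constraint $|\boldsymbol{m}|=1$ gives $|\chi_{\Omega^{\boldsymbol{y}}}\boldsymbol{m}|^2=\chi_{\Omega^{\boldsymbol{y}}}$ almost everywhere, and by Lemma \ref{lem:def-config-reduced-domain} combined with the continuity of $\boldsymbol{y}$ on $\closure{\Omega}$, the set $\Omega^{\boldsymbol{y}}$ is bounded and therefore has finite Lebesgue measure. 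Consider the functional
\begin{equation*}
J(\varphi):=\int_{\R^3}|\nabla\varphi-\chi_{\Omega^{\boldsymbol{y}}}\boldsymbol{m}|^2\,\d\boldsymbol{\xi},\qquad \varphi\in V^{1,2}(\R^3),
\end{equation*}
which depends only on $\nabla\varphi$ and hence descends to the quotient space $\dot V^{1,2}(\R^3):=V^{1,2}(\R^3)/\R$.

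Equipped with the bilinear form $(u,v)_{\dot V}:=\int_{\R^3}\nabla u\cdot\nabla v\,\d\boldsymbol{\xi}$, the space $\dot V^{1,2}(\R^3)$ is a Hilbert space; completeness follows from the Gagliardo--Nirenberg--Sobolev inequality in $\R^3$, which allows one to identify the $L^2$-limit of a Cauchy sequence of gradients with the gradient of a well-defined element of $V^{1,2}(\R^3)/\R$. Expanding the square yields
\begin{equation*}
J(\varphi)=\int_{\R^3}|\nabla\varphi|^2\,\d\boldsymbol{\xi}-2\int_{\R^3}\chi_{\Omega^{\boldsymbol{y}}}\boldsymbol{m}\cdot\nabla\varphi\,\d\boldsymbol{\xi}+\leb(\Omega^{\boldsymbol{y}}),
\end{equation*}
and the middle term defines, by Cauchy--Schwarz, a continuous linear functional on $\dot V^{1,2}(\R^3)$. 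Hence $J$ is coercive and strictly convex on this Hilbert space, and admits a unique minimizer $[\psi_{\boldsymbol{m}}]$. Lifting to $V^{1,2}(\R^3)$ yields the variational characterization \eqref{eqn:Maxwell-variational} and existence of a representative $\psi_{\boldsymbol{m}}$, determined up to additive constants.

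The Euler--Lagrange equation at the minimizer is precisely \eqref{eqn:Maxwell-weak}, so $\psi_{\boldsymbol{m}}$ is a weak solution of \eqref{eqn:maxwell_y}. Conversely, if $\widetilde\psi\in V^{1,2}(\R^3)$ also satisfies \eqref{eqn:Maxwell-weak}, subtracting the two identities and testing with $\varphi=\widetilde\psi-\psi_{\boldsymbol{m}}$ forces $\nabla(\widetilde\psi-\psi_{\boldsymbol{m}})=\boldsymbol{0}$, which gives the uniqueness statement. The stability estimate \eqref{eqn:Maxwell-stability} is obtained by testing \eqref{eqn:Maxwell-weak} with $\varphi=\psi_{\boldsymbol{m}}$ and applying Cauchy--Schwarz.

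The one point requiring care is the rigorous construction of $\dot V^{1,2}(\R^3)$ as a Hilbert space, since $V^{1,2}(\R^3)$ is not itself normed by the Dirichlet energy; once this is settled via Sobolev embedding, the rest is a textbook application of the direct method to a strictly convex coercive quadratic functional on a Hilbert space, and I expect no further obstacles.
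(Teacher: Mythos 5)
Your proof is correct and takes a genuinely different route from the paper, which simply cites \cite[Proposition 8.8]{barchiesi.henao.moracorral} for existence, uniqueness, and the stability estimate and then observes in one sentence that \eqref{eqn:Maxwell-weak} is the Euler--Lagrange equation of the convex Dirichlet functional, yielding \eqref{eqn:Maxwell-variational}. You instead give a self-contained argument by the direct method: you treat the minimization problem as primary, work in the quotient $V^{1,2}(\R^3)/\R$ equipped with the Dirichlet inner product, establish existence and uniqueness of the minimizer there, and then derive the weak form, the uniqueness up to constants, and the stability bound as corollaries. This buys you independence from the cited reference and makes the variational characterization—which the paper actually uses later, e.g. in the proof of Proposition \ref{prop:liminf-mag}—the structural centerpiece rather than an afterthought; the trade-off is that you must supply the Hilbert-space structure of the Beppo Levi quotient space yourself.

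One remark on that point, which you correctly flag as the only delicate step: invoking the Gagliardo--Nirenberg--Sobolev inequality works, but is slightly more than you need and hides a subtlety (GNS as usually stated applies to functions decaying at infinity, so one must first argue that every $\varphi\in V^{1,2}(\R^3)$ admits, up to an additive constant, a representative in $L^6(\R^3)$). A more elementary and equally standard route to completeness of $V^{1,2}(\R^3)/\R$ is local: given a Cauchy sequence of classes, normalize representatives to have zero average on a fixed ball, then the Poincar\'e--Wirtinger inequality on balls of arbitrary radius shows the sequence is Cauchy in $L^2_{\loc}(\R^3)$, and the $L^2$-limit of the gradients is the gradient of the limiting local function. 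Either way the gap is only expository, not mathematical; the remainder of your argument (coercivity and strict convexity of the quadratic functional, Euler--Lagrange equation giving \eqref{eqn:Maxwell-weak}, testing with the solution itself and with differences of solutions) is standard and correct.
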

For the proof of the existence and the stability of weak solutions, we refer to \cite[Proposition 8.8]{barchiesi.henao.moracorral}. For the proof of  \eqref{eqn:Maxwell-variational}, note that \eqref{eqn:Maxwell-weak} is exactly the weak form of the Euler-Lagrange equation of the convex functional
\begin{equation*}
    \varphi \mapsto \frac{1}{2}\int_{\R^3}|\nabla \varphi - \chi_{\Omega^{\boldsymbol{y}}}\boldsymbol{m}|^2\,\d\boldsymbol{\xi}
\end{equation*}
on $V^{1,2}(\R^3)$.

We now prove that for the magnetostatic energies we actually have the convergence to the corresponding term in the limiting energy. The proof is adapted from \cite[Proposition 4.1]{gioia.james}. Recall the definitions of $E_h$ and $E$ in \eqref{eqn:energy_Eh} and \eqref{eqn:energy_E}, respectively.

\begin{proposition}[Convergence of the magnetostatic energy]
\label{prop:liminf-mag}
Let $((\boldsymbol{y}_h,\boldsymbol{m}_h))_h \subset \mathcal{Q}$ be such that $E_h(\boldsymbol{y}_h,\boldsymbol{m}_h)\leq C$ for every $h>0$. Then, given the map $\boldsymbol{\lambda}\in W^{1,2}(S;\S^2)$ identified in Proposition \ref{prop:comp-mag}, we have, up to subsequences, as $h \to 0^+$:
\begin{equation}
    \label{eqn:convergence-mag}
    \lim_{h \to 0^+}E^\mathrm{mag}_h({\boldsymbol{y}}_h, {\boldsymbol{m}}_h)= E^\mathrm{mag}(\boldsymbol{\lambda}).
\end{equation}
\end{proposition}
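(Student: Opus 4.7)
My plan is to adapt the thin-film argument of Gioia--James to the present mixed Eulerian--Lagrangian setting. By the invariance of the magnetostatic energy under rigid motions (Remark~\ref{rem:invariance}), it suffices to prove the claim with $(\boldsymbol{y}_h,\boldsymbol{m}_h)$ replaced by $(\widetilde{\boldsymbol{y}}_h,\widetilde{\boldsymbol{m}}_h)$. Setting $\widetilde{\psi}_h\coloneqq\psi_{\widetilde{\boldsymbol{m}}_h}$ and $\widetilde{\Psi}_h\coloneqq\widetilde{\psi}_h\circ\boldsymbol{z}_h$, the relation $(\nabla\widetilde{\psi}_h)\circ\boldsymbol{z}_h=\nabla_h\widetilde{\Psi}_h$ and a change of variables give $E^{\mathrm{mag}}_h(\widetilde{\boldsymbol{y}}_h,\widetilde{\boldsymbol{m}}_h)=\tfrac12\int_{\R^3}|\nabla_h\widetilde{\Psi}_h|^2\,\d\boldsymbol{x}$. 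Testing the weak form of the Maxwell equation with $\widetilde{\psi}_h$ and rescaling produces the identity
\[
\int_{\R^3}|\nabla_h\widetilde{\Psi}_h|^2\,\d\boldsymbol{x}=\int_{\R^3}|\widetilde{\boldsymbol{\mu}}_h|^2\,\d\boldsymbol{x}-\int_{\R^3}|\nabla_h\widetilde{\Psi}_h-\widetilde{\boldsymbol{\mu}}_h|^2\,\d\boldsymbol{x},
\]
with $\widetilde{\boldsymbol{\mu}}_h=\mathcal{M}_h(\widetilde{\boldsymbol{y}}_h,\widetilde{\boldsymbol{m}}_h)$. Since \eqref{eqn:conv-mu} gives $\widetilde{\boldsymbol{\mu}}_h\to\chi_\Omega\boldsymbol{\lambda}$ in $L^2(\R^3;\R^3)$ and $|\boldsymbol{\lambda}|=1$, one has $\int|\widetilde{\boldsymbol{\mu}}_h|^2\,\d\boldsymbol{x}\to\lebt(S)$, and the proof reduces to showing that $\int|\nabla_h\widetilde{\Psi}_h-\widetilde{\boldsymbol{\mu}}_h|^2\,\d\boldsymbol{x}\to\int_S|\boldsymbol{\lambda}'|^2\,\d\boldsymbol{x}'$.

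For the upper bound I would plug an explicit competitor into the variational characterization \eqref{eqn:Maxwell-variational}, designed to reproduce the $\boldsymbol{e}_3$-component of $\chi_\Omega\boldsymbol{\lambda}$ in the scaled gradient. Fix $G\in W^{1,\infty}(\R)$ with $G'=\chi_I$, a cutoff $\chi\in C^\infty_c(\R)$ with $\chi\equiv 1$ near the origin, and, for any $\ell\in C^\infty_c(\R^2)$ approximating $\chi_S\,\lambda^3$ in $L^2(\R^2)$, let
\[
\varphi_{h,\ell}(\boldsymbol{\xi})\coloneqq h\,\ell(\boldsymbol{\xi}')\,G(\xi_3/h)\,\chi(\xi_3)\in V^{1,2}(\R^3).
\]
A direct computation shows that $\widetilde{\Psi}^*_{h,\ell}\coloneqq\varphi_{h,\ell}\circ\boldsymbol{z}_h$ satisfies $\nabla_h\widetilde{\Psi}^*_{h,\ell}\to(\boldsymbol{0}',\ell\,\chi_I)^\top$ strongly in $L^2(\R^3;\R^3)$: the prefactor $h$ kills $\nabla'\widetilde{\Psi}^*_{h,\ell}$, while $h^{-1}\partial_3\widetilde{\Psi}^*_{h,\ell}$ reduces to $\ell\,\chi_I$ by dominated convergence, thanks to $G'=\chi_I$ and $\chi(h\,\cdot)\to 1$. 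Minimality of $\widetilde{\Psi}_h$, first letting $h\to 0^+$ and then $\ell\to\chi_S\,\lambda^3$ in $L^2(\R^2)$, yields
\[
\limsup_{h\to 0^+}\int_{\R^3}|\nabla_h\widetilde{\Psi}_h-\widetilde{\boldsymbol{\mu}}_h|^2\,\d\boldsymbol{x}\leq\int_S|\boldsymbol{\lambda}'|^2\,\d\boldsymbol{x}'.
\]

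For the matching lower bound I would extract, using the a priori energy bound, weak limits $\nabla'\widetilde{\Psi}_h\wk\boldsymbol{w}'$ and $h^{-1}\partial_3\widetilde{\Psi}_h\wk w^3$ in $L^2(\R^3)$ along a subsequence. The key rigidity is that $\boldsymbol{w}'=\boldsymbol{0}'$: since $\|\partial_3\widetilde{\Psi}_h\|_{L^2(\R^3)}\leq C h\to 0$, commuting distributional partials gives, for every $\boldsymbol{\varphi}\in C^\infty_c(\R^3;\R^2)$,
\[
\int_{\R^3}\nabla'\widetilde{\Psi}_h\cdot\partial_3\boldsymbol{\varphi}\,\d\boldsymbol{x}=\int_{\R^3}\partial_3\widetilde{\Psi}_h\,\div'\boldsymbol{\varphi}\,\d\boldsymbol{x}\to 0,
\]
so that $\partial_3\boldsymbol{w}'=\boldsymbol{0}'$ in $\mathcal{D}'(\R^3;\R^2)$; combined with $\boldsymbol{w}'\in L^2(\R^3;\R^2)$, this forces $\boldsymbol{w}'=\boldsymbol{0}'$ almost everywhere, since no nontrivial $L^2(\R^3;\R^2)$ function can be independent of $x_3$. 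Weak lower semicontinuity together with $\widetilde{\boldsymbol{\mu}}_h\to\chi_\Omega\boldsymbol{\lambda}$ then gives
\[
\liminf_{h\to 0^+}\int_{\R^3}|\nabla_h\widetilde{\Psi}_h-\widetilde{\boldsymbol{\mu}}_h|^2\,\d\boldsymbol{x}\geq\int_{\R^3}|(\boldsymbol{0}',w^3)^\top-\chi_\Omega\boldsymbol{\lambda}|^2\,\d\boldsymbol{x}\geq\int_S|\boldsymbol{\lambda}'|^2\,\d\boldsymbol{x}',
\]
matching the upper bound. Plugging this limit into the key identity gives $\lim\int|\nabla_h\widetilde{\Psi}_h|^2\,\d\boldsymbol{x}=\lebt(S)-\int_S|\boldsymbol{\lambda}'|^2\,\d\boldsymbol{x}'=\int_S|\lambda^3|^2\,\d\boldsymbol{x}'$, which is exactly $2\,E^{\mathrm{mag}}(\boldsymbol{\lambda})$. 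The hardest step is the rigidity argument forcing $\boldsymbol{w}'$ to vanish, since it captures the thin-film degeneracy that localizes the limiting magnetostatic energy to the out-of-plane component $\lambda^3$.
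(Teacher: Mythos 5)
Your argument is correct and reaches the same conclusion by a genuinely different route in the final identification step. You share the paper's backbone: invariance under rigid motions, passage to the rescaled potential $\widehat\psi_h=\widetilde\psi_h\circ\boldsymbol z_h$, the a priori bound $\|\nabla_h\widehat\psi_h\|_{L^2}\leq C$, and the thin-film rigidity $\nabla'\widehat\psi_h\wk\boldsymbol0'$ (your distributional commutation $\int\nabla'\widehat\psi_h\cdot\partial_3\boldsymbol\varphi=\int\partial_3\widehat\psi_h\,\div'\boldsymbol\varphi$ is a clean shortcut; the paper instead works in $V^{1,2}(\R^3)/\R$, identifies the weak limit as a gradient $\nabla\widehat\psi$, then argues via Fubini on slabs). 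The divergence is in how the limit is closed. The paper writes $\frac{1}{2h}\int|\nabla\widetilde\psi_h|^2=\frac{1}{2h}\int\chi_{\Omega^{\widetilde{\boldsymbol y}_h}}\widetilde{\boldsymbol m}_h\cdot\nabla\widetilde\psi_h$, passes to the limit to obtain $\int\chi_\Omega\lambda^3\,l$, and then shows $l=\chi_\Omega\lambda^3$ by passing to the limit in the weak Euler--Lagrange equation for $\widehat\psi_h$ (the Carbou argument). You instead exploit the Pythagorean decomposition $\|\nabla_h\widehat\psi_h\|^2_{L^2}=\|\widetilde{\boldsymbol\mu}_h\|^2_{L^2}-\|\nabla_h\widehat\psi_h-\widetilde{\boldsymbol\mu}_h\|^2_{L^2}$ (a direct consequence of testing the weak Maxwell equation with $\widehat\psi_h$), and pin down the limit of the second term by a two-sided argument: an explicit competitor $\varphi_{h,\ell}=h\,\ell(\boldsymbol\xi')G(\xi_3/h)\chi(\xi_3)$ in the variational characterization \eqref{eqn:Maxwell-variational} for the upper bound, and weak lower semicontinuity for the lower bound. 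Your route is more in the spirit of a $\Gamma$-convergence upper/lower bound pair and avoids identifying the auxiliary weak limit $l$; the paper's route is shorter once the Euler--Lagrange characterization is in hand but requires that extra identification. Both are valid.

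One small remark: you should verify that your competitor $\varphi_{h,\ell}$ lies in $V^{1,2}(\R^3)$, which it does since $\ell$ and $\chi$ have compact support, $G$ is bounded and Lipschitz, and $G'(\xi_3/h)=\chi_{(-h/2,h/2)}(\xi_3)$ has compact support in $\xi_3$; and that $\|hG(\cdot)\chi'(h\cdot)\|_{L^2(\R)}=O(h^{1/2})\to0$, which is needed for the strong convergence $\nabla_h\widetilde\Psi^*_{h,\ell}\to(\boldsymbol0',\ell\chi_I)^\top$. Both points hold, so the construction is sound.
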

\begin{proof}
We follow the notation of Propositions \ref{prop:comp-def} and \ref{prop:comp-mag}. Thus, $\widetilde{\boldsymbol{y}}_h=\boldsymbol{T}_h \circ \boldsymbol{y}_h$ and $\widetilde{\boldsymbol{m}}_h=\boldsymbol{Q}_h^\top \boldsymbol{m}_h \circ \boldsymbol{T}_h^{-1}$, where $\boldsymbol{T}_h$ is the rigid motion given by $\boldsymbol{T}_h(\boldsymbol{\xi})=\boldsymbol{Q}_h^\top \boldsymbol{\xi}-\boldsymbol{c}_h$ for every $\boldsymbol{\xi}\in \R^3$, $\boldsymbol{Q}_h \in SO(3)$ and $\boldsymbol{c}_h \in \R^3$.
For simplicity, denote by $\psi_h$ and $\widetilde{\psi}_h$ the stray field {\MMM potentials} corresponding to $(\boldsymbol{y}_h,\boldsymbol{m}_h)$ and $(\widetilde{\boldsymbol{y}}_h,\widetilde{\boldsymbol{m}}_h)$, respectively. Recall that these are defined up to additive constants and are weak solutions, in the sense of \eqref{eqn:Maxwell-weak}, of the following two equations:
\begin{equation}
\label{eqn:two-maxwell}
 \text{$\Delta \psi_h=\text{div}\left (\chi_{\Omega^{\boldsymbol{y}_h}}\boldsymbol{m}_h\right )$ in $\R^3$, \hspace{8mm} $\Delta \widetilde{\psi}_h=\text{div}(\chi_{\Omega^{\widetilde{\boldsymbol{y}}_h}}\widetilde{\boldsymbol{m}}_h)$ in $\R^3$.} 
\end{equation}

For every $\varphi \in V^{1,2}(\R^3)$, we compute
\begin{equation}
    \label{eqn:weak-maxwell-composition}
    \begin{split}
        \int_{\R^3} \nabla (\psi_h \circ \boldsymbol{T}_h^{-1}) \cdot \nabla \varphi\,\d\boldsymbol{\xi}&=\int_{\R^3} \boldsymbol{Q}_h^\top \nabla \psi_h \circ \boldsymbol{T}_h^{-1} \cdot \nabla \varphi\,\d\boldsymbol{\xi}=\int_{\R^3} \nabla \psi_h \cdot \boldsymbol{Q}_h \nabla \varphi \circ \boldsymbol{T}_h\,\d\boldsymbol{\xi}\\
        &=\int_{\R^3} \nabla \psi_h \cdot \nabla (\varphi \circ \boldsymbol{T}_h)\,\d\boldsymbol{\xi}=\int_{\R^3} \chi_{\Omega^{\boldsymbol{y}_h}}\boldsymbol{Q}_h^\top \boldsymbol{m}_h \cdot  \nabla \varphi \circ \boldsymbol{T}_h\,\d \boldsymbol{\xi}\\
        &=\int_{\R^3} \chi_{\Omega^{\widetilde{\boldsymbol{y}}_h}}\widetilde{\boldsymbol{m}}_h\cdot \nabla \varphi\,\d\boldsymbol{\xi},
    \end{split}
\end{equation}
where we used the chain rule, the first equation in \eqref{eqn:two-maxwell}, the change-of-variable formula and the identity $\chi_{\Omega^{\widetilde{\boldsymbol{y}}_h}}\widetilde{\boldsymbol{m}}_h=(\chi_{\Omega^{\boldsymbol{y}_h}}\boldsymbol{Q}_h^\top \boldsymbol{m}_h)\circ \boldsymbol{T}_h^{-1}$. From \eqref{eqn:weak-maxwell-composition}, we deduce that $\psi_h \circ \boldsymbol{T}_h^{-1}$ is a weak solution of the second equation in \eqref{eqn:two-maxwell}. Therefore we can assume that $\widetilde{\psi}_h=\psi_h \circ \boldsymbol{T}_h^{-1}$. In this case, by the chain rule and change-of-variable formula, we have 
\begin{equation*}
\begin{split}
    E_h^\text{mag}(\widetilde{\boldsymbol{y}}_h,\widetilde{\boldsymbol{m}}_h)&=\frac{1}{2h}\int_{\R^3}|\nabla \widetilde{\psi}_h|^2\,\d\boldsymbol{\xi}=\frac{1}{2h}\int_{\R^3} |\boldsymbol{Q}_h^\top \nabla \psi_h \circ \boldsymbol{T}_h^{-1}|^2\,\d\boldsymbol{\xi}\\
    &= \frac{1}{2h}\int_{\R^3} |\boldsymbol{Q}_h^\top \nabla \psi_h|^2\,\d\boldsymbol{\xi}=\frac{1}{2h}\int_{\R^3} |\nabla \psi_h|^2\,\d\boldsymbol{\xi}=E_h^\text{mag}({\boldsymbol{y}}_h,{\boldsymbol{m}}_h).
\end{split}
\end{equation*}

Testing the weak form \eqref{eqn:Maxwell-weak}  of the second equation in \eqref{eqn:two-maxwell} with $\varphi=\widetilde{\psi}_h$, we obtain
\begin{equation*}
    E_h^{\mathrm{mag}}(\widetilde{\boldsymbol{y}}_h,\widetilde{\boldsymbol{m}}_h)=\frac{1}{2h}\int_{\R^3}|\nabla \widetilde{\psi}_h|^2\,\d\boldsymbol{\xi}=\frac{1}{2h}\int_{\R^3} \chi_{\Omega^{\widetilde{\boldsymbol{y}}_h}}\widetilde{\boldsymbol{m}}_h \cdot \nabla \widetilde{\psi}_h \, \d \boldsymbol{\xi}.
\end{equation*}
Define $\widehat{\psi}_h \coloneqq \widetilde{\psi}_h \circ \boldsymbol{z}_h$ and $\widetilde{\boldsymbol{\mu}}_h \coloneqq (\chi_{\Omega^{\widetilde{\boldsymbol{y}}_h}}\widetilde{\boldsymbol{m}}_h) \circ \boldsymbol{z}_h$. Using the change-of-variable formula, we compute
\begin{equation}
        \label{eqn:gioia-james-energy}
        \frac{1}{2h}\int_{\R^3} \chi_{\Omega^{\widetilde{\boldsymbol{y}}_h}}\widetilde{\boldsymbol{m}}_h \cdot \nabla \widetilde{\psi}_h \, \d \boldsymbol{\xi}=  \frac{1}{2} \int_{\R^3} \widetilde{\boldsymbol{\mu}}_h \cdot \nabla_h \widehat{\psi}_h\,\d\boldsymbol{x}.
\end{equation}
We claim that $\widehat{\psi}_h \to 0$ in $V^{1,2}(\R^3)$, that is, $\widehat{\psi}_h \to 0$ in $L^2_\loc(\R^3)$ and $\nabla \widehat{\psi}_h \to \boldsymbol{0}$ in $L^2(\R^3;\R^3)$. 
By \eqref{eqn:Maxwell-stability}, we have
\begin{equation}
    \int_{\R^3} |\nabla \widetilde{\psi}_h|^2\,\d\boldsymbol{\xi} \leq \int_{\R^3} |\chi_{\Omega^{\widetilde{\boldsymbol{y}}_h
    }}\widetilde{\boldsymbol{m}}_h|^2\,\d\boldsymbol{\xi}=    \leb(\Omega^{\widetilde{\boldsymbol{y}}_h
    })=h\,\lebt(S),
\end{equation}
where we used the fact that $\widetilde{\boldsymbol{m}}_h$ takes values in $\S^2$ and we applied the area formula. From this, using the change-of-variable formula, we obtain
\begin{equation*}
    \int_{\R^3} |\nabla_h \widehat{\psi}_h|^2\,\d \boldsymbol{x}=\int_{\R^3} |\nabla \widetilde{\psi}_h \circ \boldsymbol{z}_h|^2\,\d\boldsymbol{x}=\frac{1}{h}\int_{\R^3} | \nabla \widetilde{\psi}_h|^2\,\d\boldsymbol{\xi}\leq \lebt(S).
\end{equation*}
Therefore $||\nabla_h \widehat{\psi}_h||_{L^2(\R^3;\R^3)}\leq C$. In particular, $||\partial_3 \widehat{\psi}_h/h||_{L^2(\R^3)}\leq C$, so that $\partial_3 \widehat{\psi}_h \to 0$ in $L^2(\R^3)$ and there exists $l\in L^2(\R^3)$ such that, up to subsequences, $\partial_3 \widehat{\psi}_h/h \wk l$ in $L^2(\R^3)$. As $||\nabla \widehat{\psi}_h||_{L^2(\R^3;\R^3)}\leq ||\nabla_h \widehat{\psi}_h||_{L^2(\R^3;\R^3)} \leq C$, there exists $\Psi \in L^2(\R^3;\R^3)$ such that, up to subsequences, $\nabla \widehat{\psi}_h \wk \Psi$ in $L^2(\R^3;\R^3)$. Consider the Hilbert space ${V}^{1,2}(\R^3)/\R$, i.e. the quotient of $V^{1,2}(\R^3)$ with respect to constant functions, whose inner product is given by
\begin{equation*}
    ([\varphi],[\psi])_{{V}^{1,2}(\R^3)/\R}=\int_{\R^3} \nabla \varphi \cdot \nabla \psi\,\d\boldsymbol{\xi}.
\end{equation*}
Thus $||[\widehat{\psi}_h]||_{V^{1,2}(\R^3)/\R}=||\nabla \widehat{\psi}_h||_{L^2(\R^3;\R^3)}\leq C$, so that there exists $\widehat{\psi}\in V^{1,2}(\R^3)$ such that, up to subsequences, $[\widehat{\psi}_h] \wk [\widehat{\psi}]$ in $V^{1,2}(\R^3)/\R$, namely
\begin{equation*}
    \int_{\R^3} \nabla \widehat{\psi}_h \cdot \nabla \varphi\,\d\boldsymbol{\xi} \to \int_{\R^3} \nabla \widehat{\psi} \cdot \nabla \varphi\,\d\boldsymbol{\xi}
\end{equation*}
for every $\varphi \in V^{1,2}(\R^3)$. Testing the weak convergence of $(\nabla \widehat{\psi}_h)_h$ in $L^2(\R^3;\R^3)$ with gradients of functions in $V^{1,2}(\R^3)$, we obtain that $\Psi$ and $\nabla \widehat{\psi}$ differ by a constant vector but, due to their integrability on the whole space, we necessarily have $\Psi=\nabla \widehat{\psi}$. Thus $\nabla \widehat{\psi}_h \wk \nabla\widehat{\psi}$ in $L^2(\R^3;\R^3)$. From $\partial_3 \widehat{\psi}_h \to 0$ in $L^2(\R^3)$, we deduce $\partial_3 \widehat{\psi}=0$. Then, for any  $a,b\in \R$ with $a<b$, we have
\begin{equation*}
    \int_{\R^3} |\nabla \widehat{\psi}|^2\,\d\boldsymbol{\xi}=\int_{\R^3} |\nabla' \widehat{\psi}|^2\,\d\boldsymbol{\xi} \geq \int_a^b \int_{\R^2} |\nabla' \widehat{\psi}|^2\,\d\boldsymbol{\xi}'\,\d\xi_3=(b-a)\,||\nabla'\widehat{\psi}||^2_{L^2(\R^2;\R^2)}.
\end{equation*}
Since $\nabla \widehat{\psi} \in L^2(\R^3;\R^3)$ and $a$ and $b$ are arbitrary, we necessarily have that $\nabla' \widehat{\psi}=\boldsymbol{0}'$ almost everywhere. 

Going back to \eqref{eqn:gioia-james-energy}, we have
\begin{equation*}
    E_h^{\text{mag}}(\widetilde{\boldsymbol{y}}_h,\widetilde{\boldsymbol{m}}_h)=\frac{1}{2}\int_{\R^3} \widetilde{\boldsymbol{\mu}}_h \cdot \nabla_h \widehat{\psi}_h \,\d\boldsymbol{x}= \frac{1}{2} \int_{\R^3} \widetilde{\boldsymbol{\mu}}'_h \cdot \nabla'\widehat{\psi}_h\,\d\boldsymbol{x}+\frac{1}{2}\int_{\R^3} \widetilde{\mu}^3_h \,\frac{\partial_3 \widehat{\psi}_h}{h}\,\d\boldsymbol{x}.
\end{equation*}
Since $\nabla'\widehat{\psi}_h \wk \boldsymbol{0}'$ in $L^2(\R^3;\R^2)$, $\partial_3 \widehat{\psi}_h /h \wk l$ in $L^2(\R^3)$, and \eqref{eqn:conv-mu} holds, passing to the limit, as $h \to 0^+$, we obtain
\begin{equation}
    \label{eqn:gioia-james-convergence}
    \lim_{h \to 0^+} E_h^{\text{mag}}(\widetilde{\boldsymbol{y}}_h,\widetilde{\boldsymbol{m}}_h)= \int_{\R^3} \chi_\Omega \lambda^3\,l\,\d\boldsymbol{x}.
\end{equation}
Therefore, in order to prove \eqref{eqn:convergence-mag}, we are left to show that $l=\chi_\Omega \lambda^3$. {\MMM To do this, we adopt an argument from \cite[Lemma 1]{carbou}.}

Recall that, by \eqref{eqn:Maxwell-variational}, we have 
\begin{equation*}
    \widetilde{\psi}_h \in \mathrm{argmin} \left \{ \int_{\R^3}|\nabla \varphi-\chi_{\Omega^{\widetilde{\boldsymbol{y}}_h}}\widetilde{\boldsymbol{m}}_h|^2\,\d\boldsymbol{\xi}:\,\varphi \in V^{1,2}(\R^3) \right \}.
\end{equation*}
Changing variables, we  deduce that $\widehat{\psi}_h$ admits an analogous variational characterization, namely
\begin{equation}
    \label{eqn:scaled-Maxwell-variational}
    \widehat{\psi}_h \in \mathrm{argmin} \left \{ \int_{\R^3}|\nabla_h \varphi-\widetilde{\boldsymbol{\mu}}_h|^2\,\d\boldsymbol{x}:\,\varphi \in V^{1,2}(\R^3) \right \}.
\end{equation}
{\MMM
The weak form of the Euler-Lagrange equation corresponding to \eqref{eqn:scaled-Maxwell-variational} reads as follows:
\begin{equation*}
	\forall\,\varphi\in V^{1,2}(\R^3),\quad \int_{\R^3} \nabla_h \widehat{\psi}_h \cdot \nabla_h \varphi\,\d\boldsymbol{x}=\int_{\R^3}  \widetilde{\boldsymbol{\mu}}_h\cdot \nabla_h \varphi \,\d\boldsymbol{x}.
\end{equation*}
Expanding the scalar product and multiplying by $h$, we have
\begin{equation*}
	h\int_{\R^3} (\nabla'\widehat{\psi}_h-\widetilde{\boldsymbol{\mu}}_h')\cdot \nabla' \varphi\,\d\boldsymbol{x}+\int_{\R^3} \left (\frac{\partial_3\widehat{\psi}_h}{h}-\widetilde{\mu}_h^3 \right)\,\partial_3 \varphi\,\d\boldsymbol{x}=0,
\end{equation*}
from which, passing to the limit, as $h \to 0^+$, taking into account that $\nabla'\widehat{\psi}_h \wk \boldsymbol{0}'$ in $L^2(\R^3;\R^2)$, $\widetilde{\boldsymbol{\mu}}_h \to \chi_\Omega\boldsymbol{\lambda}$ in $L^2(\R^3;\R^3)$ and $\partial_3 \widehat{\psi}_h\wk l$ in $L^2(\R^3)$, we obtain
\begin{equation*}
	\int_{\R^3} (l-\chi_\Omega\lambda^3)\,\partial_3 \varphi\,\d\boldsymbol{x}=0.
\end{equation*}
From this equation, as $\varphi$ is arbitrary, we deduce that the difference $l-\chi_\Omega\lambda^3$ does not depend on $x_3$. However, since $l \in L^2(\R^3)$ and $\chi_\Omega\lambda^3 \in L^2(\R^3)$, this entails $l=\chi_\Omega\lambda^3$, as claimed.
}
\end{proof}

We are now able to prove our first main result.

{\MMM
\begin{proof}[Proof of Theorem \ref{thm:comp-lower-bound}]
The proof of compactness follows combining Propositions \ref{prop:comp-def} and \ref{prop:comp-mag}. 
More precisely, \eqref{eqn:lb-comp-u} and \eqref{eqn:lb-comp-v} are proved in \eqref{eqn:comp-u} and \eqref{eqn:comp-v}, respectively, while \eqref{eqn:lb-comp-my} is obtained in \eqref{eqn:conv-m-comp-y}. The proof of the lower bound \eqref{eqn:lb-liminf-ineq} follows immediately from Propositions \ref{prop:liminf-exc}, \ref{prop:liminf-el} and \ref{prop:liminf-mag}.
\end{proof}
}
%%%%%%%%%%%%%%%%%%%%%%%%%%%%%%%%

\section{Optimality of the lower bound}
\label{sec:opt}

In this section we show that the lower bound given by the energy \eqref{eqn:energy_E} is optimal by proving the existence of a recovery sequence for any admissible limiting state. 

We employ the common strategy of arguing first for a dense family of  states and then  regain the general statement by an approximation procedure. Note that the space $C^\infty(\closure{S};\S^2)$ is dense in $W^{1,2}(S;\S^2)$, that is, for every $\boldsymbol{\lambda} \in W^{1,2}(S;\S^2)$ there exists a sequence $(\boldsymbol{\lambda}_n)\subset C^\infty(\closure{S};\S^2)$ such that $\boldsymbol{\lambda}_n \to \boldsymbol{\lambda}$ in $W^{1,2}(S;\R^3)$, as $n \to \infty$ \cite[Theorem 2.1]{hajlasz}.

In the next result we show how to construct a recovery sequence for smooth admissible limiting states. In order to deal with the incompressibility constraint we adopt an argument from \cite{li.chermisi} (see also \cite{conti.dolzmann}). Recall the definitions of the energies $E_h$ and $E$ in \eqref{eqn:energy_Eh} and \eqref{eqn:energy_E}, respectively, and the notation introduced in \eqref{eqn:averaged-displacements} and \eqref{eqn:gamma-notation-M}.

\begin{proposition}[Recovery sequence]
\label{prop:recovery-sequence} Let {\MMM $\boldsymbol{u}\in C^\infty(\closure{S};\R^2)$, }
$v \in C^\infty(\closure{S})$ and $\boldsymbol{\lambda} \in  C^\infty(\closure{S};\S^2)$, and let {\MMM $\boldsymbol{a},\boldsymbol{b}\in C^\infty(\closure{S};\R^3)$ satisfy}
\begin{equation}
	\label{eqn:recovery-traceless-a}
	{\MMM
	\tr \left (  \left( \renewcommand\arraystretch{1.2} \begin{array}{@{}c|c@{}}   \sym \nabla' \boldsymbol{u}  & \boldsymbol{0}'\\ \hline  (\boldsymbol{0}')^\top & 0 \end{array} \right) + \boldsymbol{a} \otimes \boldsymbol{e}_3+\boldsymbol{e}_3 \otimes \boldsymbol{a} \right )=\div'\boldsymbol{u}+2 a^3=0 \hskip 5 pt \text{in $S$.}
	}
\end{equation}
\begin{equation}
    \label{eqn:recovery-traceless-b}
    \tr \left ( - \left( \renewcommand\arraystretch{1.2} \begin{array}{@{}c|c@{}}   (\nabla')^2 v  & \boldsymbol{0}'\\ \hline  (\boldsymbol{0}')^\top & 0 \end{array} \right) + \boldsymbol{b} \otimes \boldsymbol{e}_3+\boldsymbol{e}_3 \otimes \boldsymbol{b} \right )=- \Delta'v+2 b^3=0 \hskip 5 pt \text{in $S$.}
\end{equation}
Then, there exists a sequence of admissible states $((\boldsymbol{y}_h,\boldsymbol{m}_h))_h \subset \mathcal{Q}$ such that, as $h \to 0^+$, we have:
\begin{align}
        \label{eqn:recovery-y}
        &\text{$\boldsymbol{y}_h \to \boldsymbol{z}_0$ in $W^{1,p}(\Omega;\R^3)$;} \hspace{66mm}\\
        \label{eqn:recovery-u}
        &{\MMM
        \text{$\boldsymbol{u}_h \coloneqq \mathcal{U}_h(\boldsymbol{y}_n) \to \boldsymbol{u}$ in $W^{1,2}(S;\R^2)$;}
    	}\\
        \label{eqn:recovery-v}
        &{\MMM
        \text{$v_h \coloneqq \mathcal{V}_h(\boldsymbol{y}_h) \to v$ in $W^{1,2}(S)$;}
    	}\\
        \label{eqn:recovery-m-circ-y}
        &\text{${\boldsymbol{m}}_h \circ {\boldsymbol{y}}_h \to \boldsymbol{\lambda}$ in $L^r(\Omega;\R^3)$ for every $1\leq r <\infty$,}\\
        \label{eqn:recovery-mu}
        &{\MMM
        \text{$\boldsymbol{\mu}_h\coloneqq \mathcal{M}_h(\boldsymbol{y}_h,\boldsymbol{m}_h)\to \chi_\Omega \boldsymbol{\lambda}$ in $L^r(\R^3;\R^3)$ for every $1\leq r< \infty$.}
    	}
    \end{align}
Moreover, there hold:
{\MMM
\begin{equation}
	\label{eqn:recovery-elastic}
	\begin{split}
		\lim_{h \to 0^+} E^{\text{\em el}}_h(\boldsymbol{y}_h,\boldsymbol{m}_h)&=\frac{1}{2}\int_S Q_2^{\text{\em inc}}\left (\left( \renewcommand\arraystretch{1.2} \begin{array}{@{}c|c@{}}   \sym \nabla' \boldsymbol{u}  & \boldsymbol{0}'\\ \hline  (\boldsymbol{0}')^\top & 0 \end{array} \right) + \boldsymbol{a} \otimes \boldsymbol{e}_3+\boldsymbol{e}_3 \otimes \boldsymbol{a},\boldsymbol{\lambda} \right )\,\d\boldsymbol{x}'\\
		&+\frac{1}{24}\int_S Q_2^{\text{\em inc}}\left (-\left( \renewcommand\arraystretch{1.2} \begin{array}{@{}c|c@{}}   (\nabla')^2 v  & \boldsymbol{0}'\\ \hline  (\boldsymbol{0}')^\top & 0 \end{array} \right) + \boldsymbol{b} \otimes \boldsymbol{e}_3+\boldsymbol{e}_3 \otimes \boldsymbol{b},\boldsymbol{\lambda} \right )\,\d\boldsymbol{x}';
	\end{split}
\end{equation}
}
\begin{align}
	\label{eqn:recovery-exchange}
	&\hspace*{5mm}\lim_{h \to 0^+} E^{\text{\em exc}}_h(\boldsymbol{y}_h,\boldsymbol{m}_h)=E^{\text{\em exc}} (\boldsymbol{\lambda}); \hspace*{77mm}\\
	\label{eqn:recovery-magnetostatic}
	&\hspace*{5mm}\lim_{h \to 0^+} E^{\text{\em mag}}_h(\boldsymbol{y}_h,\boldsymbol{m}_h)=E^{\text{\em mag}} (\boldsymbol{\lambda}).
\end{align}
\end{proposition}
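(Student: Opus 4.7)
The plan is to follow the classical ansatz for the von K\'arm\'an regime, with additional terms built from $\boldsymbol{a}$ and $\boldsymbol{b}$ producing the column-3 entries of the effective strain, and then to enforce exact incompressibility via a small correction in the spirit of Conti-Dolzmann \cite{conti.dolzmann} and Li-Chermisi \cite{li.chermisi}. Concretely, I consider
\begin{equation*}
    \widetilde{\boldsymbol{y}}_h(\boldsymbol{x}) := \boldsymbol{z}_h(\boldsymbol{x}) + h^{\beta/2-1}\begin{pmatrix}\boldsymbol{0}'\\v(\boldsymbol{x}')\end{pmatrix} + h^{\beta/2}\begin{pmatrix}\boldsymbol{u}(\boldsymbol{x}')-x_3\nabla'v(\boldsymbol{x}')\\0\end{pmatrix}+h^{\beta/2+1}\Bigl(x_3\boldsymbol{a}(\boldsymbol{x}')+\tfrac{x_3^2}{2}\boldsymbol{b}(\boldsymbol{x}')\Bigr),
\end{equation*}
up to a rescaling of $\boldsymbol{a},\boldsymbol{b}$ so that the effective strain matches the symmetrization in \eqref{eqn:recovery-elastic}. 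A direct calculation gives
\begin{equation*}
    \nabla_h\widetilde{\boldsymbol{y}}_h=\boldsymbol{I}+h^{\beta/2-1}\boldsymbol{A}(\boldsymbol{x}')+h^{\beta/2}\boldsymbol{B}(\boldsymbol{x})+O(h^{\beta/2+1}),
\end{equation*}
where $\boldsymbol{A}\in\mathrm{Skew}(3)$ carries the bending contribution from $\nabla'v$ and $\boldsymbol{B}(\boldsymbol{x})$ encodes the limiting strain field (membrane part $\sym\nabla'\boldsymbol{u}$, bending part $-x_3(\nabla')^2 v$, and column-3 contributions from $\boldsymbol{a},\boldsymbol{b}$). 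The traceless hypotheses \eqref{eqn:recovery-traceless-a}-\eqref{eqn:recovery-traceless-b} translate into $\tr\boldsymbol{B}\equiv 0$ in $\Omega$, whence (using also that $\boldsymbol{A}$ is skew) a cofactor expansion yields $\det\nabla_h\widetilde{\boldsymbol{y}}_h=1+O(h^{\beta-2})$ uniformly on $\closure{\Omega}$.

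\medskip

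I then restore exact incompressibility by setting $\boldsymbol{y}_h:=\widetilde{\boldsymbol{y}}_h+h^{\beta-2}\boldsymbol{\psi}_h$, with $\boldsymbol{\psi}_h$ a smooth, near-identity perturbation solving a linearized determinant equation and uniformly bounded in $W^{1,\infty}(\Omega;\R^3)$; this is the Conti-Dolzmann/Li-Chermisi construction, and $\nabla_h\boldsymbol{\psi}_h=O(h^{\beta-2})$ in $L^\infty$, so the strain expansion is unaffected at the relevant order. Since $\nabla\boldsymbol{y}_h\to\nabla\boldsymbol{z}_0$ uniformly, Ciarlet's theorem \cite{ciarlet} yields global injectivity of $\boldsymbol{y}_h$ for $h$ small, hence $\boldsymbol{y}_h\in\mathcal{Y}$, and \eqref{eqn:recovery-y}-\eqref{eqn:recovery-v} follow by direct inspection of the ansatz. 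For the magnetizations, I extend $\boldsymbol{\lambda}\in C^\infty(\closure{S};\S^2)$ smoothly to $\widetilde{\boldsymbol{\lambda}}\in C^\infty(\R^3;\S^2)$ depending only on $\boldsymbol{x}'$ (first extending in the plane using a tubular-neighborhood retraction onto $\S^2$), and set $\boldsymbol{m}_h:=\widetilde{\boldsymbol{\lambda}}\restr{\Omega^{\boldsymbol{y}_h}}\in W^{1,2}(\Omega^{\boldsymbol{y}_h};\S^2)$. This is meaningful because, by the geometric control \eqref{eqn:subcylinder}-\eqref{eqn:supercylinder} (which applies with $\boldsymbol{y}_h$ in place of $\widetilde{\boldsymbol{y}}_h$ there), $\Omega^{\boldsymbol{y}_h}$ is trapped in the $x_3$-slab where $\widetilde{\boldsymbol{\lambda}}$ is defined. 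The uniform convergence $\boldsymbol{y}_h\to\boldsymbol{z}_0$ then gives $\boldsymbol{m}_h\circ\boldsymbol{y}_h\to\boldsymbol{\lambda}$ uniformly, yielding \eqref{eqn:recovery-m-circ-y}; \eqref{eqn:recovery-mu} follows by dominated convergence using the same geometric control.

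\medskip

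The elastic limit is obtained by applying frame indifference with the rotation $\boldsymbol{R}_h:=\exp(h^{\beta/2-1}\boldsymbol{A})$: one gets
\begin{equation*}
    \boldsymbol{R}_h^\top\nabla_h\boldsymbol{y}_h=\boldsymbol{I}+h^{\beta/2}\boldsymbol{B}(\boldsymbol{x})+o(h^{\beta/2})\quad\text{in }L^\infty(\Omega;\rtt),
\end{equation*}
where the remainder is genuinely $o(h^{\beta/2})$ since $\beta>2p\geq 6>4$. Substituting into the Taylor expansion \eqref{eqn:taylor-expansion} and exploiting the uniform convergence $\boldsymbol{m}_h\circ\boldsymbol{y}_h\to\boldsymbol{\lambda}$, the continuity \eqref{eqn:coupling_C} and the uniform decay \eqref{eqn:coupling_omega}, dominated convergence yields
\begin{equation*}
    \lim_{h\to 0^+}E^{\mathrm{el}}_h(\boldsymbol{y}_h,\boldsymbol{m}_h)=\tfrac{1}{2}\int_\Omega Q_3(\boldsymbol{B}(\boldsymbol{x}),\boldsymbol{\lambda}(\boldsymbol{x}'))\,\d\boldsymbol{x}.
\end{equation*}
Splitting the $x_3$-integration and using $\int_I x_3\,\d x_3=0$ and $\int_I x_3^2\,\d x_3=\tfrac{1}{12}$ reproduces the two terms of \eqref{eqn:recovery-elastic}. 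For the exchange energy, $|\nabla\boldsymbol{m}_h|^2=|\nabla'\widetilde{\boldsymbol{\lambda}}|^2$ is a bounded, continuous, $x_3$-independent function, and since $\mathscr{L}^3(\Omega^{\boldsymbol{y}_h})=h\,\mathscr{L}^2(S)$ (by the area formula with $\det\nabla\boldsymbol{y}_h=h$ and almost everywhere injectivity) and $\Omega^{\boldsymbol{y}_h}$ is asymptotically $S\times(-h/2,h/2)$, rescaling in $x_3$ and dominated convergence yield \eqref{eqn:recovery-exchange}. Finally, \eqref{eqn:recovery-magnetostatic} is exactly the content of Proposition \ref{prop:liminf-mag}, whose proof establishes a genuine limit (not merely an inequality) under the current hypotheses.

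\medskip

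The main obstacle is the incompressibility correction: one must produce a smooth $\boldsymbol{\psi}_h$ such that $\boldsymbol{y}_h=\widetilde{\boldsymbol{y}}_h+h^{\beta-2}\boldsymbol{\psi}_h$ is simultaneously (i) exactly incompressible, (ii) close enough to the identity in $C^1$ for Ciarlet's injectivity theorem to apply, and (iii) such that $\nabla_h\boldsymbol{\psi}_h$ is of lower order than the effective strain $h^{\beta/2}\boldsymbol{B}$, so that the elastic energy limit is not spoiled. This imports the Conti-Dolzmann/Li-Chermisi construction; everything else is direct, or follows by appealing to convergences already obtained in the lower bound section.
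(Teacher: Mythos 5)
Your overall strategy matches the paper's: compressible ansatz, incompressibility fix in the spirit of Conti--Dolzmann/Li--Chermisi, injectivity via Ciarlet's near-identity argument, $\boldsymbol{x}_3$-independent extension of $\boldsymbol{\lambda}$ for the magnetization, and then direct computation of the three energy limits. The step that is not correctly handled is the incompressibility correction, and the error there is real, not cosmetic.

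The claim $\det\nabla_h\widetilde{\boldsymbol{y}}_h = 1 + O(h^{\beta-2})$ is false. Write $\nabla_h\widetilde{\boldsymbol{y}}_h = \boldsymbol{I} + h^{\beta/2-1}\boldsymbol{A} + h^{\beta/2}\boldsymbol{B}(\boldsymbol{x}) + \boldsymbol{E}_h$ with $\boldsymbol{A}\in\mathrm{Skew}(3)$, $\tr\boldsymbol{B}\equiv 0$, and $\boldsymbol{E}_h=O(h^{\beta/2+1})$; the remainder $\boldsymbol{E}_h$ comes from the in-plane derivatives of the $h^{\beta/2+1}$-order terms of the ansatz and has \emph{no reason to be traceless}. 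Hence $\tr(\nabla_h\widetilde{\boldsymbol{y}}_h-\boldsymbol{I})=O(h^{\beta/2+1})$, and expanding the determinant gives $\det\nabla_h\widetilde{\boldsymbol{y}}_h = 1 + O(h^{\beta/2+1}) + O(h^{\beta-2})$. Since $\beta>2p>6$, one has $\beta/2+1<\beta-2$, so $h^{\beta/2+1}\gg h^{\beta-2}$: the true defect is $O(h^{\beta/2+1})$, which is \emph{larger} than what you wrote, and a correction of size $h^{\beta-2}$ cannot cancel it. Your assertion that $\boldsymbol{\psi}_h$ is uniformly bounded in $W^{1,\infty}$ while $\nabla_h\boldsymbol{\psi}_h=O(h^{\beta-2})$ is also internally inconsistent, because $h^{-1}\partial_3\boldsymbol{\psi}_h$ would then be $O(h^{-1})$, not $O(h^{\beta-2})$.

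Moreover, the Conti--Dolzmann/Li--Chermisi device you invoke is not an \emph{additive} perturbation of the ansatz; it is a reparametrization of the thickness variable: one sets $\boldsymbol{y}_h(\boldsymbol{x}',x_3)\coloneqq\overline{\boldsymbol{y}}_h(\boldsymbol{x}',\eta_h(\boldsymbol{x}',x_3))$, where $\eta_h$ solves the scalar ODE $\partial_3\eta_h=(\det\nabla_h\overline{\boldsymbol{y}}_h(\boldsymbol{x}',\eta_h))^{-1}$, $\eta_h(\boldsymbol{x}',0)=0$. This yields $\det\nabla_h\boldsymbol{y}_h\equiv 1$ \emph{exactly} by the factorization $\det\nabla_h\boldsymbol{y}_h=\det\overline{\boldsymbol{F}}_h(\boldsymbol{x}',\eta_h)\,\partial_3\eta_h$, and a Picard iteration plus the determinant estimate gives $\|\eta_h-x_3\|_{C^0}+\|\partial_3\eta_h-1\|_{C^0}+\|\nabla'\eta_h\|_{C^0}\leq C h^{\beta/2+1}$, which in turn gives the key estimate $\boldsymbol{F}_h^\top\boldsymbol{F}_h=\overline{\boldsymbol{F}}_h^\top\overline{\boldsymbol{F}}_h+O(h^{\beta/2+1})$. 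It is this last estimate, at the rate $h^{\beta/2+1}$, that guarantees the nonlinear strain (and hence the elastic limit) is unaffected; you cannot get it for free with an additive perturbation solving only a \emph{linearized} determinant equation, which would not give exact incompressibility and so would not make $\boldsymbol{y}_h$ admissible in $\mathcal{Y}$. The remainder of your argument (magnetization via $\S^2$-valued extension of $\boldsymbol{\lambda}$, frame-indifference rotation out of the skew part, splitting of the $x_3$-integral, area formula for the exchange energy, and Proposition~\ref{prop:liminf-mag} for the magnetostatic energy) is in line with the paper once the incompressibility step is repaired.
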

\begin{proof}
For convenience of the reader we subdivide the proof into six steps.

\textbf{Step 1.} In this step we exhibit the general structure of deformations of the recovery sequence. We can assume that {\MMM $\boldsymbol{u}$,$v$, $\boldsymbol{a}$ and $\boldsymbol{b}$} are all defined and smooth on the whole space. Consider the sequence of compressible deformations $(\overline{\boldsymbol{y}}_h) \subset  C^\infty(\R^3;\R^3)$ defined according to the ansatz of \cite{friesecke.james.mueller2} for the linearized von K\'{a}rm\'{a}n regime, namely
\begin{equation*}
\overline{\boldsymbol{y}}_h \coloneqq \boldsymbol{z}_h + {\MMM h^{\beta/2} \left ( \begin{matrix}  \boldsymbol{u}  \\ 0\end{matrix}   \right )}+ h^{\beta/2-1} \left ( \begin{matrix}  \boldsymbol{0}'  \\ v\end{matrix}   \right )-h^{\beta/2} x_3 \left ( \begin{matrix}  \nabla'v  \\ 0\end{matrix}   \right )+{\MMM2 h^{\beta/2+1} x_3 \boldsymbol{a}}+h^{\beta/2+1} x_3^2 \boldsymbol{b}.
\end{equation*}
Set $\overline{\boldsymbol{F}}_h \coloneqq \nabla_h \overline{\boldsymbol{y}}_h$. We compute
\begin{equation*}
	\begin{split}
		\overline{\boldsymbol{F}}_h&= \boldsymbol{I}+{\MMM h^{\beta/2}\left( \renewcommand\arraystretch{1.2} \begin{array}{@{}c|c@{}}   \nabla'\boldsymbol{u}  & \boldsymbol{0}'\\ \hline  (\boldsymbol{0}')^\top & 0 \end{array} \right)}+  h^{\beta/2-1} \left( \renewcommand\arraystretch{1.2} \begin{array}{@{}c|c@{}}   \mathbf{O}''  & -\nabla' v\\ \hline  \nabla'v^\top & 0 \end{array} \right) -h^{\beta/2} x_3 \left( \renewcommand\arraystretch{1.2} \begin{array}{@{}c|c@{}}   (\nabla')^2 v  & \boldsymbol{0}'\\ \hline  (\boldsymbol{0}')^\top & 0 \end{array} \right)\\
		&+ {\MMM 2h^{\beta/2} \boldsymbol{a} \otimes \boldsymbol{e}_3}+2h^{\beta/2} x_3 \boldsymbol{b} \otimes \boldsymbol{e}_3 + O(h^{\beta/2+1}).
	\end{split}   
\end{equation*}
Using the identity $(\boldsymbol{I}+\boldsymbol{G})^\top (\boldsymbol{I}+\boldsymbol{G})=\boldsymbol{I}+\boldsymbol{G}+\boldsymbol{G}^\top + \boldsymbol{G}^\top \boldsymbol{G}$ for every $\boldsymbol{G}\in \rtt$ and that $\beta>6$, we obtain
\begin{equation}
\label{eqn:y-comp-nonlinear}
\overline{\boldsymbol{F}}_h^\top \overline{\boldsymbol{F}}_h= \boldsymbol{I}+2h^{\beta/2}({\MMM\boldsymbol{A}+x_3\,\boldsymbol{B}})+O(h^{\beta/2+1}),
\end{equation}
where
\begin{equation}
	\label{eqn:matrix-B}
	{\MMM\boldsymbol{A} \coloneqq  \left( \renewcommand\arraystretch{1.2} \begin{array}{@{}c|c@{}}    \sym \nabla'\boldsymbol{u}  & \boldsymbol{0}'\\ \hline  (\boldsymbol{0}')^\top & 0 \end{array} \right) + \boldsymbol{a} \otimes \boldsymbol{e}_3 + \boldsymbol{e}_3 \otimes \boldsymbol{a},} \qquad \boldsymbol{B} \coloneqq - \left( \renewcommand\arraystretch{1.2} \begin{array}{@{}c|c@{}}   (\nabla')^2 v  & \boldsymbol{0}'\\ \hline  (\boldsymbol{0}')^\top & 0 \end{array} \right) + \boldsymbol{b} \otimes \boldsymbol{e}_3 + \boldsymbol{e}_3 \otimes \boldsymbol{b}.
\end{equation}
Then, using the formula $\det(\boldsymbol{I}+\boldsymbol{G})=1+\tr\, \boldsymbol{G}+\tr\,\cof\, \boldsymbol{G} + \det \boldsymbol{G}$ for every $\boldsymbol{G}\in \rtt$, and noting that ${\MMM\tr\,\boldsymbol{A}}=\tr\,\boldsymbol{B}=0$ by \eqref{eqn:recovery-traceless-a} and \eqref{eqn:recovery-traceless-b}, we deduce 
\begin{equation*}
	\det(\overline{\boldsymbol{F}}_h^\top \overline{\boldsymbol{F}}_h)=1+2h^\beta {\MMM(P+x_3Q+x_3^2 R)}+O(h^{\beta/2+1}),
\end{equation*}
where {\MMM$P$, $Q$ and $R$} are polynomials in the variable $\boldsymbol{x}'$ which depend on {\MMM$\boldsymbol{u}$, $v$, $\boldsymbol{a}$ and $\boldsymbol{b}$}. Taking the square root, we obtain
\begin{equation}
    \label{eqn:y-comp-determinant}
    \det \nabla_h \overline{\boldsymbol{y}}_h= 1+h^\beta {\MMM(P+x_3Q+x_3^2 R)}+O(h^{\beta/2+1}).
\end{equation}
Fix $h>0$ and consider a function $\eta_h \colon \Omega \to \R$. Define $\boldsymbol{y}_h(\boldsymbol{x}',x_3) \coloneqq \overline{\boldsymbol{y}}_h(\boldsymbol{x}',\eta_h(\boldsymbol{x}',x_3))$ for $\boldsymbol{x}\in \Omega$. Assuming $\eta_h$ to be differentiable and setting $\nabla_h \boldsymbol{y}_h\coloneqq\boldsymbol{F}_h$, for every $\boldsymbol{x}\in \Omega$, we compute
\begin{equation}
    \label{eqn:y-inc-scal-grad}
    \boldsymbol{F}_h(\boldsymbol{x}',x_3)=  \overline{\boldsymbol{F}}_h(\boldsymbol{x}',\eta_h(\boldsymbol{x}',x_3)) \boldsymbol{N}_h(\boldsymbol{x}',x_3),
\end{equation}
where
\begin{equation}
    \label{eqn:N-h}
    \boldsymbol{N}_h \coloneqq\left( \renewcommand\arraystretch{1.2} \begin{array}{@{}c|c@{}}   \boldsymbol{I}  & \boldsymbol{0}'\\ \hline  
    \begin{matrix}  h \nabla' (\eta_h)^{\top} \end{matrix}
     & \partial_3 \eta_h \end{array} \right).
\end{equation}
In particular
\begin{equation}
    \label{eqn:y-inc-determinant}
    \det \boldsymbol{F}_h(\boldsymbol{x}',x_3)= \det  \overline{\boldsymbol{F}}_h(\boldsymbol{x}',\eta_h(\boldsymbol{x}',x_3))\; \partial_3 \eta_h(\boldsymbol{x}',x_3) 
\end{equation}
for every $\boldsymbol{x} \in \Omega$. Note that, since $\beta>2$, by \eqref{eqn:y-comp-determinant} we have $\det  \overline{\boldsymbol{F}}_h=1+O(h^{\beta/2+1})$ and hence  $1/2 \leq  \det  \overline{\boldsymbol{F}}_h \leq 2$ for $h \ll 1$. Thus we can define
\begin{equation}
\label{eqn:fh-definition}
    {\MMM\Phi_h}(\boldsymbol{x}',x_3) \coloneqq \left ( \det \overline{\boldsymbol{F}}_h(\boldsymbol{x}',x_3) \right)^{-1}= \left ( 1+h^\beta{\MMM \left  ( P(\boldsymbol{x}')+x_3Q(\boldsymbol{x}')+x_3^2R(\boldsymbol{x}')\right )}+O(h^{\beta/2+1})\right)^{-1} 
\end{equation}
for every $(\boldsymbol{x}',x_3) \in S \times (-1,1)$. In view of \eqref{eqn:y-inc-determinant}, if $\eta_h$ is a solution of the following Cauchy problem
\begin{equation}
    \label{eqn:ode}
    \begin{cases} 
    \partial_3 \eta_h(\boldsymbol{x}',x_3)={\MMM\Phi_h}(\boldsymbol{x}',\eta_h(\boldsymbol{x}',x_3)) & \text{in $I$,}\\
    \eta_h(\boldsymbol{x}',0)=0,
    \end{cases}
\end{equation}
then $\det \nabla_h \boldsymbol{y}_h=\det \boldsymbol{F}_h=1$ in $\Omega$. 
Note that, regarding $\boldsymbol{x}'\in S$ as a parameter and $x_3\in I$ as the only variable, we can see the equation in \eqref{eqn:ode} as an ordinary differential equation. 

\textbf{Step 2.} In this step we prove the existence of a unique solution $\eta_h$ of \eqref{eqn:ode} and that this is  of class $C^1$. First note that, by definition, ${\MMM\Phi_h} \in C^\infty(\closure{S}\times [-1,1])$ and $1/2 < f_h < 2$ for $h \ll 1$. Using this fact and \eqref{eqn:y-comp-determinant}, we deduce the following estimates:
\begin{align}
\label{eqn:fh-estimates-1st}
    ||\partial_i {\MMM\Phi_h}||_{C^0(\closure{S}\times [-1,1])} \leq C h^{\beta/2+1}&, \hskip 14 pt ||\partial_3 {\MMM\Phi_h}||_{C^0(\closure{S}\times [-1,1])} \leq C h^{\beta/2+1},\\
\label{eqn:fh-estimates-2nd}
    ||\partial_3 \partial_i {\MMM\Phi_h}||_{C^0(\closure{S}\times [-1,1])} \leq C h^{\beta/2+1}&, \hskip 14 pt ||\partial_3^2 {\MMM\Phi_h}||_{C^0(\closure{S}\times [-1,1])} \leq C h^{\beta/2+1},
\end{align}
for $i \in \{1,2\}$ and $h \ll 1$.
In particular, for every $\boldsymbol{x}' \in S$, all the maps ${\MMM\Phi_h}(\boldsymbol{x}',\cdot)$, $\partial_i {\MMM\Phi_h}(\boldsymbol{x}',\cdot)$ and $\partial_3 {\MMM\Phi_h}(\boldsymbol{x}',\cdot)$ are Lipschitz continuous on $[-1,1]$. Consider the closed set $\mathcal{X} \coloneqq C^0(\closure{\Omega};[-1,1])$ of the Banach space $C^0(\closure{\Omega})$. 
Any solution of \eqref{eqn:ode} is given by a fixed point of the operator $\mathcal{T}_h \colon C^0(\closure{\Omega}) \to C^0(\closure{\Omega})$ defined by setting
\begin{equation*}
     \mathcal{T}_h (\eta)(\boldsymbol{x}',x_3) \coloneqq \int_0^{x_3} {\MMM\Phi_h}(\boldsymbol{x}',\eta(\boldsymbol{x}',t))\,\d t
\end{equation*}
for every $\eta \in C^0(\closure{\Omega})$ and $\boldsymbol{x}\in \Omega$.
Note that, for every $\eta \in \mathcal{X}$ and $\boldsymbol{x}\in \Omega$, we have 
\begin{equation*}
|\mathcal{T}_h (\eta)(\boldsymbol{x}',x_3)|\leq |x_3|\, ||{\MMM\Phi_h}||_{C^0(\closure{S}\times [-1,1])} \leq 1,
\end{equation*}
 so that $\mathcal{T}_h(\mathcal{X})\subset \mathcal{X}$.  Moreover, by the second estimate in \eqref{eqn:fh-estimates-1st}, for every $\eta,\widetilde{\eta}
  \in \mathcal{X}$ and $\boldsymbol{x}\in \Omega$, we have
\begin{equation*}
    \begin{split}
     |\mathcal{T}_h (\eta)(\boldsymbol{x}',x_3)-\mathcal{T}_h(\widetilde{\eta})(\boldsymbol{x}',x_3)|&\leq  \int_0^{x_3} |{\MMM\Phi_h}(\boldsymbol{x}',\eta(\boldsymbol{x}',t)) -{\MMM\Phi_h}(\boldsymbol{x}',\widetilde{\eta}(\boldsymbol{x}',t))|\,\d t\\
    &\leq  C h^{\beta/2+1}\,||\eta -\widetilde{\eta}||_{C^0(\closure{\Omega})}.
    \end{split}
\end{equation*}
Thus, for $h\ll1$, the operator $\mathcal{T}_h$ is a contraction   and, by the Banach Fixed-Point Theorem, it admits a unique fixed point $\eta_h \in \mathcal{X}$. 

By construction, $\eta_h$ is continuously differentiable with respect to the variable $x_3$, namely $\partial_3 \eta_h \in C^0(\closure{\Omega})$. 
We claim that $\eta_h \in C^1(\closure{\Omega})$. Fix $i \in \{1,2\}$. Note that, if $\eta \in \mathcal{X}$ is differentiable with respect to the variable $x_i$ with $\partial_i \eta \in C^0(\closure{\Omega})$, then so is $\mathcal{T}_h(\eta)$. Indeed, by \eqref{eqn:fh-estimates-1st}, we obtain
\begin{equation}
    \label{eqn:partial-Th-eta}
    \partial_i \mathcal{T}_h(\eta)(\boldsymbol{x}',x_3)=\int_0^{x_3} \Big ( \partial_i {\MMM\Phi_h}(\boldsymbol{x}',\eta(\boldsymbol{x}',t))+\partial_3 {\MMM\Phi_h}(\boldsymbol{x}',\eta(\boldsymbol{x'},t))\partial_i \eta(\boldsymbol{x}',t) \Big)\,\d t
\end{equation}
for every $\boldsymbol{x}\in \Omega$, so that $\partial_i \mathcal{T}_h(\eta)\in C^0(\closure{\Omega})$. Define the operator $\mathcal{S}_h \colon C^0(\closure{\Omega}) \times C^0(\closure{\Omega}) \to C^0(\closure{\Omega})$ by setting
\begin{equation*}
    \mathcal{S}_h(\eta,\xi)(\boldsymbol{x}',x_3)\coloneqq \int_0^{x_3} \Big ( \partial_i {\MMM\Phi_h}(\boldsymbol{x}',\eta(\boldsymbol{x}',t))+\partial_3 {\MMM\Phi_h}(\boldsymbol{x}',\eta(\boldsymbol{x}',t))\xi(\boldsymbol{x}',t) \Big)\,d t
\end{equation*}
for every $\eta,\xi \in C^0(\closure{\Omega})$  and $\boldsymbol{x}\in \Omega$. For $h \ll1$, we have $\mathcal{S}_h(\mathcal{X} \times \mathcal{X})\subset \mathcal{X}$. Indeed, by \eqref{eqn:fh-estimates-1st}, for every $\eta,\xi \in \mathcal{X}$ and  $\boldsymbol{x}\in \Omega$, there holds
\begin{equation*}
    |\mathcal{S}_h(\eta,\xi)(\boldsymbol{x}',x_3)|\leq \,|x_3|\,\left (||\partial_i {\MMM\Phi_h}||_{C^0(\closure{S}\times [-1,1])}+||\partial_3 {\MMM\Phi_h}||_{C^0(\closure{S}\times [-1,1])}\,||\xi||_{C^0(\closure{\Omega})} \right ) \leq C h^{\beta/2+1}.
\end{equation*}
Moreover, by \eqref{eqn:partial-Th-eta}, we have
\begin{equation}
    \label{eqn:partial-Th-Sh}
    \partial_i \mathcal{T}_h(\eta)=\mathcal{S}_h(\eta,\partial_i \eta)
\end{equation}
for every $\eta \in \mathcal{X}$ with $\partial_i \eta \in C^0(\closure{\Omega})$. 

Consider two sequences $(\eta^n),(\xi^n) \subset \mathcal{X}$ inductively defined as follows. We set $\eta^1 \coloneqq 0$ and $\xi^1 \coloneqq 0$, and we define $\eta^{n+1} \coloneqq \mathcal{T}_h(\eta^n)$ and $\xi^{n+1} \coloneqq \mathcal{S}_h(\eta^n,\xi^n)$ for every $n \in \N$. By the Banach Fixed-Point Theorem, we already know that
\begin{equation}
    \label{eqn:etan-convergence}
    \text{$\eta^n \to \eta_h$ in $C^0(\closure{\Omega})$,}
\end{equation}
as $n \to \infty$. Besides, for every $n \in \N$ we have
\begin{equation}
\label{eqn:etan-derivative}
    \xi^n=\partial_i \eta^n.
\end{equation}
This is proved by induction.
Indeed, $\xi^1=\partial_i \eta^1$ by definition, and assuming $\xi^n=\partial_i \eta^n$ for some $n \in \N$, by \eqref{eqn:partial-Th-Sh} we compute
\begin{equation*}
    \xi^{n+1}=\mathcal{S}_h(\eta^n,\xi^n)=\mathcal{S}_h(\eta^n,\partial_i \eta^n)=\partial_i \mathcal{T}_h(\eta^n)= \partial_i \eta^{n+1},
\end{equation*}
so that the claim follows. Define the operator $\mathcal{R}_h \colon C^0(\closure{\Omega}) \to C^0(\closure{\Omega})$ by setting $\mathcal{R}_h(\xi)\coloneqq \mathcal{S}_h(\eta_h,\xi)$ for every $\xi \in C^0(\closure{\Omega})$. Note that $\mathcal{R}_h(\mathcal{X})\subset \mathcal{X}$ for $h \ll 1$. By the second estimate in \eqref{eqn:fh-estimates-1st}, for every $\xi,\widetilde{\xi}\in \mathcal{X}$ and  $\boldsymbol{x}\in \Omega$ we have
\begin{equation*}
    \begin{split}
    |\mathcal{R}_h(\xi)(\boldsymbol{x}',x_3)-\mathcal{R}_h(\widetilde{\xi})(\boldsymbol{x}',x_3)|&\leq \int_0^{x_3} |\partial_3 {\MMM\Phi_h}(\boldsymbol{x}',\eta_h(\boldsymbol{x}',t))|\,|\xi(\boldsymbol{x}',t)-\widetilde{\xi}(\boldsymbol{x}',t)|\,\d t\\
    &\leq C h^{\beta/2+1}\,||\xi-\widetilde{\xi}||_{C^0(\closure{\Omega})}.
    \end{split}
\end{equation*}
Thus, for $h \ll 1$, the operator $\mathcal{R}_h$ is a contraction and, by the Banach Fixed-Point Theorem, it admits a unique fixed point $\xi_h \in \mathcal{X}$. By \eqref{eqn:fh-estimates-2nd}, for every $n \in \N$ we compute
\begin{equation*}
    \begin{split}
     \xi^{n+1}-\mathcal{R}_h(\xi^n)&=\int_0^{x_3} \big ((\partial_i {\MMM\Phi_h}(\boldsymbol{x}',\eta^n(\boldsymbol{x}',t))-\partial_i {\MMM\Phi_h}(\boldsymbol{x}',\eta_h(\boldsymbol{x}',t)) \big )\,\d t\\
     &+ \int_0^{x_3} \big(\partial_3 {\MMM\Phi_h}(\boldsymbol{x}',\eta^n(\boldsymbol{x}',t))-\partial_3 {\MMM\Phi_h}(\boldsymbol{x}',\eta_h(\boldsymbol{x}',t)))\xi^n\big )\,\d t\\
     &\leq C h^{\beta/2+1} ||\eta^n - \eta_h||_{C^0(\closure{\Omega})} + C h^{\beta/2+1} ||\eta^n - \eta_h||_{C^0(\closure{\Omega})} \,||\xi^n||_{C^0(\closure{\Omega})}\\
     &\leq C h^{\beta/2+1} \left ( 1 +||\xi^n||_{C^0(\closure{\Omega})} \right ).
    \end{split}
\end{equation*}
Hence, by the Ostrowski Theorem \cite[page 150]{walter}, we deduce that $\xi^n \to \xi_h$ in $C^0(\closure{\Omega})$, as $n \to \infty$. Recalling \eqref{eqn:etan-convergence} and \eqref{eqn:etan-derivative}, this yields $\xi_h=\partial_i \eta_h$ and, in particular, $\partial_i \eta_h \in C^0(\closure{\Omega})$. Therefore $\eta_h \in C^1(\closure{\Omega})$.

\textbf{Step 3.} In this step we prove that
\begin{equation}
    \label{eqn:comp-inc-grad-estimate}
    \boldsymbol{F}_h^\top(\boldsymbol{x}) \boldsymbol{F}_h(\boldsymbol{x})= \overline{\boldsymbol{F}}_h^\top(\boldsymbol{x}) \overline{\boldsymbol{F}}_h(\boldsymbol{x}) + O(h^{\beta/2+1})
\end{equation}
for every $\boldsymbol{x}\in \Omega$, where, we recall, $\boldsymbol{F}_h=\nabla_h \boldsymbol{y}_h$ and $\overline{\boldsymbol{F}}_h= \nabla_h \overline{\boldsymbol{y}}_h$.

The solution $\eta_h$ of \eqref{eqn:ode} satisfies the following estimates:
\begin{equation}
    \label{eqn:eta-estimates}
    ||\partial_3 \eta_h -1||_{C^0(\closure{\Omega})} \leq C h^{\beta/2+1}, \hskip 8 pt ||\eta_h - x_3||_{C^0(\closure{\Omega})}\leq C h^{\beta/2+1}, \hskip 8 pt ||\nabla'\eta_h||_{C^0(\closure{\Omega})} \leq C h^{\beta/2+1}.
\end{equation}
Recall \eqref{eqn:fh-definition}. To check the first estimate in \eqref{eqn:eta-estimates}, we use \eqref{eqn:ode} to compute
\begin{equation*}
	\begin{split}
		|\partial_3 \eta_h - 1|&=\left | \frac{1}{1+h^\beta {\MMM(P+x_3Q+x_3^2R)} + O(h^{\beta/2+1})}-1  \right|\\
		&=\frac{|h^{\beta/2-1}{\MMM(P+x_3Q+x_3^2R)}+O(1)|}{|1+h^\beta {\MMM(P+x_3Q+x_3^2R)} + O(h^{\beta/2+1})|} h^{\beta/2+1} \leq C h^{\beta/2+1}.
	\end{split}
\end{equation*}
To see the second estimate in \eqref{eqn:eta-estimates}, we use the first one and we have
\begin{equation*}
    |\eta_h - x_3|=\left | \int_0^{x_3} (\partial_3 \eta_h (\cdot,t)-1)\,\d t\right | \leq C h^{\beta/2+1}.
\end{equation*}
Finally, to prove the third estimate in \eqref{eqn:eta-estimates}, we use the fixed point property. Using the first estimate in \eqref{eqn:fh-estimates-1st}, for $i \in \{1,2\}$, we write
\begin{equation*}
    \begin{split}
        \partial_i \eta_h&= \partial_i \left( \int_0^{x_3} {\MMM\Phi_h}(\boldsymbol{x}',\eta_h(\boldsymbol{x}',t))\,\d t \right)= \int_0^{x_3} \partial_i \left( {\MMM\Phi_h}(\boldsymbol{x}',\eta_h(\boldsymbol{x}',t))\right)\,\d t\\
        &= \int_0^{x_3} \partial_i \left ( \frac{1}{1+h^\beta {\MMM(P+\eta_h Q+\eta_h^2R)} + O(h^{\beta/2+1})}\right)\,\d t\\
        &= - \int_0^{x_3} \frac{h^\beta {\MMM(\partial_i P+\partial_i \eta_h Q+\eta_h \partial_i Q+2\eta_h \partial_i\eta_h R+\eta_h^2\partial_i R)}}{(1+h^\beta {\MMM(P+\eta_h Q+\eta_h^2R)} + O(h^{\beta/2+1}))^2} \,\d t,
    \end{split}
\end{equation*}
so that
\begin{equation*}
    |\partial_i \eta_h| \leq h^{\beta/2+1} \int_0^{x_3} \frac{|h^{\beta/2-1}{\MMM(\partial_i P+\partial_i \eta_h Q+\eta_h \partial_i Q+2\eta_h \partial_i\eta_h R+\eta_h^2\partial_i R)}|}{(1+h^\beta {\MMM(P+\eta_h Q+\eta_h^2R)} + O(h^{\beta/2+1}))^2}\,\d t \leq C h^{\beta/2+1}.
\end{equation*}

We now use the estimates \eqref{eqn:eta-estimates} to prove \eqref{eqn:comp-inc-grad-estimate}.
By \eqref{eqn:y-inc-scal-grad}, for every $\boldsymbol{x}\in \Omega$ we have
\begin{equation*}
    \boldsymbol{F}_h(\boldsymbol{x})=\widehat{\boldsymbol{F}}_h(\boldsymbol{x})\,\boldsymbol{N}_h(\boldsymbol{x}),
\end{equation*}
where $\boldsymbol{N}_h$ is defined as in \eqref{eqn:N-h} and $\widehat{\boldsymbol{F}}_h(\boldsymbol{x})\coloneqq\overline{\boldsymbol{F}}_h(\boldsymbol{x}',\eta^h (\boldsymbol{x}))$. 
Note that $\boldsymbol{N}_h=O(1)$ thanks to the third estimate in  \eqref{eqn:eta-estimates} and the fact that $\partial_3 \eta_h={\MMM\Phi_h}(\boldsymbol{x}',\eta_h)$ is bounded. By the first and the third estimate in \eqref{eqn:eta-estimates}, we also deduce that $\boldsymbol{N}_h-\boldsymbol{I}=O(h^{\beta/2+1})$. Hence we compute
\begin{equation*}
    \begin{split}
        \boldsymbol{F}_h^\top \boldsymbol{F}_h - \overline{\boldsymbol{F}}_h^\top \overline{\boldsymbol{F}}_h&=\boldsymbol{N}_h^\top \widehat{\boldsymbol{F}}_h^\top \widehat{\boldsymbol{F}}_h \boldsymbol{N}_h-\overline{\boldsymbol{F}}_h^\top \overline{\boldsymbol{F}}_h\\
        &=\boldsymbol{N}_h^\top (\widehat{\boldsymbol{F}}_h^\top \widehat{\boldsymbol{F}}_h-\overline{\boldsymbol{F}}_h^\top \overline{\boldsymbol{F}}_h)\boldsymbol{N}_h+(\boldsymbol{N}_h - \boldsymbol{I})^\top \overline{\boldsymbol{F}}_h^\top \overline{\boldsymbol{F}}_h+\boldsymbol{N}_h^\top \overline{\boldsymbol{F}}_h^\top \overline{\boldsymbol{F}}_h (\boldsymbol{N}_h-\boldsymbol{I}).
    \end{split}
\end{equation*}
By \eqref{eqn:y-comp-nonlinear}, we have
\begin{equation*}
\widehat{\boldsymbol{F}}_h^\top \widehat{\boldsymbol{F}}_h-\overline{\boldsymbol{F}}_h^\top \overline{\boldsymbol{F}}_h=2h^{\beta/2}(\eta_h - x_3) \boldsymbol{B}+O(h^{\beta/2+1})=O(h^{\beta/2+1}),   
\end{equation*}
where we used the second estimate in \eqref{eqn:eta-estimates}. Also, note that $\overline{\boldsymbol{F}}_h^\top \overline{\boldsymbol{F}}_h=O(1)$ due to \eqref{eqn:y-comp-nonlinear}. Thus \eqref{eqn:comp-inc-grad-estimate} follows.

\textbf{Step 4.} We prove here that the deformations $\boldsymbol{y}_h$ are injective.
By construction, $\boldsymbol{y}_h \in C^1(\closure{\Omega};\R^3)$ satisfies $\det \nabla \boldsymbol{y}_h=h$ in $\Omega$. We argue as in \cite[Theorem 5.5-1]{ciarlet}. The expression of $\boldsymbol{y}_h$ is given by 
\begin{equation}
    \label{eqn:y-incompressible}
    \boldsymbol{y}_h={ \MMM\left ( \begin{matrix}\boldsymbol{0}'\\ h\eta_h\end{matrix}\right ) + h^{\beta/2} \left ( \begin{matrix}\boldsymbol{u}\\0\end{matrix}\right )}+h^{\beta/2-1} \left ( \begin{matrix}\boldsymbol{0}'\\v\end{matrix}\right )-h^{\beta/2} \eta_h \left ( \begin{matrix}\nabla'v\\0\end{matrix}\right )+ {\MMM2h^{\beta/2+1} \eta_h \boldsymbol{a}} +h^{\beta/2+1} \eta_h^2 \boldsymbol{b}.
\end{equation}
Set $\boldsymbol{\varphi}_h\coloneqq \boldsymbol{y}_h-\boldsymbol{z}_h$.
We define $\boldsymbol{w}_h \coloneqq \boldsymbol{y}_h \circ \boldsymbol{z}_h^{-1} \restr{\Omega_h}=\boldsymbol{id}\restr{\Omega_h}+\boldsymbol{\psi}_h \in C^1(\closure{\Omega}_h;\R^3)$, where $\boldsymbol{\psi}_h\coloneqq \boldsymbol{\varphi}_h \circ \boldsymbol{z}_h^{-1}\restr{\Omega_h}$. Then, we compute
\begin{equation}
    \label{eqn:grad-phih}
    \begin{split}
     \nabla_h \boldsymbol{\varphi}_h&={\MMM \left(\renewcommand\arraystretch{1.2} \begin{array}{@{}c|c@{}}   \boldsymbol{O}''  & \boldsymbol{0}'\\ \hline  h\nabla'\eta_h & \partial_3\eta_h-1 \end{array} \right)+h^{\beta/2}\left( \renewcommand\arraystretch{1.2} \begin{array}{@{}c|c@{}}   \nabla'\boldsymbol{u}  & \boldsymbol{0}'\\ \hline  (\boldsymbol{0}')^\top & 0 \end{array} \right)}+
     h^{\beta/2-1}\boldsymbol{e}_3 \otimes \left ( \begin{matrix} \nabla' v \\ 0 \end{matrix}\right)\\
     &- h^{\beta/2} \left ( \begin{matrix} \nabla' v \\ 0 \end{matrix} \right) \otimes \nabla_h \eta_h-h^{\beta/2} \eta_h \left(\renewcommand\arraystretch{1.2} \begin{array}{@{}c|c@{}}   (\nabla')^2 v  & \boldsymbol{0}'\\ \hline  (\boldsymbol{0}')^\top & 0 \end{array} \right)+{\MMM2h^{\beta/2+1} \boldsymbol{a}\otimes \nabla_h\eta_h}\\
     &+{\MMM2h^{\beta/2+1} \eta_h \left (\renewcommand\arraystretch{1.2} \begin{array}{@{}c|c@{}} \nabla' \boldsymbol{a}& \boldsymbol{0} \end{array}\right) }+ 2h^{\beta/2+1}\eta_h \boldsymbol{b} \otimes \nabla_h \eta_h + h^{\beta/2+1} \eta_h^2 \left (\renewcommand\arraystretch{1.2} \begin{array}{@{}c|c@{}} \nabla' \boldsymbol{b}& \boldsymbol{0} \end{array}\right).
    \end{split}
\end{equation}
Using \eqref{eqn:eta-estimates}, we deduce 
\begin{equation}
    \label{eqn:grad-psih}
    ||\nabla \boldsymbol{\psi}_h||_{C^0(\closure{\Omega}_h;\rtt)}=||\nabla_h \boldsymbol{\varphi}_h \circ \boldsymbol{z}_h^{-1}||_{C^0(\closure{\Omega}_h;\rtt)}\leq C h^{\beta/2-1}.
\end{equation}
Now, take any two distinct points $\boldsymbol{X}_1,\boldsymbol{X}_2 \in {\Omega}_h$ and set $\boldsymbol{x}_1 \coloneqq \boldsymbol{z}_h^{-1}(\boldsymbol{X}_1)\in \Omega$ and $\boldsymbol{x}_2 \coloneqq \boldsymbol{z}_h^{-1}(\boldsymbol{X}_2)\in \Omega$. Then, there exist a finite number of distinct points $\widetilde{\boldsymbol{x}}_1,\dots,\widetilde{\boldsymbol{x}}_m \in \Omega$ with $\widetilde{\boldsymbol{x}}_1=\boldsymbol{x}_1$ and $\widetilde{\boldsymbol{x}}_m=\boldsymbol{x}_2$ such that each segment connecting  $\widetilde{\boldsymbol{x}}_i$ to $\widetilde{\boldsymbol{x}}_{i+1}$ is entirely contained in $\Omega$ and $\sum_{i=1}^{m-1} |\widetilde{\boldsymbol{x}}_i-\widetilde{\boldsymbol{x}}_{i+1}|\leq C \, |\boldsymbol{x}_1-\boldsymbol{x}_2|$ for some constant $C>0$ depending only on $S$ \cite[page 224]{ciarlet}. Define $\widetilde{\boldsymbol{X}}_i \coloneqq \boldsymbol{z}_h(\widetilde{\boldsymbol{x}}_i)\in \Omega_h$ for every $i \in \{1,\dots,m\}$. By construction, each segment connecting $\widetilde{\boldsymbol{X}}_i$ to $\widetilde{\boldsymbol{X}}_{i+1}$ is entirely contained in $\Omega_h$, $\widetilde{\boldsymbol{X}}_1=\boldsymbol{X}_1$ and $\widetilde{\boldsymbol{X}}_2=\boldsymbol{X}_2$. Since $|\widetilde{\boldsymbol{X}}_i-\widetilde{\boldsymbol{X}}_{i+1}|\leq |\widetilde{\boldsymbol{x}}_i-\widetilde{\boldsymbol{x}}_{i+1}|$ and $|\boldsymbol{x}_1-\boldsymbol{x}_2| \leq h^{-1}|\boldsymbol{X}_1-\boldsymbol{X}_2|$, we deduce $\sum_{i=1}^{m-1} |\widetilde{\boldsymbol{X}}_i-\widetilde{\boldsymbol{X}}_{i+1}|\leq C h^{-1}\, |\boldsymbol{X}_1-\boldsymbol{X}_2|$.
Then, by  the Mean Value Theorem, we have
\begin{equation*}
    \begin{split}
     |\boldsymbol{w}_h(\boldsymbol{X}_1)-\boldsymbol{w}_h(\boldsymbol{X}_2)-(\boldsymbol{X}_1-\boldsymbol{X}_2)|&= |\boldsymbol{\psi_h}(\boldsymbol{X}_1) - \boldsymbol{\psi_h}(\boldsymbol{X}_2)| \leq \sum_{i=1}^{m-1} |\boldsymbol{\psi_h}(\widetilde{\boldsymbol{X}}_i) - \boldsymbol{\psi_h}(\widetilde{\boldsymbol{X}}_{i+1})|\\
     & \leq ||\nabla \boldsymbol{\psi}_h||_{C^0(\closure{\Omega}_h;\rtt)} \sum_{i=1}^{m-1} |\widetilde{\boldsymbol{X}}_i-\widetilde{\boldsymbol{X}}_{i+1}| \leq C h^{\beta/2-2} |\boldsymbol{X}_1-\boldsymbol{X}_2|,
    \end{split}
\end{equation*}
where in the last inequality we used \eqref{eqn:grad-psih}. Since $\beta>4$, for $h \ll 1$, we obtain 
$$ |\boldsymbol{w}_h(\boldsymbol{X}_1)-\boldsymbol{w}_h(\boldsymbol{X}_2)-(\boldsymbol{X}_1-\boldsymbol{X}_2)|<|\boldsymbol{X}_1-\boldsymbol{X}_2|.$$ Thus, as $\boldsymbol{X}_1 \neq \boldsymbol{X}_2$, we necessarily have $\boldsymbol{w}_h(\boldsymbol{X}_1)\neq \boldsymbol{w}_h(\boldsymbol{X}_2)$. Therefore we proved that $\boldsymbol{w}_h$ is injective and, in turn, so is $\boldsymbol{y}_h$. Thus, $\boldsymbol{y}_h \in \mathcal{Y}$. Claims \eqref{eqn:recovery-y}, \eqref{eqn:recovery-u} and \eqref{eqn:recovery-v} follow by direct computations.

\textbf{Step 5.}  This step is devoted to the proof of \eqref{eqn:recovery-elastic}. We can assume that $\boldsymbol{\lambda}$ is defined and smooth on the whole space. By continuity, since $|\boldsymbol{\lambda}|=1$ in $S$, we can find $V \subset \subset \R^2$ open with $S \subset \subset V$ such that $\boldsymbol{\lambda}$ does not vanish on $V$. Therefore, defining $\boldsymbol{\Lambda}\coloneqq \left(|\boldsymbol{\lambda}|^{-1}\boldsymbol{\lambda}\right )\restr{V}$, then  $\boldsymbol{\Lambda} \in C^\infty(\closure{V};\S^2)$ and we have $\boldsymbol{\Lambda}=\boldsymbol{\lambda}$ in $S$. Recall \eqref{eqn:y-incompressible}. Since $\eta_h$ takes values in $[-1,1]$ {\MMM and thanks to the second estimate in \eqref{eqn:eta-estimates}}, we have
\begin{equation}
    \label{eqn:sup-phih}
    ||\boldsymbol{y}_h-\boldsymbol{z}_h||_{C^0(\closure{\Omega};\R^3)}=||\boldsymbol{\varphi}_h||_{C^0(\closure{\Omega};\R^3)}\leq C h^{\beta/2-1} \eqqcolon \bar{\delta}_h.
\end{equation}
By \eqref{eqn:sup-phih}, we deduce $\Omega^{\boldsymbol{y}_h}\subset \boldsymbol{y}_h(\Omega)\subset \Omega_h + B(\boldsymbol{0},\bar{\delta}_h)$. Hence, for $ h \ll 1$, we have $\Omega^{\boldsymbol{y}_h} \subset \subset V \times \R$ and thus we can define $\boldsymbol{m}_h\coloneqq \boldsymbol{\Lambda}\restr{\Omega^{\boldsymbol{y}_h}}$. This gives $(\boldsymbol{y}_h,\boldsymbol{m}_h) \in \mathcal{Q}$.  Note that, for every $\boldsymbol{x} \in \Omega$, we have
\begin{equation*}
    \boldsymbol{m}_h(\boldsymbol{y}_h(\boldsymbol{x}))=\boldsymbol{\Lambda}(\boldsymbol{y}_h(\boldsymbol{x}))\to \boldsymbol{\Lambda}(\boldsymbol{z}_0(\boldsymbol{x}))=\boldsymbol{\Lambda}(\boldsymbol{x}')=\boldsymbol{\lambda}(\boldsymbol{x}'),
\end{equation*}
so that, applying the Dominated Convergence Theorem, we deduce \eqref{eqn:recovery-m-circ-y}. 

We now focus on the elastic energy. For simplicity, we set $\boldsymbol{\nu}_h \coloneqq \boldsymbol{m}_h \circ \boldsymbol{y}_h$. Recall that $\det \boldsymbol{F}_h=1$ in $\Omega$. By the Polar Decomposition Theorem, for every $h>0$ we have $\boldsymbol{F}_h=\boldsymbol{P}_h(\boldsymbol{F}_h^\top \boldsymbol{F}_h)^{1/2}$ for some $\boldsymbol{P}_h\in SO(3)$. Since $\boldsymbol{F}_h\to \boldsymbol{I}$ uniformly in $\Omega$ as we see from \eqref{eqn:y-incompressible} and \eqref{eqn:grad-phih}, passing to the limit in the previous identity we deduce that $\boldsymbol{P}_h \to \boldsymbol{I}$ uniformly in $\Omega$. Then, using \eqref{eqn:frame_indifference}, \eqref{eqn:taylor-expansion}, \eqref{eqn:y-comp-nonlinear} and \eqref{eqn:comp-inc-grad-estimate}, we compute
\begin{equation*}
    \begin{split}
    W^\inc(\boldsymbol{F}_h,\boldsymbol{\nu}_h)&=W(\boldsymbol{F}_h,\boldsymbol{\nu}_h)=W(\boldsymbol{P}_h(\boldsymbol{F}_h^\top \boldsymbol{F}_h)^{1/2},\boldsymbol{\nu}_h)=W((\boldsymbol{F}_h^\top \boldsymbol{F}_h)^{1/2},\boldsymbol{P}^\top_h\boldsymbol{\nu}_h)\\
    &=W((\overline{\boldsymbol{F}}_h^\top \overline{\boldsymbol{F}}_h+O(h^{\beta/2+1}))^{1/2},\boldsymbol{P}^\top_h\boldsymbol{\nu}_h)\\
    &=W((\boldsymbol{I}+2h^{\beta/2}{\MMM(\boldsymbol{A}+x_3\boldsymbol{B})}+O(h^{\beta/2+1}))^{1/2},\boldsymbol{P}^\top_h\boldsymbol{\nu}_h)\\
    &=W(\boldsymbol{I}+h^{\beta/2}{\MMM(\boldsymbol{A}+x_3\boldsymbol{B})}+O(h^{\beta/2+1}),\boldsymbol{P}^\top_h\boldsymbol{\nu}_h)\\
    &=\frac{1}{2}Q_3(h^{\beta/2}{\MMM(\boldsymbol{A}+x_3\boldsymbol{B})}+O(h^{\beta/2+1}),\boldsymbol{P}^\top_h\boldsymbol{\nu}_h) + \omega(h^{\beta/2}{\MMM(\boldsymbol{A}+x_3\boldsymbol{B})}+O(h^{\beta/2+1}),\boldsymbol{P}^\top_h\boldsymbol{\nu}_h)\\
    &=\frac{h^\beta}{2}Q_3({\MMM\boldsymbol{A}+x_3\boldsymbol{B}}+O(h),\boldsymbol{P}^\top_h\boldsymbol{\nu}_h) + \omega(h^{\beta/2}{\MMM(\boldsymbol{A}+x_3\boldsymbol{B})}+O(h^{\beta/2+1}),\boldsymbol{P}^\top_h\boldsymbol{\nu}_h),
    \end{split}
\end{equation*}
so that
\begin{equation}
    \label{eqn:recovery-split}
    \begin{split}
    	E^\mathrm{el}_h(\boldsymbol{y}_h,\boldsymbol{m}_h)&=\frac{1}{2}\int_\Omega Q_3({\MMM\boldsymbol{A}+x_3\boldsymbol{B}}+O(h),\boldsymbol{P}_h^\top\boldsymbol{\nu}_h) \,\d\boldsymbol{x}\\
    	&+ \frac{1}{h^\beta} \int_\Omega \omega(h^{\beta/2}({\MMM\boldsymbol{A}+x_3\boldsymbol{B}})+O(h^{\beta/2+1}),\boldsymbol{P}_h^\top \boldsymbol{\nu}_h)\,\d\boldsymbol{x}.
    \end{split}
\end{equation}
For the first integral on the right-hand side of \eqref{eqn:recovery-split}, applying the Dominated Convergence Theorem we obtain
\begin{equation*}
    \int_\Omega Q_3({\MMM\boldsymbol{A}+x_3\boldsymbol{B}}+O(h),\boldsymbol{P}_h^\top \boldsymbol{\nu}_h) \,\d\boldsymbol{x} \to \int_\Omega Q_3({\MMM \boldsymbol{A}+x_3\boldsymbol{B}},\boldsymbol{\lambda}) \,\d\boldsymbol{x}.
\end{equation*}
For the second integral on the right-hand side of \eqref{eqn:recovery-split}, by \eqref{eqn:coupling_omega} we have
\begin{equation*}
\begin{split}
    \frac{1}{h^\beta} \int_\Omega \omega(h^{\beta/2}({\MMM\boldsymbol{A}+x_3\boldsymbol{B}})&+O(h^{\beta/2+1}),\boldsymbol{P}_h^\top \boldsymbol{\nu}_h)\,\d\boldsymbol{x}\\
    &\leq \int_\Omega \overline{\omega}(|h^{\beta/2}({\MMM\boldsymbol{A}+x_3\boldsymbol{B}})+O(h^{\beta/2+1})|)\,\frac{|h^{\beta/2}({\MMM\boldsymbol{A}+x_3\boldsymbol{B}})+O(h^{\beta/2+1})|^2}{h^\beta}\,\d\boldsymbol{x}\\ 
    & \leq  \overline{\omega}(O(h^{\beta/2})) \int_\Omega |{\MMM\boldsymbol{A}+x_3\boldsymbol{B}}+O(h)|^2\,\d\boldsymbol{x} \leq C \,\overline{\omega}(O(h^{\beta/2})) \to 0.
\end{split}
\end{equation*}
Therefore we proved that
\begin{equation*}
	\begin{split}
		\lim_{h \to 0^+}E_h^\mathrm{el}(\boldsymbol{y}_h,\boldsymbol{m}_h)&=\frac{1}{2}\int_\Omega Q_3({\MMM\boldsymbol{A}+x_3\boldsymbol{B}},\boldsymbol{\lambda})\,\d\boldsymbol{x}={\MMM\frac{1}{2} \int_S Q_3( \MMM\boldsymbol{A},\boldsymbol{\lambda})\,\d\boldsymbol{x}'}+\frac{1}{24} \int_S Q_3( \boldsymbol{B},\boldsymbol{\lambda})\,\d\boldsymbol{x}',
	\end{split}
\end{equation*}
namely \eqref{eqn:recovery-elastic}.

\textbf{Step 6.} We now focus on the remaining terms of the energy. For the exchange energy, changing variables and passing to the limit with the Dominated Convergence Theorem, we obtain
\begin{equation*}
    \begin{split}
        E^\mathrm{exc}_h(\boldsymbol{y}_h,\boldsymbol{m}_h)&=\frac{\MMM \alpha}{h}\int_{\boldsymbol{y}_h(\Omega)}|\nabla \boldsymbol{m}_h|^2\,\d\boldsymbol{\xi}=\frac{\MMM \alpha}{h}\int_{\boldsymbol{y}_h(\Omega)} |\nabla \boldsymbol{\Lambda}|^2\,\d \boldsymbol{\xi}\\
        &=\frac{\MMM \alpha}{h}\int_{\boldsymbol{y}_h(\Omega)} |\nabla' \boldsymbol{\Lambda}|^2\,\d \boldsymbol{\xi}   ={\MMM \alpha}\int_\Omega |\nabla' \boldsymbol{\Lambda}|^2 \circ \boldsymbol{y}_h \,\d\boldsymbol{x}\\
        &\to {\MMM \alpha}\int_\Omega |\nabla' \boldsymbol{\Lambda}|^2 \circ \boldsymbol{z}_0 \,\d\boldsymbol{x}={\MMM \alpha}\int_S |\nabla' \boldsymbol{\Lambda}|^2 \,\d\boldsymbol{x}'={\MMM \alpha}\int_S |\nabla' \boldsymbol{\lambda}|^2 \,\d\boldsymbol{x}'=E^{\mathrm{exc}}(\boldsymbol{\lambda}),
    \end{split}
\end{equation*}
which is \eqref{eqn:recovery-exchange}.
Finally, we deal with the magnetostatic energy. By continuity, for every $\boldsymbol{x} \in \Omega$, we have
\begin{equation*}
    \boldsymbol{m}_h(\boldsymbol{z}_h(\boldsymbol{x}))=\boldsymbol{\Lambda}(\boldsymbol{z}_h(\boldsymbol{x})) \to \boldsymbol{\Lambda}(\boldsymbol{z}_0(\boldsymbol{x}))=\boldsymbol{\Lambda}(\boldsymbol{x}')=\boldsymbol{\lambda}(\boldsymbol{x}'),
\end{equation*}
as $h \to 0^+$.
Recall \eqref{eqn:sup-phih}. From this, after geometric considerations analogous to \eqref{eqn:subcylinder} and \eqref{eqn:supercylinder}, we obtain that $\chi_{\Omega^{\boldsymbol{y}_h}} \circ \boldsymbol{z}_h=\chi_{\boldsymbol{z}^{-1}_h(\Omega^{\boldsymbol{y}_h})} \to \chi_\Omega$ almost everywhere in $\R^3$. Then, applying the Dominated Convergence Theorem, we show \eqref{eqn:recovery-mu} and, arguing as in the proof of Proposition \ref{prop:liminf-mag}, we obtain
\begin{equation*}
    \lim_{h \to 0^+} E^{\mathrm{mag}}_h(\boldsymbol{y}_h,\boldsymbol{m}_h)=E^{\mathrm{mag}}(\boldsymbol{\lambda}),
\end{equation*}
so that \eqref{eqn:recovery-magnetostatic} is proven.
\end{proof}

We conclude this section with the proof of our second main result.

\begin{proof}[Proof of Theorem \ref{thm:optimality-lower-bound}]
By the definition of $Q_2^\inc$, there exists {\MMM$\boldsymbol{a},\boldsymbol{b}\colon S \to \R^3$} such that, given
\begin{equation*}
    {\MMM\boldsymbol{A}\coloneqq  \left( \renewcommand\arraystretch{1.2}\begin{array}{@{}c|c@{}}   \sym \nabla'\boldsymbol{u}  & \boldsymbol{0}'\\ \hline  (\boldsymbol{0}')^\top & 0 \end{array} \right) + \boldsymbol{a} \otimes \boldsymbol{e}_3 + \boldsymbol{e}_3 \otimes \boldsymbol{a},} \qquad \boldsymbol{B}\coloneqq - \left( \renewcommand\arraystretch{1.2}\begin{array}{@{}c|c@{}}   (\nabla')^2 v  & \boldsymbol{0}'\\ \hline  (\boldsymbol{0}')^\top & 0 \end{array} \right) + \boldsymbol{b} \otimes \boldsymbol{e}_3 + \boldsymbol{e}_3 \otimes \boldsymbol{b},
\end{equation*}
we have {\MMM$\tr\boldsymbol{A}=\div'\boldsymbol{u}+2a^3=0$ and $Q_3(\boldsymbol{A},\boldsymbol{\lambda})=Q^\inc_2(\sym\nabla'\boldsymbol{u},\boldsymbol{\lambda})$ in $S$}, as well as $\tr \boldsymbol{B}=-\Delta'v+2b^3=0$  and $Q_3(\boldsymbol{B},\boldsymbol{\lambda})=Q^\inc_2((\nabla')^2v,\boldsymbol{\lambda})$ in $S$. In particular, using the positive definiteness of $Q_3$ and the continuity of $Q_2^\inc$, we  deduce that $\boldsymbol{a},\boldsymbol{b} \in L^2(S;\R^3)$. Let the sequences {\MMM$(\boldsymbol{u}_n)\subset C^\infty(\closure{S};\R^2)$}, $(v_n) \subset C^\infty(\closure{S})$ and $(\boldsymbol{\lambda}_n) \subset C^\infty(\closure{S};\S^2)$ be such that {\MMM$\boldsymbol{u}_n\to\boldsymbol{u}$ in $W^{1,2}(S;\R^2)$}, $v_n \to v$ in $W^{2,2}(S)$ and $\boldsymbol{\lambda}_n \to \boldsymbol{\lambda}$ in $W^{1,2}(S;\R^3)$, as $n \to \infty$. Consider also two sequences $(\boldsymbol{a}_n),(\boldsymbol{b}_n) \subset C^\infty(\closure{S};\R^3)$ such that {\MMM$\boldsymbol{a}_n\to\boldsymbol{a}$ in $L^2(S;\R^3)$} and $\boldsymbol{b}_n \to \boldsymbol{b}$ in $L^2(S;\R^3)$, as $n \to \infty$, and set
\begin{equation*}
{\MMM\boldsymbol{A}_n\coloneqq  \left( \renewcommand\arraystretch{1.2}\begin{array}{@{}c|c@{}}   \sym \nabla'\boldsymbol{u}_n  & \boldsymbol{0}'\\ \hline  (\boldsymbol{0}')^\top & 0 \end{array} \right) + \boldsymbol{a}_n \otimes \boldsymbol{e}_3 + \boldsymbol{e}_3 \otimes \boldsymbol{a}_n,} \qquad \boldsymbol{B}_n\coloneqq - \left( \renewcommand\arraystretch{1.2}\begin{array}{@{}c|c@{}}   (\nabla')^2 v_n  & \boldsymbol{0}'\\ \hline  (\boldsymbol{0}')^\top & 0 \end{array} \right) + \boldsymbol{b}_n \otimes \boldsymbol{e}_3 + \boldsymbol{e}_3 \otimes \boldsymbol{b}_n.
\end{equation*}
We can always choose {\MMM$\boldsymbol{a}_n$} and $\boldsymbol{b}_n$ in order to have ${\MMM\tr \boldsymbol{A}_n}=\tr \boldsymbol{B}_n=0$ for every $n \in \N$. Indeed, given {\MMM$a_n^3\coloneqq -\frac{1}{2}\div'\boldsymbol{u}_n\in C^\infty(\closure{S})$} and  $b_n^3 \coloneqq \frac{1}{2}\Delta'v_n \in C^\infty(\closure{S})$, we have {\MMM$a_n^3\to -\frac{1}{2}\div'\boldsymbol{u}=a^3$ in $L^2(S)$} and
$b_n^3 \to \frac{1}{2}\Delta'v=b^3$ in $L^2(S)$, since {\MMM$\boldsymbol{u}_n \to \boldsymbol{u}$ in $W^{1,2}(S;\R^2)$}, $v_n \to v$ in $W^{2,2}(S)$ and ${\MMM\tr \boldsymbol{A}}=\tr \boldsymbol{B}=0$.

For each $n\in \N$, we can apply Proposition \ref{prop:recovery-sequence}. Therefore, there exists a sequence of admissible states $((\boldsymbol{y}^n_h,\boldsymbol{m}^n_h))_h \subset \mathcal{Q}$ satisfying, as $h \to 0^+$:
\begin{align*}
    &\text{{\MMM$\boldsymbol{u}^n_h\coloneqq \mathcal{U}_h(\boldsymbol{y}_h^n) \to \boldsymbol{u}_n$ in $W^{1,2}(S;\R^2)$}}, \hspace{10 mm} \text{$v_h^n\coloneqq \mathcal{V}_h(\boldsymbol{y}_h^n) \to v_n$ in $W^{1,2}(S)$},\\
     &\hspace{20mm}\text{$\boldsymbol{m}_h^n \circ \boldsymbol{y}_h^n \to \boldsymbol{\lambda}_n$ in $L^s(\Omega;\R^3)$ for every $1\leq s<\infty$,}\\
     &\hspace*{10mm}E_h^{\mathrm{el}}(\boldsymbol{y}^n_h,\boldsymbol{m}^n_h)\to {\MMM\frac{1}{2} \int_S Q_3(\boldsymbol{A}_n,\boldsymbol{\lambda}_n)\,\d\boldsymbol{x}'}+ \frac{1}{24} \int_S Q_3(\boldsymbol{B}_n,\boldsymbol{\lambda}_n)\,\d\boldsymbol{x}',\\
      &\hspace*{15mm}E_h^{\mathrm{exc}}(\boldsymbol{y}^n_h,\boldsymbol{m}^n_h) \to E^{\mathrm{exc}}(\boldsymbol{\lambda}_n), \hspace{6mm} E_h^{\mathrm{mag}}(\boldsymbol{y}^n_h,\boldsymbol{m}^n_h) \to E^{\mathrm{mag}}(\boldsymbol{\lambda}_n).
\end{align*}
Here, we adopted the notation in \eqref{eqn:averaged-displacements}. Using the Dominated Convergence Theorem, we see that
\begin{equation*}
    {\MMM\frac{1}{2} \int_S Q_3(\boldsymbol{A}_n,\boldsymbol{\lambda}_n)\,\d\boldsymbol{x}'}+ \frac{1}{24} \int_S Q_3(\boldsymbol{B}_n,\boldsymbol{\lambda}_n)\,\d\boldsymbol{x}'\to {\MMM\frac{1}{2} \int_S Q_3(\boldsymbol{A},\boldsymbol{\lambda})\,\d\boldsymbol{x}'}+ \frac{1}{24} \int_S Q_3(\boldsymbol{B},\boldsymbol{\lambda})\,\d\boldsymbol{x}',
\end{equation*}
as $n \to \infty$, and, recalling that $\boldsymbol{\lambda}_n \to \boldsymbol{\lambda}$ in $W^{1,2}(S;\R^3)$, we deduce
\begin{equation*}
    E^{\mathrm{exc}}(\boldsymbol{\lambda}_n)\to E^{\mathrm{exc}}(\boldsymbol{\lambda}), \qquad E^{\mathrm{mag}}(\boldsymbol{\lambda}_n)\to E^{\mathrm{mag}}(\boldsymbol{\lambda}),
\end{equation*}
as $n \to \infty$. Then, by a standard diagonal argument, we select an vanishing sequence $(h_n)$ such that we have, as $n \to \infty$:
\begin{align*}
    &\hspace*{10mm}\text{{\MMM$\boldsymbol{u}^n_{h_n} \to \boldsymbol{u}$ in $W^{1,2}(S;\R^2)$}}, \hspace{10 mm} \text{$v_{h_n}^n \to v$ in $W^{1,2}(S)$},\\
    &\hspace*{12mm}\text{$\boldsymbol{m}_{h_n}^n \circ \boldsymbol{y}_{h_n}^n \to \boldsymbol{\lambda}$ in $L^s(\Omega;\R^3)$ for every $1 \leq s < \infty$,}\hspace*{20 mm}\\
    &\hspace*{5mm}E_h^{\mathrm{el}}(\boldsymbol{y}^n_{h_n},\boldsymbol{m}^n_{h_n})\to {\MMM\frac{1}{2} \int_S Q_3(\boldsymbol{A},\boldsymbol{\lambda})\,\d\boldsymbol{x}'}+\frac{1}{24} \int_S Q_3(\boldsymbol{B},\boldsymbol{\lambda})\,\d\boldsymbol{x}',\\
    &E_h^{\mathrm{exc}}(\boldsymbol{y}^n_{h_n},\boldsymbol{m}^n_{h_n}) \to E^{\mathrm{exc}}(\boldsymbol{\lambda}), \hspace{10mm} E_h^{\mathrm{mag}}(\boldsymbol{y}^n_{h_n},\boldsymbol{m}^n_{h_n}) \to E^{\mathrm{mag}}(\boldsymbol{\lambda}).
\end{align*}
Thus \eqref{eqn:opt-lb-u}-\eqref{eqn:opt-lb-m-comp-y} are proved and, recalling our choice of $\boldsymbol{a}$ and $\boldsymbol{b}$, also \eqref{eqn:opt-lb-limit} is proved.
\end{proof}

\section{\texorpdfstring{$\Gamma$}{gamma}-convergence}
\label{sec:gamma}

This section is devoted to the proof of Corollary \ref{cor:gamma-convergence}. Recall the notation introduced in \eqref{eqn:averaged-displacements} and \eqref{eqn:gamma-notation-M}, and the definitions of the functionals $\mathcal{I}_h$ and $\mathcal{I}$ in \eqref{eqn:gamma-energy-Eh} and \eqref{eqn:gamma-energy-E}, respectively.

\begin{proof}[Proof of Corollary \ref{cor:gamma-convergence}]
We prove the result in the case of the strong product topology. The other case works exactly in the same way. 

\textbf{Proof of (i).} Suppose that  $(\boldsymbol{y}_h,\boldsymbol{u}_h,v_h,\boldsymbol{\mu}_h)\to(\boldsymbol{y},\boldsymbol{u},v,\boldsymbol{\mu})$ in $W^{1,p}(\Omega;\R^3)\times W^{1,2}(S;\R^2) \times W^{1,2}(S)\times L^2(\R^3;\R^3)$, as $h \to 0^+$. We can assume that the right-hand side of \eqref{eqn:liminf-inequality} is finite and, up to subsequences,  that the inferior limit is actually a limit. In this case, $\mathcal{I}_h(\boldsymbol{y}_h,\boldsymbol{u}_h,v_h,\boldsymbol{\mu}_h) \leq C$ for $h \ll 1$. Hence, by the definition of the functional $\mathcal{I}_h$, for $h \ll 1$ we have $\boldsymbol{u}_h=\mathcal{U}_h(\boldsymbol{y_h})$, $v_h=\mathcal{V}_h(\boldsymbol{y}_h)$ and $\boldsymbol{\mu}_h=\mathcal{M}_h(\boldsymbol{y}_h,\boldsymbol{m}_h)$ for some $\boldsymbol{m}_h \in W^{1,2}(\Omega^{\boldsymbol{y}_h};\S^2)$. Moreover, there holds
$E_h(\boldsymbol{y}_h,\boldsymbol{m}_h)\leq C$ for $h \ll 1$. Thus, we can apply Proposition \ref{prop:comp-def}. We obtain a sequence $(\boldsymbol{T}_h)$ of rigid motions of the form $\boldsymbol{T}_h(\boldsymbol{\xi})=\boldsymbol{Q}_h^\top \boldsymbol{\xi}-\boldsymbol{c}_h$ for every $\boldsymbol{\xi}\in \R^3$, where $\boldsymbol{Q}_h \in SO(3)$ and $\boldsymbol{c}_h \in \R^3$. Then, we define $\widetilde{\boldsymbol{y}}_h\coloneqq\boldsymbol{T}_h \circ \boldsymbol{y}_h$, and, up to subsequences, we have:
\begin{equation}
    \label{eqn:compactness-variant}
    \text{$\widetilde{\boldsymbol{y}}_h \to \boldsymbol{z}_0$ in $W^{1,p}(\Omega;\R^3)$, \hskip 8 pt $\widetilde{\boldsymbol{u}}_h\coloneqq \mathcal{U}_h(\widetilde{\boldsymbol{y}}_h) \wk \widetilde{\boldsymbol{u}}$ in $W^{1,2}(S;\R^2)$, \hskip 8 pt $\widetilde{v}_h\coloneqq \mathcal{V}_h(\widetilde{\boldsymbol{y}}_h) \to \widetilde{v}$ in $W^{1,2}(S)$,}    
\end{equation}
 for some $\widetilde{\boldsymbol{u}}\in W^{1,2}(S;\R^2)$ and $\widetilde{v} \in W^{2,2}(S)$.  For convenience, set $\boldsymbol{V}_h\coloneqq (h\boldsymbol{u}_h^\top,v_h)^\top$ and $\widetilde{\boldsymbol{V}}_h\coloneqq (h \widetilde{\boldsymbol{u}}_h^\top,\widetilde{v}_h)^\top$. Thus, $\boldsymbol{V}_h\to \boldsymbol{V}$ in $W^{1,2}(S;\R^3)$ and $\widetilde{\boldsymbol{V}}_h \to \widetilde{\boldsymbol{V}}$ in $W^{1,2}(S;\R^3)$, where $V\coloneqq((\boldsymbol{0}')^\top,v)^\top$ and $\widetilde{\boldsymbol{V}}\coloneqq((\boldsymbol{0}')^\top,\widetilde{v})^\top$. We compute
\begin{equation*}
    \begin{split}
    \int_S |\nabla' \widetilde{\boldsymbol{V}}_h-\boldsymbol{Q}_h^\top \nabla'\boldsymbol{V}_h|^2\,\d\boldsymbol{x}' &= \frac{1}{h^{\beta-2}}\int_S \left | \int_I \left( \nabla' \widetilde{\boldsymbol{y}}_h - \left( \renewcommand\arraystretch{1.2} \begin{matrix}
    \boldsymbol{I}'' \\ \hline (\boldsymbol{0}')^\top \end{matrix}
    \right) - \boldsymbol{Q}_h^\top \left (\nabla' \boldsymbol{y}_h - \left( \renewcommand\arraystretch{1.2} \begin{matrix}
    \boldsymbol{I}'' \\ \hline (\boldsymbol{0}')^\top \end{matrix}
    \right) \right) \right) \,\d x_3 \right|^2\,\d\boldsymbol{x}'\\
    &=\frac{1}{h^{\beta-2}} \int_S \left | \int_I (\boldsymbol{Q}_h^\top-\boldsymbol{I})\,\left( \renewcommand\arraystretch{1.2}  \begin{matrix}
    \boldsymbol{I}'' \\ \hline (\boldsymbol{0}')^\top \end{matrix}
    \right)\,\d x_3\right |^2\,\d \boldsymbol{x}'=\frac{C}{h^{\beta-2}}\,|\boldsymbol{Q}_h^\top - \boldsymbol{I}|^2.
    \end{split}
\end{equation*}
Since the left-hand side is bounded in $L^2(S;\R^{3 \times 2})$, because both sequences $(\nabla'\widetilde{\boldsymbol{V}}_h)$ and $(\nabla' \boldsymbol{V}_h)$ are convergent, we deduce that 
\begin{equation}
    \label{eqn:Qh-I-estimate}
    |\boldsymbol{Q}_h^\top - \boldsymbol{I}|\leq Ch^{\beta/2-1}.
\end{equation}
On the other hand, we have
\begin{equation*}
	\boldsymbol{V}_h-\widetilde{\boldsymbol{V}}_h=-\frac{1}{h^{\beta/2-1}} (\boldsymbol{Q}_h^\top-\boldsymbol{I}) \int_I \boldsymbol{y}_h(\cdot,x_3)\,\d x_3+\frac{1}{h^{\beta/2-1}}\boldsymbol{c}_h
\end{equation*}
from which we obtain
\begin{equation*}
	\frac{1}{h^{\beta/2-1}}\boldsymbol{c}_h=\boldsymbol{V}_h-\widetilde{\boldsymbol{V}}_h+\frac{1}{h^{\beta/2-1}} (\boldsymbol{Q}_h^\top-\boldsymbol{I}) \int_I \boldsymbol{y}_h(\cdot,x_3)\,\d x_3.
\end{equation*}
Since the right-hand side is bounded in $L^2(S;\R^3)$ because of \eqref{eqn:Qh-I-estimate} and the fact that both the sequences $(\boldsymbol{V}_h)$, $(\widetilde{\boldsymbol{V}}_h)$  are convergent in $L^2(S;\R^3)$ and $(\boldsymbol{y}_h)$ is convergent in $L^2(\Omega;\R^3)$, we deduce that
\begin{equation}
	\label{eqn:ch-estimate}
	|\boldsymbol{c}_h|\leq C h^{\beta/2-1}.
\end{equation}
From the first estimate in \eqref{eqn:key-estimate} and from \eqref{eqn:Qh-I-estimate} and \eqref{eqn:ch-estimate}, we obtain that
\begin{equation}
	\label{eqn:key-estimate-variant}
	||\boldsymbol{y}_h - \boldsymbol{z}_h||_{C^0(\closure{\Omega};\R^3)}\leq C h^{\beta/p-1}.
\end{equation}
Hence, $\boldsymbol{y}_h \to \boldsymbol{z}_0$ uniformly in $\Omega$, so that, by the Morrey embedding, $\boldsymbol{y}=\boldsymbol{z}_0$. Moreover, exploiting \eqref{eqn:key-estimate-variant} and the bound $E^{\text{exc}}_h(\boldsymbol{y}_h,\boldsymbol{m}_h)\leq C$ for $h \ll 1$, and arguing as in the proof of Proposition \ref{prop:comp-mag}, we show that 
\begin{equation}
	\label{eqn:conv-mu-variant}
	\text{$\boldsymbol{\mu}_h \coloneqq \mathcal{M}_h(\boldsymbol{y}_h,\boldsymbol{m}_h) \to \chi_\Omega \boldsymbol{\lambda}$ in $L^r(\R^3;\R^3)$ for every $1 \leq r < \infty$,}
\end{equation}
for some $\boldsymbol{\lambda}\in W^{1,2}(S;\S^2)$. In particular, we deduce $\boldsymbol{\mu}=\chi_\Omega \boldsymbol{\lambda}$.

Combining \eqref{eqn:Qh-I-estimate} with the second estimate in \eqref{eqn:approx-rot2} with $q=2$, we obtain
\begin{equation}
    \label{eqn:variant-approx-rot}
   \int_\Omega |\nabla_h \boldsymbol{y}_h-\boldsymbol{I}|^2\,\d\boldsymbol{x}\leq C h^{\beta-2}. 
\end{equation}
Thanks to the estimate \eqref{eqn:variant-approx-rot}, we can repeat the argument of the proof of Proposition \ref{prop:comp-def}, but with $\boldsymbol{I}$ in place of $\boldsymbol{Q}_h$. Consider the translation motion $\widecheck{\boldsymbol{T}}_h$ defined by $\widecheck{\boldsymbol{T}}_h(\boldsymbol{\xi})\coloneqq\boldsymbol{\xi}-\widecheck{\boldsymbol{c}}_h$ for every $\boldsymbol{\xi}\in \R^3$, where $\widecheck{\boldsymbol{c}}_h \coloneqq \dashint_\Omega (\boldsymbol{y}_h-\boldsymbol{z}_h)\,\d\boldsymbol{x}$.
We define $\widecheck{\boldsymbol{y}}_h\coloneqq \widecheck{\boldsymbol{T}}_h \circ \boldsymbol{y}_h=\boldsymbol{y}_h-\widecheck{\boldsymbol{c}}_h$ and $\widecheck{\boldsymbol{m}}_h \coloneqq \boldsymbol{m}_h \circ \widecheck{\boldsymbol{T}}_h^{-1}$. 
We show that 
\begin{equation}
	\label{eqn:check-conv}
	\text{$\widecheck{\boldsymbol{u}}_h \coloneqq \mathcal{U}_h(\widecheck{\boldsymbol{y}}_h)\wk \widecheck{\boldsymbol{u}}$ in $W^{1,2}(S;\R^2)$, \hskip 8 pt $\widecheck{v}_h\coloneqq \mathcal{V}_h(\widetilde{\boldsymbol{y}}_h) \to \widecheck{v}$ in $W^{1,2}(S)$,}    
\end{equation}
for some $\widecheck{\boldsymbol{u}}\in W^{1,2}(S;\R^2)$ and $\widecheck{v} \in W^{2,2}(S)$. Then, reasoning as in the proof of Proposition \ref{prop:comp-mag}, we find $\widecheck{\boldsymbol{\lambda}}\in W^{1,2}(S;\S^2)$ such that
\begin{align}
	\label{eqn:check-conv-mag}
	&\text{$\widecheck{\boldsymbol{m}}_h \circ \widecheck{\boldsymbol{y}}_h \to \boldsymbol{\lambda}$ in $L^r(\Omega;\R^3)$ for every $1 \leq r < \infty$,}\\
	\label{eqn:check-mu}
	&\text{$\widecheck{\boldsymbol{\mu}}_h \coloneqq \mathcal{M}_h(\widecheck{\boldsymbol{y}}_h,\widecheck{\boldsymbol{m}_h})\to \chi_\Omega \widecheck{\boldsymbol{\lambda}}$ in $L^r(\Omega;\R^3)$ for every $1 \leq r < \infty$.}
\end{align}
for some $\widecheck{\boldsymbol{\lambda}}\in W^{1,2}(S;\S^2)$. Besides, following the proof of Propositions \ref{prop:liminf-exc}, \ref{prop:liminf-el} and \ref{prop:liminf-mag}, we show that
\begin{equation}
	\label{eqn:liminf-inequality-almost}
	E(\widecheck{\boldsymbol{u}},\widecheck{v},\widecheck{\boldsymbol{\lambda}})\leq \liminf_{h \to 0^+} E_h(\widecheck{\boldsymbol{y}}_h,\widecheck{\boldsymbol{m}}_h).
\end{equation}
Thanks to the invariance of the energy $E_h$ with respect to translations (see Remark \ref{rem:invariance}), we have $E_h(\widecheck{\boldsymbol{y}}_h,\widecheck{\boldsymbol{m}}_h)=E_h(\boldsymbol{y}_h,\boldsymbol{m}_h)=\mathcal{I}_h(\boldsymbol{y}_h,\boldsymbol{u}_h,v_h,\boldsymbol{\mu}_h)$. As $\nabla'\widecheck{\boldsymbol{u}}_h=\nabla'\boldsymbol{u}_h$ and $\nabla'\widecheck{v}_h=\nabla'v_h$, passing to the limit, as $h \to 0^+$, we deduce $\nabla'\widecheck{\boldsymbol{u}}=\nabla'\boldsymbol{u}$ and $\nabla'\widecheck{v}=\nabla'v$. In particular, we have $v \in W^{2,2}(S)$, and, recalling that $\boldsymbol{y}=\boldsymbol{z}_0$ and $\boldsymbol{\mu}=\chi_\Omega \boldsymbol{\lambda}$, there holds $\mathcal{I}(\boldsymbol{y},\boldsymbol{u},v,\boldsymbol{\mu})=E(\boldsymbol{u},v,\boldsymbol{\lambda})=E(\widecheck{\boldsymbol{u}},\widecheck{v},\boldsymbol{\lambda})$.
Therefore, if we show that $\widecheck{\boldsymbol{\lambda}}=\boldsymbol{\lambda}$, then $E(\widecheck{\boldsymbol{u}},\widecheck{v},\widecheck{\boldsymbol{\lambda}})=\mathcal{I}(\boldsymbol{y},\boldsymbol{u},v,\boldsymbol{\mu})$, so that \eqref{eqn:liminf-inequality-almost} yields \eqref{eqn:liminf-inequality}.

{\MMM
We now prove this last claim. Setting $\widecheck{\boldsymbol{V}}_h\coloneqq (h\widecheck{\boldsymbol{u}}_h^\top,\widecheck{v}_h)^\top$ and $\widecheck{\boldsymbol{V}}\coloneqq((\boldsymbol{0}')^\top,\widecheck{v})^\top$, by \eqref{eqn:check-conv}, we have $\widecheck{\boldsymbol{V}}_h \to \widecheck{\boldsymbol{V}}$ in $W^{1,2}(S;\R^3)$. Note that $\boldsymbol{V}_h-\widecheck{\boldsymbol{V}}_h=h^{-\beta/2+1} \widecheck{\boldsymbol{c}}_h$.
Since the left-hand side of the previous equation is bounded in $L^2(S;\R^3)$, we deduce $|\widecheck{\boldsymbol{c}}_h|\leq Ch^{\beta/2-1}$. Let $\boldsymbol{\varphi}\in C^\infty_c(S;\R^3)$ and denote by $\overline{\boldsymbol{\varphi}}$ its extension to the whole space by zero. Using the change-of-variable formula and the identity $\chi_{\Omega^{\widecheck{\boldsymbol{y}}_h}}\widecheck{\boldsymbol{m}}_h=(\chi_{\Omega^{\boldsymbol{y}_h}}\boldsymbol{m}_h)\circ \widecheck{\boldsymbol{T}}_h^{-1}$, we compute
\begin{equation}
	\label{eqn:gamma-final}
	\begin{split}
		\int_{\R^3} \widecheck{\boldsymbol{\mu}}_h \cdot \overline{\boldsymbol{\varphi}}\,\d\boldsymbol{x}&=\int_{\R^3} (\chi_{\Omega^{\boldsymbol{y}_h}}\boldsymbol{m}_h) \circ \widecheck{\boldsymbol{T}}_h^{-1}\circ \boldsymbol{z}_h \cdot \overline{\boldsymbol{\varphi}}\,\d\boldsymbol{x}\\
		&=\frac{1}{h}\int_{\R^3} \chi_{\Omega^{\boldsymbol{y}_h}}\boldsymbol{m}_h \cdot \overline{\boldsymbol{\varphi}} \circ \boldsymbol{z}_h^{-1}\circ \widecheck{\boldsymbol{T}}_h\,\d\boldsymbol{\xi}\\
		&=\int_{\R^3} \boldsymbol{\mu}_h \cdot \overline{\boldsymbol{\varphi}} \circ \boldsymbol{z}_h^{-1}\circ \widecheck{\boldsymbol{T}}_h \circ \boldsymbol{z}_h\,\d\boldsymbol{x}.
	\end{split}
\end{equation}
Note that $	\boldsymbol{z}_h^{-1}\circ \widecheck{\boldsymbol{T}}_h \circ \boldsymbol{z}_h(\boldsymbol{x})=\boldsymbol{x}-((\widecheck{\boldsymbol{c}}_h')^\top,h^{-1}\widecheck{c}_h^{\,3})^\top \to \boldsymbol{x}$ for every $\boldsymbol{x}\in \Omega$, since $|\widecheck{\boldsymbol{c}}_h|\leq Ch^{\beta/2-1}$. Thus, we have $\overline{\boldsymbol{\varphi}} \circ \boldsymbol{z}_h^{-1}\circ \widecheck{\boldsymbol{T}}_h \circ \boldsymbol{z}_h \to \overline{\boldsymbol{\varphi}}$ in $L^2(\R^3;\R^3)$ thanks to the Dominated Convergence Theorem. Recalling \eqref{eqn:conv-mu-variant} and \eqref{eqn:check-mu}, and passing to the limit, as $h \to 0^+$, in \eqref{eqn:gamma-final}, we obtain
\begin{equation*}
	\int_S \widecheck{\boldsymbol{\lambda}} \cdot \boldsymbol{\varphi}\,\d\boldsymbol{x}'=\int_{\Omega} \widecheck{\boldsymbol{\lambda}} \cdot \overline{\boldsymbol{\varphi}}\,\d\boldsymbol{x}=\int_{\Omega} {\boldsymbol{\lambda}} \cdot \overline{\boldsymbol{\varphi}}\,\d\boldsymbol{x}=\int_S {\boldsymbol{\lambda}} \cdot \boldsymbol{\varphi}\,\d\boldsymbol{x}'.
\end{equation*}
Since $\boldsymbol{\varphi}$ is arbitrary, the equality $\widecheck{\boldsymbol{\lambda}}=\boldsymbol{\lambda}$ follows.
}

\textbf{Proof of (ii).}
The existence of recovery sequences follows directly from Theorem \ref{thm:optimality-lower-bound}. Consider $(\boldsymbol{y},\boldsymbol{u},v,\boldsymbol{\mu})\in W^{1,p}(\Omega;\R^3)\times W^{1,2}(S;\R^2)\times W^{1,2}(S)\times L^2(\R^3;\R^3)$. If $\mathcal{I}(\boldsymbol{y},\boldsymbol{u},v,\boldsymbol{\mu})=+\infty$,then the constant sequence $((\boldsymbol{y}_h,\boldsymbol{u}_h,v_h,\boldsymbol{\mu}_h))_h$ with $\boldsymbol{y}_h=\boldsymbol{y}$, $\boldsymbol{u}_h=\boldsymbol{u}$, $v_h=v$ and $\boldsymbol{\mu}_h=\boldsymbol{\mu}$, is a recovery sequence. Suppose that $\mathcal{I}(\boldsymbol{y},\boldsymbol{u},v,\boldsymbol{\mu})<+\infty$. In this case, by the definition of the functional $\mathcal{I}$ in \eqref{eqn:gamma-energy-E}, we have $\boldsymbol{y}=\boldsymbol{z}_0$, $v \in W^{2,2}(S)$ and $\boldsymbol{\mu}=\chi_\Omega \boldsymbol{\lambda}$ for some $\boldsymbol{\lambda} \in W^{1,2}(S;\S^2)$. Also, $\mathcal{I}(\boldsymbol{y},\boldsymbol{u},v,\boldsymbol{\mu})=E(\boldsymbol{u},v,\boldsymbol{\lambda})$. By Theorem \ref{thm:optimality-lower-bound}, we obtain a sequence $((\boldsymbol{y}_h,\boldsymbol{m}_h))_h \subset \mathcal{Q}$ satisfying the following:
\begin{align}
	&\nonumber \text{$\boldsymbol{u}_h\coloneqq \mathcal{U}_h(\boldsymbol{y}_h)\to \boldsymbol{u}$ in $W^{1,2}(S;\R^2$), \quad $v_h\coloneqq\mathcal{V}_h(\boldsymbol{y}_h)\to v$ in $W^{1,2}(S)$,}\\
	&\nonumber \hspace*{15mm}\text{$\boldsymbol{m}_h \circ \boldsymbol{y}_h \to \boldsymbol{\lambda}$ in $L^r(\Omega;\R^3)$ for every $1 \leq r < \infty$,}
\end{align}
Moreover, this also gives
\begin{equation}
	\label{eqn:recovery-sequence-almost}
	E(\boldsymbol{u},v,\boldsymbol{\lambda})=\lim_{h \to 0^+} E_h(\boldsymbol{y}_h,\boldsymbol{m}_h).
\end{equation}	
If we set $\boldsymbol{\mu}_h\coloneqq \mathcal{M}_h(\boldsymbol{y}_h,\boldsymbol{m}_h)$, then $\mathcal{I}_h(\boldsymbol{y}_h,\boldsymbol{u}_h,v_h,\boldsymbol{\mu}_h)=E_h(\boldsymbol{y}_h,\boldsymbol{m}_h)$, so that \eqref{eqn:recovery-sequence} follows from \eqref{eqn:recovery-sequence-almost}.
A direct computation shows that $\boldsymbol{\mu}_h \to \chi_\Omega\boldsymbol{\lambda}$ in $L^r(\R^3;\R^3)$ for every $1 \leq r < \infty$, which concludes the proof.
\end{proof}

\section*{Acknowledgements}
The author is thankful to his advisor Elisa Davoli for proposing this problem to him and for many helpful discussions and suggestions. He is also thankful to Martin Kru\v{z}\'{i}k for discussions on the topic of magnetoelasticity {\MMM and to the anonimous referee for various comments that led to an improvement of the manuscript.} This work has been supported by the Austrian Science Fund (FWF) through the grant I4052-N32 and by the Federal Ministry of Education, Science and Research of Austria (BMBWF) through the OeAD-WTZ project CZ04/2019.

%\bibliographystyle{plain}
%\bibliography{ed}
\end{document}